\def\bfomega{\boldsymbol{\omega}}
\def\bfeta{\boldsymbol{\eta}}
\def\bfb{\boldsymbol{b}}
\def\bfp{\boldsymbol{p}}
\def\AA{\mathord{\mathbf{A}}}
\def\CC{\mathord{\mathbf{C}}}
\def\RR{\mathord{\mathbf{R}}}
\def\QQ{\mathord{\mathbf{Q}}}
\def\ZZ{\mathord{\mathbf{Z}}}
\def\GG{\mathord{\mathbf{G}}}
\def\transp{^{\mathord{\textsf{T}}}}
\def\opp{{\rm op}}
\def\Sets{\textsf{Set}}
\def\Sch{\textsf{Sch}}
\def\Man{\textsf{Man}}
\def\SmVar{\textsf{SmVar}}
\def\Sym{\mathop{\rm Sym}}
\def\Sp{\mathop{\rm Sp}}
\def\GSp{\mathop{\rm GSp}}
\def\Hom{\mathop{\rm Hom}}
\def\Aut{\mathop{\rm Aut}}
\def\GL{\mathop{\rm GL}}
\def\Im{\mathop{\rm Im}}
\def\Re{\mathop{\rm Re}}
\def\Lie{\mathop{\rm Lie}}
\def\MT{\mathop{\rm MT}}
\def\Spec{\mathop{\rm Spec}}
\def\Pic{\mathop{\rm Pic }}
\def\SL{\mathop{\rm SL}}
\def\trdeg{\text{\rm trdeg}}
\def\an{\text{\rm an}}
\def\dR{\text{\rm dR}}
\def\defeq{\vcentcolon=}
\def\eqdef{=\vcentcolon}
\def\tensor{\otimes}
\def\to{\longrightarrow}
\def\mapsto{\longmapsto}
\newtheorem{theorem}{Theorem}[section]
\newtheorem{prop}[theorem]{Proposition}
\newtheorem*{conj}{Conjecture}
\newtheorem{lemma}[theorem]{Lemma}
\newtheorem{coro}[theorem]{Corollary}
\theoremstyle{definition}
\newtheorem{ex}[theorem]{Example}
\newtheorem{defi}[theorem]{Definition}
\newtheorem{obs}[theorem]{Remark}
\numberwithin{equation}{section}
\title[Higher Ramanujan equations II]{Higher Ramanujan equations II:\\ periods of abelian varieties and transcendence questions}
\author{Tiago J. Fonseca}
\address{Laboratoire de Mathématiques d'Orsay, Univ. Paris-Sud, CNRS, Université
Paris-Saclay, 91405 Orsay, France.}
\email{tiago.jardim-da-fonseca@math.u-psud.fr}
\date{\today}
\begin{document}
\maketitle

\begin{abstract}
  In the first part of this work, we have considered a moduli space $B_g$ classifying principally polarized abelian varieties of dimension $g$ endowed with a symplectic-Hodge basis, and we have constructed the higher Ramanujan vector fields $(v_{kl})_{1\le k\le l \le g}$ on it. In this second part, we study these objects from a complex analytic viewpoint. We construct a holomorphic map $\varphi_g : \mathbf{H}_g \to B_g(\CC)$, where $\mathbf{H}_g$ denotes the Siegel upper half-space of genus $g$, satisfying the system of differential equations $\frac{1}{2\pi i}\frac{\partial \varphi_g}{\partial \tau_{kl}}=v_{kl}\circ \varphi_g$, $1\le k\le l \le g$. When $g=1$, we prove that $\varphi_1$ may be identified with the triple of Eisenstein series $(E_2,E_4,E_6)$, so that the previous differential equations coincide with Ramanujan's classical relations concerning Eisenstein series. We discuss the relation between the values of $\varphi_g$ and the fields of periods of abelian varieties, and we explain how this relates to Grothendieck's periods conjecture. Finally, we prove that every leaf of the holomorphic foliation on $B_g(\CC)$ induced by the vector fields $v_{kl}$ is Zariski-dense in $B_{g,\CC}$. This last result implies a ``functional version'' of Grothendieck's periods conjecture for abelian varieties.
\end{abstract}

\tableofcontents

\section{Introduction}

\subsection{} In the first part of this work (\cite{fonseca16}) we have considered for any integer $g\ge 1$ a smooth moduli stack $\mathcal{B}_g$ over $\ZZ$ classifying principally polarized abelian varieties of dimension $g$ endowed with a \emph{symplectic-Hodge basis} of its first algebraic de Rham cohomology, and we have constructed a family $(v_{kl})_{1\le k \le l \le g}$ of $g(g+1)/2$ commuting vector fields on $\mathcal{B}_g$, the \emph{higher Ramanujan vector fields}. We have also proved that $\mathcal{B}_g\tensor \ZZ[1/2]$ is representable by a smooth quasi-projective scheme $B_g$ over $\ZZ[1/2]$. In particular, the set of complex points $B_g(\CC)$ has a natural structure of a quasi-projective complex manifold.

In this second part, we consider the differential equations defined by the higher Ramanujan vector fields, and we study their complex analytic solutions. Let
\begin{align*}
\mathbf{H}_g \defeq\{\tau \in M_{g\times g}(\CC) \mid \tau\transp = \tau\text{, }\Im \tau > 0\}
\end{align*}
be the Siegel upper half-space of genus $g$. This is a complex manifold of dimension $g(g+1)/2$ admitting the holomorphic coordinate system $(\tau_{kl})_{1\le k \le l \le g}$, where $\tau_{kl}:\mathbf{H}_g \to \CC$ is the holomorphic map associating to $\tau \in \mathbf{H}_g$ its entry in the $k$th row and $l$th column. Using the universal property of $B_g(\CC)$, we shall construct a holomorphic map
\begin{align*}
\varphi_g : \mathbf{H}_g \to B_g(\CC)
\end{align*}
satisfying the system of differential equations
\begin{align} \label{higherram}
\frac{1}{2\pi i}\frac{\partial \varphi_g}{\partial \tau_{kl}} = v_{kl}\circ \varphi_g\text{, } \ \ \ 1\le k \le l\le g\text{.}
\end{align}

When $g=1$, the Siegel upper half-space $\mathbf{H}_1$ is the Poincaré upper half-plane $\mathbf{H} = \{\tau \in \CC \mid \Im \tau >0\}$, and the classical theory of elliptic curves provides an isomorphism
\begin{align*}
B_1 \tensor \ZZ[1/6] \cong \ZZ[1/6,e_2,e_4,e_6,(e_4^3-e^2_6)^{-1}]\text{,}
\end{align*}
under which the vector field $v_{11}$ becomes the ``classical'' Ramanujan vector field (cf. \cite{fonseca16} Section 6)
\begin{align}\label{ramvectorfield}
 v\defeq \frac{e_2^2-e_4}{12}\frac{\partial}{\partial e_2} + \frac{e_2e_4-e_6}{3}\frac{\partial}{\partial e_4} + \frac{e_2e_6-e_4^2}{2}\frac{\partial}{\partial e_6}\text{.}
\end{align}
In this situation, the holomorphic map $\varphi_1$ gets identified with
\begin{align*}
\tau \mapsto (E_2(\tau),E_4(\tau),E_6(\tau))\text{,}
\end{align*}
where $E_2,E_4,E_6 : \mathbf{H} \to \CC$ are the classical (level 1) Eisenstein series normalized by $E_{2k}(+i\infty)=1$, and we obtain Ramanujan's original relations between Eisenstein series:
\begin{align*}
\frac{1}{2\pi i}\frac{dE_2}{d\tau} = \frac{E_2^2 - E_4}{12}\text{, }\ \ \frac{1}{2\pi i}\frac{dE_4}{d\tau} = \frac{E_2E_4-E_6}{3}\text{, } \ \ \frac{1}{2\pi i}\frac{dE_6}{d\tau} = \frac{E_2E_6-E^2_4}{2}\text{.}
\end{align*}

\subsection{} As explained in the introduction of \cite{fonseca16}, questions in Transcendental Number Theory constitute our main source of motivation for the study of these higher dimensional analogs of Ramanujan's equations. We shall illustrate this point by relating the values of $\varphi_g$ with Grothendieck's periods conjecture on abelian varieties. In order to fully motivate a precise statement of our result, let us digress into a discussion of periods of abelian varieties and Grothendieck's conjecture on the algebraic relations between them.

Let $X$ be a complex abelian variety and $k\subset \CC$ be the smallest algebraically closed subfield of $\CC$ over which there exists an abelian variety $X_0$ such that $X$ is isomorphic to $X_0\tensor_k\CC$ as complex abelian varieties (cf. Lemma \ref{fielddef}). By a \emph{period} of $X$, we mean any complex number of the form
\begin{align*}
\int_{\gamma}\alpha
\end{align*}
where $\alpha$ is an element of the first algebraic de Rham cohomology $H^1_{\dR}(X_0/k)$ and $\gamma \in H_1(X_0(\CC),\ZZ)$ is the class of a singular 1-cycle. We define the \emph{field of periods} $\mathcal{P}(X)$ of $X$ as the smallest subfield of $\CC$ containing $k$ and all the periods of $X$.\footnote{This definition is not standard. Usually, one starts with an abelian variety $X$ defined over a subfield $K\subset \CC$, and one defines $K$-periods in terms of $H^1_{\dR}(X/K)$. Our ``absolute'' definition considering a minimal algebraically closed field of definition is convenient for our purposes and will be justified in the sequel.} Equivalently, $\mathcal{P}(X)$ may be regarded as the field of rationality of the comparison isomorphism
\begin{align*}
H^1(X_0(\CC), \CC) = \Hom (H_1(X_0(\CC),\ZZ),\CC) \stackrel{\sim}{\to} H_{\dR}^1(X_0/k)\tensor_k \CC\text{.} 
\end{align*}
A central problem in the theory of transcendental numbers is to determine, or simply to estimate, the transcendence degree over $\mathbf{Q}$ of the field of periods $\mathcal{P}(X)$.

In a first approach, one might observe that any algebraic cycle in some power $X^n$ of $X$ induces an algebraic relation between its periods (cf. \cite{DMOS82} Proposition I.1.6). Broadly speaking, Grothendieck conjectured that \emph{every} algebraic relation between periods of an abelian variety can be ``explained'' through algebraic cycles on its powers.

A convenient way of giving a precise formulation for Grothendieck's conjecture is by means of Mumford-Tate groups. Let $X$ be a complex abelian variety, and denote by $H$ the $\QQ$-Hodge structure of weight 1 with underlying $\QQ$-vector space given by $H^1(X(\CC),\QQ)$, and Hodge filtration $F^1H$ given by $H^0(X,\Omega^1_{X/\CC})\subset H^1_{\dR}(X/\CC) \cong H^1(X(\CC),\QQ)\tensor_{\QQ}\CC$. The decomposition $H_{\CC} = F^1H \oplus \overline{F^1H}$ corresponds to the morphism of real algebraic groups
\begin{align*}
h : \CC^{\times} \to \GL(H_{\RR})\text{,}
\end{align*}
where $h(z)$ acts on $F^1H$ by a homothety of ratio $z^{-1}$, and on $\overline{F^1H}$ by a homothety of ratio $\overline{z}^{-1}$. The \emph{Mumford-Tate group} $\MT(X)$ of $X$ is defined as the smallest $\QQ$-algebraic subgroup of $\GL(H)$ such that $h$ factors through $\MT(X)_{\RR}$. It can also be interpreted as the smallest $\QQ$-algebraic subgroup of $\GL(H)\times \GG_{m,\QQ}$ fixing all Hodge classes in twisted mixed tensor powers of the $\QQ$-Hodge structure $H$ (cf. \cite{DMOS82} I.3).

The following formulation of \emph{Grothendieck's periods conjecture} (GPC) for abelian varieties is a specialization of the ``generalized Grothendieck's periods conjecture'' proposed by André (\cite{andre04} 23.4.1; see also \cite{lang66} Historical Note pp. 40-44 and \cite{grothendieck66} footnote 10).

\begin{conj}[Grothendieck-André]
For any complex abelian variety $X$, we have
\begin{align*}
\trdeg_{\QQ}\mathcal{P}(X) \stackrel{\text{?}}{\ge} \dim \MT(X)\text{.}
\end{align*} 
\end{conj}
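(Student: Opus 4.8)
\emph{The statement is a well-known open conjecture, not proved in the present paper. What follows is a sketch of the line of attack, of the cases that are within reach, and of where the essential difficulty lies.}

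The plan is to recast the conjectured inequality as a statement about a \emph{torsor of periods}, and then to split off the cases in which $X$ genuinely varies. Fix, as in Lemma~\ref{fielddef}, an abelian variety $X_0$ over the minimal algebraically closed field of definition $k\subset\CC$, choose a polarization on it, and let $M=h^1(X_0)$, viewed in André's Tannakian category of motives for motivated cycles over $k$ (\cite{andre04}). By Deligne's theorem that Hodge classes on abelian varieties are absolutely Hodge, hence motivated (\cite{DMOS82}), the motivic Galois group $G_{\mathrm{mot}}(M)$ is canonically isomorphic, as a $\QQ$-group, to $\MT(X)$; in particular $\dim G_{\mathrm{mot}}(M)=\dim\MT(X)$. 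The de Rham--Betti comparison isomorphism, together with all its tensor constructions on objects of $\langle M\rangle^{\otimes}$, defines a $\CC$-point $\varpi$ of the torsor of periods $P=\underline{\mathrm{Isom}}^{\otimes}(\omega_{\dR},\omega_{B})$ of $M$, an affine $k$-scheme that is a torsor under $G_{\mathrm{mot}}(M)_{k}$. The coordinates of $\varpi$ are precisely the periods of $X$ together with $(2\pi i)^{\pm 1}$ --- the latter being a period already by the polarization (Riemann) relation --- so that $\mathcal{P}(X)=k(\varpi)$. Since $P$ is defined over $k$ and $\dim P=\dim\MT(X)$, one has automatically $\trdeg_{k}\mathcal{P}(X)\le\dim\MT(X)$; thus GPC would follow from, and for $X$ defined over $\overline{\QQ}$ is equivalent to, the assertion that $\varpi$ is a \emph{generic} point of $P$ over $k$, i.e.\ that $\trdeg_{k}\mathcal{P}(X)=\dim\MT(X)$.

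If $X$ is generic --- say its moduli point lifts to a $\tau\in\mathbf{H}_g$ whose entries $(\tau_{kl})$ are algebraically independent over $\QQ$ --- then one can conclude from the last theorem announced in the abstract of the present paper. Indeed $\dim B_g=\dim\Sp_{2g}=\dim\GSp_{2g}-1$, and since every leaf of the holomorphic foliation cut out on $B_{g,\CC}$ by the $v_{kl}$ is Zariski-dense, the point $\varphi_g(\tau)$ lies on no proper $\overline{\QQ}$-subvariety of $B_{g,\overline{\QQ}}$ for such $\tau$; hence $\trdeg_{\QQ}\QQ(\varphi_g(\tau))=\dim B_g$. The coordinates of $\varphi_g(\tau)$ are, up to powers of $2\pi i$, the entries of a symplectic-Hodge basis of $X_\tau$, hence periods of $X_\tau$; combining this with the transcendence of $2\pi i$ over $\QQ(\varphi_g(\tau))$ --- an instance of Nesterenko's theorem when $g=1$, and in general a short supplementary point, or a consequence of passing to the natural $\GG_m$-torsor over $B_g$ --- yields $\trdeg_{\QQ}\mathcal{P}(X_\tau)\ge\dim B_g+1=\dim\GSp_{2g}=\dim\MT(X_\tau)$. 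This is a \emph{functional} input of Lindemann--Weierstrass type; it could be replaced by an Ax--Schanuel theorem for $\varphi_g$ together with the Pila--Zannier method, but the foliation-theoretic argument developed in this paper is the one relevant here. It settles GPC for every sufficiently generic abelian variety.

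What remains --- and is the true obstacle --- is the case of abelian varieties defined over $\overline{\QQ}$, where $k=\overline{\QQ}$ and the ``functional slack'' exploited above disappears entirely. There the only available tools are transcendence-theoretic. Wüstholz's analytic subgroup theorem, combined with the Masser--Wüstholz isogeny estimates, yields the \emph{linear} part of GPC: every $\overline{\QQ}$-linear relation among periods of $X$ is induced by a sub-motive of a tensor construction on $M$, hence is accounted for by an algebraic subgroup. Chudnovsky's algebraic independence of $\pi$ and $\Gamma(1/3)$, extended by Shiga--Wolfart and others, gives GPC in some low-dimensional CM situations. But no known method reaches the \emph{quadratic and higher} period relations in general: already for a single non-CM elliptic curve over $\overline{\QQ}$, GPC asserts the algebraic independence over $\QQ$ of $\omega_1,\omega_2,\pi$ --- that is, $\trdeg_{\QQ}\mathcal{P}(X)=4=\dim\GL_2$ --- whereas the best unconditional lower bound is $2$. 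Thus the crux is to produce a transcendence method detecting the non-linear period relations of abelian varieties over number fields, equivalently upgrading the differential-Galois statement proved here to an arithmetic one. In the absence of such a method GPC remains open, and the contribution of the present work is precisely to establish its functional counterpart unconditionally.
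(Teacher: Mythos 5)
You are right that this statement is the Grothendieck--André conjecture, which the paper does not prove (and does not claim to prove); your first and last paragraphs correctly describe the standard reformulation via the period torsor, Deligne's upper bound, and the state of the art over $\overline{\QQ}$, and correctly identify the paper's contribution as the unconditional \emph{functional} counterpart.

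However, your middle paragraph contains a genuine error. From the Zariski-density of every leaf (or of the graph of $\varphi_g$) you cannot conclude that ``$\varphi_g(\tau)$ lies on no proper $\overline{\QQ}$-subvariety of $B_{g,\overline{\QQ}}$'' for $\tau$ with algebraically independent entries, nor that $\trdeg_{\QQ}\QQ(\varphi_g(\tau))=\dim B_g$. Zariski-density is a statement about the \emph{image} of $\varphi_g$ as a whole: it says no nonzero polynomial vanishes identically along the leaf, i.e.\ the coordinate \emph{functions} of $\varphi_g$ satisfy no algebraic relation beyond those imposed by the polarization. It gives no lower bound whatsoever on the transcendence degree of the \emph{value} $\varphi_g(\tau)$ at any individual $\tau$; the hypothesis that the $\tau_{kl}$ are algebraically independent only yields $\trdeg_{\QQ}\QQ(2\pi i,\tau,\varphi_g(\tau))\ge g(g+1)/2$, far below $\dim\MT(X_\tau)=\dim\GSp_{2g}=2g^2+g+1$ in the Hodge-generic case. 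Passing from the functional statement to such point-wise bounds is exactly the missing arithmetic input (Nesterenko-type multiplicity estimates, or an arithmetic Ax--Schanuel), and the paper explicitly poses this as the open problem motivating the study of the foliation; so GPC is not settled by these results even for ``generic'' $\tau$, and your claimed intermediate case collapses into the same open problem as the $\overline{\QQ}$ case (where, as you note, the genuine difficulty lies). The side claims in that paragraph (transcendence of $2\pi i$ over $\QQ(\varphi_g(\tau))$, and $\MT(X_\tau)=\GSp_{2g}$ merely from algebraic independence of the $\tau_{kl}$) are likewise asserted without justification, though the second is at least plausible since the non-Hodge-generic locus is cut out in $\mathbf{H}_g$ by algebraic conditions with algebraic coefficients.
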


It follows from Deligne \cite{deligne80} (cf. \cite{DMOS82} Corollary I.6.4) that we always have the upper bound
\begin{align*}
\trdeg_{\QQ}\mathcal{P}(X) \le \dim \MT(X) + \trdeg_{\QQ}k\text{,}
\end{align*}
where $k$ is the smallest algebraically closed subfield of $\CC$ over which $X$ may be defined. In particular, if $X$ is definable over the field of algebraic numbers $\overline{\QQ}$ --- the case originally considered by Grothendieck --- the above conjectural inequality becomes the conjectural equality
\begin{align*}
\trdeg_{\QQ}\mathcal{P}(X) \stackrel{\text{?}}{=} \dim \MT(X)\text{.}
\end{align*} 
In the case $g=1$, the Mumford-Tate group of a complex elliptic curve $E$ may be easily computed. Its dimension only depends on the existence or not of complex multiplication, and GPC predicts  that
\begin{align*}
\trdeg_{\QQ}\mathcal{P}(E) \stackrel{\text{?}}{\ge} \left\{\begin{array}{cl}
                                     2 & \text{if }E\text{ has complex multiplication}\\
4& \text{otherwise}\text{.}
                                    \end{array}
                              \right.
\end{align*}
Even in this minimal case, GPC is not yet established in full generality --- only the complex multiplication case is understood; see below. Nevertheless, an approach that has been proved fruitful for obtaining non-trivial lower bounds in the direction of GPC relies on a \emph{modular description} of the fields of periods of elliptic curves, which we now recall.

Let $E$ be a complex elliptic curve and let $j\in \CC$ be its $j$-invariant. Then $E$ admits a model
\begin{align*}
E: \ \ y^2 = 4x^3 -g_2x-g_3
\end{align*}
with $g_2,g_3 \in \QQ(j)$, and we can consider the algebraic differential forms defined over $\QQ(j)$
\begin{align*}
\omega \defeq \frac{dx}{y}\text{, } \ \ \ \eta \defeq x\frac{dx}{y}\text{.}
\end{align*} 
They form a (symplectic-Hodge) basis of the first algebraic de Rham cohomology $H^1_{\dR}(E/\QQ(j))$. If $(\gamma,\delta)$ is any basis of the first singular homology group $H_1(E(\CC),\ZZ)$, we may consider the periods
\begin{align*}
\omega_1 = \int_\gamma \omega\text{, }\ \ \omega_2 = \int_{\delta}\omega\text{, }\ \ \eta_1= \int_{\gamma}\eta\text{, }\ \ \eta_2=\int_{\delta}\eta\text{.}
\end{align*}
We may assume moreover that the basis $(\gamma,\delta)$ is oriented, in the sense that their topological intersection product $\gamma \cap \delta =1$.

The field of periods of $E$ is given by
\begin{align*}
\mathcal{P}(E) = \overline{\QQ(j)}(\omega_1,\omega_2,\eta_1,\eta_2)\text{,}
\end{align*}
where $\overline{\QQ(j)}$ denotes the algebraic closure of $\QQ(j)$ in $\CC$. Now, observe that $\omega_1\neq 0$ and let
\begin{align*}
\tau \defeq \frac{\omega_2}{\omega_1}\text{.}
\end{align*}
As the basis $(\gamma,\delta)$ of $H_1(E(\CC),\ZZ)$ is oriented, the complex number $\tau$ is in $\mathbf{H}$. By the classical theory of modular forms, we have
\begin{align*}
E_2(\tau) = -12 \left(\frac{\omega_1}{2\pi i} \right)\left(\frac{\eta_1}{2\pi i} \right)\text{, }\ \ E_4(\tau) = 12g_2\left(\frac{\omega_1}{2\pi i} \right)^4\text{, }\ \ E_6(\tau)= -216g_3\left(\frac{\omega_1}{2\pi i} \right)^6\text{.}
\end{align*}
Finally, Legendre's periods relation and the definition of $j$ show that $\mathcal{P}(E)$ is an algebraic extension of the field $\QQ(2\pi i, \tau, E_2(\tau),E_4(\tau),E_6(\tau))$, and we obtain
\begin{align} \label{modular}
\trdeg_{\QQ}\mathcal{P}(E) = \trdeg_{\QQ}\QQ(2\pi i, \tau, E_2(\tau),E_4(\tau),E_6(\tau))\text{.}
\end{align}

In this way, the problem of estimating the transcendence degree of fields of periods of elliptic curves translates into the problem of estimating the transcendence degree of values of some analytic functions. Accordingly, the theorem of Nesterenko \cite{nesterenko96} asserts that, for any $\tau \in \mathbf{H}$,
\begin{align*}
\trdeg_{\QQ}\QQ(e^{2\pi i\tau}, E_2(\tau),E_4(\tau),E_6(\tau)) \ge 3\text{.}
\end{align*}
As an immediate consequence, we obtain
\begin{align*}
\trdeg_{\QQ}\QQ(2\pi i, \tau, E_2(\tau),E_4(\tau),E_6(\tau)) \ge \trdeg_{\QQ} \QQ(E_2(\tau),E_4(\tau),E_6(\tau))\ge 2
\end{align*}
for any $\tau \in \mathbf{H}$. Equivalently, by equation (\ref{modular}), for any complex elliptic curve $E$, we obtain the uniform bound
\begin{align*}
\trdeg_{\QQ}\mathcal{P}(E) \ge 2\text{,}
\end{align*}
which is sharp when $E$ has complex multiplication. This last result had already been previously established by Chudnovsky (cf. \cite{chudnovsky80}) via elliptic methods.\footnote{We should also point out that the modular parameter $e^{2\pi i \tau}$, ignored in our discussion, has a geometric interpretation. Namely, it is a period of a certain $1$-motive naturally attached to $E$. We refer to \cite{bertolin02} (cf. \cite{andre04} 23.4.3) for further discussion on these matters.}

\subsection{} In this paper, we generalize the modular description (\ref{modular}). Namely, let $g\ge 1$ be an integer and, for any $\tau \in \mathbf{H}_g$, let $X_{\tau}$ be the complex abelian variety given by the (polarizable) complex torus $\CC^g/(\ZZ^g + \tau \ZZ^g)$. We shall prove that, for any $\tau \in \mathbf{H}_g$, the field of periods $\mathcal{P}(X_{\tau})$ is an algebraic extension of $\QQ(2\pi i , \tau, \varphi_g(\tau))$ (the residue field in $\AA_{\QQ}^1\times_{\QQ} \Sym_{g,\QQ}\times_{\QQ} B_{g,\QQ}$ of the complex point $(2\pi i, \tau,\varphi_g(\tau))$,  where $\Sym_g$ denotes the group scheme of symmetric matrices of order $g\times g$); in particular, we obtain 
\begin{align*}
\trdeg_{\QQ}\mathcal{P}(X_{\tau}) = \trdeg_{\QQ}\QQ(2\pi i , \tau, \varphi_g(\tau))\text{.}
\end{align*}

This generalized modular description raises the question of whether it is possible to adapt Nesterenko's methods to this higher dimensional setting. Guided by this problem, we are naturally lead to study the \emph{higher Ramanujan foliation}, namely, the holomorphic foliation on $B_g(\CC)$ generated by the higher Ramanujan vector fields.

We shall prove that every leaf of the Ramanujan foliation on $B_g(\CC)$ is Zariski-dense in $B_{g,\CC}$. This property of a foliation plays an important role, at least in the case in which leaves are one dimensional (where it implies Nesterenko's $D$-property), in the ``multiplicity estimates'' appearing in applications of differential equations to transcendental number theory (cf. \cite{binyamini14}, \cite{nesterenko89}, \cite{nesterenko96}).

The Zariski-density of the image of $\varphi_g : \mathbf{H}_g \to B_g(\CC)$ in $B_{g,\CC}$ also implies the \emph{a priori} stronger result that its graph
\begin{align*}
\{(\tau, \varphi_g(\tau)) \in {\Sym}_g(\CC)\times B_g(\CC) \mid \tau \in \mathbf{H}_g\}
\end{align*}
is Zariski-dense in $\Sym_{g,\CC} \times_{\CC} B_{g,\CC}$ (cf. \cite{BZ03} where a similar question is investigated in the context of \emph{Thetanullwerte}). This can be seen as an analog --- but not a complete generalization by the lack of the modular parameter $q$ --- of Mahler's result \cite{mahler69} on the algebraic independence of the holomorphic functions $\tau$, $e^{2\pi i \tau}$, $E_2(\tau)$, $E_4(\tau)$, and $E_6(\tau)$, of $\tau \in \mathbf{H}$. It can also be interpreted as a ``functional version'' of GPC: loosely speaking, it says that there is no algebraic relation simultaneously satisfied  by the periods of every (principally polarized) abelian variety other than the relations given by the polarization data.

The above Zariski-density results will rely on a characterization of the leaves of the higher Ramanujan foliation in terms of an action by $\Sp_{2g}(\CC)$. In fact, from the complex analytic viewpoint, the complex manifold $B_g(\CC)$ and the higher Ramanujan vector fields admit a simple description in terms of algebraic groups.

Namely, we shall explain how to realize $B_g(\CC)$ as a domain (in the analytic topology) of the quotient manifold $\Sp_{2g}(\ZZ)\backslash \Sp_{2g}(\CC)$, and we shall prove that under this identification the higher Ramanujan vector field $v_{kl}$ is induced by the left invariant holomorphic vector field on $\Sp_{2g}(\CC)$ associated to
\begin{align*}
\frac{1}{2\pi i} \left(\begin{array}{cc} 0 & \mathbf{E}^{kl} \\ 0 & 0  \end{array}\right)\in \Lie {\Sp}_{2g}(\CC)\text{,}
\end{align*}
where $\mathbf{E}^{kl}$ is the symmetric matrix of order $g\times g$ whose entry in the $k$th row and $l$th column (resp. $l$th row and $k$th column) is 1, and whose all other entries are 0.

Furthermore, the solution $\varphi_g : \mathbf{H}_g \to B_g(\CC)$ of the higher Ramanujan equations is identified to
\begin{align*}
  \tau \mapsto \left[\left(\begin{array}{cc} \mathbf{1}_g & \tau \\
                                             0 & \mathbf{1}_g
                     \end{array}\right)\right] \in {\Sp}_{2g}(\ZZ)\backslash {\Sp}_{2g}(\CC)\text{,}
\end{align*}
where $\mathbf{1}_g$ denotes the identity matrix of order $g\times g$. This enables us to obtain every leaf of the higher Ramanujan foliation as the image of a holomorphic map $\varphi_{\delta}: U_{\delta} \to B_g(\CC)$ defined on some explicitly defined open subset $U_{\delta}\subset \mathbf{H}_g$ obtained from $\varphi_g$ via a ``twist'' by some element $\delta\in \Sp_{2g}(\CC)$.

In the case $g=1$, the above twisting procedure may be illustrated as follows. Let 
\begin{align*}
\delta = \left(\begin{array}{cc}a & b \\ c& d\end{array} \right) \in {\SL}_2(\CC)\text{,}
\end{align*}
let $U_{\delta} = \{\tau \in \mathbf{H} \mid c\tau +d \neq 0\}$, and define a holomorphic map $\varphi_{\delta}:U_{\delta} \to B_1(\CC)\subset \CC^3$ by
\begin{align*}
\varphi_{\delta}(\tau) = \left((c\tau+d)^2E_2(\tau) + \frac{12c}{2\pi i}(c\tau+d), (c\tau +d)^4E_4(\tau), (c\tau+d)^6E_6(\tau)  \right)
\end{align*}
 Then one may easily check that $\varphi_{\delta}$ satisfy the differential equation
\begin{align*}
\frac{1}{2\pi i}\frac{d\varphi_{\delta}}{d\tau} = (c\tau+d)^{-2}v\circ \varphi_{\delta}
\end{align*}
where $v$ is the classical Ramanujan vector field defined by (\ref{ramvectorfield}). 

\subsection{}

As acknowledged in \cite{fonseca16}, our definition of the moduli stack $\mathcal{B}_g$ was inspired by Movasati's point of view on the Ramanujan vector field in terms of the Gauss-Manin connection on the de Rham cohomology of the universal elliptic curve (cf. \cite{movasati12} 4.2), which corresponds to the case $g=1$ of our construction.

After I completed a first version this article, H. Movasati has kindly indicated to me that a number of its results and constructions has some overlap with his article \cite{movasati13}. In this work, he considers complex analytic spaces $U$ classifying lattices in maximal totally real subspaces of some given complex vector space $V_0$ (i.e. subgroups of $V_0$ generated by a $\CC$-basis of $V_0$) satisfying suitable compatibility conditions with a fixed Hodge filtration $F^{\bullet}_0$ on $V_0$, and a fixed polarization $\psi_0$; these spaces come equipped with a natural analytic right action of the complex algebraic group
$$
G_0 = \{g\in \GL(V_0) \mid gF_0^i=F^i_0 \text{ for every }i\text{, and }g^*\psi_0=\psi_0\}\text{.}
$$
For the particular case where $V_0 = \CC^{2g}$,
$$
F_0^{\bullet} = (F^0_0=V_0\supset F^1_0=\CC^{g}\times\{0\}\supset F^2_0=0)\text{,}
$$
and $\psi_0$ is the standard (complex) symplectic form (\cite{movasati13} 5.1), the space $U$ becomes the analytic moduli space $B_g(\CC)$, investigated in the present article. Of course, the algebraic group $G_0$ coincides with our $P_g(\CC)$, and the action of $G_0$ on $U$ gets identified with the action of $P_g(\CC)$ on $B_g(\CC)$ defined in \cite{fonseca16} under $U\cong B_g(\CC)$. 

In \cite{movasati13} 3.2, Movasati also describes $U$ as a quotient $\Gamma_{\ZZ}\backslash P$, where $P$ is the space of ``period matrices'' and $\Gamma_{\ZZ}$ is some explicitly defined discrete group. In our particular case, $P$ may be identified with our $\mathbf{B}_g$ (cf. Proposition \ref{prop1}) and $\Gamma_{\ZZ}=\Sp_{2g}(\ZZ)$. Moreover, the map $\mathbf{H}_g \to P$ defined in \cite{movasati13} p. 584 coincides with our $\varphi_g: \mathbf{H}_g \to B_g(\CC)$ constructed via the universal property of $B_g(\CC)$.

In his article, Movasati explicitly states the problem of algebraizing $U$ --- i.e. finding the algebraic variety $T$ over $\overline{\QQ}$, in his notations --- and the action of $G_0$. This is solved ``by definition'' in our construction, which also shows that $T$, here called $B_{g,\overline{\QQ}}$, is smooth and quasi-projective. Movasati actually conjectures that the complex manifold $B_g(\CC)$ admit a \emph{unique} structure of complex algebraic variety (in analogy with the Baily-Borel theorem) and that it is actually quasi-affine. On his web page\footnote{See ``What is a Siegel quasi-modular form?'' in \url{http://w3.impa.br/~hossein/WikiHossein/WikiHossein.html}.}, Movasati also indicates a construction
of what we call ``higher Ramanujan vector fields'' with slightly different normalizations (cf. \cite{fonseca16} Proposition 5.7).

\subsection{Acknowledgments}

This work was supported by a public grant as part of the FMJH project, and is part of my PhD thesis under the supervision of Jean-Benoît Bost. I thank Mikolaj Fraczyk for sharing his insights on Zariski-density matters and for his interest in this work. I also thank Hossein Movasati for his kind remarks on a first version of this article, and for making me aware of his work on this circle of questions.

\subsection{Terminology and notations} \label{terminology}

Besides the terminology and notations of \cite{fonseca16}, we shall consider the following. 




\subsubsection{} Let $M$ be a complex manifold. Every holomorphic vector bundle $\pi: V \to M$ may be seen as a (commutative) relative complex Lie group over $M$. We shall occasionally identify $V$ with its corresponding locally free sheaf of $\mathcal{O}_M$-modules of holomorphic sections of $\pi$.

\subsubsection{} If $R$ is any ring, we denote the constant sheaf with values in $R$ over some complex manifold $M$ by $R_M$. A \emph{local system} of $R$-modules over $M$ is a locally constant sheaf $L$ of $R$-modules over $M$. The \emph{dual} of $L$ is denoted by  $L^{\vee} \defeq \mathcal{H}om_{R}(L,R_M)$.

The \emph{étalé space} of a local system of $R$-modules $L$ over $M$ will be denoted by $E(L)$; this is a topological covering space over $M$ whose fiber at each $p\in M$ is naturally identified to $L_p$. 

\subsubsection{} \label{notationmatrices}

Let $m,n\ge 1$ be integers. The set of matrices of order $m\times n$ over a ring  $R$ is denoted by $M_{m\times n }(R)$. We shall frequently adopt a block notation for elements in $M_{2n \times 2n}(R)$:
$$
\left(\begin{array}{cc}
A & B \\
C & D
\end{array}\right) = ( A \ B \ ; \ C \ D)\text{,}
$$
where $A,B,C,D \in M_{n\times n}(R)$.

The transpose of a matrix $M \in M_{m\times n}(R)$ is denoted by $M\transp \in M_{n\times m}(R)$. For $1\le i \le n$, $\textbf{e}_i \in M_{n\times 1}(R)$ denotes for the column vector whose entry in the $i$th line is 1, and all the others are 0. The identity matrix in $M_{n\times n}(R)$ is denoted by $\mathbf{1}_n$. For every $1\le i \le j \le n$, we denote by $\textbf{E}^{ij}$ the unique symmetric matrix $(\textbf{E}^{ij}_{kl})_{1\le k,l\le n} \in M_{n\times n}(R)$ such that
\begin{align*}
\textbf{E}^{ij}_{kl} = \begin{cases}
          1 & \text{if }  (k,l)=(i,j) \text{ or } (k,l)=(j,i)\\
          0 & \text{otherwise}\text{.}
          \end{cases}
\end{align*}

The \emph{symmetric group} $\Sym_n$ is the subgroup scheme of $M_{n\times n}$ consisting of symmetric matrices. The \emph{symplectic group} $\Sp_{2n}$ is defined as the subgroup scheme of $\GL_{2n}$ such that for every affine scheme $V=\Spec R$
\begin{align*}
{\Sp}_{2g}(V)  =  \{M \in {\GL}_{2n}(R) \mid MJM\transp =J\}
\end{align*}
where
\begin{align*}
J\defeq \left(\begin{array}{cc}
                  0 & \mathbf{1}_n \\
                  -\mathbf{1}_n& 0
                  \end{array}\right)\text{.}
\end{align*}

\begin{obs} \label{eqsympl}
As $J^2=-\mathbf{1}_{2n}$, the condition $MJM\transp = J$ is equivalent to $M^{-1} = -JM\transp J$; thus $MJM\transp = J$ if and only if $M\transp J M = J$. In particular, if we write
\begin{align*}
M = \left(\begin{array}{cc}
                  A & B \\
                  C & D
                  \end{array}\right) \in M_{2n\times 2n}(R)
\end{align*}
for some $A,B,C,D\in M_{n\times n}(R)$, then $M$ is in $\Sp_{2n}(R)$ if and only if one of the following two conditions is satisfied
\begin{enumerate}
   \item $AB\transp = BA\transp$, $CD\transp= DC\transp$, and $AD\transp -BC\transp = \mathbf{1}_n$.
   \item $A\transp C = C\transp A\text{, } B\transp D = D\transp B\text{, and } A\transp D - C\transp B = \mathbf{1}_n$.
\end{enumerate}
\end{obs}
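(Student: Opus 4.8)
This is purely a matter of elementary matrix algebra over the (commutative) ring $R$, so the plan is just to organize the computation. First I would record the basic properties of $J$: one has $J\transp = -J$ and $J^2 = -\mathbf{1}_{2n}$, hence $J$ is invertible with $J^{-1} = -J = J\transp$ and $\det J = 1$. Now suppose $M \in \GL_{2n}(R)$ satisfies $MJM\transp = J$; right-multiplying by $J^{-1} = -J$ gives $M\cdot(-JM\transp J) = \mathbf{1}_{2n}$, which, $M$ being invertible, is exactly $M^{-1} = -JM\transp J$, and conversely multiplying this identity on the left by $M$ and on the right by $J^{-1}$ and using $J^2 = -\mathbf{1}_{2n}$ recovers $MJM\transp = J$. (The hypothesis $M \in \GL_{2n}$ is in fact superfluous: $MJM\transp = J$ forces $\det(M)^2 = 1$, so $\det M \in R^\times$; the same applies to $M\transp J M = J$ below.) To obtain $MJM\transp = J \iff M\transp J M = J$, apply the equivalence just proved with $M\transp$ in place of $M$: it reads $M\transp J M = J \iff (M^{-1})\transp = -JMJ$, and transposing this last identity, using $J\transp = -J$, turns it back into $M^{-1} = -JM\transp J$, i.e. into $MJM\transp = J$.

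It remains to unwind $MJM\transp = J$ and $M\transp J M = J$ in block form. Writing $M = (A\ B\,;\,C\ D)$ and $M\transp = (A\transp\ C\transp\,;\,B\transp\ D\transp)$, a direct multiplication gives
\[
MJM\transp = \begin{pmatrix} AB\transp - BA\transp & AD\transp - BC\transp \\ CB\transp - DA\transp & CD\transp - DC\transp \end{pmatrix},
\]
so $MJM\transp = J$ is equivalent to $AB\transp = BA\transp$, $CD\transp = DC\transp$, $AD\transp - BC\transp = \mathbf{1}_n$, together with $DA\transp - CB\transp = \mathbf{1}_n$; but the last equation is simply the transpose of $AD\transp - BC\transp = \mathbf{1}_n$, hence redundant, and one is left with condition (1). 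Symmetrically,
\[
M\transp J M = \begin{pmatrix} A\transp C - C\transp A & A\transp D - C\transp B \\ B\transp C - D\transp A & B\transp D - D\transp B \end{pmatrix},
\]
and comparison with $J$ gives condition (2), the lower-left block equation being again the transpose of $A\transp D - C\transp B = \mathbf{1}_n$. Combined with the first paragraph, which identifies each of $MJM\transp = J$ and $M\transp J M = J$ with membership in $\Sp_{2n}(R)$, this establishes all the assertions.

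There is no genuine obstacle in this argument; it is a bookkeeping exercise. The only points one must watch are the sign conventions produced by $J\transp = -J$ and the observation that, in each of the two block identities, one of the four block equations is automatically the transpose of another and may be discarded.
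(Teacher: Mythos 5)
Your argument is correct and follows the same route the remark sketches: use $J^2=-\mathbf{1}_{2n}$ to translate $MJM\transp=J$ into $M^{-1}=-JM\transp J$, deduce the symmetric condition $M\transp JM=J$, and then read off conditions (1) and (2) from the two block multiplications, discarding the redundant transposed equation in each. The only minor stylistic difference is that for $MJM\transp=J\iff M\transp JM=J$ you substitute $M\transp$ and transpose, whereas one can get it a hair more directly by writing $M^{-1}M=\mathbf{1}_{2n}$ as $-JM\transp JM=\mathbf{1}_{2n}$ and left-multiplying by $J$; both are fine.
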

Finally, the \emph{Siegel parabolic subgroup} $P_n$ of $\Sp_{2n}$ consists of matrices $(A \ B \ ; \ C \ D )$ in $\Sp_{2n}$ such that $C=0$. 



\subsubsection{} \label{notationresidue}

 Let $K$ be a subfield of $\CC$ and $X$ be an algebraic variety over $K$ (i.e. a reduced separated scheme of finite type over $K$). For any complex point $\overline{x}: \Spec \CC \to X$, if $x \in X$ denotes the point in the image of $\overline{x}$, and $k(x)$ denotes its residue field, we put
\begin{align*}
K(\overline{x}) \defeq k(x)\text{.}
\end{align*}
Let us remark that
\begin{align*}
\trdeg_{K}K(\overline{x}) = \min \{\dim Y \mid Y \text{ is an integral closed $K$-subscheme of }X \text{ such that }\overline{x}\in Y(\CC)\}\text{.}
\end{align*}

\section{Analytic families of complex tori, abelian varieties, and their uniformization}

In this section we briefly transpose some of the standard theory of complex tori to a relative situation, that is, we shall consider analytic families of complex tori. To both simplify and shorten our exposition, we shall assume that the parameter space is smooth (i.e. a complex manifold); this largely suffices our needs.

Most of the material included in here, and in the following section, is well known to experts --- and may be even considered as ``classical'' --- but we could not find a convenient reference in the literature.

\subsection{Relative complex tori} \label{rct}

Let $M$ be a complex manifold.

\begin{defi}
A \emph{(relative) complex torus over $M$} is a relative complex Lie group $\pi:X\to M$ over $M$ such that $\pi$ is proper with connected fibers. A morphism of complex tori over $M$ is a morphism of relative complex Lie groups over $M$.
\end{defi}

As any compact connected complex Lie group is a complex torus, every fiber of $\pi$ in the above definition is a complex torus.

In general, for any relative complex Lie group $\pi: X \to M$ over $M$, we may consider its \emph{relative Lie algebra} $\Lie_M X$; this is a holomorphic vector bundle over $M$ whose fiber at each $p\in M$ is the Lie algebra $\Lie X_p$ of the complex torus $X_p \defeq \pi^{-1}(p)$. Moreover, there exists a canonical morphism of relative complex Lie groups over $M$
\begin{align*}
\exp : {\Lie}_MX \to X
\end{align*}
restricting to the usual exponential map of complex Lie groups at each fiber.

\begin{lemma} \label{uniftorus}
Let $\pi: X \to M$ be a complex torus over $M$. Then $\exp : \Lie_MX \to X$ is a surjective submersion, and the sheaf of sections of the relative complex Lie group $\ker (\exp)$ over $M$ is canonically isomorphic to 
\begin{align*}
R_1\pi_*\ZZ_X  \defeq (R^1\pi_* \ZZ_X)^{\vee}\text{.}
\end{align*}    
\end{lemma}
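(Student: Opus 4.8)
The plan is to prove Lemma \ref{uniftorus} fiberwise and then to promote the fiberwise statements to the relative setting, using the properness of $\pi$ and the local structure of $X$ near the zero section.

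First I would establish that $\exp : \Lie_M X \to X$ is a submersion. Since both $\Lie_M X$ and $X$ are complex manifolds of the same dimension over $M$ (namely $\dim M + g$, where $g$ is the fiber dimension) and $\exp$ is a morphism over $M$, it suffices to check that $\exp$ is a local biholomorphism near the zero section. This is the relative version of the classical fact that $\exp$ is a local isomorphism at $0$: the differential of $\exp$ at the zero section is the identity on $\Lie_M X$, because $\Lie_M X$ is, canonically, its own tangent space along the zero section and the exponential of a complex Lie group has derivative the identity at $0$. By the holomorphic inverse function theorem (applied on a neighborhood of the zero section, which is proper over $M$, so one can shrink uniformly in the base locally), $\exp$ is a biholomorphism from a neighborhood $V$ of the zero section in $\Lie_M X$ onto a neighborhood $U$ of the zero section in $X$. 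Translating $U$ around in each fiber by the group law — and using that $\exp$ is a group homomorphism fiberwise, so $\exp(u + u') = \exp(u)\cdot\exp(u')$ — shows $\exp$ is a submersion everywhere. Surjectivity then follows since each fiber $X_p$ is a compact connected complex Lie group, hence equal to the image of its own exponential (the image is an open subgroup, hence all of the connected group $X_p$); so the image of $\exp$, being open and containing every fiber, is all of $X$.

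Next I would identify $\ker(\exp)$. Fiberwise, $\ker(\exp)\cap \Lie X_p$ is the period lattice $\Lambda_p \subset \Lie X_p \cong \CC^g$, a discrete subgroup of rank $2g$, and $X_p \cong \Lie X_p / \Lambda_p$. Since $\exp$ is a submersion with discrete fibers over the zero section, $\ker(\exp)$ is a closed complex submanifold of $\Lie_M X$ that is étale over $M$; its sheaf of holomorphic sections is therefore a local system of free $\ZZ$-modules of rank $2g$ — call it $\underline{\Lambda}$ — and one has a short exact sequence of sheaves of abelian groups $0 \to \underline{\Lambda} \to \mathcal{L} \to \mathcal{O}(X) \to 0$ in an appropriate sense, or more usefully, fiberwise $H_1(X_p(\CC),\ZZ) \cong \Lambda_p$ canonically (the loop $t\mapsto \exp(t\lambda)$, $t\in[0,1]$, attached to $\lambda\in\Lambda_p$, gives the isomorphism). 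The point is to make this identification work in families: the long exact sequence for the fibration, or simply the covering space $E(R_1\pi_*\ZZ_X)$, matches up with $E(\underline{\Lambda})$ because both have the same monodromy — the monodromy of the family of lattices $\Lambda_p$ inside the family of universal covers $\Lie_M X$ equals the monodromy on $H_1$ of the fibers. Concretely, I would construct the comparison map $\underline{\Lambda} \to R_1\pi_*\ZZ_X$ by sending a local section $\lambda$ of $\ker(\exp)$ to the homology class of the loop it generates in the corresponding fiber; this is evidently a morphism of local systems, it is an isomorphism on every stalk by the classical theory of a single complex torus, hence an isomorphism of sheaves.

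I expect the main obstacle to be the honest verification that $\exp$ is a submersion uniformly over the base — that is, producing the neighborhood $V$ of the zero section on which $\exp$ is a biholomorphism, working locally on $M$ — since one must check that the inverse function theorem can be applied along the whole (non-compact, but proper-over-$M$) zero section at once. This is handled by covering the zero section over a relatively compact open $M' \subset M$ by finitely many charts on which $\exp$ is invertible, and using properness to get a tubular neighborhood of uniform "size"; the group translation trick then removes any remaining non-uniformity away from the zero section. The identification of $\ker(\exp)$ with $R_1\pi_*\ZZ_X$ is then essentially formal given the classical fiberwise statement, the only care being to phrase everything in terms of sheaves and étalé spaces so that monodromy is tracked correctly, which is why the lemma is stated in the language of local systems in the first place.
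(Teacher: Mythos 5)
Your proposal is correct and matches the paper's intent: the paper itself gives no argument beyond the remark that the lemma ``follows from the classical case where $M$ is a point via a fiber-by-fiber consideration,'' and your proof is exactly the fiberwise argument being alluded to (local biholomorphism near the zero section, group translation, surjectivity from connectedness of fibers, $\ker(\exp)$ étale over $M$ of degree $\ZZ^{2g}$, identification with $R_1\pi_*\ZZ_X$ via the loop $t\mapsto \exp(t\lambda)$).

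One remark on your flagged ``main obstacle'': the uniformity concern is a red herring. To prove $\exp$ is a submersion you do not need an honest tubular neighborhood of the zero section on which $\exp$ is a biholomorphism onto its image --- submersivity is a pointwise condition. Since $\exp$ is a morphism over $M$ and its restriction to each fiber $\Lie X_p \to X_p$ is a local biholomorphism (this is the classical statement for a single complex torus), the differential $d\exp$ at any point has surjective vertical component and induces the identity on $T_pM$, hence is surjective. The same observation gives directly that $\ker(\exp) \to M$ is étale, without needing to produce a global neighborhood $V$ with $\exp|_V$ injective. So the translation trick and the ``shrink uniformly over relatively compact $M'$'' step can be dropped, and the proof becomes entirely local and pointwise, which is cleaner and closer in spirit to the paper's one-line indication.
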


This follows from the classical case where $M$ is a point via a fiber-by-fiber consideration (cf. \cite{mumford70} I.1). Note that $R_1\pi_*\ZZ_X$ is a local system of free abelian groups over $M$ whose fiber at $p\in M$ is given by the first singular homology group $H_1(X_p,\ZZ)$.

\begin{defi}
Let $V$ be a holomorphic vector bundle of rank $g$ over $M$. By a \emph{lattice} in $V$, we mean a subsheaf of abelian groups $L$ of $\mathcal{O}_M(V)$ such that
\begin{enumerate}
   \item $L$ is a local system of free abelian groups of rank $2g$,
   \item for each $p\in M$, the quotient $V_p/L_p$ is compact.
\end{enumerate}
\end{defi}

It follows from Lemma \ref{uniftorus} that, for any complex torus $\pi: X \to M$ of relative dimension $g$, $R_1\pi_*\ZZ_X$ may be canonically identified to a lattice in $\Lie_M X$.

Conversely, if $V$ is a holomorphic vector bundle of rank $g$ over $M$ and $L$ is a lattice in $V$, then the étalé space $E(L)$ of $L$ is a relative complex Lie subgroup of $V$ over $M$ and $X\defeq V/E(L)$ is a complex torus over $M$ of relative dimension $g$. Furthermore, the relative Lie algebra $\Lie_MX$ gets canonically identified with $V$ and, under this identification, $E(L)$ is the kernel of the exponential map $\exp: \Lie_MX \to X$.

\begin{obs} \label{remarktorus}
The above reasoning actually proves that the category of complex tori over $M$ of relative dimension $g$ is equivalent to the category of couples $(V,L)$ where $V$ is a holomorphic vector bundle of rank $g$ over $M$ and $L$ is a lattice in $V$; a morphism $(V,L) \to (V',L')$ in this category is given by a morphism of holomorphic vector bundles $\varphi:V \to V'$ such that $\varphi(E(L))\subset E(L')$.
\end{obs}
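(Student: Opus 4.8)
The plan is to exhibit the equivalence through the two evident functors and to verify that they are quasi-inverse; everything will be a formal consequence of Lemma \ref{uniftorus} together with the standard rigidity of morphisms of complex tori. Define $F$ from the category of complex tori over $M$ of relative dimension $g$ to the category of pairs $(V,L)$ by sending $\pi:X\to M$ to $(\Lie_M X,\, R_1\pi_*\ZZ_X)$, where $R_1\pi_*\ZZ_X$ is regarded as a lattice in $\Lie_M X$ through its identification, furnished by Lemma \ref{uniftorus}, with the sheaf of sections of $\ker(\exp)$; a morphism $f:X\to X'$ over $M$ goes to its linearization $\Lie_M(f):\Lie_M X\to\Lie_M X'$ along the identity section, a holomorphic bundle map satisfying $f\circ\exp_X=\exp_{X'}\circ\Lie_M(f)$ and hence carrying $E(R_1\pi_*\ZZ_X)=\ker(\exp_X)$ into $\ker(\exp_{X'})=E(R_1\pi'_*\ZZ_{X'})$. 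Define $G$ in the opposite direction by sending $(V,L)$ to the complex torus $V/E(L)$ --- of relative dimension $g$ by the discussion preceding the statement --- and a morphism $\varphi:V\to V'$ with $\varphi(E(L))\subset E(L')$ to the induced map $V/E(L)\to V'/E(L')$, which is a morphism of relative complex Lie groups and is holomorphic because $V\to V/E(L)$ is a holomorphic covering.

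Next I check that $F$ and $G$ are quasi-inverse. Starting from $(V,L)$, the discussion before the statement provides a canonical isomorphism $\Lie_M(V/E(L))\cong V$ under which $\ker(\exp)$ corresponds to $E(L)$, so $F(G(V,L))\cong(V,L)$; one checks this is natural in $(V,L)$ and that, combined with the defining relation $f\circ\exp_X=\exp_{X'}\circ\Lie_M(f)$, it identifies $\Lie_M(G(\varphi))$ with $\varphi$, so that $F\circ G\cong\id$ on objects and on morphisms. Starting from a complex torus $X$, Lemma \ref{uniftorus} says $\exp$ descends to an isomorphism $\Lie_M X/\ker(\exp)\xrightarrow{\sim}X$; the naturality of $\exp$ in $X$ (again the displayed relation) upgrades these to a natural isomorphism $G\circ F\cong\id$. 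These two families of natural isomorphisms exhibit the asserted equivalence of categories. In particular $F$ is fully faithful, which in view of the above amounts to the statement that a morphism of complex tori over $M$ is determined by, and can be reconstructed from, its linearization along the identity section.

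I expect this last point --- the rigidity of morphisms of relative complex tori --- to be the one step demanding genuine, if routine, work. The argument is local on $M$: over a small enough open $U$ on which $V|_U$ and $V'|_U$ are trivial, the exponential maps are holomorphic covering maps of the total spaces, and after shrinking $U$ and a neighbourhood of the zero section one lifts $f$ uniquely to a holomorphic map between neighbourhoods of the zero sections of $V$ and $V'$; being a germ of homomorphism for the additive structures and holomorphic, this germ is the restriction of a unique $\mathcal{O}_U$-linear bundle map, which must be $\Lie_M(f)|_U$. Two morphisms with the same linearization thus coincide near the zero section, hence on all of $V|_U$ since each fibre $V_p\cong\CC^g$ is connected and $\exp_{X'}$ is a covering; this yields faithfulness, while reconstructing $f$ from a prescribed $\varphi$ by descent to the quotient, after gluing the local bundle maps over $M$, yields fullness. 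When $M$ is not connected one argues component by component. The substance of the proof is simply in carrying out these standard verifications relatively rather than fibrewise.
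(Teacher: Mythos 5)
Your proposal is correct and follows the same route the paper gestures at in this Remark: define the functors $X\mapsto(\Lie_M X,\, R_1\pi_*\ZZ_X)$ and $(V,L)\mapsto V/E(L)$, then use Lemma \ref{uniftorus} together with the naturality of $\exp$ to produce quasi-inverse natural isomorphisms. The paper gives no further detail beyond pointing to the preceding paragraph, so your elaboration (including the local rigidity argument showing a morphism of relative tori is determined by its linearization along the identity section) simply makes explicit what the Remark leaves implicit.
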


In what follows, we shall drop the notation $E(L)$ and identify a local system with its étalé space.

\subsection{Riemann forms and principally polarized complex tori}

Let $M$ be a complex manifold and $\pi:X \to M$ be a complex torus over $M$.

\begin{defi} \label{defiriemannform}
A \emph{Riemann form} over $X$ is a $C^{\infty}$ Hermitian metric\footnote{Our convention is that Hermitian forms are anti-linear on the first coordinate and linear on the second.} $H$ on the vector bundle  $\Lie_M X$ over $M$ such that
\begin{align*}
E \defeq \Im H
\end{align*}
takes integral values on $R_1\pi_*\ZZ_X$.
\end{defi}

Observe that $E$ is an alternating $\RR$-bilinear form. We also remark that the Hermitian metric $H$ is completely determined by $E$: for any sections $v$ and $w$ of $\Lie_M X$ we have $H(v,w)= E(v,iw) + iE(v,w)$. In particular, by abuse, we may also say that $E$ is Riemann form over $X$.

\begin{defi}
With the above notations, we say that the Riemann form $E$ is \emph{principal} if the induced morphism of local systems
\begin{align*}
   R_1\pi_*\ZZ_X &\longrightarrow (R_1\pi_*\ZZ_X)^{\vee} \cong R^1\pi_*\ZZ_X\\
           \gamma &\mapsto E( \gamma , \ )
\end{align*}
is an isomorphism.
\end{defi}

\begin{defi}
Let $M$ be a complex manifold. A \emph{principally polarized complex torus} over $M$ of relative dimension $g$ is a couple $(X,E)$, where $X$ is a complex torus over $M$ of relative dimension $g$ and $E$ is a principal Riemann form over $X$. 
\end{defi}

\begin{ex} \label{torus}
Let $g\ge 1$ and consider the Siegel upper half-space
\begin{align*}
\mathbf{H}_{g} \defeq \{\tau \in M_{g\times g}(\CC) \mid \tau = \tau^{\textsf{T}}\text{, } \Im \tau >0\}\text{.}
\end{align*}
If $g=1$, we denote $\mathbf{H} \defeq \mathbf{H}_1$; this is the Poincaré upper half-plane. Let us consider the trivial vector bundle $V\defeq \CC^g \times \mathbf{H}_g$ over $\mathbf{H}_g$ and let $L$ be the subsheaf of $\mathcal{O}_{\mathbf{H}_g}(V)$ given by the image of the morphism of sheaves of abelian groups
\begin{align*}
(\ZZ^g \oplus \ZZ^g)_{\mathbf{H}_g} &\to \mathcal{O}_{\mathbf{H}_g}(V) = \mathcal{O}_{\mathbf{H}_g}^{\oplus g}\\
            (m,n) &\mapsto m+\tau n
\end{align*}
where $m$ and $n$ are considered as column vectors of order $g$. Then $L$ is a lattice in $V$ and we denote by
\begin{align*}
\bfp_g: \mathbf{X}_g  \to \mathbf{H}_g
\end{align*}
the corresponding  complex torus over $\mathbf{H}_g$ of relative dimension $g$ (cf. Remark \ref{remarktorus}). Let $E_g$ be imaginary part of the Hermitian metric over $V$ given by
\begin{align*}
(v,w) \mapsto \overline{v}\transp (\Im \tau)^{-1}w\text{.}
\end{align*}
One may easily verify that $E_g$ takes integral values on $L$ and that $\gamma \mapsto E_g( \gamma ,  \ )$ induces an isomorphism $L \stackrel{\sim}{\to} L^{\vee}$. We thus obtain a principally polarized complex torus $(\mathbf{X}_g,E_g)$ over $\mathbf{H}_g$ of relative dimension $g$.
\end{ex}

\subsection{The category $\mathcal{T}_g$ of principally polarized complex tori of relative dimension $g$} \label{defitg}

 Let $\textsf{Man}_{/\CC}$ denote the category of complex manifolds. We define a category  $\mathcal{T}_g$ fibered in groupoids over $\Man_{/\CC}$ as follows.

\begin{enumerate}
     \item An object of the category $\mathcal{T}_g$ consists in a complex manifold $M$ and a principally polarized complex torus $(X,E)$ over $M$ of relative dimension $g$; we denote such an object by $(X,E)_{/M}$.
     \item Let $(X,E)_{/M}$ and $(X',E')_{/M'}$ be objects of $\mathcal{T}_g$. A morphism
\begin{align*}
F_{/f}:(X',E')_{/M'} \to (X,E)_{/M}
\end{align*}
in $\mathcal{T}_g$ is a cartesian diagram of complex manifolds
$$
  \raisebox{-0.5\height}{\includegraphics{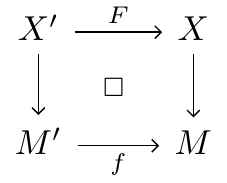}}
$$
preserving the identity sections of the complex tori and such that $E' = f^*E$ under the isomorphism of holomorphic vector bundles $\Lie_{M'}X' \stackrel{\sim}{\to} f^*\Lie_{M}X$ induced by $F$. We may also denote $(X',E') = (X,E)\times_{M}M'$.
   \item The structural functor $\mathcal{T}_g \to \Man_{/\CC}$ sends an object $(X,E)_{/M}$ of $\mathcal{T}_g$ to the complex manifold $M$, and a morphism $F_{/f}$ as above to $f$.
\end{enumerate}

\begin{ex}\label{exactionsp}
We define an action of $\Sp_{2g}(\ZZ)$ on the object $(\mathbf{X}_g,E_g)_{/\mathbf{H}_g}$ of $\mathcal{T}_g$
\begin{align*}
{\Sp}_{2g}(\ZZ) &\longrightarrow {\Aut}_{\mathcal{T}_g}\left((\mathbf{X}_g,E_g)_{/\mathbf{H}_g}\right)\\
        \gamma &\mapsto {F_\gamma}_{/f_{\gamma}}
\end{align*} 
as follows. Recall that an element $\gamma = (A \ B \ ; \ C \ D) \in \Sp_{2g}(\RR)$ acts on $\mathbf{H}_g$ by
\begin{align*}
f_{\gamma} : \mathbf{H}_g &\longrightarrow \mathbf{H}_g\\
          \tau &\mapsto \gamma\cdot \tau \defeq (A\tau + B)(C\tau+D)^{-1}\text{.}
\end{align*}
For $\gamma$ as above, consider the holomorphic map
\begin{align*}
\widetilde{F}_{\gamma}:\CC^g\times \mathbf{H}_g &\to  \CC^g\times \mathbf{H}_g\\
(z,\tau) &\mapsto  ((j(\gamma,\tau)\transp)^{-1}z,\gamma \cdot \tau)           
\end{align*}   
where 
$$
j(\gamma,\tau)\defeq C\tau+D \in {\GL}_g(\CC)\text{.}
$$
 If $\gamma \in \Sp_{2g}(\ZZ)$, then for every $\tau \in \mathbf{H}_g$ we have
\begin{align*}
\widetilde{F}_{\gamma,\tau}(\ZZ^g + \tau \ZZ^g) = \ZZ^g + (\gamma \cdot\tau) \ZZ^g\text{,}
\end{align*}
so that $\widetilde{F}_{\gamma}$ induces a holomorphic map $F_{\gamma} : \mathbf{X}_g \to \mathbf{X}_g$. One easily verifies that
$$
  \raisebox{-0.5\height}{\includegraphics{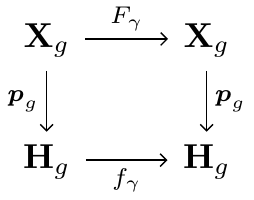}}
$$
is a cartesian diagram of complex manifolds preserving the identity sections and the Riemann forms $E_g$, i.e. it defines a morphism ${F_{\gamma}}_{/f_{\gamma}} : (\mathbf{X}_g,E_g)_{/\mathbf{H}_g} \to (\mathbf{X}_g,E_g)_{/\mathbf{H}_g}$ in $\mathcal{T}_g$. Finally, the formula 
$$
j(\gamma_1\gamma_2,\tau) = j(\gamma_1,\gamma_2\cdot \tau)j(\gamma_2,\tau)
$$
implies that ${F_{\gamma}}_{/f_{\gamma}}$ is in fact an automorphism of $(\mathbf{X}_g,E_g)_{/\mathbf{H}_g}$ in $\mathcal{T}_g$ and that $\gamma \mapsto {F_{\gamma}}_{/f_{\gamma}}$ is a morphism of groups.\footnote{Actually, it follows from Proposition \ref{reprsintsympl} below (see also Remark \ref{actionsp}) that $\gamma \mapsto {F_{\gamma}}_{/f_{\gamma}}$ is an \emph{isomorphism} of groups.}
\end{ex}

\subsection{De Rham cohomology of complex tori} 

Let $M$ be a complex manifold and $\pi : X \to M$ be a complex torus over $M$ of relative dimension $g$.

\subsubsection{}  For any integer $i\ge 0$, we define the $i$th \emph{analytic de Rham cohomology} sheaf of $\mathcal{O}_M$-modules by
\begin{align*}
\mathcal{H}^i_{\dR}(X/M) \defeq \mathbf{R}^i\pi_* \Omega^{\bullet}_{X/M}\text{,}
\end{align*}
where $\Omega^{\bullet}_{X/M}$ is the complex of relative holomorphic differential forms. If $M$ is a point, we denote $\mathcal{H}^i_{\dR}(X) \defeq \mathcal{H}^i_{\dR}(X/M)$.

\begin{obs} \label{cinftyderham}
If $M$ is a point, the analytic de Rham cohomology $\mathcal{H}^i_{\dR}(X)$ is canonically isomorphic to the quotient of the complex vector space of $C^{\infty}$ closed $i$-forms over $X$ with values in $\CC$  by the subspace of exact $i$-forms (cf. \cite{DMOS82} I.1 p. 16). 
\end{obs}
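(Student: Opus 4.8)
The plan is to identify both sides with the sheaf cohomology $H^{i}(X,\CC_{X})$ of the constant sheaf, by realizing the holomorphic and the $C^{\infty}$ de Rham complexes as resolutions of $\CC_{X}$. Observe first that, since $M$ is a point, the definition $\mathcal{H}^{i}_{\dR}(X)=\mathbf{R}^{i}\pi_{*}\Omega^{\bullet}_{X/M}$ unwinds to the hypercohomology $\mathbb{H}^{i}(X,\Omega^{\bullet}_{X})$ of the holomorphic de Rham complex $\Omega^{\bullet}_{X}=(\mathcal{O}_{X}\to\Omega^{1}_{X}\to\cdots\to\Omega^{g}_{X})$; this is insensitive to whether one works in the derived category of $\mathcal{O}_{X}$-modules or of sheaves of $\CC$-vector spaces, since injective $\mathcal{O}_{X}$-modules are flasque.

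First I would invoke the holomorphic Poincaré lemma: on a polydisc every $d$-closed holomorphic form is $d$-exact (an elementary induction, integrating in one coordinate at a time), so the augmentation $\CC_{X}\to\Omega^{\bullet}_{X}$ is a quasi-isomorphism of complexes of sheaves of $\CC$-vector spaces, whence a canonical isomorphism $\mathcal{H}^{i}_{\dR}(X)=\mathbb{H}^{i}(X,\Omega^{\bullet}_{X})\cong H^{i}(X,\CC_{X})$. Next, write $\mathcal{A}^{\bullet}_{X}$ for the complex of sheaves of complex-valued $C^{\infty}$ differential forms on $X$, with the exterior derivative. The smooth Poincaré lemma gives that $\CC_{X}\to\mathcal{A}^{\bullet}_{X}$ is likewise a quasi-isomorphism, while each $\mathcal{A}^{i}_{X}$ is a fine sheaf --- it is a module over the sheaf $\mathcal{A}^{0}_{X}$ of $C^{\infty}$ functions, which admits partitions of unity subordinate to any open cover --- hence $\Gamma(X,-)$-acyclic. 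A bounded complex of $\Gamma(X,-)$-acyclic sheaves computes its own hypercohomology, so $H^{i}(X,\CC_{X})\cong\mathbb{H}^{i}(X,\mathcal{A}^{\bullet}_{X})=H^{i}\big(\Gamma(X,\mathcal{A}^{\bullet}_{X})\big)$, and the right-hand side is exactly the quotient of the space of closed $C^{\infty}$ $i$-forms on $X$ by the exact ones.

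It remains only to match up these identifications. The inclusion $\Omega^{\bullet}_{X}\hookrightarrow\mathcal{A}^{\bullet}_{X}$ is a morphism of complexes of sheaves compatible with the two augmentations from $\CC_{X}$, so the induced map $\mathbb{H}^{i}(X,\Omega^{\bullet}_{X})\to\mathbb{H}^{i}(X,\mathcal{A}^{\bullet}_{X})$ sits in a commutative triangle together with the two quasi-isomorphisms to and from $H^{i}(X,\CC_{X})$; hence it is an isomorphism, and it is plainly functorial. I do not anticipate a genuine obstacle: the analytic ingredients --- the holomorphic and smooth Poincaré lemmas, and the fineness of $\mathcal{A}^{\bullet}_{X}$ --- are entirely classical, and compactness of $X$ plays no role. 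The only thing demanding care is the bookkeeping that makes the comparison isomorphism canonical rather than merely an isomorphism. (For the torus at hand one could instead argue by hand with translation-invariant forms and the Künneth-type identity $H^{\bullet}_{\dR}(X)\cong\Lambda^{\bullet}H^{1}_{\dR}(X)$, but the resolution argument above is cleaner and immediately applies to arbitrary complex manifolds.)
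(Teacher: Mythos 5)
Your argument is correct and is exactly the standard comparison the paper is invoking when it cites \cite{DMOS82} I.1: both the holomorphic and the $C^{\infty}$ de Rham complexes resolve $\CC_{X}$ (holomorphic and smooth Poincaré lemmas), the latter is a fine resolution, and the inclusion of complexes makes the identifications compatible, hence canonical. The paper offers no proof of its own — only the reference — so there is nothing for you to deviate from; your write-up is a complete and correct expansion of that citation.
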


The arguments in \cite{BBM82} 2.5 prove, \emph{mutatis mutandis}, that there is a canonical isomorphism of $\mathcal{O}_M$-modules given by cup product
\begin{align*}
\bigwedge^i \mathcal{H}^1_{\dR}(X/M) \stackrel{\sim}{\to} \mathcal{H}^i_{\dR}(X/M)\text{,}
\end{align*}
and that $\mathcal{H}^1_{\dR}(X/M)$ is (the sheaf of sections of) a holomorphic vector bundle over $M$ of rank $2g$. Moreover, the canonical $\mathcal{O}_M$-morphism $\pi_*\Omega^1_{X/M} \to \mathcal{H}^1_{\dR}(X/M)$ induces an isomorphism of $\pi_*\Omega^1_{X/M}$ onto a rank $g$ subbundle of $\mathcal{H}^1_{\dR}(X/M)$ that we denote by $\mathcal{F}^1(X/M)$.

Analogously, it follows from the arguments of \cite{KO68} that $\mathcal{H}^1_{\dR}(X/M)$ is equipped with a canonical integrable holomorphic connection
\begin{align*}
\nabla : \mathcal{H}^1_{\dR}(X/M) \to \mathcal{H}^1_{\dR}(X/M) \tensor_{\mathcal{O}_{M}}\Omega^1_M \text{,} 
\end{align*}
the \emph{Gauss-Manin connection}.

Furthermore, the formation of $\mathcal{H}^1_{\dR}(X/M)$, $\mathcal{F}^1(X/M)$, and $\nabla$, are compatible with every base change in $M$.



 \subsubsection{}\label{sectioncompisom}

 There is a canonical \emph{comparison isomorphism} of holomorphic vector bundles
\begin{align} \label{compisom}
c: \mathcal{H}om_{\ZZ}(R_1\pi_*\ZZ_X,\mathcal{O}_M) \cong R^1\pi_*\ZZ_X \tensor_{\ZZ} \mathcal{O}_M  \stackrel{\sim}{\longrightarrow} \mathcal{H}^1_{\dR}(X/M)   
\end{align}
identifying the the local system of $\CC$-vector spaces $\mathcal{H}om_{\ZZ}(R_1\pi_*\ZZ_X,\CC_M) \cong R^1\pi_*\CC_X$ with the subsheaf of $\mathcal{H}^1_{\dR}(X/M)$ consisting of horizontal sections for the Gauss-Manin connection  (\cite{deligne70} I Proposition 2.28 and II 7.6-7.7). The induced pairing
\begin{align*}
\mathcal{H}^1_{\dR}(X/M) \otimes_{\ZZ}R_1\pi_*\ZZ_X  &\longrightarrow \mathcal{O}_M\\
       \alpha\tensor \gamma &\mapsto c^{-1}(\alpha)(\gamma)  \eqdef \int_{\gamma}\alpha
\end{align*}
is given at each fiber by ``integration of differential forms'' (cf. Remark \ref{cinftyderham}).
 
\begin{obs}\label{derivint}
In particular, for any section $\gamma$ of $R_1\pi_*\ZZ_X$, any $C^{\infty}$ section $\alpha$ of the vector bundle $\mathcal{H}^1_{\dR}(X/M)$, and any holomorphic vector field $\theta$ on $M$, we have
\begin{align*}
\theta \left( \int_{\gamma}\alpha \right) = \int_{\gamma}\nabla_{\theta}\alpha\text{.}
\end{align*}
\end{obs}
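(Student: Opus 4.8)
The plan is to reduce the identity to the defining property of the comparison isomorphism~(\ref{compisom}): under $c$, the horizontal sections of the Gauss--Manin connection on $\mathcal{H}^1_{\dR}(X/M)$ correspond to the flat sections of the local system $R^1\pi_*\CC_X\cong\mathcal{H}om_{\ZZ}(R_1\pi_*\ZZ_X,\CC_M)$. First I would treat the case where $\alpha$ is a \emph{horizontal} section of $\mathcal{H}^1_{\dR}(X/M)$. Then $c^{-1}(\alpha)$ is a flat section of $\mathcal{H}om_{\ZZ}(R_1\pi_*\ZZ_X,\CC_M)$, and since $\gamma$ is a locally constant section of $R_1\pi_*\ZZ_X$, the function $\int_\gamma\alpha=c^{-1}(\alpha)(\gamma)$ is locally constant on $M$. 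Hence $\theta(\int_\gamma\alpha)=0$, while $\nabla_\theta\alpha=0$ forces $\int_\gamma\nabla_\theta\alpha=0$; so the identity holds in this case.

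For a general $C^\infty$ section $\alpha$ I would argue locally on $M$. Since $\nabla$ is integrable, after shrinking $M$ there is a holomorphic frame $e_1,\dots,e_{2g}$ of $\mathcal{H}^1_{\dR}(X/M)$ consisting of horizontal sections; write $\alpha=\sum_i f_i e_i$ with $f_i\in C^\infty(M)$. Extending the pairing of Section~\ref{sectioncompisom} $C^\infty$-linearly in its first argument gives $\int_\gamma\alpha=\sum_i f_i\int_\gamma e_i$, and the Leibniz rule for $\nabla$ gives $\nabla_\theta\alpha=\sum_i\theta(f_i)\,e_i$. Applying the derivation $\theta$, using that each $\int_\gamma e_i$ is locally constant by the previous step, and re-contracting with $\gamma$ then yields
\begin{align*}
\theta\!\left(\int_\gamma\alpha\right)=\sum_i\theta(f_i)\int_\gamma e_i=\int_\gamma\Big(\sum_i\theta(f_i)\,e_i\Big)=\int_\gamma\nabla_\theta\alpha\text{.}
\end{align*}

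The only steps needing a little care are the compatibility of the $C^\infty$-extended pairing with $c^{-1}$ (immediate from the $\mathcal{O}_M$-linearity of the original pairing) and the existence of a local horizontal holomorphic frame (the classical integrability of the Gauss--Manin connection). I expect no genuine obstacle: the statement is essentially a formal consequence of~(\ref{compisom}) together with the properties of $\nabla$ recalled above. An alternative, entirely equivalent route would be to observe that $\int_\gamma\alpha$ is, by construction, obtained by pairing $\alpha$ with the flat section $c^{-1}$ attaches to it and $\gamma$, and that $\nabla$ is precisely the connection whose flat sections are those in the image of $R^1\pi_*\CC_X$; then ``differentiating under the integral sign'' holds tautologically.
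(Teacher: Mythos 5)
Your proof is correct and is essentially the argument the paper leaves implicit: the paper records this as a remark following immediately from the fact (stated just before) that under the comparison isomorphism the local system $R^1\pi_*\CC_X$ corresponds to the sheaf of $\nabla$-horizontal sections, and that $\int_\gamma\alpha$ is $\mathcal{O}_M$-bilinear in $\alpha$ and $\gamma$. Your two-step reduction (horizontal case gives locally constant periods, then expand a general $C^\infty$ section in a local horizontal holomorphic frame and apply the Leibniz rule) is the standard way to spell this out, and it also makes explicit the one point the remark glosses over, namely how $\nabla_\theta$ is extended to act on $C^\infty$ sections via the horizontal frame. No gap.
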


Recall that $R_1\pi_*\ZZ_X$ may be naturally identified with a lattice in the holomorphic vector bundle $\Lie_{M}X$. Accordingly, the dual bundle $(\Lie_MX)^{\vee}$ gets naturally identified with a holomorphic subbundle of $\mathcal{H}om_{\ZZ}(R_1\pi_*\ZZ_X,\mathcal{O}_M)$.

\begin{lemma} \label{identcomp}
With notations as above, the comparison isomorphism (\ref{compisom}) induces an isomorphism of the holomorphic vector bundle $(\Lie_MX)^{\vee}$ onto $\mathcal{F}^1(X/M)$.
\end{lemma}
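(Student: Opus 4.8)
The plan is to identify both subbundles fiberwise using the classical Hodge-theoretic description of the de Rham cohomology of a complex torus, and then note that the identification is compatible with base change and thus holomorphic in families. Concretely, fix $p \in M$ and write $X_p = \Lie X_p / \Lambda$ with $\Lambda = H_1(X_p,\ZZ)$. First I would recall the classical picture: $H^1_{\dR}(X_p) \cong H^1(X_p,\CC)$ is the space of $\CC$-valued invariant $1$-forms on $\Lie X_p$ modulo exact forms, but since translation-invariant forms are already harmonic, $H^1_{\dR}(X_p)$ is canonically $(\Lie X_p)^{\vee}_{\CC} = \Hom_{\RR}(\Lie X_p, \CC)$, which decomposes as $(\Lie X_p)^{*} \oplus \overline{(\Lie X_p)^{*}}$ where $(\Lie X_p)^{*} = \Hom_{\CC}(\Lie X_p, \CC)$ is the space of $\CC$-linear forms (the holomorphic $1$-forms) and the bar denotes the anti-linear ones. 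Under this, $F^1(X_p/M) = \pi_* \Omega^1_{X/M}|_p = H^0(X_p, \Omega^1_{X_p}) = (\Lie X_p)^{*}$, which is exactly the image of $(\Lie_M X)^{\vee}|_p = (\Lie X_p)^{\vee}$ in $H^1_{\dR}(X_p)$ under the comparison map. So fiberwise the two subbundles of $\mathcal{H}^1_{\dR}(X/M)$ coincide.

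Next I would make this precise at the level of the comparison isomorphism (\ref{compisom}) rather than just abstract fibers. The inclusion $(\Lie_M X)^{\vee} \hookrightarrow \mathcal{H}om_{\ZZ}(R_1\pi_*\ZZ_X, \mathcal{O}_M)$ is the one dual to the inclusion of the lattice $R_1\pi_*\ZZ_X \subset \Lie_M X$, i.e. a $\CC$-linear functional on $\Lie_M X$ restricted to the lattice. On the other hand $\mathcal{F}^1(X/M) = \pi_*\Omega^1_{X/M}$ sits inside $\mathcal{H}^1_{\dR}(X/M)$, and the comparison pairing sends a class $\alpha$ to the functional $\gamma \mapsto \int_\gamma \alpha$ on $R_1\pi_*\ZZ_X$. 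For a holomorphic (relative) $1$-form $\alpha$, pulling back to the universal cover $\Lie_M X$ of $X$ (fiberwise), $\alpha$ becomes $d\ell$ for a unique $\CC$-linear $\ell \in (\Lie_M X)^{\vee}$, and $\int_\gamma \alpha = \ell(\gamma)$ for $\gamma$ in the lattice. Hence the composite $\mathcal{F}^1(X/M) \to \mathcal{H}om_\ZZ(R_1\pi_*\ZZ_X, \mathcal{O}_M)$, $\alpha \mapsto (\gamma \mapsto \int_\gamma\alpha)$, has image exactly $(\Lie_M X)^{\vee}$, and is injective because an invariant holomorphic $1$-form vanishing on the lattice vanishes identically. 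This is precisely the assertion that $c$ carries $\mathcal{F}^1(X/M)$ isomorphically onto $(\Lie_M X)^{\vee}$, equivalently $c^{-1}$ carries $(\Lie_M X)^{\vee}$ onto $\mathcal{F}^1(X/M)$.

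To organize the argument cleanly I would first treat the case $M$ a point — here everything is literally the classical statement about complex tori, e.g. as in \cite{mumford70} or \cite{BBM82} — and then invoke the base-change compatibility already recorded in the excerpt: both $\mathcal{F}^1(X/M)$ (as $\pi_*\Omega^1_{X/M}$) and the comparison isomorphism $c$, as well as the lattice $R_1\pi_*\ZZ_X \subset \Lie_M X$, commute with pullback along any holomorphic map $M' \to M$. Since a morphism of holomorphic vector bundles that is a fiberwise isomorphism is an isomorphism, it suffices to check that the composite $\mathcal{F}^1(X/M) \hookrightarrow \mathcal{H}^1_{\dR}(X/M) \xrightarrow{c^{-1}} \mathcal{H}om_\ZZ(R_1\pi_*\ZZ_X,\mathcal{O}_M)$ lands in the subbundle $(\Lie_M X)^{\vee}$ — a closed condition that can be verified after passing to each fiber — and that the resulting map $\mathcal{F}^1(X/M) \to (\Lie_M X)^{\vee}$ is a fiberwise isomorphism, which is the pointwise statement above.

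The only genuine subtlety, and the step I expect to require the most care, is matching conventions: one must check that the "$\int_\gamma$" pairing coming from the explicit comparison isomorphism of \ref{compisom} (built from $R^1\pi_*\ZZ_X \otimes \mathcal{O}_M \xrightarrow{\sim} \mathcal{H}^1_{\dR}$ à la Deligne) really does compute, for an invariant holomorphic form $\alpha = d\ell$, the value $\ell(\gamma)$ — i.e. that no spurious constant or sign intervenes and that "holomorphic relative $1$-form" corresponds on the universal cover to "$\CC$-linear functional on $\Lie_M X$", not its conjugate. This is where the sign/linearity conventions fixed in \ref{sectioncompisom} (and the Hodge-theoretic fact that $H^0(X_p,\Omega^1)$ is spanned by the $dz_i$, hence by $\CC$-linear forms) do the work; once that is pinned down the rest is formal.
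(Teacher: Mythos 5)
Your argument is correct and matches the paper's proof in essence: the paper likewise reduces to the fiber case (a single complex torus), identifies $\mathcal{F}^1(X)$ with the $(1,0)$-forms in $C^\infty$ de Rham cohomology, and invokes the classical de Rham isomorphism $H^0(X,\Omega^1)\cong\Hom_{\CC}(\Lie X,\CC)$ (cf. \cite{BL04} Theorem 1.4.1). Your additional unwinding of the integration pairing and the base-change argument just makes explicit what the paper leaves implicit.
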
 

This also follows from a fiber-by-fiber argument: if $M$ is a point, by identifying $\mathcal{H}^1_{\dR}(X)$ with the $C^{\infty}$ de Rham cohomology with values in $\CC$ (Remark \ref{cinftyderham}), the subspace $\mathcal{F}^1(X)$ gets identified with the space of $(1,0)$-forms in $\mathcal{H}^1_{\dR}(X)$, and these correspond to $\Hom_{\CC}(\Lie X, \CC)$ under the de Rham isomorphism (cf. \cite{BL04} Theorem 1.4.1). 

\subsubsection{} If $X$ admits a principal Riemann form  $E$, then, by linearity, we may define a holomorphic symplectic form $\langle \ , \ \rangle_E$ on the holomorphic vector bundle $\mathcal{H}^1_{\dR}(X/M)$ over $M$ (cf. \cite{fonseca16} Appendix A) by
\begin{align*}
\langle E(\gamma, \ ) , E(\delta, \ ) \rangle_E \defeq \frac{1}{2\pi i}E(\gamma,\delta) 
\end{align*}
for any sections $\gamma$ and $\delta$ of $R_1\pi_*\ZZ_X$, where $E(\gamma , \ )$ and $E(\delta , \ )$ are regarded as sections of $\mathcal{H}^1_{\dR}(X/M)$ via the comparison isomorphism (\ref{compisom}).

Since every section of $R^1\pi_*\ZZ_X$ is horizontal for the Gauss-Manin connection $\nabla$ on $\mathcal{H}^1_{\dR}(X/M)$ under the comparison isomorphism (\ref{compisom}), the symplectic form $\langle \ , \ \rangle_{E}$ is compatible with $\nabla$: for every sections $\alpha,\beta$ of $\mathcal{H}^1_{\dR}(X/M)$, and every holomorphic vector field $\theta$ on $M$, we have
\begin{align} \label{compconn}
\theta \langle \alpha,\beta\rangle_E = \langle \nabla_{\theta}\alpha,\beta \rangle_{E} + \langle \alpha,\nabla_{\theta}\beta\rangle_E\text{.}
\end{align}
  
\subsection{Relative uniformization of complex abelian schemes} \label{unifabsch}

Let $U$ be a smooth separated $\CC$-scheme of finite type and $(X,\lambda)$ be a principally polarized abelian scheme over $U$ of relative dimension $g$. Denote by $p: X \to U$ its structural morphism. Then the associated analytic space $U^{\an}$ is a complex manifold, and the analytification $p^{\an}:X^{\an} \to U^{\an}$ of $p$ is a complex torus over $U^{\an}$ of relative dimension $g$.

Since the analytification of the coherent $\mathcal{O}_U$-module $H^1_{\dR}(X/U)$ is canonically isomorphic to $\mathcal{H}^1_{\dR}(X^{\an}/U^{\an})$, the symplectic form $\langle \ , \ \rangle_{\lambda}$ on $H^1_{\dR}(X/U)$ defined in \cite{fonseca16} 2.2 induces a symplectic form $\langle \ , \ \rangle_{\lambda}^{\an}$ on the holomorphic vector bundle $\mathcal{H}^1_{\dR}(X^{\an}/U^{\an})$ over $U^{\an}$.

\begin{lemma} \label{compsympl}
Let $\gamma$ and $\delta$ be sections of $R_1p^{\an}_*\ZZ_{X^{\an}}$, and let $\alpha$ and $\beta$ be sections of $\mathcal{H}^1_{\dR}(X^{\an}/U^{\an})$ such that $\gamma = \langle \ \ , \alpha\rangle^{\an}_{\lambda}$ and $\delta = \langle \ \ , \beta \rangle^{\an}_{\lambda}$ under (the dual of) the comparison isomorphism (\ref{compisom}). Then
\begin{enumerate}
   \item The formula
\begin{align*}
E_{\lambda}(\gamma,\delta) \defeq \frac{1}{2\pi i}\langle \alpha,\beta \rangle^{\an}_{\lambda}
\end{align*}
defines a Riemann form over $X^{\an}$.
    \item The holomorphic symplectic forms $\langle \ , \ \rangle_{E_{\lambda}}$ and $\langle \ , \ \rangle_{\lambda}^{\an}$ over $\mathcal{H}^1_{\dR}(X^{\an}/U^{\an})$ coincide.
\end{enumerate} 
\end{lemma}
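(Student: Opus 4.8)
The plan is to reduce everything to the case of a single abelian variety, where assertion (1) becomes the classical Riemann relations and assertion (2) follows formally from the definitions. First I would note that the three things to check --- integrality of the $\CC$-valued form $E_{\lambda}$ on the local system $R_1p^{\an}_{*}\ZZ_{X^{\an}}$, positive-definiteness of the associated $C^{\infty}$ Hermitian form on $\Lie_{U^{\an}}X^{\an}$, and equality of the two holomorphic symplectic forms on $\mathcal{H}^1_{\dR}(X^{\an}/U^{\an})$ --- may all be tested after restriction along an arbitrary complex point $u:\Spec\CC\to U$: for the first because $E_{\lambda}$ is locally constant on $R_1p^{\an}_{*}\ZZ_{X^{\an}}$ (the Gauss--Manin connection being compatible with $\langle\ ,\ \rangle^{\an}_{\lambda}$, by the analogue of (\ref{compconn})), and for the other two by looking at fibres of holomorphic vector bundles over the reduced manifold $U^{\an}$. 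Since the formations of $H^1_{\dR}(X/U)$, of $\langle\ ,\ \rangle_{\lambda}$, of the comparison isomorphism (\ref{compisom}), and of the Gauss--Manin connection all commute with base change, this reduces us to $U=\Spec\CC$, so that $X$ is a principally polarized abelian variety over $\CC$, say $\lambda=\phi_{L}$ for an ample line bundle $L$ with $h^{0}(X,L)=1$.

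The heart of the matter is to identify $\langle\ ,\ \rangle^{\an}_{\lambda}$, transported to singular homology via (\ref{compisom}), with the classical polarization form of $L$. Recall (\cite{fonseca16} 2.2) that $\langle\ ,\ \rangle_{\lambda}$ is defined functorially out of $\lambda$ and the canonical autoduality $H^1_{\dR}(X^{\vee}/\CC)\cong H^1_{\dR}(X/\CC)^{\vee}$, the latter induced by the de Rham first Chern class of the Poincaré bundle. Running the same recipe with Betti cohomology produces the alternating, integral, unimodular form $E_{L}$ on $H_{1}(X(\CC),\ZZ)$ cut out by $c_{1}(L)\in H^{2}(X(\CC),\ZZ)$ (unimodular because $\lambda$ is principal). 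Since the comparison isomorphism is compatible with pull-back along morphisms of abelian varieties and with cup products, and since the de Rham and Betti first Chern classes of a line bundle differ exactly by the normalizing factor $2\pi i$, one deduces that (\ref{compisom}) carries $\langle\ ,\ \rangle^{\an}_{\lambda}$ onto $2\pi i\cdot E_{L}$; equivalently, writing $\alpha_{\gamma}$ for the class with $\gamma=\langle\ ,\alpha_{\gamma}\rangle^{\an}_{\lambda}$, one has $E_{\lambda}(\gamma,\delta)=\tfrac{1}{2\pi i}\langle\alpha_{\gamma},\alpha_{\delta}\rangle^{\an}_{\lambda}=E_{L}(\gamma,\delta)$ for all $\gamma,\delta\in H_{1}(X(\CC),\ZZ)$.

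Granting this, assertion (1) is exactly the content of the Riemann relations for the ample bundle $L$: $E_{L}$ is $\ZZ$-valued on $H_{1}(X(\CC),\ZZ)$, satisfies $E_{L}(iv,iw)=E_{L}(v,w)$, and has $E_{L}(v,iv)>0$ for $v\neq 0$, so that $H(v,w)\defeq E_{L}(v,iw)+iE_{L}(v,w)$ is a positive-definite Hermitian metric on $\Lie X^{\an}$ with $\Im H=E_{L}=E_{\lambda}$ (see \cite{BL04} or \cite{mumford70}); one further records that $E_{\lambda}$ is \emph{principal}. Assertion (2) is then a formal unwinding: the hypothesis $\gamma=\langle\ ,\alpha\rangle^{\an}_{\lambda}$ under the dual of (\ref{compisom}) means $\int_{\gamma}\beta'=\langle\beta',\alpha\rangle^{\an}_{\lambda}$ for every section $\beta'$ of $\mathcal{H}^1_{\dR}(X^{\an}/U^{\an})$; applying this with $\beta'=E_{\lambda}(\gamma,\cdot)$ --- which is a section of $R^{1}p^{\an}_{*}\ZZ_{X^{\an}}$ by (1), viewed inside $\mathcal{H}^1_{\dR}$ via (\ref{compisom}) --- and combining the tautology $\int_{\delta}E_{\lambda}(\gamma,\cdot)=E_{\lambda}(\gamma,\delta)$ with $E_{\lambda}(\gamma,\delta)=\tfrac{1}{2\pi i}\langle\alpha,\beta\rangle^{\an}_{\lambda}$ and the non-degeneracy of $\langle\ ,\ \rangle^{\an}_{\lambda}$, one obtains $E_{\lambda}(\gamma,\cdot)=\tfrac{1}{2\pi i}\alpha$ and similarly $E_{\lambda}(\delta,\cdot)=\tfrac{1}{2\pi i}\beta$ in $\mathcal{H}^1_{\dR}(X^{\an}/U^{\an})$. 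As the classes $E_{\lambda}(\gamma,\cdot)$ span $\mathcal{H}^1_{\dR}(X^{\an}/U^{\an})$ over $\mathcal{O}_{U^{\an}}$, it is enough to compare the two symplectic forms on such classes, and there $\langle E_{\lambda}(\gamma,\cdot),E_{\lambda}(\delta,\cdot)\rangle^{\an}_{\lambda}=\tfrac{1}{(2\pi i)^{2}}\langle\alpha,\beta\rangle^{\an}_{\lambda}=\tfrac{1}{2\pi i}E_{\lambda}(\gamma,\delta)$, which is exactly the definition of $\langle E_{\lambda}(\gamma,\cdot),E_{\lambda}(\delta,\cdot)\rangle_{E_{\lambda}}$.

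The step I expect to be the main obstacle is the normalization bookkeeping in the second paragraph: verifying, with the precise sign conventions for $\langle\ ,\ \rangle_{\lambda}$ in \cite{fonseca16} 2.2 and for the comparison isomorphism, that (\ref{compisom}) sends $\langle\ ,\ \rangle_{\lambda}$ to $2\pi i\cdot E_{L}$ and not to $E_{L}$ or to $(2\pi i)^{-1}E_{L}$, and that the form obtained is the one attached to the \emph{ample} bundle $L$ (so the Hermitian form is positive, not negative, definite). Once this single factor of $2\pi i$ and its sign are pinned down, the positivity in (1) is classical and (2) is purely formal; one could also bypass the explicit reduction to a point by quoting the fibrewise classical statements directly.
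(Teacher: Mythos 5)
Your proposal is correct and follows essentially the same route as the paper: reduce to $U=\Spec\CC$ by base-change compatibility, identify the symplectic form transported via the comparison isomorphism with $2\pi i$ times the classical integral polarization form of an ample line bundle (which the paper pins down via the commutative square relating $c_{1,\text{top}}$ and $c_{1,\dR}$, i.e.\ exactly the ``normalization bookkeeping'' you flag), invoke classical facts about ample bundles for integrality, unimodularity and positivity, and obtain (2) by the same formal computation with $E_{\lambda}(\gamma,\cdot)=\tfrac{1}{2\pi i}\alpha$. The only substantive difference is presentational: you spell out the reduction to a fibre more carefully and defer the sign/normalization check, whereas the paper handles that check explicitly via the cited diagram from \cite{deligne71} and the identification $E_{\lambda}=-c_{1,\text{top}}(\mathcal{L})$, without detailing why the reduction to a point suffices.
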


\begin{proof}
We can assume $U=\Spec \CC$, so that $(X,\lambda)$ is a principally polarized complex abelian variety.  

Recall from \cite{fonseca16} 2.2 that we have constructed  an alternating bilinear form $Q_{\lambda}$ on $H^1_{\dR}(X/\CC)^{\vee}$, and that the bilinear form $\langle \ , \ \rangle_{\lambda}$ over $H^1_{\dR}(X/\CC)$ is obtained from $Q_{\lambda}$ by duality. Therefore, to prove (1), it is sufficient to prove that, under the identification of $H_1(X^{\an},\ZZ)$ with an abelian subgroup of $H^1_{\dR}(X/\CC)^{\vee}$ via (the dual of) the comparison isomorphism (\ref{compisom}), for any elements $\gamma$ and $\delta$ of $H_1(X^{\an},\ZZ)$, 
\begin{align*}
E_{\lambda}(\gamma,\delta) \defeq \frac{1}{2\pi i}  Q_{\lambda}(\gamma,\delta)
\end{align*}
is in $\ZZ$, and that the induced morphism
\begin{align*}\tag{$*$}
H_1(X^{\an},\ZZ) &\to  \Hom(H_1(X^{\an},\ZZ),\ZZ)\\
\gamma &\mapsto E_{\lambda}( \gamma , \ )
\end{align*}
is an isomorphism of abelian groups.

Note that, with this definition, (2) is automatic, since for any $\gamma,\delta \in H_1(X^{\an},\ZZ)$ we have
\begin{align*}
\langle E_{\lambda}(\gamma, \ ), E_{\lambda}(\delta, \ )\rangle_{E_{\lambda}} &= \frac{1}{2\pi i}E_{\lambda}(\gamma,\delta) = \frac{1}{(2\pi i)^2}Q_{\lambda}(\gamma,\delta) = \frac{1}{(2\pi i)^2}\langle Q_{\lambda}(\gamma, \ ),Q_{\lambda}(\delta, \ ) \rangle^{\an}_{\lambda}\\
             &= \langle \frac{1}{2\pi i}Q_{\lambda}(\gamma, \ ), \frac{1}{2\pi i}Q_{\lambda}(\delta, \ )\rangle^{\an}_{\lambda} = \langle E_{\lambda}(\gamma, \ ),E_{\lambda}(\delta, \ )\rangle^{\an}_{\lambda}\text{.}
\end{align*}
where we identified the vector space $H^1_{\dR}(X/\CC)$ with $\mathcal{H}^1_{\dR}(X^{\an})$ via the canonical analytification isomorphism. 

Now, the topological Chern class $c_{1,\text{top}}:\Pic(X) \longrightarrow H^2(X^{\an},\ZZ)$, defined via the exponential sequence
\begin{align*}
0 \longrightarrow \ZZ_{X^{\an}} \longrightarrow \mathcal{O}_{X^{\an}} &\longrightarrow \mathcal{O}_{X^{\an}}^{\times} \longrightarrow 0\\
 f&\mapsto \exp(2\pi i f)
\end{align*}
and the de Rham Chern class $c_{1,\text{dR}} : \Pic(X) \to H^2_{\dR}(X/\CC)$ (cf. \cite{fonseca16} 2.2) are related by the following commutative diagram (cf. \cite{deligne71} 2.2.5.2)
$$
  \raisebox{-0.5\height}{\includegraphics{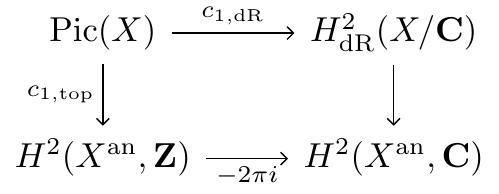}}
$$
where the arrow $H^2_{\dR}(X/\CC) \longrightarrow H^2(X^{\an},\CC) \cong \Hom(H_2(X^{\an},\ZZ),\CC)$ is given by the comparison isomorphism.

If $\mathcal{L}$ is an ample line bundle on $X$ inducing $\lambda$, then $Q_{\lambda} = c_{1,\dR}(\mathcal{L})$ under the identification $H^2_{\dR}(X/\CC)$ with the vector space of alternating bilinear forms on $H^1_{\dR}(X/\CC)^{\vee}$ (cf. \cite{fonseca16} proof of Lemma 2.1). By the commutativity of the above diagram, we see that $E_{\lambda} = -c_{1,\text{top}}(\mathcal{L})$ under the identification of $H^2(X^{\an},\ZZ)$ with the module of alternating (integral) bilinear forms on $H_1(X^{\an},\ZZ)$. This proves that $E_{\lambda}$ takes integral values.

To prove that  $(*)$ is an isomorphism, we simply use the fact that $\lambda^{\an}$ is an isomorphism of $X^{\an}$ onto its dual torus, hence the determinant of the bilinear form on $H_1(X^{\an},\ZZ)$ induced by $c_{1,\text{top}}(\mathcal{L})$ is 1 (cf. \cite{BL04} 2.4.9).
\end{proof}

Thus, for any smooth separated $\CC$-scheme of finite type $U$ and any principally polarized abelian scheme $(X,\lambda)$ over $U$ of relative dimension $g$, the above construction gives a principally polarized complex torus $(X^{\an},E_{\lambda})$ over $U^{\an}$ of relative dimension $g$.

Recall from \cite{fonseca16} 3.1 that we denote by $\mathcal{A}_{g,\CC}$ the moduli stack over $\CC$ of principally polarized abelian schemes of relative dimension $g$ over $\CC$-schemes. Let $\SmVar_{/\CC}$ be the full subcategory of $\Sch_{/\CC}$ consisting of smooth separated $\CC$-schemes of finite type, and $\mathcal{A}_{g,\CC}^{\text{sm}}$ be the full subcategory of $\mathcal{A}_{g,\CC}$ consisting of objects $(X,\lambda)_{/U}$ of $\mathcal{A}_{g,\CC}$ such that $U$ is an object of $\SmVar_{/\CC}$. 

We can summarize this paragraph by remarking that we have constructed a ``relative uniformization functor'' $\mathcal{A}_{g,\CC}^{\text{sm}} \to \mathcal{T}_g$ making the diagram
$$
  \raisebox{-0.5\height}{\includegraphics{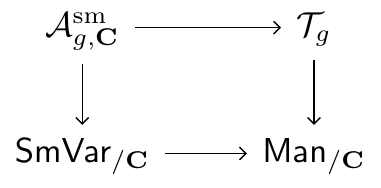}}
$$
(strictly) commutative, where  $\SmVar_{/\CC}  \to \Man_{/\CC}$ is the classical analytification functor $U\mapsto U^{\an}$.

\begin{obs} \label{algebraization}
One can prove that the above diagram is ``cartesian'' in the sense that it induces an equivalence of categories between $\mathcal{A}_{g,\CC}^{\text{sm}}$ and the full subcategory of $\mathcal{T}_g$ formed by the objects lying above the essential image of the analytification functor $\SmVar_{/\CC} \to \Man_{/\CC}$  (cf. \cite{deligne71} Rappel 4.4.3 and \cite{borel72} Theorem 3.10). In particular, for any object $U$ of $\SmVar_{/\CC}$ and any principally polarized complex torus $(X',E)$ over $U^{\an}$ of relative dimension $g$, there exists up to isomorphism a unique principally polarized abelian scheme $(X,\lambda)$ over $U$ of relative dimension $g$ such that $(X',E)_{/U^{\an}}$ is isomorphic to $(X^{\an},E_{\lambda})_{/U^{\an}}$ in $\mathcal{T}_g(U^{\an})$. In this paper, we shall only need this algebraization result when  $U=\Spec \CC$, which is classical (cf. \cite{mumford70} Corollary p. 35).
\end{obs}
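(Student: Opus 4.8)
The plan is to prove the claim as an equivalence of fibred categories over $\Man_{/\CC}$ by separately establishing \emph{full faithfulness} and \emph{essential surjectivity onto the indicated subcategory}. Since the relative uniformization functor is cartesian --- all the operations involved (relative de Rham cohomology, the Hodge subbundle $\mathcal{F}^1$, the Riemann form $E_{\lambda}$, analytification) commute with base change --- and both $\mathcal{A}_{g,\CC}^{\text{sm}}$ and $\mathcal{T}_g$ are fibred in groupoids, it suffices to argue fibrewise, i.e.\ over a fixed object $U$ of $\SmVar_{/\CC}$.

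\emph{Full faithfulness.} Given principally polarized abelian schemes $(X,\lambda)$ and $(Y,\mu)$ over $U$, I would first invoke relative GAGA for morphisms of proper $U$-schemes (cf.\ \cite{deligne71}): analytification induces a bijection
\[
\Hom_U(X,Y) \stackrel{\sim}{\to} \Hom_{U^{\an}}(X^{\an},Y^{\an})\text{.}
\]
Applying this to the structure morphisms of the group laws, homomorphisms of abelian schemes over $U$ correspond to homomorphisms of relative complex tori over $U^{\an}$, and hence isomorphisms to isomorphisms. It then remains to match the polarization data; here Lemma \ref{compsympl} identifies $E_{\lambda}$ with the Riemann form determined by $\lambda$ through the comparison isomorphism (and conversely lets one recover $\lambda$ from $E_{\lambda}$), so that an analytic isomorphism respects $E_{\lambda}$ and $E_{\mu}$ --- i.e.\ is a morphism in $\mathcal{T}_g$ --- if and only if the corresponding algebraic isomorphism respects $\lambda$ and $\mu$.

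\emph{Essential surjectivity.} Let $(X',E)$ be a principally polarized complex torus over $U^{\an}$, with structure morphism $p'$, and fix $n\ge 3$. The $\Sp_{2g}(\ZZ/n\ZZ)$-torsor of symplectic trivializations of the $n$-torsion $X'[n]$ (with its Weil pairing coming from $E$) is a finite \'etale cover of $U^{\an}$, hence, by the Riemann existence theorem, the analytification $U'^{\an}$ of a unique smooth separated $\CC$-scheme $U'$ finite \'etale over $U$; over $U'$ the $n$-torsion carries a tautological symplectic level-$n$ structure. The pullback of $(X',E)$ to $U'^{\an}$, together with this level structure, is classified by a holomorphic map $U'^{\an}\to \mathcal{A}_{g,n}(\CC)$, where $\mathcal{A}_{g,n}$ denotes the smooth quasi-projective fine moduli $\CC$-scheme of principally polarized abelian schemes of relative dimension $g$ with symplectic level-$n$ structure, so that $\mathcal{A}_{g,n}(\CC)\cong \Gamma(n)\backslash\mathbf{H}_g$ for $\Gamma(n)\subset \Sp_{2g}(\ZZ)$ the principal congruence subgroup (this map being built from the local analytic uniformization of complex tori, the level structure rigidifying the $\Gamma(n)$-ambiguity). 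By Borel's algebraicity theorem (\cite{borel72} Theorem 3.10) this holomorphic map is algebraic, i.e.\ the analytification of a morphism $U'\to \mathcal{A}_{g,n}$; pulling back the universal object yields a principally polarized abelian scheme $(X_1,\lambda_1)$ over $U'$ equipped with an isomorphism $(X_1^{\an},E_{\lambda_1})\cong (X',E)|_{U'^{\an}}$ in $\mathcal{T}_g(U'^{\an})$, the compatibility of analytification with the universal family being a routine point (both sides are fine analytic moduli families, and $E_{\lambda}$ is the form computed in Lemma \ref{compsympl}).

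Finally I would descend $(X_1,\lambda_1)$ along $U'\to U$. Over $U''\defeq U'\times_U U'$ the two pullbacks of $X_1$ are identified, via the two pullbacks of the chosen isomorphism with $X'$, by an analytic isomorphism of principally polarized complex tori, and this descent datum satisfies the cocycle condition over $U'\times_U U'\times_U U'$; by the full faithfulness just proved (applied over $U''$ and the triple fibre product, which again belong to $\SmVar_{/\CC}$) the descent datum is algebraic, and since a principally polarized abelian scheme is projective over its base (through a power of the polarization) \'etale descent is effective. This produces a principally polarized abelian scheme $(X,\lambda)$ over $U$ with $(X^{\an},E_{\lambda})\cong (X',E)$ over $U^{\an}$, and uniqueness up to isomorphism follows from full faithfulness. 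I expect the essential surjectivity step to be the main obstacle: its crux is Borel's algebraicity theorem for holomorphic maps into arithmetic quotients, while the surrounding work is the (somewhat lengthy) bookkeeping that analytification commutes with base change, passage to the dual, the Riemann form, level structures, and the universal family, together with the fact that the \'etale cover trivializing the $n$-torsion can be chosen algebraic.
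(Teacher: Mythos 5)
The paper does not prove this remark; it cites \cite{deligne71} and \cite{borel72} and notes that only the classical case $U=\Spec\CC$ is used in the sequel, so there is no in-text proof to compare against. Your reconstruction --- rigidify by a level-$n$ structure, apply Borel's algebraicity theorem (\cite{borel72} Theorem 3.10) to the classifying map into $A_{g,n,\CC}$, pull back the universal family and descend along the finite \'etale cover via Riemann existence --- is the standard argument and is surely what the cited references intend; the essential surjectivity and the \'etale descent are handled correctly.

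One soft spot: the full faithfulness step should not be attributed to ``relative GAGA for morphisms of proper $U$-schemes''. GAGA requires properness over $\Spec\CC$; here $X$ and $Y$ are proper only over the non-proper base $U$, so not every closed analytic subspace of $(X\times_U Y)^{\an}$ is algebraic, and the bijection on Hom sets is not a consequence of GAGA in that form. What actually works is rigidity of abelian schemes: for $U$ connected with base point $u\in U(\CC)$, both $\Hom_U(X,Y)$ and $\Hom_{U^{\an}}(X^{\an},Y^{\an})$ inject into $\Hom(X_u,Y_u)$, and one compares the two images using the structure of the Hom functor for abelian schemes (representable by a countable union of schemes finite unramified over $U$) together with Riemann existence for the \'etale pieces; the reference \cite{deligne71} is the right one, but the mechanism deserves to be named correctly, especially since you reuse full faithfulness to algebraize the descent datum. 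A final minor caveat is categorical: $\SmVar_{/\CC}\to\Man_{/\CC}$ is not full, so the ``full subcategory'' of $\mathcal{T}_g$ in the remark contains morphisms over non-algebraic maps of bases which cannot lie in the image; the remark's ``equivalence of categories'' should be read as full faithfulness over algebraic base maps plus essential surjectivity onto objects with algebraic base, which is what your argument (modulo the point above) establishes and all that the algebraization statement in the second sentence --- the only part the paper uses --- requires.
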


\section{Analytic moduli spaces of complex abelian varieties with a symplectic-Hodge basis}

In this section we consider some moduli problems of principally polarized complex tori, regarded as functors
\begin{align*}
\mathcal{T}_g^{\opp} \to \Sets
\end{align*}
where $\mathcal{T}_g$ is the category fibered in groupoids over the category of complex manifolds $\Man_{/\CC}$ defined in \ref{defitg}.

Recall that we denote by $B_g$ the smooth quasi-projective scheme over $\ZZ[1/2]$ representing the stack $\mathcal{B}_g\tensor_{\ZZ} \ZZ[1/2]$ (see \cite{fonseca16} Theorem 4.1). We shall prove in particular that the complex manifold $B_g(\CC) = B_{g,\CC}^{\an}$ is a fine moduli space in the analytic category for principally polarized complex abelian varieties of dimension $g$ endowed with a symplectic-Hodge basis.

\subsection{Descent of principally polarized complex tori} 

Let $M$ be a complex manifold and $(X,E)$ be a principally polarized complex torus over $M$ of relative dimension $g$. 

If $M_0$ is another complex manifold and $M \to M_0$ is a holomorphic map, we say that $(X,E)$ \emph{descends} to $M_0$ if there exists a principally polarized complex torus $(X_0,E_0)$ over $M_0$ and a morphism $(X,E)_{/M} \to (X_0,E_0)_{/M_0}$ in $\mathcal{T}_g$.



\begin{lemma} \label{descentlemma}
With the above notations, suppose that there exists a proper and free left action of a discrete group $\Gamma$ on $M$. If the action of $\Gamma$ on $M$ lifts to an action of $\Gamma$ on $(X,E)_{/M}$ in the category $\mathcal{T}_g$, then $(X,E)_{/M}$ descends to a principally polarized complex torus over the quotient $\Gamma \backslash M$. 
\end{lemma}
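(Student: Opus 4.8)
The plan is to perform classical descent along the quotient map $p : M \to \Gamma\backslash M$. Since $\Gamma$ acts properly and freely on the complex manifold $M$, the quotient $M_0 \defeq \Gamma\backslash M$ inherits a unique structure of complex manifold making $p$ a holomorphic covering map, and in particular $p$ is a local biholomorphism. By Remark \ref{remarktorus}, giving the principally polarized complex torus $(X,E)$ over $M$ is the same as giving a holomorphic vector bundle $V$ of rank $g$ over $M$, a lattice $L \subset \mathcal{O}_M(V)$, and a principal Riemann form, i.e.\ a $C^{\infty}$ Hermitian metric $H$ on $V$ whose imaginary part $E = \Im H$ takes integral values on $L$ and induces an isomorphism $L \stackrel{\sim}{\to} L^{\vee}$. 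So it suffices to descend the triple $(V,L,H)$ through $p$.

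First I would unwind what the lifted action provides. An action of $\Gamma$ on $(X,E)_{/M}$ in $\mathcal{T}_g$ is by definition, for each $\gamma \in \Gamma$, a cartesian diagram exhibiting $(X,E)$ as the pullback of itself along the biholomorphism $\gamma : M \to M$, compatibly with composition and preserving identity sections and Riemann forms. Via Remark \ref{remarktorus} and Lemma \ref{identcomp}, this amounts to a family of isomorphisms of holomorphic vector bundles $\psi_{\gamma} : \gamma^* V \stackrel{\sim}{\to} V$ satisfying the cocycle condition $\psi_{\gamma_1\gamma_2} = \psi_{\gamma_1}\circ \gamma_1^*\psi_{\gamma_2}$, carrying $\gamma^* L$ to $L$ and $\gamma^* H$ to $H$. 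Equivalently, $\Gamma$ acts on the total space of $V$ by bundle automorphisms lying over its action on $M$, preserving the étalé space of $L$ and the metric $H$. Because the $\Gamma$-action on $M$ is proper and free and these automorphisms are fibrewise linear isomorphisms (hence the action on $V$ is again proper and free), the quotient $V_0 \defeq \Gamma\backslash V$ is a complex manifold and the induced map $V_0 \to M_0$ is a holomorphic vector bundle of rank $g$ with $p^* V_0 \cong V$; this is just faithfully flat (here: covering-space) descent for vector bundles, and the cocycle condition is exactly the gluing datum. The $\Gamma$-invariant lattice $L$ descends to a subsheaf $L_0$ of $\mathcal{O}_{M_0}(V_0)$: locally on $M_0$, $p$ restricts to a biholomorphism onto an open subset of $M$, over which we transport $L$; the invariance guarantees these local pieces agree on overlaps, and properties (1) and (2) of a lattice are local on the base, hence inherited. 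Likewise the $\Gamma$-invariant Hermitian metric $H$ descends to a $C^{\infty}$ Hermitian metric $H_0$ on $V_0$, its imaginary part $E_0$ descends $E$, integrality on $L_0$ and the isomorphism $L_0 \stackrel{\sim}{\to} L_0^{\vee}$ follow by checking locally where $p$ is a biholomorphism. Applying Remark \ref{remarktorus} in reverse, $(V_0, L_0)$ defines a complex torus $X_0 \defeq V_0/E(L_0)$ over $M_0$ of relative dimension $g$, equipped with the principal Riemann form $E_0$, and by construction $p^*(X_0,E_0) \cong (X,E)$, i.e.\ the canonical morphism $(X,E)_{/M} \to (X_0,E_0)_{/M_0}$ is a morphism in $\mathcal{T}_g$; so $(X,E)$ descends to $\Gamma\backslash M$ as claimed.

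The main obstacle is the bookkeeping in the descent step itself: verifying that the $\Gamma$-action on the total space of $V$ induced by the morphisms in $\mathcal{T}_g$ is again proper and free (so that the quotient is a manifold and $V_0 \to M_0$ is genuinely a bundle), and checking that the three structures $V$, $L$, $H$ descend \emph{compatibly} — i.e.\ that the same cocycle/gluing datum simultaneously trivializes all of them — rather than each in isolation. Concretely one fixes, for each point $m_0 \in M_0$, an open $U_0 \ni m_0$ over which $p$ admits a section (a local homeomorphism onto a sheet); the transition biholomorphisms between two such sheets are given by elements of $\Gamma$, and the isomorphisms $\psi_{\gamma}$ glue $V|_{\text{sheet}}$, $L|_{\text{sheet}}$, $H|_{\text{sheet}}$ across them. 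Once the single gluing datum is in hand, the descent of each structure is routine (it is local on $M_0$, and over each $U_0$ everything is pulled back from $M$ via a biholomorphism), and the compatibility conditions in the definition of a morphism in $\mathcal{T}_g$ — preservation of identity sections and of the Riemann form — hold because they hold upstairs $\Gamma$-equivariantly.
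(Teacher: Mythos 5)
Your proof is correct and follows essentially the same route as the paper's sketch: identify $(X,E)$ with the triple $(V,L,E)$ via Remark \ref{remarktorus}, observe that the lifted $\Gamma$-action on the total space of $V$ is again proper and free because it covers the action on $M$, form the quotient bundle $\Gamma\backslash V$ over $\Gamma\backslash M$, and descend $L$ and $E$ by $\Gamma$-invariance. You supply somewhat more detail than the paper does (which only gives a sketch), but no new idea or different decomposition is involved.
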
 

\begin{proof}[Sketch of the proof]
  Consider $X$ as a pair $(V,L)$, where $V$ is a holomorphic vector bundle over $M$ of rank $g$, and $L$ is a lattice in $V$ (cf. Remark \ref{remarktorus}). Then, to every $\gamma \in \Gamma$ there is associated a holomorphic map $\varphi_{\gamma} :V \to V$ making the diagram
$$
  \raisebox{-0.5\height}{\includegraphics{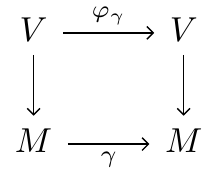}}
$$
commute, and compatible with the vector bundle structures. It follows from the commutativity of this diagram that the action of $\Gamma$ on $V$ is also proper and free. Thus, there exists a unique holomorphic vector bundle structure on the complex manifold $\Gamma \backslash V$ over $\Gamma \backslash M$ such that the canonical holomorphic map $V \to \Gamma \backslash V$ induces a vector bundle isomorphism of $V$ onto the pullback to $M$ of the vector bundle $\Gamma \backslash V$ over $\Gamma \backslash M$.

Analogously, one descends the lattice $L$ to a lattice in $\Gamma \backslash V$ (consider the étale space, for instance), and the bilinear form $E$ on $V$ to a bilinear form on $\Gamma \backslash V$, which is seen to be a principal polarization \emph{a posteriori}.
\end{proof}

\subsection{Integral symplectic bases over complex tori} 

Let $M$ be a complex manifold and $(X,E)$ be a principally polarized complex torus over $M$ of relative dimension $g$. We denote by $\pi : X \to M$ its structural morphism. 

\begin{defi}
An \emph{integral symplectic basis} of $(X,E)_{/M}$ is a trivializing $2g$-uple $(\gamma_1,\ldots,\gamma_g,\delta_1,\ldots,\delta_g)$ of global sections of $R_1\pi_*\ZZ_X$ which is symplectic with respect to the Riemann form $E$, that is,
\begin{align*}
E(\gamma_i,\gamma_j) = E(\delta_i,\delta_j)=0\ \ \text{  and } \ \ E(\gamma_i,\delta_j)=\delta_{ij}
\end{align*}
for any $1\le i\le j\le g$.
\end{defi}

\begin{ex} \label{intsymplbasis}
Consider the principally polarized complex torus $(\mathbf{X}_g,E_g)$ over $\mathbf{H}_g$ of Example \ref{torus} and recall that a section of $R_1{\bfp_g}_*\ZZ_{\mathbf{X}_g}$ is given by a column vector of holomorphic functions on $\mathbf{H}_g$ of the form $\tau \mapsto m + \tau n$, for some sections $(m,n)$ of $(\ZZ^{g}\oplus \ZZ^g)_{\mathbf{H}_g}$.  We can thus define an integral symplectic basis 
\begin{align*}
\beta_g = (\gamma_1,\ldots,\gamma_g,\delta_1,\ldots,\delta_g)
\end{align*}
of $(\mathbf{X}_g,E_g)_{/\mathbf{H}_g}$ by 
\begin{align*}
\gamma_i(\tau) \defeq \textbf{e}_i\ \ \text{ and }\ \  \delta_i(\tau)\defeq \tau \textbf{e}_i
\end{align*}
for any $\tau \in \mathbf{H}_g$.
\end{ex}

Let $(X',E')_{/M'}$ and $(X,E)_{/M}$ be objects of $\mathcal{T}_g$ with structural morphisms $\pi': X' \to M'$ and $\pi : X \to M$. If $F_{/f} : (X',E')_{/M'} \to (X,E)_{/M}$ is a morphism in $\mathcal{T}_g$, then the isomorphism of vector bundles 
\begin{align} \label{isomorphism}
{\Lie}_{M'}X' \stackrel{\sim}{\to} f^*{\Lie}_MX
\end{align}
induced by $F$ identifies the lattice $R_1\pi'_*\ZZ_{X'}$ with $f^*R_1\pi_*\ZZ_X$. If $\gamma$ is a section of $R_1\pi_*\ZZ_X$, we denote by $F^*\gamma$ the section of $R_1\pi'_*\ZZ_{X'}$ mapping to $f^*\gamma$ under (\ref{isomorphism}). As the isomorphism (\ref{isomorphism}) also preserves the corresponding Riemann forms, for any integral symplectic basis $(\gamma_1,\ldots,\gamma_g,\delta_1,\ldots,\delta_g)$ of $(X,E)_{/M}$, the $2g$-uple of global sections of $R_1\pi'_*\ZZ_{X'}$ given by
\begin{align*}
F^*\beta \defeq (F^{*}\gamma_1,\ldots,F^*\gamma_g,F^*\delta_1,\ldots,F^*\delta_g)
\end{align*}
is an integral symplectic basis of $(X',E')_{/M'}$. 

\begin{prop}[cf. \cite{BL04} Proposition 8.1.2] \label{reprsintsympl}
  The functor $\mathcal{T}_g^{\opp} \longrightarrow \mathsf{Set}$ sending an object $(X,E)_{/M}$ of $\mathcal{T}_g$ to the set of integral symplectic bases of $(X,E)_{/M}$ is representable by $(\mathbf{X}_g,E_g)_{/\mathbf{H}_g}$, with universal integral symplectic basis $\beta_g$ defined in Example \ref{intsymplbasis}.
\end{prop}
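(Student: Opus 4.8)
The plan is to exhibit, for every object $(X,E)_{/M}$ of $\mathcal{T}_g$, a natural bijection between the set of integral symplectic bases of $(X,E)_{/M}$ and the set $\Hom_{\Man_{/\CC}}(M,\mathbf{H}_g)$ of holomorphic maps $M\to\mathbf{H}_g$, in such a way that under this bijection the universal basis $\beta_g$ of $(\mathbf{X}_g,E_g)_{/\mathbf{H}_g}$ (Example \ref{intsymplbasis}) corresponds to $\id_{\mathbf{H}_g}$. First I would show how an integral symplectic basis produces a classifying map. Given such a basis $\beta=(\gamma_1,\dots,\gamma_g,\delta_1,\dots,\delta_g)$ of $(X,E)_{/M}$, write $X=(V,L)$ with $V=\Lie_M X$ and $L=R_1\pi_*\ZZ_X\subset\mathcal{O}_M(V)$ (Remark \ref{remarktorus}). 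Lemma \ref{identcomp} identifies $\mathcal{F}^1(X/M)$ with $(\Lie_M X)^\vee=V^\vee$, so passing to duals one obtains from the symplectic form $\langle\ ,\ \rangle_E$ on $\mathcal{H}^1_{\dR}(X/M)$, together with the Hodge filtration, exactly the data needed to see that the sections $\gamma_1,\dots,\gamma_g$ form a local frame of $V$ over $M$ (this uses that $(\gamma_i)$ spans a Lagrangian complementary to $F^1$, which follows from the Riemann relations; here is where positivity of $E$ enters). In this frame the $\delta_j$ are expressed as $\delta_j=\sum_i\tau_{ij}\gamma_i$ for holomorphic functions $\tau_{ij}$ on $M$, and I would check, using $E(\gamma_i,\delta_j)=\delta_{ij}$ and $E(\delta_i,\delta_j)=0$ that $\tau=(\tau_{ij})$ is symmetric, and using the positivity of the Hermitian form $H=\Re H+i E$ (Definition \ref{defiriemannform}) that $\Im\tau>0$. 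Thus $\tau\colon M\to\mathbf{H}_g$ is a holomorphic map.

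Next I would run the construction backwards. Given a holomorphic map $\tau\colon M\to\mathbf{H}_g$, pull back $(\mathbf{X}_g,E_g)_{/\mathbf{H}_g}$ along $\tau$ to get a principally polarized complex torus over $M$ (the functor $\mathcal{T}_g\to\Man_{/\CC}$ is fibered in groupoids, and pullbacks exist by Remark \ref{remarktorus}), and pull back the universal basis $\beta_g$ to obtain an integral symplectic basis $\tau^*\beta_g$ of $\tau^*(\mathbf{X}_g,E_g)_{/M}$, as described in the paragraph preceding the statement. To conclude I must verify that these two constructions are mutually inverse. For the composite starting from a map $\tau$, this is immediate from the definitions of $\mathbf{X}_g$ and $\beta_g$ in Examples \ref{torus} and \ref{intsymplbasis}: in the standard frame $\gamma_i(\tau)=\mathbf{e}_i$ of $V=\CC^g\times\mathbf{H}_g$ one has $\delta_i(\tau)=\tau\mathbf{e}_i$, so the recovered matrix of coordinates is $\tau$ itself. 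For the composite starting from an arbitrary $(X,E)_{/M}$ with basis $\beta$, I must produce a morphism $(X,E)_{/M}\to(\mathbf{X}_g,E_g)_{/\mathbf{H}_g}$ in $\mathcal{T}_g$ covering the map $\tau$ built in the first step and carrying $\beta_g$ to $\beta$: this is the isomorphism of vector bundles $V\xrightarrow{\sim}\tau^*(\CC^g\times\mathbf{H}_g)$ given by the frame $(\gamma_i)$, which by construction sends the lattice generated by $(\gamma_i,\delta_i)$ to the lattice generated by $(\mathbf{e}_i,\tau\mathbf{e}_i)$ and is compatible with the Riemann forms because both are determined by the same matrix $(\Im\tau)^{-1}$. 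Uniqueness of this morphism (needed for the bijection to be canonical, i.e. for the isomorphism of functors) follows because a morphism in $\mathcal{T}_g$ is determined by its effect on the underlying vector bundles, and the frame condition $F^*\beta_g=\beta$ pins down that effect.

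The main obstacle I anticipate is the first step: showing that the homology sections $\gamma_1,\dots,\gamma_g$ of an abstract integral symplectic basis actually form a \emph{holomorphic} frame of $\Lie_M X$, and that the resulting period matrix lands in $\mathbf{H}_g$ rather than merely in the symmetric matrices. In the absolute case ($M$ a point) this is the classical fact that a principal polarization together with a symplectic homology basis yields a point of $\mathbf{H}_g$ via the Riemann bilinear relations, and the reference \cite{BL04} Proposition 8.1.2 handles exactly this; I would cite it and then observe that everything — the frame property, symmetry, positivity — is pointwise on $M$ and varies holomorphically because $\Lie_M X$, its lattice, and the comparison isomorphism all do (the holomorphic dependence of the period matrix is where the compatibility of $\mathcal{F}^1(X/M)$ with base change, from the de Rham cohomology discussion, is used). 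The rest is formal manipulation of the fibered category $\mathcal{T}_g$ and poses no real difficulty.
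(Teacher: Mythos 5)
Your proof follows the same strategy as the paper's: translate an integral symplectic basis $\beta=(\gamma_1,\dots,\gamma_g,\delta_1,\dots,\delta_g)$ into a holomorphic map $\tau\colon M\to\mathbf{H}_g$, using positivity of the Hermitian metric $H$ to get both the frame property of $(\gamma_1,\dots,\gamma_g)$ and $\Im\tau>0$, and the symplectic relations $E(\gamma_i,\delta_j)=\delta_{ij}$, $E(\delta_i,\delta_j)=0$ for symmetry of $\tau$, then lift $\tau$ to the unique classifying morphism in $\mathcal{T}_g$ via the $(V,L)$ description of Remark~\ref{remarktorus}. The one place where you are heavier than the paper is the frame-property step: you route through Lemma~\ref{identcomp}, de Rham cohomology, and Lagrangian complements to $F^1$ (falling back on \cite{BL04} for the pointwise statement), whereas the paper argues directly on $\Lie_M X$ with no cohomological input --- positivity of $H$ and isotropy of the real subbundle $W$ spanned by $\gamma_1,\dots,\gamma_g$ immediately give $W\cap iW=0$, hence $W\oplus iW=\Lie_M X$ by a rank count, which is both the frame property and, after a short matrix computation, $\Im\tau>0$.
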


\begin{proof}
Let $(X,E)_{/M}$ be an object of $\mathcal{T}_g$ with structural morphism $\pi:X \to M$, and let $\beta = (\gamma_1,\ldots,\gamma_g,\delta_1,\ldots,\delta_g)$ be an integral symplectic basis of $(X,E)_{/M}$. Let $W$ be the real subbundle of $\Lie_MX$ generated by $\gamma_1,\ldots,\gamma_g$. Since $E$ is the imaginary part of a Hermitian metric, for any nontrivial section $\gamma$ of $W$, we have $E(\gamma,i\gamma)\neq 0$. As $W$ is isotropic with respect to $E$, it follows that $\Lie_M X = W \oplus iW$ as a real vector bundle. In particular, $\gamma \defeq (\gamma_1,\ldots,\gamma_g)$ trivializes $\Lie_M X$ as a holomorphic vector bundle. Hence, if $\delta \defeq  (\delta_1,\ldots,\delta_g)$, then there exists a unique holomorphic map $\tau : M \to \GL_g(\CC)$ such that $\delta = \gamma \tau$, where $\gamma$ and $\delta$ are regarded as row vectors of global holomorphic sections of $\Lie_M X$. 

Let $A \defeq (E(\gamma_k,i\gamma_l))_{1\le k ,l\le g} \in M_{g\times g}(\CC)$. Since 
$$
\delta = \gamma \Re \tau + i \gamma \Im \tau\text{,}
$$
the matrix of $E$ in the basis $\beta$ is given by
\begin{align*}
\left(\begin{array}{cc}
      0 & A\Im \tau\\
      -(A \Im\tau)\transp & (\Re \tau)\transp A \Im \tau - (\Im \tau)\transp A\transp \Re\tau
      \end{array} \right)\text{.}
\end{align*}
Using that $\beta$ is symplectic with respect to $E$, and that $A$ is symmetric and positive-definite (recall that $E$ is the imaginary part of a Hermitian metric), we conclude that $\tau$ factors through $\mathbf{H}_g\subset \GL_g(\CC)$.

Finally, writing $X$ as the quotient of $\Lie_M X$ by $R_1\pi_*\ZZ_X$, we see that $\tau$ lifts to a unique morphism in $\mathcal{T}_g$
\begin{align*}
F_{/\tau} : (X,E)_{/M} \to (\mathbf{X}_g,E_g)_{/\mathbf{H}_g}
\end{align*}
satisfying $F^*\beta_g = \beta$. 
\end{proof}

\begin{obs}\label{actionsp}
We may define a \emph{left} action of the group $\Sp_{2g}(\ZZ)$ on the functor $\mathcal{T}_g^{\opp} \to \Sets$ of integral symplectic bases, considered in the above proposition, as follows. Let $(X,E)_{/U}$ be an object of $\mathcal{T}_g$ and $\beta$ be an integral symplectic basis of $(X,E)_{/U}$. Let $\gamma =( A \ \ B \ ; \ C \ D) \in \Sp_{2g}(\ZZ)$, and consider $\beta = (\gamma_1, \ldots,\gamma_g,\delta_1,\ldots,\delta_g)$ as a row vector of order $2g$; then we define
\begin{align*}
\gamma \cdot \beta \defeq (\begin{array}{cccccc}\gamma_1 & \cdots & \gamma_g & \delta_1 & \cdots & \delta_g\end{array})
\left(\begin{array}{cc}
D\transp & B\transp \\
C\transp & A\transp
\end{array}\right)
\end{align*}
The morphism
\begin{align*}
{F_{\gamma}}_{/f_{\gamma}}: (\mathbf{X}_g,E_g)_{/\mathbf{H}_g} \to (\mathbf{X}_g,E_g)_{/\mathbf{H}_g}
\end{align*}
defined in Example \ref{exactionsp} is the unique morphism in $\mathcal{T}_g$ satisfying
\begin{align*}
F_{\gamma}^*\beta_g = \gamma \cdot \beta_g\text{.}
\end{align*}
\end{obs}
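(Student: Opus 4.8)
The plan is to establish the two assertions packaged into this remark separately: first that $\gamma\cdot\beta$ is again an integral symplectic basis and that $\beta\mapsto\gamma\cdot\beta$ is a \emph{left} action, and then --- invoking the representability in Proposition~\ref{reprsintsympl} --- to identify the morphism representing $\gamma\cdot\beta_g$ with ${F_\gamma}_{/f_\gamma}$ by checking $F_\gamma^*\beta_g=\gamma\cdot\beta_g$.

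For the first assertion, write $N(\gamma)\defeq\left(\begin{smallmatrix} D\transp & B\transp\\ C\transp & A\transp\end{smallmatrix}\right)$, so that $\gamma\cdot\beta=\beta\,N(\gamma)$ with $\beta$ read as a row vector of sections of $R_1\pi_*\ZZ_X$. Since $\beta$ is symplectic, the Gram matrix of $E$ on the tuple $\beta$ is $J$, hence the Gram matrix on $\gamma\cdot\beta$ is $N(\gamma)\transp J\,N(\gamma)$; it therefore suffices to see $N(\gamma)\in\Sp_{2g}(\ZZ)$. I would make this transparent by observing that $N(\gamma)=\sigma\,\gamma\transp\sigma$, where $\sigma\defeq\left(\begin{smallmatrix}0 & \mathbf{1}_g\\ \mathbf{1}_g & 0\end{smallmatrix}\right)$ satisfies $\sigma^2=\mathbf{1}_{2g}$ and $\sigma J\sigma=-J$: then $N(\gamma)\transp J\,N(\gamma)=\sigma\gamma(\sigma J\sigma)\gamma\transp\sigma=-\sigma(\gamma J\gamma\transp)\sigma=-\sigma J\sigma=J$ using $\gamma J\gamma\transp=J$, and by Remark~\ref{eqsympl} this says $N(\gamma)\in\Sp_{2g}$. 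As $N(\gamma)$ is invertible over $\ZZ$, the tuple $\gamma\cdot\beta$ is still trivializing, hence an integral symplectic basis, and the assignment is visibly natural in $(X,E)_{/M}$. Finally $\gamma\mapsto N(\gamma)=\sigma\gamma\transp\sigma$ is a group anti-homomorphism (again by $\sigma^2=\mathbf{1}_{2g}$), so $\beta\mapsto\beta\,N(\gamma)$ defines a left action of $\Sp_{2g}(\ZZ)$ on the functor of integral symplectic bases.

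For the second assertion, Proposition~\ref{reprsintsympl} applied to $(\mathbf{X}_g,E_g)_{/\mathbf{H}_g}$ equipped with the integral symplectic basis $\gamma\cdot\beta_g$ produces a \emph{unique} morphism $(\mathbf{X}_g,E_g)_{/\mathbf{H}_g}\to(\mathbf{X}_g,E_g)_{/\mathbf{H}_g}$ in $\mathcal{T}_g$ pulling $\beta_g$ back to $\gamma\cdot\beta_g$, so everything reduces to checking $F_\gamma^*\beta_g=\gamma\cdot\beta_g$. From the construction in Example~\ref{exactionsp}, the isomorphism $\Lie_{\mathbf{H}_g}\mathbf{X}_g\stackrel{\sim}{\to}f_\gamma^*{\Lie}_{\mathbf{H}_g}\mathbf{X}_g$ induced by $F_\gamma$ is, fiberwise over $\tau$, the linear map $z\mapsto(j(\gamma,\tau)\transp)^{-1}z$ with $j(\gamma,\tau)=C\tau+D$; hence for a section $s$ of $R_1{\bfp_g}_*\ZZ_{\mathbf{X}_g}$ one has $(F_\gamma^*s)(\tau)=j(\gamma,\tau)\transp\,s(\gamma\cdot\tau)$. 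Feeding in $\gamma_i(\sigma)=\textbf{e}_i$ and $\delta_i(\sigma)=\sigma\textbf{e}_i$ from Example~\ref{intsymplbasis} and using $\tau\transp=\tau$, a short computation gives $F_\gamma^*\gamma_i=\sum_j D_{ij}\gamma_j+\sum_j C_{ij}\delta_j$ and $(F_\gamma^*\delta_i)(\tau)=(C\tau+D)\transp(A\tau+B)(C\tau+D)^{-1}\textbf{e}_i$; reading off the columns of $N(\gamma)$ gives $(\gamma\cdot\beta_g)_i=\sum_j D_{ij}\gamma_j+\sum_j C_{ij}\delta_j$ and $(\gamma\cdot\beta_g)_{g+i}(\tau)=(A\tau+B)\transp\textbf{e}_i$, so the two tuples agree \emph{provided} the matrix $(C\tau+D)\transp(A\tau+B)$ is symmetric.

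The crux, and the step I expect to require the most care, is precisely this symmetry --- which is also the place where all three symplectic relations are needed, exactly as in the (symmetry part of the) classical verification that $\gamma\cdot\tau\in\mathbf{H}_g$. Concretely I would expand $(\tau C\transp+D\transp)(A\tau+B)$ into its four terms and, using $A\transp C=C\transp A$, $B\transp D=D\transp B$, $A\transp D-C\transp B=\mathbf{1}_g$ from Remark~\ref{eqsympl}(2) together with $\tau\transp=\tau$, check term by term that the result equals its own transpose. Apart from this, the only real danger is bookkeeping with the various transposes occurring in $\gamma\cdot\beta$ and in $N(\gamma)$, for which the identity $N(\gamma)=\sigma\gamma\transp\sigma$ serves as a convenient safeguard.
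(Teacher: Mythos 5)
Your proof is correct and supplies precisely the verification the paper leaves implicit in this remark: the symplecticity of $N(\gamma)$ and the left-action property both drop out of the identity $N(\gamma)=\sigma\gamma\transp\sigma$, and the key step $F_\gamma^*\delta_i=(\gamma\cdot\beta_g)_{g+i}$ reduces, as you say, to the symmetry of $(C\tau+D)\transp(A\tau+B)$, which indeed follows from the relations in Remark~\ref{eqsympl}(2). One small notational slip: you reuse the letter $\sigma$ both for the exchange matrix $\left(\begin{smallmatrix}0 & \mathbf{1}_g \\ \mathbf{1}_g & 0\end{smallmatrix}\right)$ and, near the end, as the moduli parameter $\tau$ in ``$\gamma_i(\sigma)=\textbf{e}_i$''; this should be fixed in a write-up but does not affect the argument.
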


\subsection{Principal (symplectic) level structures} \label{level}

\subsubsection{} \label{pls}

Let $U$ be a scheme, and $X$ be an abelian scheme over $U$. Recall that, for any integer $n\ge 1$, we may define a natural pairing, the so-called \emph{Weil pairing},
\begin{align*}
X[n] \times X^t[n] \to \mu_{n,U}\text{,}
\end{align*}
where $\mu_{n,U}$ denotes the $U$-group scheme of $n$th roots of unity (cf. \cite{mumford70} IV.20).

Fix an integer $n\ge 1$, and let $\zeta_n\in \CC$ be the $n$th root of unity $e^{\frac{2\pi i}{n}}$. For any scheme $U$ over $\ZZ[1/n,\zeta_n]$, and any principally polarized abelian scheme $(X,\lambda)$ over $U$ of relative dimension $g$, by identifying $X^t[n]$ with $X[n]$ via $\lambda$, and $\mu_{n,U}$ with $(\ZZ/n\ZZ)_U$ via $\zeta_n$, we obtain a pairing
\begin{align*}
e^{\lambda}_n: X[n] \times X[n] \to (\ZZ/n\ZZ)_U\text{.}
\end{align*} 
The formation of $e^{\lambda}_n$ is compatible with every base change in $U$. Moreover, $e^{\lambda}_n$ is skew-symmetric and non-degenerate (cf. \cite{mumford70} IV.23). 

Since, for any integer $n\ge 3$, there exists a fine moduli space $A_{g,1,n}$ over $\ZZ[1/n]$ for principally polarized abelian varieties of dimension $g$ endowed with a full level $n$-structure (see \cite{GIT94} Theorem 7.9, and the following remark; see also \cite{moret-bailly85} Théorème VII.3.2), there also exists a fine moduli space $A_{g,n}$ over $\ZZ[1/n,\zeta_n]$ for principally polarized abelian varieties $(X,\lambda)$ of dimension $g$ endowed with a symplectic basis of $X[n]$ for the pairing $e^{\lambda}_n$ (cf. \cite{FC90} IV.6).  The scheme $A_{g,n}$ is quasi-projective and smooth over $\ZZ[1/n,\zeta_n]$, with connected fibers. In the sequel, we denote the universal principally polarized abelian scheme over $A_{g,n}$ by $(X_{g,n},\lambda_{g,n})$, and the universal symplectic basis of $X_{g,n}[n]$ by $\alpha_{g,n}$.

\subsubsection{}

Let $(X,E)_{/M}$ be an object of $\mathcal{T}_g$ with structural morphism $\pi:X \to M$. For any integer $n\ge 1$, by an \emph{integral symplectic basis modulo $n$} of $(X,E)_{/M}$, we mean a $2g$-uple of global sections of the local system of $\ZZ/n\ZZ$-modules 
\begin{align*}
R_1\pi_*(\ZZ/n\ZZ)_X = R_1\pi_*\ZZ_{X}/nR_1\pi_*\ZZ_{X}
\end{align*}
which is symplectic with respect to the alternating $\ZZ/n\ZZ$-linear form on $R_1\pi_*(\ZZ/n\ZZ)_X$ induced by $E$.

\begin{obs} \label{localift}
Every integral symplectic basis of $(X,E)_{/M}$ induces an integral symplectic basis modulo $n$ of $(X,E)_{/M}$. Conversely, since the natural map ${\Sp}_{2g}(\ZZ) \to {\Sp}_{2g}(\ZZ/n\ZZ)$ is surjective, locally on $M$, every integral symplectic basis modulo $n$ of $(X,E)_{/M}$ can be lifted to an integral symplectic basis of $(X,E)_{/M}$. 
\end{obs}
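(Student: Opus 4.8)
\emph{Proof proposal.} The first assertion requires no real argument: the reduction map $R_1\pi_*\ZZ_X \to R_1\pi_*(\ZZ/n\ZZ)_X$ is a morphism of local systems carrying the alternating form induced by $E$ to its reduction modulo $n$, so one simply checks that it sends a trivializing symplectic $2g$-uple to a trivializing symplectic $2g$-uple modulo $n$. I would dispatch this in one line.

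The substance is in the converse, and the plan is to argue locally and reduce to linear algebra over $\ZZ$. First I would cover $M$ by simply connected open subsets $V$ (e.g. polydiscs); on each such $V$ the local systems $R_1\pi_*\ZZ_X|_V$ and $R_1\pi_*(\ZZ/n\ZZ)_X|_V$ are constant, the alternating forms induced by $E$ are constant, and the former is \emph{unimodular} because $E$ is principal. By the classical normal form for unimodular alternating forms over $\ZZ$, the pullback $(X,E)\times_M V$ then admits an integral symplectic basis $\beta_0$; write $\overline{\beta_0}$ for its reduction modulo $n$. Next, given an arbitrary integral symplectic basis modulo $n$, say $\overline{\beta}$, of $(X,E)\times_M V$: since $\overline{\beta}$ and $\overline{\beta_0}$ are global sections of a constant sheaf over the connected space $V$, they are constant, and being two symplectic bases of one and the same unimodular symplectic $\ZZ/n\ZZ$-module they differ by a well-defined $\overline{\gamma}\in\Sp_{2g}(\ZZ/n\ZZ)$, in the sense of the $\Sp_{2g}(\ZZ/n\ZZ)$-action on bases modulo $n$ analogous to the one of Remark \ref{actionsp} (which is compatible with reduction modulo $n$): $\overline{\beta}=\overline{\gamma}\cdot\overline{\beta_0}$. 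Finally I would invoke the surjectivity of $\Sp_{2g}(\ZZ)\to\Sp_{2g}(\ZZ/n\ZZ)$ to pick a lift $\gamma\in\Sp_{2g}(\ZZ)$ of $\overline{\gamma}$; then $\gamma\cdot\beta_0$ is an integral symplectic basis of $(X,E)\times_M V$ whose reduction modulo $n$ equals $\overline{\gamma}\cdot\overline{\beta_0}=\overline{\beta}$, which is exactly the desired local lift.

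The one step that is not purely formal — and which I expect to be the main point to get right — is the existence of the \emph{local} integral symplectic basis $\beta_0$: this is where one genuinely uses both that $V$ may be taken simply connected, so that $R_1\pi_*\ZZ_X$ becomes constant there, and that $E$ is principal, so that the induced constant alternating form is unimodular and hence admits a hyperbolic basis. Everything else is bookkeeping: the equivariance of reduction modulo $n$ with respect to the $\Sp_{2g}$-actions, and the surjectivity $\Sp_{2g}(\ZZ)\to\Sp_{2g}(\ZZ/n\ZZ)$, which is a standard fact (and is the one quoted in the statement).
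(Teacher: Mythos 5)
Your proposal is correct and is exactly the argument the paper leaves implicit behind the phrase ``since the natural map $\Sp_{2g}(\ZZ)\to\Sp_{2g}(\ZZ/n\ZZ)$ is surjective, locally on $M$'': pass to a simply connected open set where the local system and the principal (hence unimodular) alternating form become constant, produce one integral symplectic basis there, compare the given mod-$n$ basis to its reduction via a unique element of $\Sp_{2g}(\ZZ/n\ZZ)$, and lift that element. Nothing to add.
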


The notion of integral symplectic bases modulo $n$ is compatible with the notion of principal level $n$ structures of \ref{pls} in the following sense. Let $(X,\lambda)_{/U}$ be an object of $\mathcal{A}_{g,\CC}^{\text{sm}}$ (see \ref{unifabsch}) with structural morphism $p:X \to U$. The étalé space of the local system $R_1 p^{\an}_*(\ZZ/n\ZZ)_{X^{\an}}$ is canonically isomorphic to the $n$-torsion Lie subgroup $X^{\an}[n]$ of $X^{\an}$. Under this identification, the pairing $e_n^{\lambda}$ on $X[n]$ coincides, up to a sign, with the reduction modulo $n$ of the Riemann form $E_{\lambda}$ (cf. \cite{mumford70} IV.23 and IV.24), and thus an integral symplectic basis modulo $n$ of $(X^{\an},E_{\lambda})_{/U^{\an}}$ canonically corresponds  to a symplectic trivialization of $X^{\an}[n]$ with respect to $e_n^{\lambda}$.

\subsubsection{} Let $\Gamma(n)$ the kernel of the natural map ${\Sp}_{2g}(\ZZ) \to {\Sp}_{2g}(\ZZ/n\ZZ)$. Recall that for any $n\ge 3$ the induced action of $\Gamma(n)$ on $\mathbf{H}_g$ is free (\cite{mumford70} IV.21 Theorem 5) and proper.

The following proposition is well known.

\begin{prop} \label{levelstruc}
For any integer $n\ge 3$, the complex manifold $A_{g,n}(\CC)=A_{g,n,\CC}^{\an}$  is canonically biholomorphic to the quotient of $\mathbf{H}_g$ by $\Gamma(n)$, and the functor $\mathcal{T}_g^{\opp} \longrightarrow \mathsf{Set}$ sending an object $(X,E)_{/M}$ of $\mathcal{T}_g$ to the set of integral symplectic basis modulo $n$ of $(X,E)_{/M}$ is representable by $(X_{g,n,\CC}^{\an},E_{\lambda_{g,n}})_{/A_{g,n,\CC}^{\an}}$. 
\end{prop}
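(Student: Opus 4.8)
The plan is to reduce the statement to the representability result of Proposition~\ref{reprsintsympl} by a descent argument, and then to identify the resulting quotient with $A_{g,n}(\CC)$. Denote by $G_n : \mathcal{T}_g^{\opp}\to\Sets$ the functor sending $(X,E)_{/M}$ to its set of integral symplectic bases modulo $n$. Reduction modulo $n$ defines a natural transformation $G\to G_n$ from the functor $G$ of integral symplectic bases, which by Proposition~\ref{reprsintsympl} is represented by $(\mathbf{X}_g,E_g)_{/\mathbf{H}_g}$ with universal basis $\beta_g$; write $\overline{\beta_g}$ for the induced universal basis modulo $n$. Since every $\gamma\in\Gamma(n)$ reduces to $\mathbf{1}_{2g}$ modulo $n$, Remark~\ref{actionsp} shows that $\gamma\cdot\beta_g$ and $\beta_g$ have the same reduction, so $\overline{\beta_g}$ is invariant under the action of $\Gamma(n)$ on $(\mathbf{X}_g,E_g)_{/\mathbf{H}_g}$ described in Example~\ref{exactionsp}.

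\textbf{Step 1 (descent).} For $n\ge 3$ the action of $\Gamma(n)$ on $\mathbf{H}_g$ is free and proper, so Lemma~\ref{descentlemma} produces a principally polarized complex torus $(Y_n,F_n)$ over the complex manifold $\Gamma(n)\backslash\mathbf{H}_g$ together with a morphism $(\mathbf{X}_g,E_g)_{/\mathbf{H}_g}\to(Y_n,F_n)_{/\Gamma(n)\backslash\mathbf{H}_g}$ in $\mathcal{T}_g$. Because $\overline{\beta_g}$ is $\Gamma(n)$-invariant, the local system $R_1\pi_*(\ZZ/n\ZZ)$ descends and $\overline{\beta_g}$ descends to an integral symplectic basis modulo $n$, say $\overline{\beta_n}$, of $(Y_n,F_n)_{/\Gamma(n)\backslash\mathbf{H}_g}$. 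I then claim $(Y_n,F_n,\overline{\beta_n})$ represents $G_n$. Given $(X,E)_{/M}$ with an integral symplectic basis modulo $n$, cover $M$ by opens over which it lifts to an integral symplectic basis (Remark~\ref{localift}); each lift yields, by Proposition~\ref{reprsintsympl}, a local holomorphic map to $(\mathbf{X}_g,E_g)_{/\mathbf{H}_g}$; on overlaps two such lifts differ by a locally constant element of $\Gamma(n)$ (the change-of-frame matrix is a constant element of $\Sp_{2g}(\ZZ)$ reducing to $\mathbf{1}_{2g}$ modulo $n$), hence their composites with $\mathbf{H}_g\to\Gamma(n)\backslash\mathbf{H}_g$ agree and glue to a global holomorphic map $M\to\Gamma(n)\backslash\mathbf{H}_g$ pulling back $(Y_n,F_n,\overline{\beta_n})$ to the given data. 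Uniqueness follows from the uniqueness clause of Proposition~\ref{reprsintsympl} together with the freeness of the $\Gamma(n)$-action.

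\textbf{Step 2 (identification with $A_{g,n}(\CC)$).} Via the relative uniformization functor $\mathcal{A}_{g,\CC}^{\text{sm}}\to\mathcal{T}_g$ of~\ref{unifabsch}, the analytification $(X_{g,n,\CC}^{\an},E_{\lambda_{g,n}})$ of the universal object over $A_{g,n,\CC}$ is an object of $\mathcal{T}_g$ over $A_{g,n}(\CC)$, and the universal symplectic basis $\alpha_{g,n}$ of $X_{g,n}[n]$ corresponds, by the compatibility of level structures recalled in~\ref{level} (up to the sign of \cite{mumford70} IV.23--24) and the choice $\zeta_n=e^{2\pi i/n}$, to an integral symplectic basis modulo $n$ of $(X_{g,n,\CC}^{\an},E_{\lambda_{g,n}})_{/A_{g,n}(\CC)}$. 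By Step~1 this datum is classified by a unique holomorphic map $c:A_{g,n}(\CC)\to\Gamma(n)\backslash\mathbf{H}_g$, and it suffices to prove $c$ is bijective: a holomorphic bijection between complex manifolds is automatically biholomorphic, and then, being the classifying map of $(X_{g,n,\CC}^{\an},E_{\lambda_{g,n}})$ with the basis induced by $\alpha_{g,n}$, it transports $(Y_n,F_n,\overline{\beta_n})$ to exactly this object, which is the assertion. For surjectivity, the fiber of $(\mathbf{X}_g,E_g)$ over any $\tau\in\mathbf{H}_g$ is a principally polarized complex torus, algebraizable by the classical theory (\cite{mumford70} Corollary p. 35; cf. Remark~\ref{algebraization}), which together with the level-$n$ structure induced by $\overline{\beta_g}$ defines a complex point of $A_{g,n}$ lying over $[\tau]$. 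For injectivity, if $c(x)=c(y)$ then pulling back $(Y_n,F_n,\overline{\beta_n})$ along $x$ and $y$ gives isomorphic principally polarized complex abelian varieties with integral symplectic basis modulo $n$; by the algebraization equivalence over $\Spec\CC$ these are isomorphic as principally polarized abelian varieties with level-$n$ structure, so $x=y$ since $A_{g,n}$ is a fine moduli space.

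The delicate point is Step~2, and specifically the bijectivity of $c$: here one must match the analytic notion of level structure (a symplectic frame of $R_1\pi_*(\ZZ/n\ZZ)$) with the algebraic one (a symplectic basis of the finite group scheme $X[n]$ for the Weil pairing $e_n^{\lambda}$), which is the sign- and root-of-unity-sensitive comparison reviewed in~\ref{level}, and invoke the classical algebraization of complex tori to pass between the analytic and algebraic moduli problems in both directions. Step~1 and the remainder are a formal consequence of the descent machinery of Lemma~\ref{descentlemma} and the representability of Proposition~\ref{reprsintsympl}.
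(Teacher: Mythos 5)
Your proof is correct and takes essentially the same route as the paper's: descend $(\mathbf{X}_g,E_g)$ and the reduction of $\beta_g$ to $\Gamma(n)\backslash\mathbf{H}_g$ via Lemma~\ref{descentlemma}, prove representability by the local-lift-and-glue argument from Remark~\ref{localift} and Proposition~\ref{reprsintsympl}, then use the universal property to produce a holomorphic map from $A_{g,n}(\CC)$ and invoke algebraization of principally polarized complex tori (over a point) to see it is bijective, hence a biholomorphism. The paper merely compresses your surjectivity/injectivity discussion into the single remark that bijectivity follows from algebraization, and explicitly notes the equal dimensions before concluding biholomorphism, but the structure of the argument is identical.
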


\begin{proof}
As the action of $\Gamma(n)$ on $\mathbf{H}_g$ is proper and free, the quotient
\begin{align*}
\mathbf{A}_{g,n} \defeq \Gamma(n)\backslash \mathbf{H}_g
\end{align*}
is a complex manifold, and the canonical holomorphic map $\mathbf{H}_g \to \mathbf{A}_{g,n}$ is a covering map with Galois group $\Gamma(n)$. Moreover, since the action of $\Gamma(n)$ on $\mathbf{H}_g$ lifts to an action of $\Gamma(n)$ on $(\mathbf{X}_g,E_g)_{/\mathbf{H}_g}$ in the category $\mathcal{T}_g$, the principally polarized complex torus $(\mathbf{X}_g,E_g)$ over $\mathbf{H}_g$ descends to a principally polarized complex torus $(\mathbf{X}_{g,n},E_{g,n})$ over $\mathbf{A}_{g,n}$ (Lemma \ref{descentlemma}).

 Let $\overline{\beta}_{g}$ be the integral symplectic basis modulo $n$ of $(\mathbf{X}_{g},E_g)_{/\mathbf{H}_g}$ obtained from $\beta_g$ by reduction modulo $n$. Then $\overline{\beta}_g$ is invariant under the action of $\Gamma(n)$, and thus it descends to an integral symplectic basis modulo $n$ of $(\mathbf{X}_{g,n},E_{g,n})_{/\mathbf{A}_{g,n}}$, say $\beta_{g,n}$. 

The object $(\mathbf{X}_{g,n},E_{g,n})_{/\mathbf{A}_{g,n}}$ of $\mathcal{T}_{g}$ so constructed represents the functor in the statement with $\beta_{g,n}$ serving as universal symplectic basis modulo $n$. Indeed, let $(X,E)_{/M}$ be an object of $\mathcal{T}_g$, and $\beta$ be an integral symplectic basis modulo $n$ of $(X,E)_{/M}$. By Remark \ref{localift}, there exists an open covering $M= \bigcup_{i\in I}U^i$ and, for each $i\in I$, an integral symplectic basis $\beta^i$ of $(X,E)_{/U^i}$ lifting $\beta$. By Proposition \ref{reprsintsympl}, we obtain for each $i\in I$ a morphism $F^i_{/f^i} : (X,E)_{/U^i} \to (\mathbf{X}_g,E_g)_{/\mathbf{H}_g}$ in $\mathcal{T}_g$ satisfying $(F^i)^*\beta_g=\beta^i$. Finally, by construction, for any $i,j \in I$, the compositions of $F^i_{/f^i}$ and $F^j_{/f^j}$ with the projection $(\mathbf{X}_g,E_g)_{/\mathbf{H}_g} \to (\mathbf{X}_{g,n},E_{g,n})_{/\mathbf{A}_{g,n}}$ agree over the intersection $U^i\cap U^j$; hence they glue to a morphism 
$$
F_{/f} : (X,E)_{/M} \to (\mathbf{X}_{g,n},E_{g,n})_{/\mathbf{A}_{g,n}}
$$ 
satisfying $F^*\beta_{g,n}=\beta$, and uniquely determined by this property.

To finish the proof, it is sufficient to show that $(X_{g,n,\CC}^{\an},E_{\lambda_{g,n}})_{/A_{g,n,\CC}^{\an}}$ is isomorphic to $(\mathbf{X}_{g,n},E_{g,n})_{/\mathbf{A}_{g,n}}$ in the category $\mathcal{T}_g$. By the compatibility of principal level $n$ structures with integral symplectic bases modulo $n$, there exists a unique morphism in $\mathcal{T}_g$
\begin{align*}
F_{/f} : (X_{g,n,\CC}^{\an},E_{\lambda_{g,n}})_{/A_{g,n,\CC}^{\an}} \to (\mathbf{X}_{g,n},E_{g,n})_{/\mathbf{A}_{g,n}}
\end{align*}
such that $F^*\beta_{g,n}$ is the integral symplectic basis modulo $n$ of $(X_{g,n,\CC}^{\an},E_{\lambda_{g,n}})_{/A_{g,n,\CC}^{\an}}$ associated to $\alpha_{g,n}$ (the universal principal level $n$ structure of $(X_{g,n},\lambda_{g,n})_{/A_{g,n}}$). Since complex tori (over a point) endowed with a principal Riemann form are algebraizable (cf. Remark \ref{algebraization}), the holomorphic map
\begin{align*}
f: A_{g,n}(\CC) = A_{g,n,\CC}^{\an} \to \mathbf{A}_{g,n}
\end{align*}
is bijective. As the complex manifolds $\mathbf{A}_{g,n}$ and $A_{g,n}(\CC)$ have same dimension, $f$ is necessarily a biholomorphism (\cite{GH78} p. 19).
\end{proof}

\subsection{Symplectic-Hodge bases over complex tori}

\subsubsection{}

Let $M$ be a complex manifold and $(X,E)$ be a principally polarized complex torus over $M$ of relative dimension $g$. As in \cite{fonseca16} Definition 2.4, by a \emph{symplectic-Hodge basis} of $(X,E)_{/M}$, we mean a $2g$-uple $b=(\omega_1,\ldots,\omega_g,\eta_1,\ldots,\eta_g)$ of global sections of the holomorphic vector bundle $\mathcal{H}^1_{\dR}(X/M)$ such that $\omega_1,\ldots,\omega_g$ are sections of the subbundle $\mathcal{F}^1(X/M)$, and $b$ is symplectic with respect to the holomorphic symplectic form $\langle \ , \ \rangle_E$.

It follows from Lemma \ref{compsympl} that this notion of symplectic-Hodge basis is compatible with its algebraic counterpart (\cite{fonseca16} Definition 2.4) via the ``relative uniformization functor'' in \ref{unifabsch}. 

\subsubsection{} \label{principalbundle}

Consider Siegel parabolic subgroup of $\Sp_{2g}(\CC)$
\begin{align*}
P_g(\CC) = 
 \left.\left\{\left(\begin{array}{cc}
                           A & B \\
                           0 & (A\transp)^{-1}
                          \end{array} \right) \in M_{2g\times 2g}(\CC) \ \right|\  A \in {\GL}_g(\CC)\text{ and } B\in M_{g\times g}(\CC)\text{ satisfy }AB\transp=BA\transp \right\}\text{.}
\end{align*}
Note that $P_g(\CC)$ is a complex Lie group of dimension $g(3g+1)/2$.

Let $(X,E)$ be a principally polarized complex torus of dimension $g$. If $b = ( \omega \  \eta )$ is a symplectic-Hodge basis of $(X,E)$, seen as a row vector of order $2g$ with coefficients in $\mathcal{H}^1_{\dR}(X)$, and $p = (A \ B \ ; \ 0 \ (A\transp)^{-1}) \in P_g(\CC)$, then we put
\begin{align*}
b \cdot p \defeq (\begin{array}{cc}\omega A & \omega B + \eta (A\transp)^{-1}  \end{array})
\end{align*}
It is easy to check that $b\cdot p$ is a symplectic-Hodge basis of $(X,E)$, and that the above formula defines a free and transitive action of $P_g(\CC)$ on the set of symplectic-Hodge bases of $(X,E)$.

\subsubsection{}
For a complex manifold $M$, let us denote by $\Man_{/M}$ the category of complex manifolds endowed with a holomorphic map to $M$.

\begin{lemma}[cf. \cite{fonseca16} Corollary 3.4] \label{relrepr}
Let $M$ be a complex manifold and $(X,E)$ be a principally polarized complex torus over $M$ of relative dimension $g$. The functor
\begin{align*}
\mathsf{Man}_{/M}^{\opp} &\to \mathsf{Set}\\
           M' & \mapsto \{\text{symplectic-Hodge bases of }(X,E)\times_M M'\}
\end{align*}
is representable by a principal $P_g(\CC)$-bundle $B(X,E)$ over $M$.
\end{lemma}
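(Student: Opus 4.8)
The plan is to construct $B(X,E)$ by representing the functor locally on $M$ and then gluing the local pieces. First I would observe that the functor is well defined --- by the base-change compatibility of $\mathcal{H}^1_{\dR}(X/M)$, $\mathcal{F}^1(X/M)$ and $\langle\ ,\ \rangle_E$, a symplectic-Hodge basis of $(X,E)\times_M M'$ pulls back along any $M''\to M'$ --- and that it is a sheaf on $\Man_{/M}$: sections of $\mathcal{H}^1_{\dR}(X/M)$ form a sheaf, and the conditions of lying in $\mathcal{F}^1(X/M)$ and of being symplectic for $\langle\ ,\ \rangle_E$ are local. Consequently it is enough to represent the functor over each member $U$ of some open cover of $M$, compatibly with restriction: over the overlap of two members the two representing objects are canonically identified (uniqueness of a representing object), these identifications satisfy the cocycle condition on triple overlaps for the same reason, and so the local objects glue to a complex manifold $B(X,E)$ over $M$ representing the functor globally. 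Throughout I would also use the free and transitive right action of $P_g(\CC)$ on the set of symplectic-Hodge bases of a single principally polarized complex torus recorded in \ref{principalbundle}, which extends fibrewise to a right action of $P_g(\CC)$ on the whole functor.

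The one substantial point is the \emph{local existence} of symplectic-Hodge bases, and this is the step I expect to be the main obstacle; everything else is formal. Here one uses that the rank $g$ subbundle $\mathcal{F}^1(X/M)$ is Lagrangian for $\langle\ ,\ \rangle_E$ --- this holds fibrewise by the Riemann bilinear relations, and is compatible with the algebraic situation via Lemma \ref{compsympl}. Shrinking $M$ to a sufficiently small open $U$, choose a holomorphic frame $\omega=(\omega_1,\dots,\omega_g)$ of $\mathcal{F}^1(X/U)$ together with a holomorphic complementary subbundle $G\subset \mathcal{H}^1_{\dR}(X/U)$; since $\mathcal{F}^1$ is Lagrangian, $\langle\ ,\ \rangle_E$ restricts to a perfect pairing between $\mathcal{F}^1(X/U)$ and $G$, so solving linear equations with holomorphic coefficients produces a frame $(\eta'_1,\dots,\eta'_g)$ of $G$ with $\langle\omega_i,\eta'_j\rangle_E=\delta_{ij}$. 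Correcting it within $\mathcal{F}^1(X/U)$, i.e. setting $\eta_j\defeq\eta'_j+\sum_k c_{jk}\omega_k$ with holomorphic $c_{jk}$ chosen to annihilate the antisymmetric matrix $(\langle\eta'_i,\eta'_j\rangle_E)$ without disturbing the relations $\langle\omega_i,\eta_j\rangle_E=\delta_{ij}$, one obtains a symplectic-Hodge basis $b^U\defeq(\omega_1,\dots,\omega_g,\eta_1,\dots,\eta_g)$ of $(X,E)_{/U}$.

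With $b^U$ fixed, local representability follows quickly. For any $M'\to U$ and any symplectic-Hodge basis $b$ of $(X,E)\times_U M'$, the basis $b$ and the pullback of $b^U$ differ fibrewise by a unique element of $P_g(\CC)$, giving a unique map $p:M'\to P_g(\CC)$ with $b=(b^U|_{M'})\cdot p$; it is holomorphic because it is read off from the holomorphic sections $b$ and $b^U$ by solving linear equations. Hence over $U$ the functor is naturally isomorphic to $M'\mapsto \Hom_{\Man}(M',P_g(\CC))$, i.e. it is represented by the trivial bundle $P_g(\CC)\times U$ over $U$, with tautological symplectic-Hodge basis $b^U\cdot\pr_{P_g(\CC)}$. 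Covering $M$ by such opens and gluing as in the first paragraph yields the complex manifold $B(X,E)$ over $M$ representing the functor; the right $P_g(\CC)$-action transported from the functor makes $B(X,E)\to M$ locally $P_g(\CC)$-equivariantly isomorphic to $P_g(\CC)\times U$, so it is a principal $P_g(\CC)$-bundle. This parallels the proof of the algebraic statement \cite{fonseca16} Corollary 3.4.
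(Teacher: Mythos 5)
Your proof is correct but takes a different route from the paper's. The paper constructs $B(X,E)$ globally and directly, as a locally closed analytic subspace of the vector bundle $V=\mathcal{H}^1_{\dR}(X/M)^{\oplus g}$: namely the locus of $g$-uples $(\alpha_1,\ldots,\alpha_g)$ spanning a Lagrangian complement to $\mathcal{F}^1$ in each fibre. The identification of the fibres of $B\to M$ with sets of symplectic-Hodge bases rests on the key input from \cite{fonseca16} Proposition A.7 (2), that a symplectic-Hodge basis $(\omega_1,\ldots,\omega_g,\eta_1,\ldots,\eta_g)$ is uniquely determined by $(\eta_1,\ldots,\eta_g)$; the principal-bundle structure is then read off from \ref{principalbundle}. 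You instead represent the functor locally over a small open $U\subset M$ by exhibiting an explicit local symplectic-Hodge basis $b^U$ --- choosing a holomorphic frame of $\mathcal{F}^1(X/U)$ and a complementary subbundle, dualizing via the perfect pairing, and correcting by $-\tfrac12(\langle\eta'_i,\eta'_j\rangle_E)$ to make the $\eta_j$ isotropic --- and you then glue the local representing objects $P_g(\CC)\times U$ using the sheaf property and uniqueness of representing objects. Both arguments ultimately rely on the local existence of symplectic-Hodge bases: the paper leaves this implicit in the step declaring $B$ a principal bundle, while you make the local construction explicit, at the cost of the gluing formalism. The paper's embedding into $V$ buys a single global construction and avoids gluing; your version is more elementary and self-contained but longer. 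Both are valid.
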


\begin{proof}
Let us denote by $\pi: V \to M$ the holomorphic vector bundle $\mathcal{H}^1_{\dR}(X/M)^{\oplus g}$ over $M$. For any $p\in M$, the fiber $\pi^{-1}(p)=V_p$ is the vector space of $g$-uples $(\alpha_1,\ldots,\alpha_g)$, with each $\alpha_i \in \mathcal{H}^1_{\dR}(X_p)$. Let $B$ be the locally closed analytic subspace of $V$ consisting of points $v=(\alpha_1,\ldots,\alpha_g)$ of $V$ such that 
\begin{align*}
L \defeq \CC \alpha_1 + \cdots + \CC \alpha_g
\end{align*}
is a Lagrangian subspace of $\mathcal{H}^1_{\dR}(X_{\pi(v)})$ with respect to $\langle \ , \ \rangle_{E_{\pi(v)}}$ satisfying
\begin{align*}
\mathcal{F}^1(X_{\pi(v)}) \oplus L = \mathcal{H}^1_{\dR}(X_{\pi(v)})\text{.}
\end{align*}

By \cite{fonseca16} Proposition A.7. (2), a symplectic-Hodge basis $(\omega_1,\ldots,\omega_g,\eta_1,\ldots,\eta_g)$ of a principally polarized complex torus is uniquely determined by $(\eta_1,\ldots,\eta_g)$. In particular, for each $p\in M$, the fiber $B_p=B\cap V_p$ may be naturally identified with the set of symplectic-Hodge bases of $(X_p,E_p)$.

Thus, it follows from \ref{principalbundle} that $B$ is a principal $P_g(\CC)$-bundle over $M$; in particular, it is a complex manifold. We also conclude from the above paragraph that $B$ represents the functor in the statement.
\end{proof}

\begin{obs}\label{relreprcomp}
The above construction is compatible, under analytification, with its algebraic counterpart. Namely, let $U$ be a smooth separated $\CC$-scheme of finite type, and $(X,\lambda)$ be a principally polarized abelian scheme over $U$. The complex manifold $B(X^{\an},E_{\lambda})$ over $U^{\an}$ constructed in Lemma \ref{relrepr} is canonically isomorphic to the analytification of the scheme $B(X,\lambda)$ over $U$ constructed in \cite{fonseca16} Corollary 3.4.
\end{obs}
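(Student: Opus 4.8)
The plan is to compare the two constructions explicitly, showing that every ingredient entering the algebraic construction of $B(X,\lambda)$ in \cite{fonseca16} Corollary 3.4 analytifies to the corresponding ingredient in the construction of $B(X^{\an},E_{\lambda})$ carried out in the proof of Lemma \ref{relrepr}. First I would recall that $B(X,\lambda)$ is realized as a locally closed subscheme $\mathbf{V}'$ of the geometric vector bundle $\mathbf{V}$ over $U$ attached to the locally free sheaf $H^1_{\dR}(X/U)^{\oplus g}$, namely the locus of $g$-tuples $(\alpha_1,\dots,\alpha_g)$ spanning a Lagrangian subbundle (with respect to $\langle\ ,\ \rangle_{\lambda}$) complementary to the Hodge subbundle $F^1(X/U)$, and that the formula $b\cdot p$ of \ref{principalbundle} makes $B(X,\lambda)$ a principal $P_g$-bundle over $U$. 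On the analytic side, the proof of Lemma \ref{relrepr} produces $B(X^{\an},E_{\lambda})$ as precisely the analogous locally closed analytic subspace of the holomorphic vector bundle $\mathcal{H}^1_{\dR}(X^{\an}/U^{\an})^{\oplus g}$, with the $P_g(\CC)$-bundle structure given by the same formula.

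Next I would invoke the relevant compatibilities under analytification: the analytification of the coherent sheaf $H^1_{\dR}(X/U)$ is $\mathcal{H}^1_{\dR}(X^{\an}/U^{\an})$, that of the Hodge subbundle $F^1(X/U)$ is $\mathcal{F}^1(X^{\an}/U^{\an})$, and that of the symplectic form $\langle\ ,\ \rangle_{\lambda}$ is $\langle\ ,\ \rangle_{\lambda}^{\an}$, which by Lemma \ref{compsympl}(2) equals $\langle\ ,\ \rangle_{E_{\lambda}}$. Since analytification commutes with the formation of geometric vector bundles from coherent sheaves, and with the formation of a locally closed subspace cut out by a fixed system of conditions expressed through this universal data (vanishing of the pairing on the span, maximality of the rank of the span, transversality to the Hodge subbundle), the analytification $B(X,\lambda)^{\an}$ is canonically identified, as a locally closed analytic subspace of $\mathcal{H}^1_{\dR}(X^{\an}/U^{\an})^{\oplus g}$, with the subspace defining $B(X^{\an},E_{\lambda})$. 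The $P_g$-action on $B(X,\lambda)$ is defined by the algebraic formula of \ref{principalbundle}, whose analytification is exactly the analytic action used in Lemma \ref{relrepr}, so this identification is automatically $P_g(\CC)$-equivariant, i.e. an isomorphism of principal $P_g(\CC)$-bundles over $U^{\an}$; canonicity follows since it is the unique such isomorphism carrying the analytified universal algebraic symplectic-Hodge basis to the universal analytic one, using again the compatibility of the two notions of symplectic-Hodge basis via Lemma \ref{compsympl}.

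The main obstacle is organizational rather than conceptual: one must check that the scheme-theoretic construction of \cite{fonseca16} Corollary 3.4 is literally the algebraic shadow of the construction in Lemma \ref{relrepr}, so that the conditions defining it are phrased through the same universal objects (the de Rham bundle, its Hodge filtration, and the polarization pairing) and the standard fact that analytification commutes with the formation of such a subspace applies verbatim. Once this matching is set up, no GAGA input is needed beyond the comparison isomorphisms for $H^1_{\dR}$, for $F^1$, and for $\langle\ ,\ \rangle_{\lambda}$ recorded above.
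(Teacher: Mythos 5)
Your proposal is correct, and it fills in exactly the implicit argument behind this remark (which the paper leaves unjustified, since it is stated as an observation rather than as a lemma with proof). You correctly identify all the ingredients: $B(X,\lambda)$ is built in the algebraic setting exactly as $B(X^{\an},E_\lambda)$ is built in the proof of Lemma~\ref{relrepr}, as a locally closed subspace of the total space of $H^1_{\dR}(X/U)^{\oplus g}$ cut out by closed conditions (isotropy with respect to the symplectic pairing) together with open conditions (linear independence of the $\alpha_i$ and transversality to the Hodge subbundle); and the three compatibility statements you invoke — for $H^1_{\dR}$, for $F^1$, and for $\langle\,,\,\rangle_\lambda$ via Lemma~\ref{compsympl}(2) — are precisely the ones recorded in Section~\ref{unifabsch}. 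Together with the standard fact that analytification commutes with forming geometric vector bundles and with passing to locally closed subspaces defined by rank and vanishing conditions in the universal data, and with the $P_g$-equivariance observation, this gives the stated canonical isomorphism of principal $P_g(\CC)$-bundles over $U^{\an}$.

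One small remark, not a flaw: it would sharpen the last step of the canonicity argument to note that the constructed isomorphism is the unique morphism over $U^{\an}$ commuting with the analytification of the universal symplectic-Hodge basis precisely because both sides represent the same functor on $\Man_{/U^{\an}}$ (the functor of symplectic-Hodge bases of $(X^{\an},E_\lambda)\times_{U^{\an}}M'$), so the Yoneda lemma forces uniqueness once the universal objects are matched; your appeal to Lemma~\ref{compsympl} supplies the identification of the two notions of symplectic-Hodge basis needed for that matching.
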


Recall that we denote by $(X_g,\lambda_g)$ the universal principally polarized abelian scheme over $B_g$, and by $b_g$ the universal symplectic-Hodge basis of $(X_g,\lambda_g)_{/B_g}$.

\begin{prop} \label{reprsympl}
 The functor $\mathcal{T}_g^{\opp} \longrightarrow \mathsf{Set}$ sending an object $(X,E)_{/M}$ of $\mathcal{T}_g$ to the set of symplectic-Hodge bases of $(X,E)_{/M}$ is representable by $(X_{g,\CC}^{\an},E_{\lambda_g})_{/B_{g,\CC}^{\an}}$, with universal symplectic-Hodge basis $b_g$.
 \end{prop}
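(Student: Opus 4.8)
The plan is to deduce the statement from the corresponding statement for the level-$n$ fine moduli schemes $A_{g,n}$ (Proposition \ref{levelstruc}) by descent along a finite étale covering. Fix an integer $n\ge 3$ and write $G_n\defeq\Sp_{2g}(\ZZ/n\ZZ)$. Set $B_{g,n,\CC}\defeq B_{g,\CC}\times_{\mathcal{A}_{g,\CC}}A_{g,n,\CC}$. Since $A_{g,n,\CC}\to\mathcal{A}_{g,\CC}$ is representable and finite étale, $B_{g,n,\CC}\to B_{g,\CC}$ is a finite étale morphism of $\CC$-schemes, and in fact a Galois covering with group $G_n$: here one uses that an automorphism of a principally polarized abelian variety acting trivially on its first de Rham cohomology is the identity, so that the objects of $\mathcal{B}_g$ have no nontrivial automorphisms, $G_n$ acts freely on $B_{g,n,\CC}$, and $B_{g,\CC}=G_n\backslash B_{g,n,\CC}$ as schemes. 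By the base-change compatibility of the construction in \cite{fonseca16} Corollary 3.4, $B_{g,n,\CC}$ is canonically identified with $B(X_{g,n},\lambda_{g,n})_{\CC}$; the universal principally polarized abelian scheme over $B_{g,n,\CC}$ is the pullback of $(X_{g,n,\CC},\lambda_{g,n})$ from $A_{g,n,\CC}$, and the universal symplectic-Hodge basis over $B_{g,n,\CC}$ is the pullback of $b_g$ from $B_{g,\CC}$.

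First I would record a ``combined'' representability at level $n$: the object $(X^{\an}_{g,n,\CC},E_{\lambda_{g,n}})_{/B^{\an}_{g,n,\CC}}$ of $\mathcal{T}_g$, together with the pullback of the universal integral symplectic basis modulo $n$ and the pullback $b_g$ of the universal symplectic-Hodge basis, represents the functor $\mathcal{T}_g^{\opp}\to\mathsf{Set}$ sending $(X,E)_{/M}$ to the set of pairs consisting of an integral symplectic basis modulo $n$ and a symplectic-Hodge basis of $(X,E)_{/M}$. Indeed, by Proposition \ref{levelstruc} an integral symplectic basis modulo $n$ of $(X,E)_{/M}$ is the same as a morphism $\psi:(X,E)_{/M}\to(X^{\an}_{g,n,\CC},E_{\lambda_{g,n}})_{/A^{\an}_{g,n,\CC}}$ in $\mathcal{T}_g$; once such a $\psi$ is fixed, Lemma \ref{relrepr} together with Remark \ref{relreprcomp} identifies the symplectic-Hodge bases of $(X,E)_{/M}\cong\psi^*(X^{\an}_{g,n,\CC},E_{\lambda_{g,n}})$ with the sections over $M$ of the pullback by $\psi$ of $B(X^{\an}_{g,n,\CC},E_{\lambda_{g,n}})=B(X_{g,n},\lambda_{g,n})^{\an}_{\CC}=B^{\an}_{g,n,\CC}$, that is, with the liftings of $\psi$ to $B^{\an}_{g,n,\CC}$. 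Combining, such a pair is precisely a morphism $(X,E)_{/M}\to(X^{\an}_{g,n,\CC},E_{\lambda_{g,n}})_{/B^{\an}_{g,n,\CC}}$ in $\mathcal{T}_g$.

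Now let $(X,E)_{/M}$ be an object of $\mathcal{T}_g$ with structural morphism $\pi$, equipped with a symplectic-Hodge basis $b$. Choose an open cover $M=\bigcup_{i}U^i$ such that over each $U^i$ the sheaf of integral symplectic bases modulo $n$ of $(X,E)_{/M}$ --- a torsor under the finite group $G_n$, hence locally constant --- admits a section $\overline{\beta}^i$. By the combined representability above, $(\overline{\beta}^i,b|_{U^i})$ is classified by a unique morphism $G^i_{/g^i}:(X,E)_{/U^i}\to(X^{\an}_{g,n,\CC},E_{\lambda_{g,n}})_{/B^{\an}_{g,n,\CC}}$ in $\mathcal{T}_g$; composing with the projection onto $(X^{\an}_{g,\CC},E_{\lambda_g})_{/B^{\an}_{g,\CC}}$ yields $F^i_{/f^i}$ with $(F^i)^*b_g=b|_{U^i}$. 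On an overlap $U^i\cap U^j$ the sections $\overline{\beta}^i$ and $\overline{\beta}^j$ differ by a locally constant $G_n$-valued function $\gamma_{ij}$, whence $G^j=\gamma_{ij}\cdot G^i$ by the $G_n$-equivariance of the level-$n$ moduli; since $B^{\an}_{g,n,\CC}\to B^{\an}_{g,\CC}$ is precisely the quotient by this $G_n$-action, the composites $F^i$ and $F^j$ agree on $U^i\cap U^j$, so the $F^i$ glue to a morphism $F_{/f}:(X,E)_{/M}\to(X^{\an}_{g,\CC},E_{\lambda_g})_{/B^{\an}_{g,\CC}}$ in $\mathcal{T}_g$ with $F^*b_g=b$. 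For uniqueness, given any such $F$, on each $U^i$ the underlying isomorphism of complex tori identifies $(X,E)|_{U^i}$ with $f^*(X^{\an}_{g,\CC},E_{\lambda_g})$; pulling back the $G_n$-covering $B^{\an}_{g,n,\CC}\to B^{\an}_{g,\CC}$ along $f|_{U^i}$, the section $\overline{\beta}^i$ provides a lifting of $f|_{U^i}$ to $B^{\an}_{g,n,\CC}$, which together with the torus isomorphism classifies $(\overline{\beta}^i,b|_{U^i})$ and hence equals $G^i$; therefore $F|_{U^i}=F^i$ for every $i$, so $F$ coincides with the morphism constructed above.

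The step I expect to be the main obstacle is not the construction itself but the verification that it descends well: that $B^{\an}_{g,n,\CC}\to B^{\an}_{g,\CC}$ is an honest finite étale $G_n$-Galois covering --- which rests on the rigidity statement that objects of $\mathcal{B}_g$ have no nontrivial automorphisms, so that $G_n$ acts freely --- and that the level-$n$ representing object carries the $G_n$-equivariant structure making the locally defined classifying morphisms canonical after composition with the quotient map. The other ingredients (Proposition \ref{levelstruc}, Lemma \ref{relrepr}, and the analytification comparison of Remark \ref{relreprcomp}) supply the substance, and the gluing is then formal.
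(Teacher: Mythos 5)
Your proof is correct and it takes a genuinely different route from the paper's. The paper works with the \emph{infinite} universal cover: it introduces $\mathbf{B}_g = B(\mathbf{X}_g,E_g)$ over $\mathbf{H}_g$ as the moduli space for pairs (full integral symplectic basis, symplectic-Hodge basis), uses the rigidity of $\underline{B}_g$ to see that the lifted $\Sp_{2g}(\ZZ)$-action on $\mathbf{B}_g$ is proper and free, descends the universal data to the quotient via Lemma \ref{descentlemma}, and then identifies this quotient with $B^{\an}_{g,\CC}$ by the ``bijective holomorphic map between equidimensional manifolds'' trick, invoking the algebraization of principally polarized complex tori for bijectivity. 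You instead work with the \emph{finite} level-$n$ cover: you establish a combined representability by $B^{\an}_{g,n,\CC}$ for pairs (integral symplectic basis mod $n$, symplectic-Hodge basis) directly from Proposition \ref{levelstruc}, Lemma \ref{relrepr} and Remark \ref{relreprcomp}, use rigidity to see that $B^{\an}_{g,n,\CC}\to B^{\an}_{g,\CC}$ is an honest $G_n$-Galois covering, and then glue local classifying maps obtained from local choices of level-$n$ structure. What your approach buys: no need to verify properness/freeness of an infinite discrete group action, and no need for the final bijectivity/algebraization/dimension-count argument, since you land directly on the known algebraic moduli scheme $B_{g,\CC}$ via the already-algebraic covering $B_{g,n,\CC}\to B_{g,\CC}$. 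What the paper's approach buys: it constructs the relevant objects ($\mathbf{B}_g$, the $\Sp_{2g}(\ZZ)$-action on it, its quotient) that are reused throughout the rest of the paper (notably in Section \ref{gpinterpret} for the group-theoretic description of $B_g(\CC)$), so the auxiliary work done there is not wasted. Both routes rest on the same rigidity input and on the relative representability of Lemma \ref{relrepr}; yours simply substitutes $A_{g,n}$ for $\mathbf{H}_g$ as the intermediate moduli space.
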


\begin{proof}
By Lemma \ref{relrepr}, there exists a complex manifold $\mathbf{B}_g\defeq B(\mathbf{X}_g,E_g)$ over $\mathbf{H}_g$ representing the functor
\begin{align*}
\Man_{/\mathbf{H}_g}^{\opp} &\to \Sets\\
                    M &\mapsto \{\text{symplectic-Hodge bases of }(\mathbf{X}_g,E_{g})\times_{\mathbf{H}_g}M\}
\end{align*}
Let $(\mathbf{X}_{\mathbf{B}_g}, E_{\mathbf{B}_g}) = (\mathbf{X}_g,E_g)\times_{\mathbf{H}_g}\mathbf{B}_g$. Note that the principally polarized complex torus $(\mathbf{X}_{\mathbf{B}_g}, E_{\mathbf{B}_g})$ over $\mathbf{B}_g$ is equipped with a universal symplectic-Hodge basis $b_{\mathbf{B}_g}$, and with an integral symplectic basis $\beta_{\mathbf{B}_g}$ obtained by pullback from $\beta_g$ via the canonical morphism $(\mathbf{X}_{\mathbf{B}_g}, E_{\mathbf{B}_g})_{/\mathbf{B}_g} \to (\mathbf{X}_g,E_g)_{/\mathbf{H}_g}$ in $\mathcal{T}_g$.

We now remark that $(\mathbf{X}_{\mathbf{B}_g}, E_{\mathbf{B}_g})_{/\mathbf{B}_g}$ represents the functor $\mathcal{T}_g^{\opp} \to \Sets$ sending an object $(X,E)_{/M}$ of $\mathcal{T}_g$ to the cartesian product of the set of symplectic-Hodge bases of $(X,E)_{/M}$ with the set of integral symplectic bases of $(X,E)_{/M}$, with $(b_{\mathbf{B}_g},\beta_{\mathbf{B}_g})$ serving as a universal object. Thus, for any element $\gamma\in \Sp_{2g}(\ZZ)$, there exists a unique automorphism ${\Psi_{\gamma}}_{/\psi_{\gamma}}$ of $(\mathbf{X}_{\mathbf{B}_g}, E_{\mathbf{B}_g})_{/\mathbf{B}_g}$ in $\mathcal{T}_g$ such that $\Psi_{\gamma}^*b_{\mathbf{B}_g} =b_{\mathbf{B}_g}$ and $\Psi_{\gamma}^*\beta_{\mathbf{B}_g}=\gamma\cdot \beta_{\mathbf{B}_g}$ (where the left action of $\Sp_{2g}(\ZZ)$ on integral symplectic bases is defined as in Remark \ref{actionsp}). 

As the functor $\underline{B}_g:\mathcal{A}_g^{\opp} \to \Sets$ is rigid over $\CC$ (\cite{fonseca16} Lemma 4.3), we see that
\begin{enumerate}
  \item $\gamma \mapsto {\Psi_{\gamma}}_{/\psi_{\gamma}}$ is in fact an action of $\Sp_{2g}(\ZZ)$ on $(\mathbf{X}_{\mathbf{B}_g}, E_{\mathbf{B}_g})_{/\mathbf{B}_g}$ in the category $\mathcal{T}_g$, and
  \item the action $\gamma \mapsto \psi_{\gamma}$ of $\Sp_{2g}(\ZZ)$ on the complex manifold $\mathbf{B}_g$ is free; it is also proper since it lifts the action on $\mathbf{H}_g$.
\end{enumerate} 

Let $M$ be the quotient manifold $\Sp_{2g}(\ZZ)\backslash \mathbf{B}_g$ and descend $(\mathbf{X}_{\mathbf{B}_g},E_{\mathbf{B}_g})$ to a principally polarized complex torus $(X,E)$ over $M$. Since $b_{\mathbf{B}_g}$ is invariant under the action of $\Sp_{2g}(\ZZ)$, we can descend it to a symplectic-Hodge basis $b$ of $(X,E)_{/M}$. As in the proof of Proposition \ref{levelstruc}, we may check that $(X,E)_{/M}$ represents the functor in the statement, with $b$ serving as universal symplectic-Hodge basis. 

To finish the proof, we must prove that $(X,E)_{/M}$ is isomorphic to $(X_{g,\CC}^{\an},E_{\lambda_g})_{/B_{g,\CC}^{\an}}$ in $\mathcal{T}_g$. For this, it is sufficient to remark that, by the universal property of $(X,E)_{/M}$, there exists a unique morphism in $\mathcal{T}_g$
$$
F_{/f}:(X_{g,\CC}^{\an},E_{\lambda_g})_{/B_{g,\CC}^{\an}} \to (X,E)_{/M}
$$
satisfying $F^*b = b_g$, and that the holomorphic map
\begin{align*}
f: B_g(\CC)= B_{g,\CC}^{\an} \to M
\end{align*}
is bijective since principally polarized complex tori (over a point) are algebraizable (cf. Remark \ref{algebraization}); then $f$ is necessarily a biholomorphism (\cite{GH78} p. 19).
\end{proof}


\section{The higher Ramanujan equations and their analytic solution $\varphi_g$}

Fix an integer $g\ge 1$. Let us consider the holomorphic coordinate system $(\tau_{kl})_{1\le k \le l \le g}$ on the complex manifold $\mathbf{H}_g$, where $\tau_{kl}: \mathbf{H}_g \to \CC$ associates to any $\tau \in \mathbf{H}_g$ its entry in the $k$th row and $l$th column. To this system of coordinates is attached a family $(\theta_{kl})_{1\le k \le l \le g}$ of holomorphic vector fields on $\mathbf{H}_g$, defined by
\begin{align*}
\theta_{kl} \defeq \frac{1}{2\pi i}\frac{\partial}{\partial\tau_{kl}}\text{.} 
\end{align*}

Let $(v_{kl})_{1\le k \le l \le g}$ be the family of holomorphic vector fields on $B_g(\CC)$ induced by the higher Ramanujan vector fields on $\mathcal{B}_g$ defined in \cite{fonseca16} 5.3.

\begin{defi} \label{defihreq}
Let $U$ be an open subset of $\mathbf{H}_g$. We say that a holomorphic map $u: U \to B_g(\CC)$ is a solution of the \emph{higher Ramanujan equations} if
\begin{align*}
\theta_{kl}u = v_{kl}\circ u
\end{align*}
for every $1\le k \le l \le g$.
\end{defi}

 In this section, we construct a global holomorphic solution
\begin{align*}
\varphi_g: \mathbf{H}_g \to B_g(\CC)
\end{align*}
of the higher Ramanujan equations. In view of the universal property of the moduli space $B_g(\CC)$ (Proposition \ref{reprsympl}), the holomorphic map $\varphi_g$ will be induced by a certain symplectic-Hodge basis of the principally polarized complex torus $(\mathbf{X}_g,E_g)$ over $\mathbf{H}_g$.


\subsection{Definition of $\varphi_g$ and statement of our main theorem} \label{defphig}

Recall that the comparison isomorphism (\ref{compisom}) identifies the holomorphic vector bundle $(\Lie_{\mathbf{H}_g} \mathbf{X}_g)^{\vee}$ over $\mathbf{H}_g$ with $\mathcal{F}^1(\mathbf{X}_g/\mathbf{H}_g)$ (Lemma \ref{identcomp}). Moreover, it follows from the construction of $\mathbf{X}_g$ in Example \ref{torus} that $\Lie_{\mathbf{H}_g}\mathbf{X}_g$ is canonically isomorphic to the trivial vector bundle $\CC^g\times \mathbf{H}_g$ over $\mathbf{H}_g$. Under this isomorphism, we define the holomorphic frame
\begin{align*}
(dz_1,\ldots,dz_g)
\end{align*}
of $\mathcal{F}^1(\mathbf{X}_g/\mathbf{H}_g)$ as the dual of the canonical holomorphic frame of $\CC^g\times \mathbf{H}_g$.

\begin{theorem}\label{theoremsolution}
For each $1\le k \le g$, consider the global sections of $\mathcal{H}^1_{\dR}(\mathbf{X}_g/\mathbf{H}_g)$
\begin{align*}
\bfomega_k \defeq 2\pi i\, dz_k\text{, } \ \ \ \bfeta_k \defeq \nabla_{\theta_{kk}}\bfomega_k
\end{align*}
where $\nabla$ denotes the Gauss-Manin connection on $\mathcal{H}^1_{\dR}(\mathbf{X}_g/\mathbf{H}_g)$. Then,
\begin{enumerate}
   \item The $2g$-uple 
\begin{align*}
\bfb_g \defeq (\bfomega_1,\ldots,\bfomega_g,\bfeta_1,\ldots,\bfeta_g)
\end{align*}
of holomorphic global sections of $\mathcal{H}^1_{\dR}(\mathbf{X}_g/\mathbf{H}_g)$ is a symplectic-Hodge basis of the principally polarized complex torus $(\mathbf{X}_g,E_g)_{/\mathbf{H}_g}$.
   \item The holomorphic map  
\begin{align*}
\varphi_g: \mathbf{H}_g\to B_g(\CC)
\end{align*}
   corresponding to $\bfb_g$ by the universal property of $B_g(\CC)$ is a solution of the higher Ramanujan equations (Definition \ref{defihreq}).
\end{enumerate}
\end{theorem}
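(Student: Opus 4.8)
The plan is to derive both assertions from one computation: the periods of $\bfomega_k$, $\bfeta_k$ and of their Gauss--Manin derivatives along the $\theta_{kl}$, taken against the universal integral symplectic basis $\beta_g=(\gamma_1,\dots,\gamma_g,\delta_1,\dots,\delta_g)$ of Example~\ref{intsymplbasis}. First I would read off from the description of $\mathbf X_g$ by the lattice $L$ of Example~\ref{torus}, via the comparison isomorphism~(\ref{compisom}), that $\int_{\gamma_i}\bfomega_k=2\pi i\,\delta_{ik}$ and $\int_{\delta_i}\bfomega_k=2\pi i\,\tau_{ik}$. Differentiating with Remark~\ref{derivint}, and using $\partial\tau_{ik}/\partial\tau_{kk}=\delta_{ik}$, this gives $\int_{\gamma_i}\bfeta_k=0$ and $\int_{\delta_i}\bfeta_k=\delta_{ik}$; more generally, using $\partial\tau_{im}/\partial\tau_{kl}=(\mathbf E^{kl})_{im}$, one gets that $\nabla_{\theta_{kl}}\bfeta_m$ has all periods $0$, while $\nabla_{\theta_{kl}}\bfomega_m$ has the same periods as $\sum_{j}(\mathbf E^{kl})_{jm}\bfeta_j$. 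As the comparison isomorphism identifies a de Rham class with its family of periods along the $\gamma_i,\delta_i$, these are equalities of sections of $\mathcal H^1_{\dR}(\mathbf X_g/\mathbf H_g)$.

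For part~(1), the Hodge condition is immediate since $(dz_1,\dots,dz_g)$ is by construction a frame of $\mathcal F^1(\mathbf X_g/\mathbf H_g)$, so $\bfomega_1,\dots,\bfomega_g$ are sections of $\mathcal F^1$. For the symplectic relations I would observe that bilinearity of $\langle\ ,\ \rangle_{E_g}$ together with the symplecticity of $\beta_g$ for $E_g$ yields, for arbitrary sections $\alpha,\beta$ of $\mathcal H^1_{\dR}(\mathbf X_g/\mathbf H_g)$, the Riemann bilinear relation
\[
\langle\alpha,\beta\rangle_{E_g}=\frac{1}{2\pi i}\sum_{j=1}^{g}\left(\int_{\gamma_j}\alpha\int_{\delta_j}\beta-\int_{\delta_j}\alpha\int_{\gamma_j}\beta\right)\text{.}
\]
Substituting the periods above gives $\langle\bfomega_i,\bfomega_j\rangle_{E_g}=2\pi i\,(\tau_{ij}-\tau_{ji})=0$ (this is where the symmetry $\tau=\tau\transp$ enters), $\langle\bfeta_i,\bfeta_j\rangle_{E_g}=0$ and $\langle\bfomega_i,\bfeta_j\rangle_{E_g}=\delta_{ij}$. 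Hence $\bfb_g$ is a symplectic-Hodge basis of $(\mathbf X_g,E_g)_{/\mathbf H_g}$, and Proposition~\ref{reprsympl} yields the classifying map $\varphi_g\colon\mathbf H_g\to B_g(\CC)$, underlying the unique morphism in $\mathcal T_g$ from $(\mathbf X_g,E_g)_{/\mathbf H_g}$ to $(X_{g,\CC}^{\an},E_{\lambda_g})_{/B_{g,\CC}^{\an}}$ with $\varphi_g^*b_g=\bfb_g$.

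For part~(2), I would use the definition of the higher Ramanujan vector fields in \cite{fonseca16}: $v_{kl}$ is characterised through the Gauss--Manin connection by a relation $\nabla_{v_{kl}}b_g=b_g\cdot N_{kl}$ with $N_{kl}\in\mathfrak{sp}_{2g}$ a fixed constant matrix whose only nonzero block, in the block form $b_g=(\,\omega\ \ \eta\,)$, is the lower-left block $\mathbf E^{kl}$ (up to the normalisation chosen there; the matrix $\frac{1}{2\pi i}(\,0\ \ \mathbf E^{kl}\ ;\ 0\ \ 0\,)$ of the introduction is the same datum re-expressed in the period-matrix coordinates of the embedding $B_g(\CC)\subset\Sp_{2g}(\ZZ)\backslash\Sp_{2g}(\CC)$, which transposes and rescales). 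This pins $v_{kl}$ down because in \cite{fonseca16} the assignment $\xi\mapsto(\text{matrix of }\nabla_\xi\text{ in the frame }b_g)$ is shown to be an isomorphism $TB_g\stackrel{\sim}{\to}\mathfrak{sp}_{2g}\tensor\mathcal O_{B_g}$. Since $\mathcal H^1_{\dR}$, $\mathcal F^1$ and $\nabla$ are compatible with base change and $\varphi_g^*b_g=\bfb_g$, pulling back along $\varphi_g$ and applying that isomorphism reduces the equation $\theta_{kl}\varphi_g=v_{kl}\circ\varphi_g$ to the single identity ``the matrix of $\nabla_{\theta_{kl}}$ in the frame $\bfb_g$ equals $N_{kl}$''. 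By the computation of the first paragraph, $\nabla_{\theta_{kl}}\bfeta_m=0$ and $\nabla_{\theta_{kl}}\bfomega_m=\sum_{j}(\mathbf E^{kl})_{jm}\bfeta_j$, so in the block form $\bfb_g=(\,\bfomega\ \ \bfeta\,)$ the matrix of $\nabla_{\theta_{kl}}$ is precisely the constant nilpotent matrix $(\,0\ \ 0\ ;\ \mathbf E^{kl}\ \ 0\,)$, and one concludes after the normalisation check.

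I expect the only genuine difficulty to be bookkeeping: the mathematical content reduces to differentiating the period matrix of $(\mathbf X_g,E_g)$, which by construction is $(\,\mathbf 1_g\ \ \tau\,)$, so the delicate point is to line up carefully the present conventions (the ordering and scaling of $\bfb_g$, the block conventions for $P_g\subset\Sp_{2g}$, the several $2\pi i$'s) with the precise definition of the $v_{kl}$ in \cite{fonseca16}~5.3, and to quote correctly from \cite{fonseca16} the structural fact that the Gauss--Manin connection trivialises $TB_g$ by $\mathfrak{sp}_{2g}$ --- it is this fact that turns ``$\nabla$ acts on $\bfb_g$ as prescribed'' into ``$\varphi_g$ solves the higher Ramanujan equations''.
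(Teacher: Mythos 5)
Your proof is correct, and your verification of part~(2) is essentially the one in the paper (which records it as Proposition~\ref{equivalences}): reduce the ODE $\theta_{kl}\varphi_g=v_{kl}\circ\varphi_g$, via injectivity of the Gauss--Manin/Kodaira--Spencer morphism $c$ from \cite{fonseca16}, to the identities $\nabla_{\theta_{kl}}\bfeta_m=0$ and $\nabla_{\theta_{kl}}\bfomega_m=\sum_j(\mathbf E^{kl})_{jm}\bfeta_j$. (Your phrasing of the quoted fact as an \emph{isomorphism} $T_{B_g}\stackrel{\sim}{\to}\mathfrak{sp}_{2g}\tensor\mathcal O_{B_g}$ is a slight overstatement; what \cite{fonseca16}~Theorem~5.4 provides and what you actually use is injectivity, so the argument is unaffected.)

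For part~(1), however, your route is genuinely different. The paper works in the $C^{\infty}$ frame $(dz_1,\dots,dz_g,d\bar z_1,\dots,d\bar z_g)$: it first writes $\bfeta_k=\sum_l((\Im\tau)^{-1})_{kl}\,\Im dz_l$ (Proposition~\ref{lemme1} and Corollary~\ref{caraceta}), then expresses $dz_i$ in terms of the Riemann form $E_g$ (Lemma~\ref{lemme2}), and evaluates $\langle\bfomega_i,\bfeta_j\rangle_{E_g}$ directly from these formulas; the remaining relation $\langle\bfeta_i,\bfeta_j\rangle_{E_g}=0$ is then deduced indirectly by applying $\nabla_{\theta_{ii}}$ to $\langle\bfomega_i,\bfeta_j\rangle_{E_g}=\delta_{ij}$ and invoking Corollary~\ref{corohor} and the compatibility~(\ref{compconn}). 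You instead bypass the $C^{\infty}$ computation entirely: you express $\langle\ ,\ \rangle_{E_g}$ through the bilinear relation
\[
\langle\alpha,\beta\rangle_{E_g}=\frac{1}{2\pi i}\sum_{j=1}^{g}\left(\int_{\gamma_j}\alpha\int_{\delta_j}\beta-\int_{\delta_j}\alpha\int_{\gamma_j}\beta\right)
\]
(which is a direct consequence of the definition of $\langle\ ,\ \rangle_{E_g}$ and the symplecticity of $\beta_g$), and then everything becomes a mechanical substitution of the period table $\int_{\gamma_i}\bfomega_k=2\pi i\,\delta_{ik}$, $\int_{\delta_i}\bfomega_k=2\pi i\,\tau_{ik}$, $\int_{\gamma_i}\bfeta_k=0$, $\int_{\delta_i}\bfeta_k=\delta_{ik}$. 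Your approach gives all three relations $\langle\bfomega_i,\bfomega_j\rangle=0$, $\langle\bfomega_i,\bfeta_j\rangle=\delta_{ij}$, $\langle\bfeta_i,\bfeta_j\rangle=0$ at once and avoids Lemma~\ref{lemme2}, at the cost of having to state and check the Riemann bilinear relation; it is arguably cleaner for part~(1), while the paper's approach has the advantage of also producing the explicit $C^{\infty}$ expression for $\bfeta_k$, which the paper reuses elsewhere (notably Lemma~\ref{psi}).
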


The main idea in our proof is to compute with a $C^{\infty}$ trivialization of the vector bundle $\mathcal{H}^1_{\dR}(\mathbf{X}_g/\mathbf{H}_g)$. In the next subsection we develop some preliminary background.

\subsection{Preliminary results}

Consider the \emph{complex conjugation}, seen as a $C^{\infty}$ morphism of real vector bundles over $\mathbf{H}_g$,
\begin{align*}
\mathcal{H}^1_{\dR}(\mathbf{X}_g/\mathbf{H}_g) &\to \mathcal{H}^1_{\dR}(\mathbf{X}_g/\mathbf{H}_g)\\
                               \alpha &\mapsto \overline{\alpha}
\end{align*}
induced by the comparison isomorphism (\ref{compisom}), and denote $d\bar{z}_k \defeq \overline{dz_k}$ for every $1\le k \le g$. We may check fiber by fiber that the $2g$-uple of $C^{\infty}$ global sections of $\mathcal{H}^1_{\dR}(\mathbf{X}_g/\mathbf{H}_g)$
\begin{align*}
(dz_1,\ldots,dz_g,d\bar{z}_1,\ldots,d\bar{z}_g)
\end{align*}
trivializes $\mathcal{H}^1_{\dR}(\mathbf{X}_g/\mathbf{H}_g)$ as a $C^{\infty}$ complex vector bundle over $\mathbf{H}_g$.

For $1\le i \le j \le g$ and $1\le k \le g$, let us define
\begin{align*}
\bfeta^{ij}_k \defeq \nabla_{\theta_{ij}} \bfomega_k\text{,}
\end{align*}
so that
\begin{align*}
\bfeta_k = \bfeta_k^{kk}\text{.}
\end{align*}


\begin{prop} \label{lemme1}
Consider the notations in \ref{notationmatrices}. For every $1\le i \le j \le g$ and $1\le k \le g$, we have
\begin{align*}
\bfeta^{ij}_k = \sum_{l=1}^g \mathbf{e}^{\mathsf{T}}_k  \mathbf{E}^{ij}(\Im \tau)^{-1}\mathbf{e}_l \Im dz_l
\end{align*} 
as a $C^{\infty}$ section of $\mathcal{H}^1_{\dR}(\mathbf{X}_g/\mathbf{H}_g)$, where $\Im dz_l \defeq (dz_l -d\bar{z}_l)/2i$. 
\end{prop}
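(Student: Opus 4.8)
The plan is to reduce the identity to a comparison of periods against the global integral symplectic basis $\beta_g=(\gamma_1,\ldots,\gamma_g,\delta_1,\ldots,\delta_g)$ of $(\mathbf{X}_g,E_g)_{/\mathbf{H}_g}$ from Example~\ref{intsymplbasis}. Since $\beta_g$ is a global frame of the local system $R_1{\bfp_g}_*\ZZ_{\mathbf{X}_g}$, the fiberwise comparison isomorphism (\ref{compisom}) shows that the $C^{\infty}$ bundle map $\alpha\mapsto\bigl(\int_{\gamma_1}\alpha,\ldots,\int_{\gamma_g}\alpha,\int_{\delta_1}\alpha,\ldots,\int_{\delta_g}\alpha\bigr)$ is an isomorphism of $\mathcal{H}^1_{\dR}(\mathbf{X}_g/\mathbf{H}_g)$ onto the trivial $C^{\infty}$ bundle of rank $2g$ over $\mathbf{H}_g$. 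Hence two $C^{\infty}$ global sections agree as soon as they have the same periods along each $\gamma_m$ and each $\delta_m$, and it suffices to compute these $2g$ period functions for both sides of the asserted formula.

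First I would record the ``base'' periods of $\bfomega_k=2\pi i\,dz_k$. By the construction of $\mathbf{X}_g$ in Example~\ref{torus} and of the frame $(dz_1,\ldots,dz_g)$ of $\mathcal{F}^1(\mathbf{X}_g/\mathbf{H}_g)\cong(\Lie_{\mathbf{H}_g}\mathbf{X}_g)^{\vee}$ (Lemma~\ref{identcomp}), pairing $dz_k$ against the sections $\gamma_i(\tau)=\mathbf{e}_i$ and $\delta_i(\tau)=\tau\mathbf{e}_i$ of the lattice yields $\int_{\gamma_i}dz_k=\delta_{ik}$ and $\int_{\delta_i}dz_k=\tau_{ki}=\tau_{ik}$. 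Then, applying Remark~\ref{derivint} (differentiation under the period integral, $\int_{\gamma}\nabla_{\theta}\alpha=\theta\int_{\gamma}\alpha$ for a flat section $\gamma$ of $R_1{\bfp_g}_*\ZZ_{\mathbf{X}_g}$) together with $\theta_{ij}=\tfrac{1}{2\pi i}\partial/\partial\tau_{ij}$, I obtain
\[
\int_{\gamma_m}\bfeta^{ij}_k=\theta_{ij}\int_{\gamma_m}\bfomega_k=0,\qquad
\int_{\delta_m}\bfeta^{ij}_k=\theta_{ij}\int_{\delta_m}\bfomega_k=\frac{\partial\tau_{km}}{\partial\tau_{ij}}=\mathbf{E}^{ij}_{km}=\mathbf{e}_k\transp\mathbf{E}^{ij}\mathbf{e}_m,
\]
where the last equalities use that $\tau$ is symmetric, so the coordinate $\tau_{ij}$ (with $i\le j$) governs both $\tau_{ij}$ and $\tau_{ji}$, whence $\partial\tau_{km}/\partial\tau_{ij}=1$ exactly when $\{k,m\}=\{i,j\}$, i.e.\ when $(k,m)=(i,j)$ or $(k,m)=(j,i)$, which is the definition of $\mathbf{E}^{ij}_{km}$ (see \ref{notationmatrices}).

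Next I would compute the periods of the right-hand side. Because the complex conjugation on $\mathcal{H}^1_{\dR}(\mathbf{X}_g/\mathbf{H}_g)$ is the one induced by the real structure via (\ref{compisom}), integration along the real (integral) cycles $\gamma_m,\delta_m$ is conjugation-equivariant, so $\int_{\gamma_m}d\bar z_l=\overline{\delta_{ml}}=\delta_{ml}$ and $\int_{\delta_m}d\bar z_l=\overline{\tau_{lm}}$. Hence $\int_{\gamma_m}\Im dz_l=0$ and $\int_{\delta_m}\Im dz_l=\tfrac{1}{2i}(\tau_{lm}-\overline{\tau_{lm}})=(\Im\tau)_{lm}=(\Im\tau)_{ml}$. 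Consequently the $\gamma_m$-period of the right-hand side vanishes, while its $\delta_m$-period is
\[
\sum_{l=1}^{g}\bigl(\mathbf{e}_k\transp\mathbf{E}^{ij}(\Im\tau)^{-1}\mathbf{e}_l\bigr)(\Im\tau)_{lm}
=\mathbf{e}_k\transp\mathbf{E}^{ij}(\Im\tau)^{-1}\Bigl(\sum_{l=1}^{g}(\Im\tau)_{lm}\,\mathbf{e}_l\Bigr)
=\mathbf{e}_k\transp\mathbf{E}^{ij}(\Im\tau)^{-1}(\Im\tau)\mathbf{e}_m
=\mathbf{e}_k\transp\mathbf{E}^{ij}\mathbf{e}_m,
\]
using $\sum_{l}(\Im\tau)_{lm}\mathbf{e}_l=(\Im\tau)\mathbf{e}_m$. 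This matches the periods of $\bfeta^{ij}_k$ found in the previous step, so the two $C^{\infty}$ sections coincide, proving the formula.

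The argument is essentially linear algebra once this framework is in place, so I expect no conceptual obstacle; the points demanding real care are (i) the bookkeeping around the symmetry of $\tau$, so that $\partial\tau_{km}/\partial\tau_{ij}$ produces the \emph{symmetric} matrix $\mathbf{E}^{ij}$ rather than a single Kronecker delta, and (ii) the compatibility of complex conjugation on $\mathcal{H}^1_{\dR}$ with integration along integral cycles, which is what lets the terms $\Im dz_l$ be treated period by period. As an alternative route, the same identity follows by exhibiting the $\nabla$-horizontal $C^{\infty}$ frame of $\mathcal{H}^1_{\dR}(\mathbf{X}_g/\mathbf{H}_g)$ dual to $\beta_g$, expressing $dz_k$ and $d\bar z_k$ in that frame via the periods above, differentiating termwise, and inverting the resulting change of frame; this makes the appearance of $(\Im\tau)^{-1}$ transparent.
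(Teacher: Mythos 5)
Your proof is correct and follows essentially the same path as the paper's. Both arguments compute the periods of $\bfeta^{ij}_k$ against the symplectic frame $\beta_g$ via Remark~\ref{derivint} and the identities $\int_{\gamma_l}dz_k=\delta_{kl}$, $\int_{\delta_l}dz_k=\tau_{kl}$, $\partial\tau_{kl}/\partial\tau_{ij}=\mathbf{E}^{ij}_{kl}$; the only cosmetic difference is that the paper expands $\bfeta^{ij}_k$ in the $(dz_l,d\bar z_l)$ frame with unknown coefficients and solves the resulting linear system $2i\lambda^{ij}\Im\tau=\mathbf{E}^{ij}$, whereas you directly compute the periods of the proposed right-hand side (using that integration against integral cycles is conjugation-equivariant) and match them, which is an equivalent reorganization of the same linear algebra.
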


\begin{proof}
For $1\le i \le j \le g$ and $1\le k,l \le g$, let $\lambda_{kl}^{ij}$ and $\mu_{kl}^{ij}$ be the $C^{\infty}$ functions on $\mathbf{H}_g$ with values in $\CC$  defined by the equation 
\begin{align*}
\bfeta_{k}^{ij} = \sum_{l=1}^g (\lambda_{kl}^{ij} dz_l + \mu_{kl}^{ij}d\bar{z}_l)\text{.}
\end{align*}
 We must prove that $\lambda_{kl}^{ij} + \mu_{kl}^{ij}=0$ and that $\lambda^{ij}_{kl} = \frac{1}{2i}\textbf{e}\transp_k\textbf{E}^{ij}(\Im \tau)^{-1}\textbf{e}_l$.

Let us consider the integral symplectic basis $\beta_g=(\gamma_1,\ldots,\gamma_g,\delta_1,\ldots,\delta_g)$ of $R_1{\bfp_g}_*\ZZ_{\mathbf{X}_g}$ defined in Example \ref{intsymplbasis}. For every $1\le i \le j \le g$ and $1\le k,l\le g$, we have (cf. Remark \ref{derivint})
\begin{align*}
\int_{\gamma_l}\bfeta^{ij}_k=\int_{\gamma_l}\nabla_{\frac{\partial}{\partial \tau_{ij}}}dz_k = \frac{\partial}{\partial \tau_{ij}}\int_{\gamma_l}dz_k = \frac{\partial}{\partial \tau_{ij}}\delta_{kl} = 0
\end{align*}
and
\begin{align*}
\int_{\delta_l} \bfeta_{k}^{ij} = \int_{\delta_l}\nabla_{\frac{\partial}{\partial \tau_{ij}}}dz_k =\frac{\partial}{\partial \tau_{ij}}\int_{\delta_l}dz_k = \frac{\partial}{\partial \tau_{ij}}\tau_{kl} = \textbf{E}^{ij}_{kl}\text{.}
\end{align*}
Thus, by definition of $\lambda^{ij}_{kl}$ and $\mu^{ij}_{kl}$, we obtain
\begin{align*}
0=\int_{\gamma_l}\bfeta^{ij}_k =\sum_{m=1}^g\left( \lambda^{ij}_{km}\int_{\gamma_l}dz_m + \mu^{ij}_{km}\int_{\gamma_l}d\bar{z}_m\right) = \lambda^{ij}_{kl} + \mu^{ij}_{kl}
\end{align*}
and
\begin{align*}
\textbf{E}^{ij}_{kl} = \int_{\delta_l}\bfeta^{ij}_k = \sum_{m=1}^g\left( \lambda^{ij}_{km}\int_{\delta_l}dz_m + \mu^{ij}_{km}\int_{\delta_l}d\bar{z}_m\right) = \sum_{m=1}^g \lambda^{ij}_{km}(\tau_{ml}-\overline{\tau_{ml}}) = 2i\sum_{m=1}^g \lambda^{ij}_{km}(\Im\tau)_{ml}\text{.}
\end{align*}
In matricial notation, if we put $\lambda^{ij} \defeq (\lambda^{ij}_{kl})_{1\le k,l\le g} \in M_{g\times g}(\CC)$, then we have shown that
\begin{align*}
2i\lambda^{ij}\Im \tau = \textbf{E}^{ij}
\end{align*}
The assertion follows.
\end{proof}

Specializing to the case $i=j=k$ in the above proposition, we obtain the following formulas.

\begin{coro} \label{caraceta}
For any $1\le k \le g$, we have
\begin{align*}
\bfeta_k = \sum_{l=1}^g ((\Im \tau)^{-1})_{kl} \Im dz_l\text{.}
\end{align*}
In particular, $\bfeta_k$ is the unique global section of $\mathcal{H}^1_{\dR}(\mathbf{X}_g/\mathbf{H}_g)$ satisfying
\begin{align*}
\int_{\gamma_l}\bfeta_k =0\ \ \ \text{ and } \ \ \ \int_{\delta_l}\bfeta_k = \delta_{kl}
\end{align*}
for every $1\le l \le g$. In other words, $\bfeta_k$ may be identified with $E_g(\gamma_k, \ )$ under the comparison isomorphism (\ref{compisom}).
\end{coro}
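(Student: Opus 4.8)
The plan is to obtain this corollary by specializing Proposition \ref{lemme1} to $i=j=k$ and then reading off the period characterization from the computations already carried out in its proof.

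First I would note that $\mathbf{E}^{kk} = \mathbf{e}_k\mathbf{e}_k\transp$, hence $\mathbf{e}_k\transp\mathbf{E}^{kk} = \mathbf{e}_k\transp$; substituting $i=j=k$ into the formula of Proposition \ref{lemme1} and recalling $\bfeta_k = \bfeta_k^{kk}$ gives at once
\begin{align*}
\bfeta_k = \sum_{l=1}^g \mathbf{e}_k\transp(\Im\tau)^{-1}\mathbf{e}_l\,\Im dz_l = \sum_{l=1}^g \big((\Im\tau)^{-1}\big)_{kl}\,\Im dz_l\text{.}
\end{align*}
For the period characterization, I would recall from the proof of Proposition \ref{lemme1} that, for the integral symplectic basis $\beta_g=(\gamma_1,\ldots,\gamma_g,\delta_1,\ldots,\delta_g)$ of Example \ref{intsymplbasis}, one has $\int_{\gamma_l}\bfeta_k^{ij}=0$ and $\int_{\delta_l}\bfeta_k^{ij}=\mathbf{E}^{ij}_{kl}$; taking $i=j=k$ yields $\int_{\gamma_l}\bfeta_k=0$ and $\int_{\delta_l}\bfeta_k = \mathbf{E}^{kk}_{kl}=\delta_{kl}$. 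Uniqueness is then immediate: since the comparison isomorphism (\ref{compisom}) identifies $\mathcal{H}^1_{\dR}(\mathbf{X}_g/\mathbf{H}_g)$ with $\mathcal{H}om_{\ZZ}(R_1{\bfp_g}_*\ZZ_{\mathbf{X}_g},\mathcal{O}_{\mathbf{H}_g})$ and $\beta_g$ is a trivializing family of $R_1{\bfp_g}_*\ZZ_{\mathbf{X}_g}$, a global section of $\mathcal{H}^1_{\dR}(\mathbf{X}_g/\mathbf{H}_g)$ is determined by its periods along the $\gamma_l$ and the $\delta_l$.

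It remains to identify $\bfeta_k$ with $E_g(\gamma_k,\ \cdot\ )$. By the very definition of the comparison isomorphism, the section of $\mathcal{H}^1_{\dR}(\mathbf{X}_g/\mathbf{H}_g)$ corresponding to $E_g(\gamma_k,\ \cdot\ )$ is the one with periods $\int_{\gamma_l}=E_g(\gamma_k,\gamma_l)$ and $\int_{\delta_l}=E_g(\gamma_k,\delta_l)$; since $\beta_g$ is symplectic for $E_g$ (Example \ref{intsymplbasis}), these equal $0$ and $\delta_{kl}$ respectively, matching the periods of $\bfeta_k$ just computed. By the uniqueness established above, $\bfeta_k=E_g(\gamma_k,\ \cdot\ )$. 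I do not anticipate any genuine difficulty here: the statement is a direct specialization of Proposition \ref{lemme1}, and the only point requiring care is keeping the linearity and sign conventions of the comparison isomorphism (\ref{compisom}) and of the Riemann form $E_g$ fixed in Example \ref{torus} consistent throughout.
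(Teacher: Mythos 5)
Your argument is correct and follows exactly the paper's route: the paper proves the corollary simply by specializing Proposition \ref{lemme1} to $i=j=k$, and what you've written fills in the intermediate details (the identity $\mathbf{e}_k\transp\mathbf{E}^{kk}=\mathbf{e}_k\transp$, the period computations $\int_{\gamma_l}\bfeta_k^{ij}=0$ and $\int_{\delta_l}\bfeta_k^{ij}=\mathbf{E}^{ij}_{kl}$ from the proof of Proposition \ref{lemme1}, and the identification with $E_g(\gamma_k,\ \cdot\ )$ via the symplectic relations for $\beta_g$) that the paper leaves implicit. No gaps.
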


Since every section of $R^1{\bfp_g}_*\ZZ_{\mathbf{X}_g} = (R_1{\bfp_g}_*\ZZ_{\mathbf{X}_g})^{\vee}$, seen as a section of $\mathcal{H}^1_{\dR}(\mathbf{X}_g/\mathbf{H}_g)$ via the comparison isomorphism (\ref{compisom}), is horizontal for the Gauss-Manin connection, we obtain the next corollary.


\begin{coro}\label{corohor}
For any $1\le k \le g$, the global section $\bfeta_k$ of $\mathcal{H}^1_{\dR}(\mathbf{X}_g/\mathbf{H}_g)$ is horizontal for the Gauss-Manin connection:
\begin{align*}
\nabla \bfeta_k = 0\text{.}
\end{align*}
\end{coro}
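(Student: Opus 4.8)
The plan is to read this off directly from Corollary~\ref{caraceta} together with the compatibility of the comparison isomorphism with the Gauss--Manin connection recalled in~\ref{sectioncompisom}. First I would recall that, by \ref{sectioncompisom}, the comparison isomorphism (\ref{compisom}) identifies the local system $R^1{\bfp_g}_*\CC_{\mathbf{X}_g}$ with the subsheaf of $\mathcal{H}^1_{\dR}(\mathbf{X}_g/\mathbf{H}_g)$ consisting of sections horizontal for $\nabla$. Hence it suffices to check that, under (\ref{compisom}), the section $\bfeta_k$ corresponds to a global section of $R^1{\bfp_g}_*\CC_{\mathbf{X}_g}$ --- in fact, of the integral sublattice $R^1{\bfp_g}_*\ZZ_{\mathbf{X}_g}$.

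Next I would use that $E_g$ is a \emph{principal} Riemann form: by definition this means that $\gamma \mapsto E_g(\gamma,\ )$ induces an isomorphism of local systems $R_1{\bfp_g}_*\ZZ_{\mathbf{X}_g} \stackrel{\sim}{\to} (R_1{\bfp_g}_*\ZZ_{\mathbf{X}_g})^{\vee} \cong R^1{\bfp_g}_*\ZZ_{\mathbf{X}_g}$. Since $\gamma_k$ (Example~\ref{intsymplbasis}) is a global section of $R_1{\bfp_g}_*\ZZ_{\mathbf{X}_g}$, the section $E_g(\gamma_k,\ )$ is a global section of $R^1{\bfp_g}_*\ZZ_{\mathbf{X}_g}$; and by the last assertion of Corollary~\ref{caraceta} this is precisely $\bfeta_k$ viewed through (\ref{compisom}). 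Therefore $\bfeta_k$ is horizontal, i.e.\ $\nabla\bfeta_k = 0$.

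Alternatively, one can avoid invoking the internal structure of the comparison isomorphism and argue by a short computation using Remark~\ref{derivint}: for any holomorphic vector field $\theta$ on $\mathbf{H}_g$ and any section $\gamma$ of $R_1{\bfp_g}_*\ZZ_{\mathbf{X}_g}$ one has $\int_{\gamma}\nabla_{\theta}\bfeta_k = \theta(\int_{\gamma}\bfeta_k)$, and Corollary~\ref{caraceta} shows that $\int_{\gamma_l}\bfeta_k = 0$ and $\int_{\delta_l}\bfeta_k = \delta_{kl}$ are constant; since the comparison pairing is perfect, a section of $\mathcal{H}^1_{\dR}(\mathbf{X}_g/\mathbf{H}_g)$ whose integrals over all members of the trivializing family $\beta_g$ vanish must be zero, so $\nabla_\theta\bfeta_k = 0$ for every $\theta$. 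I do not expect any genuine obstacle in either route: the whole content has already been isolated in Corollary~\ref{caraceta}, and this corollary is essentially a restatement of it.
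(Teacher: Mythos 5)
Your first route is exactly the paper's argument: Corollary~\ref{caraceta} identifies $\bfeta_k$ with $E_g(\gamma_k,\,\cdot\,)$, a section of $R^1{\bfp_g}_*\ZZ_{\mathbf{X}_g}$, and such sections are horizontal under the comparison isomorphism. The alternative via Remark~\ref{derivint} is a correct and slightly more hands-on variant, but the paper does not need it since the identification in Corollary~\ref{caraceta} already does all the work.
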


Our next goal is to use the duality given by the Riemann form $E_g$ to express $dz_l$ in terms of $C^{\infty}$ sections of $\Lie_{\mathbf{H}_g}\mathbf{X}_g$.

\begin{lemma} \label{lemme2}
 Let $1\le k \le g$, and denote by $\tau_k$ the $k$-th column of $\tau \in \mathbf{H}_g$. Then
\begin{align*}
dz_k = -E_g(i\Im\tau_k, \ ) + iE_g(\Im \tau_k, \ )
\end{align*}
as a $C^{\infty}$ section of $\mathcal{H}^1_{\dR}(\mathbf{X}_g/\mathbf{H}_g)$ under the comparison isomorphism (\ref{compisom}).
\end{lemma}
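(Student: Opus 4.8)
The plan is to verify the claimed identity fiber by fiber, using that $\mathcal{H}^1_{\dR}(\mathbf{X}_g/\mathbf{H}_g)$ is trivialized as a $C^{\infty}$ complex vector bundle by $(dz_1,\dots,dz_g,d\bar z_1,\dots,d\bar z_g)$, so it suffices to compute the value of both sides of the identity on a fixed $2g$-uple of sections of $R_1{\bfp_g}_*\ZZ_{\mathbf{X}_g}$ that generate it as a $\CC$-vector bundle — the integral symplectic basis $\beta_g = (\gamma_1,\dots,\gamma_g,\delta_1,\dots,\delta_g)$ of Example \ref{intsymplbasis}, where $\gamma_l(\tau) = \mathbf{e}_l$ and $\delta_l(\tau) = \tau\mathbf{e}_l$. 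Since a section of $\mathcal{H}^1_{\dR}$ is determined by its pairings $\int_{\gamma_l}(\,\cdot\,)$ and $\int_{\delta_l}(\,\cdot\,)$ (this is exactly the content of the comparison isomorphism (\ref{compisom})), the claim reduces to checking that the right-hand side has the same periods as $dz_k$, i.e. that
\begin{align*}
\int_{\gamma_l}\!\big(-E_g(i\Im\tau_k,\ ) + iE_g(\Im\tau_k,\ )\big) = \delta_{kl},\qquad
\int_{\delta_l}\!\big(-E_g(i\Im\tau_k,\ ) + iE_g(\Im\tau_k,\ )\big) = \tau_{kl}.
\end{align*}

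To evaluate these, first I would unwind what the expression $E_g(v,\ )$, for a $C^{\infty}$ section $v$ of $\Lie_{\mathbf{H}_g}\mathbf{X}_g = \CC^g\times\mathbf{H}_g$, means as a section of $\mathcal{H}^1_{\dR}$ under (\ref{compisom}). Recall from Example \ref{torus} that $E_g = \Im H$ with $H(v,w) = \bar v\transp(\Im\tau)^{-1}w$; as a $C^{\infty}$ $1$-form, $E_g(v,\ )$ is the $\RR$-linear functional $w\mapsto E_g(v,w)$ on the lattice/torus. Then I would compute its pairing with $\gamma_l$ and $\delta_l$: since $\gamma_l$ corresponds to the real cycle given by the lattice vector $\mathbf{e}_l$ and $\delta_l$ to the lattice vector $\tau\mathbf{e}_l$, integrating $E_g(v,\ )$ over $\gamma_l$ gives $E_g(v,\mathbf{e}_l)$ and over $\delta_l$ gives $E_g(v,\tau\mathbf{e}_l)$ — this is the "integration of differential forms" interpretation of the pairing recalled in \ref{sectioncompisom}. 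So the whole computation collapses to evaluating $E_g(i\Im\tau_k,\mathbf{e}_l)$, $E_g(\Im\tau_k,\mathbf{e}_l)$, $E_g(i\Im\tau_k,\tau\mathbf{e}_l)$, and $E_g(\Im\tau_k,\tau\mathbf{e}_l)$ explicitly from the formula $E_g = \Im H$, using $\Im\tau_k = (\Im\tau)\mathbf{e}_k$ and the symmetry and positive-definiteness of $\Im\tau$.

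The only mild subtlety — and where I expect to spend the most care rather than face a genuine obstacle — is bookkeeping the $\CC$-antilinearity of $H$ in its first slot versus the $\RR$-bilinearity of $E_g$, and correctly splitting $\tau = \Re\tau + i\Im\tau$ when computing $E_g(\,\cdot\,,\tau\mathbf{e}_l)$; one must track that $E_g(iv,w) = \Re H(v,w)$ (up to sign with the stated Hermitian convention) so that the combination $-E_g(i\Im\tau_k,\ ) + iE_g(\Im\tau_k,\ )$ reassembles into a holomorphic $(1,0)$-form with the right periods. Concretely, $\int_{\gamma_l}$ of the right-hand side should work out to $-\Re H(\Im\tau_k,\mathbf{e}_l)+i\Im H(\Im\tau_k,\mathbf{e}_l) = -\overline{H(\Im\tau_k,\mathbf{e}_l)}$, and since $H(\Im\tau_k,\mathbf{e}_l) = ((\Im\tau)\mathbf{e}_k)\transp(\Im\tau)^{-1}\mathbf{e}_l = \mathbf{e}_k\transp\mathbf{e}_l = \delta_{kl}$ (real), this is $-\delta_{kl}$; a comparison with $\int_{\gamma_l}dz_k = \delta_{kl}$ shows that one should double-check the overall sign in the statement, or equivalently confirm the sign conventions in Definition \ref{defiriemannform} and the comparison isomorphism are being applied consistently. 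Once the signs are pinned down, $\int_{\delta_l}$ is handled identically, giving $\tau_{kl}$ after using $\tau\mathbf{e}_l = \tau_l$ and expanding $H(\Im\tau_k,\tau_l) + \overline{H(i\Im\tau_k,\tau_l)}$-type terms. Matching both sets of periods with those of $dz_k$ (namely $\delta_{kl}$ and $\tau_{kl}$, as already used in the proof of Proposition \ref{lemme1}) then forces the identity by the faithfulness of the period pairing.
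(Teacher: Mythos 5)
Your strategy is the same as the paper's: use the comparison isomorphism to reduce the identity to an evaluation of both sides against sections of $R_1{\bfp_g}_*\ZZ_{\mathbf{X}_g}$ (the paper does it against an arbitrary section $\gamma$ rather than against the specific basis $\gamma_l,\delta_l$, but that is cosmetic). However, the sign worry you raise is a symptom of an error in your own bookkeeping, not in the statement. With the paper's convention ($H$ anti-linear in the \emph{first} slot), for any sections $v,w$ one has
\begin{align*}
E_g(iv,w) = \Im H(iv,w) = \Im\bigl(\overline{i}\,H(v,w)\bigr) = \Im\bigl(-i\,H(v,w)\bigr) = -\Re H(v,w)\text{,}
\end{align*}
so the minus sign you parenthetically hedged about ("up to sign") is genuinely there. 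Consequently
\begin{align*}
-E_g(iv,w) + iE_g(v,w) = \Re H(v,w) + i\Im H(v,w) = H(v,w)\text{,}
\end{align*}
not $-\overline{H(v,w)}$ as you wrote. Taking $v=\Im\tau_k$ and $w=\gamma_l=\mathbf{e}_l$, and using $\Im\tau_k=(\Im\tau)\mathbf{e}_k$ together with the symmetry of $\Im\tau$, gives $H(\Im\tau_k,\mathbf{e}_l)=\Im\tau_k\transp(\Im\tau)^{-1}\mathbf{e}_l=\mathbf{e}_k\transp\mathbf{e}_l=\delta_{kl}$, which matches $\int_{\gamma_l}dz_k=\delta_{kl}$ exactly; the $\int_{\delta_l}$ computation then works out the same way (yielding $\tau_{kl}$). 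So once you fix this sign, your proof closes and agrees with the statement; there is no sign ambiguity in the lemma to be resolved.

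One further small remark: the paper's proof simply computes $-E_g(i\Im\tau_k,\gamma)+iE_g(\Im\tau_k,\gamma)=\mathbf{e}_k\transp\gamma=dz_k(\gamma)$ for an arbitrary section $\gamma$ of $R_1{\bfp_g}_*\ZZ_{\mathbf{X}_g}$ in one pass, which avoids having to split into the $\gamma_l$ and $\delta_l$ cases and slightly shortens the argument; you may find it cleaner to do the same.
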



\begin{proof}
Note that $\Im\tau_k = (\Im\tau) \textbf{e}_k$. Let $\gamma$ be a section of $R_1{\bfp_g}_*\ZZ_{\mathbf{X}_g}$. As $\Im \tau$ is symmetric and $\gamma = \Re \gamma + i \Im \gamma$, we have
\begin{align*}
-E_g(i\Im\tau_k, \gamma ) + iE_g(\Im\tau_k ,\gamma) &= -\Im (\overline{i\Im \tau_k}\transp (\Im \tau)^{-1} \gamma) + i\Im(\overline{\Im \tau_k}\transp (\Im \tau)^{-1}\gamma)\\
&= \Im (i\textbf{e}_k\transp (\Im \tau) (\Im\tau)^{-1}\gamma) + i\Im (\textbf{e}_k\transp (\Im \tau) (\Im\tau)^{-1}\gamma) \\ 
             &=\Re (\textbf{e}_k\transp \gamma) + i\Im (\textbf{e}\transp_k \gamma) \\
             &= \textbf{e}_k\transp \gamma = dz_k(\gamma) \text{.}
\end{align*}
\end{proof}

\subsection{Proof of Theorem \ref{theoremsolution}}

We prove parts (1) and (2) separately.

\begin{proof}[Proof of Theorem \ref{theoremsolution} (1)]
As each $\bfomega_k$ is by definition a section of $\mathcal{F}^1(\mathbf{X}_g/\mathbf{H}_g)$, to prove that $\bfb_g$ is a symplectic-Hodge basis of $(\mathbf{X}_g,E_g)_{/\mathbf{H}_g}$ it is sufficient to show that it is a symplectic trivialization of $\mathcal{H}^1_{\dR}(\mathbf{X}_g/\mathbf{H}_g)$ with respect to the holomorphic symplectic form $\langle \ , \ \rangle_{E_g}$. For this, we claim that it is enough to prove that
\begin{align} \label{equation1} \tag{$*$}
\langle \bfomega_i,\bfeta_j \rangle_{E_g} = \delta_{ij}
\end{align}
for every $1\le i \le j\le g$. Indeed, by Corollary \ref{corohor} and by the compatibility (\ref{compconn}), equation (\ref{equation1}) implies that $\langle \bfeta_i,\bfeta_j\rangle_{E_g}=0$ (apply $\nabla_{\theta_{ii}}$). Since we already know that $\mathcal{F}^1(\mathbf{X}_g/\mathbf{H}_g)$ is Lagrangian, this proves indeed that $\bfb_g$ is a symplectic trivialization of $\mathcal{H}^1_{\dR}(\mathbf{X}_g/\mathbf{H}_g)$.

Fix $1\le i \le j \le g$. By Corollary \ref{caraceta}, we have 
\begin{align*}
 \bfeta_j = \sum_{l=1}^g ((\Im \tau)^{-1})_{jl}\Im dz_l \text{,}
 \end{align*}
thus
 \begin{align*}
 \langle \bfomega_i,\bfeta_j \rangle_{E_g} = 2\pi i\sum_{l=1}^g ((\Im \tau)^{-1})_{jl} \langle dz_i,\Im dz_l \rangle_{E_g}\text{.} 
 \end{align*}
Now, using Lemma \ref{lemme2}, we obtain
 \begin{align*}
 \langle dz_i,\Im dz_l \rangle_{E_g} &= \langle -E_g(i\Im\tau_i, \ ) + iE_g(\Im \tau_i, \ ), E_g( \Im\tau_l, \ )\rangle_{E_g}\\
         &=-\langle E_g(i\Im \tau_i, \ ), E_g( \Im \tau_l,\ )\rangle_{E_g} + i\langle E_g(\Im \tau_i , \ ),E_g( \Im\tau_l , \ )\rangle_{E_g}\\
&=\frac{1}{2\pi i}\left(-E_g(i\Im \tau_i,\Im\tau_l) + iE_g(\Im \tau_i,\Im \tau_l) \right)\\
&=\frac{1}{2\pi i}\Im (i\Im \tau_i\transp(\Im\tau)^{-1} \Im\tau_l)\\
&=\frac{1}{2\pi i}\textbf{e}_i\transp (\Im \tau)\textbf{e}_l = \frac{1}{2\pi i}(\Im \tau)_{il}\text{.}
\end{align*}
Therefore, since $\Im \tau$ is symmetric,
\begin{align*}
 \langle \bfomega_i,\bfeta_j \rangle_{E_g} = \sum_{l=1}^g((\Im \tau)^{-1})_{jl}(\Im \tau)_{li}  = \delta_{ij}\text{.}
\end{align*}
\end{proof}

Let $M$ be a complex manifold and $(X,E)$ be a principally polarized complex torus over $M$ of relative dimension $g$. Let us denote by $\nabla$ the Gauss-Manin connection on $\mathcal{H}^1_{\dR}(X/M)$. To any symplectic-Hodge basis $b=(\omega_1,\ldots,\omega_g,\eta_1,\ldots,\eta_g)$ of $(X,E)_{/M}$ we can associate a morphism of $\mathcal{O}_M$-modules (cf. \cite{fonseca16} Theorem 5.4)
\begin{align*}
 c: T_M &\longrightarrow \Gamma^2(\mathcal{F}^1(X/M)^{\vee})\oplus \mathcal{H}^1_{\dR}(X/M)^{\oplus g}\\
 \theta &\mapsto (\kappa(\theta), \nabla_{\theta}\eta_1,\ldots, \nabla_{\theta}\eta_g)
\end{align*} 
where
\begin{align*}
\kappa : T_M &\longrightarrow \Gamma^2(\mathcal{F}^1(X/M)^{\vee})\\
          \theta &\mapsto \sum_{i=1}^g  \langle \ \ , \eta_i \rangle_E \tensor \langle \ \ , \nabla_{\theta}\omega_i \rangle_E 
\end{align*}
is the \emph{Kodaira-Spencer morphism} defined as in \cite{fonseca16} 5.1.2.\footnote{Recall that $\Gamma^2(\mathcal{F}^1(X/M)^{\vee})$ denotes the submodule of symmetric tensors in $\mathcal{F}^1(X/M)^{\vee}\tensor \mathcal{F}^1(X/M)^{\vee}$.}

This construction is compatible with base change: if $M'$ is another complex manifold, $(X',E')$ is a principally polarized complex torus over $M'$ of relative dimension $g$, and $b'$ is a symplectic-Hodge basis of $(X',E')_{/M'}$, then for any morphism $F_{/f} : (X',E')_{/M'} \to (X,E)_{/M}$ in $\mathcal{T}_g$ such that $F^*b=b'$, the diagram
$$
  \raisebox{-0.5\height}{\includegraphics{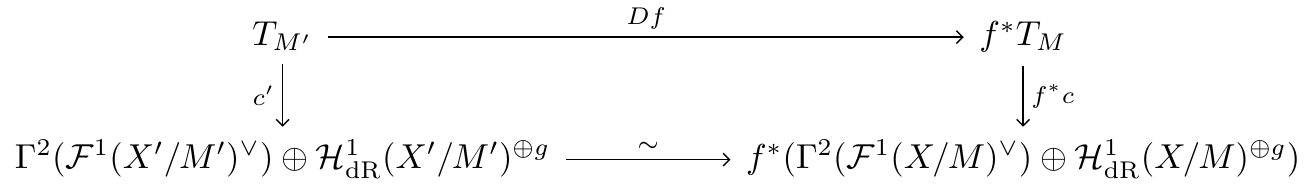}}
$$
commutes.

\begin{obs}
Applying the above construction to the universal symplectic-Hodge basis $b_g$ of $(X_{g,\CC}^{\an},E_{\lambda_{g}})_{/B_{g,\CC}^{\an}}$, we obtain the analytification $c_{g,\CC}^{\an}$ of the morphism $c_g$ defined in \cite{fonseca16} Theorem 5.4.
\end{obs}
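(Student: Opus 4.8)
The plan is to observe that the morphism $c$ is assembled from data each ingredient of which commutes with analytification --- the de Rham bundle $\mathcal{H}^1_{\dR}$ together with its Hodge subbundle $\mathcal{F}^1$ and its Gauss--Manin connection $\nabla$, the holomorphic symplectic form $\langle\ ,\ \rangle$, and the symplectic--Hodge basis itself --- and then to feed these compatibilities into the defining formula $\theta\mapsto(\kappa(\theta),\nabla_{\theta}\eta_1,\dots,\nabla_{\theta}\eta_g)$ one term at a time.

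Concretely, write $p:X_g\to B_g$ for the universal principally polarized abelian scheme. The comparisons I would invoke are: (i) the analytification of the coherent sheaf $H^1_{\dR}(X_g/B_g)$ is canonically $\mathcal{H}^1_{\dR}(X_{g,\CC}^{\an}/B_{g,\CC}^{\an})$, as already used in \ref{unifabsch}; since the algebraic Hodge bundle is the image of $p_*\Omega^1_{X_g/B_g}$ and analytification is exact and commutes with pushforward along the proper morphism $p$, this identification carries it onto $\mathcal{F}^1(X_{g,\CC}^{\an}/B_{g,\CC}^{\an})$; (ii) the same identification is horizontal, i.e. it intertwines the algebraic and analytic Gauss--Manin connections, which both arise from the Katz--Oda construction (cf.\ \cite{KO68}); (iii) by Lemma \ref{compsympl}\,(2) the analytification of the algebraic symplectic form $\langle\ ,\ \rangle_{\lambda_g}$ is the analytic form $\langle\ ,\ \rangle_{E_{\lambda_g}}$; and (iv) by Proposition \ref{reprsympl} (cf.\ also Remark \ref{relreprcomp}) the analytification of the algebraic universal symplectic--Hodge basis $b_g$ of $(X_g,\lambda_g)_{/B_g}$ is the analytic universal symplectic--Hodge basis $b_g$ of $(X_{g,\CC}^{\an},E_{\lambda_g})_{/B_{g,\CC}^{\an}}$, the Hodge and symplectic conditions being preserved precisely by (i) and (iii).

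Granting (i)--(iv), the identity $c_{b_g}=c_{g,\CC}^{\an}$ becomes formal. First, the target of $c_g$ analytifies to the target of $c_{b_g}$, using (i) and the fact that $\Gamma^2$ of a locally free sheaf commutes with analytification; likewise $T_{B_g}$ analytifies to $T_{B_{g,\CC}^{\an}}$. Next, the Kodaira--Spencer morphism $\kappa$ of \cite{fonseca16} 5.1.2, being $\theta\mapsto\sum_{i}\langle\ ,\eta_i\rangle\tensor\langle\ ,\nabla_{\theta}\omega_i\rangle$ in the components of the symplectic--Hodge basis, analytifies to its analytic counterpart by (ii), (iii), (iv), and each algebraic $\nabla_{\theta}\eta_i$ matches the analytic $\nabla_{\theta}\eta_i$ by (ii) and (iv). Since $c_g$ of \cite{fonseca16} Theorem 5.4 and $c_{b_g}$ of the present section are given by the identical formula in these analytification-compatible ingredients, they agree after analytification. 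The only step that is not pure bookkeeping is (ii), the compatibility of the algebraic and analytic Gauss--Manin connections; this is the point I would expect to require care, but it is classical --- it follows from the naturality of the Katz--Oda construction under the analytification functor, equivalently from Deligne's Riemann--Hilbert correspondence (cf.\ \cite{deligne70}).
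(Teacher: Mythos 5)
Your proposal is correct and is essentially the argument the paper leaves implicit: the remark is stated without proof precisely because it reduces to the compatibilities already set up — the GAGA-type identification of $H^1_{\dR}(X_g/B_g)^{\an}$ with $\mathcal{H}^1_{\dR}(X_{g,\CC}^{\an}/B_{g,\CC}^{\an})$ together with its Hodge subbundle and Gauss--Manin connection, Lemma \ref{compsympl} for the symplectic forms, and Remark \ref{relreprcomp}/Proposition \ref{reprsympl} for the universal symplectic--Hodge basis — after which the coincidence of the two morphisms is formal since they are given by the same formula in these ingredients. Your identification of the Gauss--Manin compatibility (Katz--Oda, cf.\ \cite{KO68}, \cite{deligne70}) as the only step needing care is exactly right.
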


Part (2) in Theorem \ref{theoremsolution} will be an easy consequence of the following characterization.

\begin{prop} \label{equivalences}
Let $U\subset \mathbf{H}_g$ be an open subset and $u: U \to B_g(\CC)$ be the holomorphic map corresponding to a principally polarized complex torus $(X,E)$ over $U$ endowed with some symplectic-Hodge basis $b=(\omega_1,\ldots,\omega_g,\eta_1,\ldots,\eta_g)$. Then the following are equivalent:
\begin{enumerate}
    \item $u$ is a solution of the higher Ramanujan equations.
    \item For every $1\le i \le j \le g$, we have
\begin{align*}
c(\theta_{ij}) = u^*c_{g,\CC}^{\an}(v_{ij})
\end{align*}
where $c : T_U \longrightarrow \Gamma^2(\mathcal{F}^1(X/U)^{\vee})\oplus \mathcal{H}^1_{\dR}(X/U)^{\oplus g}$ is the morphism defined above for the symplectic-Hodge basis $b$ of $(X,E)_{/U}$, and $v_{ij}$ are the higher Ramanujan vector fields on $B_g(\CC)$.
    \item For every $1\le i \le j \le g$, we have
      \begin{itemize}
    \item[($i$)] $\nabla_{\theta_{ij}} \omega_i = \eta_j$, $\nabla_{\theta_{ij}} \omega_j = \eta_i$, and $\nabla_{\theta_{ij}}\omega_k =0$, for $k\notin \{i,j\}$
    \item[($ii$)] $\nabla_{\theta_{ij}}\eta_k =0$, for $1 \le k \le g$.
 \end{itemize}
\end{enumerate} 
\end{prop}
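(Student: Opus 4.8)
The plan is to prove the two equivalences $(1)\Leftrightarrow(2)$ and $(2)\Leftrightarrow(3)$ separately. The first is essentially formal: it rests on the universal property of $B_g(\CC)$ (Proposition \ref{reprsympl}) and on the base-change compatibility of the construction $b\mapsto c$ recalled just above the proposition. The second is a direct computation with the Gauss--Manin connection, using only the symplectic-Hodge relations satisfied by $b$ and the compatibility $(\ref{compconn})$ of $\nabla$ with $\langle\ ,\ \rangle_E$.

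For $(1)\Leftrightarrow(2)$, I would first use that, by Proposition \ref{reprsympl}, the map $u$ is the same datum as a morphism $F_{/u}:(X,E)_{/U}\to(X_{g,\CC}^{\an},E_{\lambda_g})_{/B_g(\CC)}$ in $\mathcal{T}_g$ with $F^{*}b_g=b$; the isomorphisms of de Rham bundles, Hodge subbundles and holomorphic symplectic forms induced by $F$ carry $b$ to the $u$-pullback of $b_g$, so the base-change compatibility diagram stated before the proposition says exactly that $c(\theta)=u^{*}c_{g,\CC}^{\an}\bigl(du(\theta)\bigr)$ for every $\theta\in T_U$, the two sides being identified as sections of $\Gamma^2(\mathcal{F}^1(X/U)^{\vee})\oplus\mathcal{H}^1_{\dR}(X/U)^{\oplus g}$ through $F$. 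Since $du(\theta_{ij})=\theta_{ij}u$ and $v_{ij}\circ u=u^{*}v_{ij}$, applying $u^{*}c_{g,\CC}^{\an}$ to the two sides of $\theta_{ij}u=v_{ij}\circ u$ yields precisely $c(\theta_{ij})=u^{*}c_{g,\CC}^{\an}(v_{ij})$; this gives $(1)\Rightarrow(2)$ at once. For the converse I would invoke that $c_{g,\CC}^{\an}$ is injective as a morphism of holomorphic vector bundles --- a fact contained in \cite{fonseca16} Theorem 5.4, and in any case built into the definition of the higher Ramanujan vector fields (\cite{fonseca16} 5.3) as the \emph{unique} vector fields with a prescribed $c_g$-image ---, so that $u^{*}c_{g,\CC}^{\an}$ is fibrewise injective and the equality of images forces $\theta_{ij}u=v_{ij}\circ u$.

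For $(2)\Leftrightarrow(3)$, the starting point is the explicit description, from \cite{fonseca16} 5.3, of $c_{g,\CC}^{\an}(v_{ij})$: it is the section whose last $g$ components (the $\nabla\eta$'s) vanish and whose Kodaira--Spencer component takes the value $\mathbf{E}^{ij}_{kl}$ on the pair $(\omega^g_k,\omega^g_l)$ of the universal frame of $\mathcal{F}^1(X_g/B_g)$. Pulling back along $u$ (which sends $b_g$ to $b$), condition $(2)$ becomes: $\nabla_{\theta_{ij}}\eta_k=0$ for all $k$, together with $\kappa(\theta_{ij})(\omega_k,\omega_l)=\mathbf{E}^{ij}_{kl}$ for all $k,l$. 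The first clause is literally $(3)(ii)$. For the second, I would use $\kappa(\theta_{ij})(\omega_k,\omega_l)=\langle\omega_l,\nabla_{\theta_{ij}}\omega_k\rangle_E$: writing $\nabla_{\theta_{ij}}\omega_k=\sum_l p_l\omega_l+\sum_l q_l\eta_l$ in the symplectic-Hodge basis $b$, and using $\langle\omega_l,\omega_m\rangle_E=0$, $\langle\omega_l,\eta_m\rangle_E=\delta_{lm}$, this clause reads $q_l=\mathbf{E}^{ij}_{kl}$, i.e.\ it pins down the $\eta$-part of $\nabla_{\theta_{ij}}\omega_k$. To also kill the $\mathcal{F}^1$-part, i.e.\ to show $p_l=0$, I would differentiate the constant pairing $\langle\eta_l,\omega_k\rangle_E=-\delta_{lk}$ using $(\ref{compconn})$, getting $0=\langle\nabla_{\theta_{ij}}\eta_l,\omega_k\rangle_E+\langle\eta_l,\nabla_{\theta_{ij}}\omega_k\rangle_E$, where the first term is $0$ by the (already equivalent) first clause of $(2)$ and the second equals $-p_l$. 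Hence $(2)$ is equivalent to $\nabla_{\theta_{ij}}\eta_k=0$ and $\nabla_{\theta_{ij}}\omega_k=\sum_l\mathbf{E}^{ij}_{kl}\eta_l$ for all $k$; expanding the entries of $\mathbf{E}^{ij}$ ($\mathbf{E}^{ij}_{il}=\delta_{jl}$, $\mathbf{E}^{ij}_{jl}=\delta_{il}$, $\mathbf{E}^{ij}_{kl}=0$ for $k\notin\{i,j\}$) this is exactly $(3)(i)$ together with $(3)(ii)$. The implication $(3)\Rightarrow(2)$ is the same chain of equalities read in reverse: $(3)(i)$ gives $\langle\omega_l,\nabla_{\theta_{ij}}\omega_k\rangle_E=\mathbf{E}^{ij}_{kl}$, hence $\kappa(\theta_{ij})$ is the prescribed form, and $(3)(ii)$ gives the vanishing of the $\nabla\eta$-components, so $c(\theta_{ij})=u^{*}c_{g,\CC}^{\an}(v_{ij})$.

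I expect the only genuinely delicate step to be this last upgrade inside $(2)\Leftrightarrow(3)$: the morphism $c$ records $\nabla_{\theta_{ij}}\omega_k$ only through its $\eta$-coefficients (via $\kappa$) and through the full $\nabla_{\theta_{ij}}\eta_k$, whereas $(3)(i)$ demands $\nabla_{\theta_{ij}}\omega_k$ on the nose, with no $\mathcal{F}^1$-component; recovering that vanishing is precisely where one must feed in the relation $\nabla_{\theta_{ij}}\eta_k=0$ and the compatibility of the Gauss--Manin connection with the symplectic form on $\mathcal{H}^1_{\dR}$. Everything else --- the base-change compatibility of $b\mapsto c$ and the injectivity of $c_g$ --- is quoted directly from \cite{fonseca16}, and the resulting proof of Theorem \ref{theoremsolution}(2) will then be immediate, since $(\mathbf{X}_g,E_g)$ endowed with $\bfb_g$ satisfies $(3)$ by Proposition \ref{lemme1} and Corollary \ref{corohor}.
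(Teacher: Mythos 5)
Your proof is correct and takes essentially the same route as the paper: for $(1)\Leftrightarrow(2)$ the paper likewise invokes the commutativity of the base-change diagram together with the injectivity of $u^{*}c^{\an}_{g,\CC}$ (quoting \cite{fonseca16} Theorem 5.4), and for $(2)\Leftrightarrow(3)$ the paper simply refers to ``the same argument in the proof of \cite{fonseca16} Proposition 5.7''. Your explicit reconstruction of that second step --- reading off the $\eta$-coefficients of $\nabla_{\theta_{ij}}\omega_k$ from $\kappa(\theta_{ij})(\omega_k,\omega_l)=\langle\omega_l,\nabla_{\theta_{ij}}\omega_k\rangle_E$, and killing the $\mathcal{F}^1$-component of $\nabla_{\theta_{ij}}\omega_k$ by differentiating $\langle\eta_l,\omega_k\rangle_E=-\delta_{lk}$ with the compatibility $(\ref{compconn})$ and the vanishing $\nabla_{\theta_{ij}}\eta_l=0$ --- is exactly the content being cited, so the two arguments coincide.
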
 

\begin{proof}
  The equivalence between (1) and (2) follows from the commutativity of the diagram
  $$
  \raisebox{-0.5\height}{\includegraphics{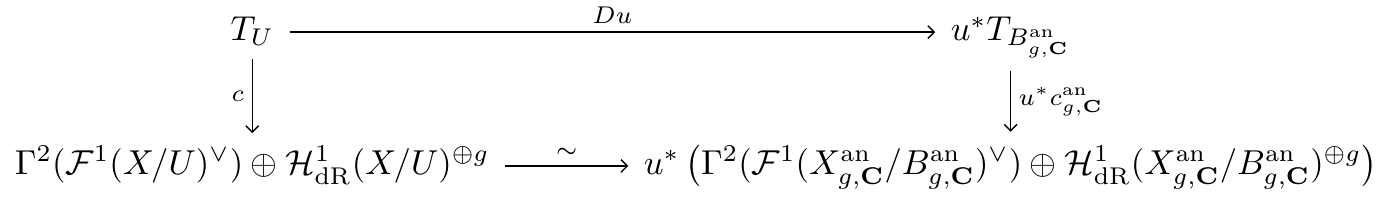}}
$$
and the injectivity of $u^*c^{\an}_{g,\CC}$ (cf. \cite{fonseca16} Theorem 5.4). 

The same argument in the proof of \cite{fonseca16} Proposition 5.7 proves the equivalence between (2) and (3).
\end{proof}

\begin{proof}[Proof of Theorem \ref{theoremsolution} (2)]
By Proposition \ref{equivalences}, it is sufficient to prove that, for every $1\le i \le j \le g$, we have
\begin{itemize}
     \item[($i$)] $\nabla_{\theta_{ij}} \bfomega_i = \bfeta_j$, $\nabla_{\theta_{ij}} \bfomega_j = \bfeta_i$, and $\nabla_{\theta_{ij}}\bfomega_k =0$, for $k\notin \{i,j\}$
     \item[($ii$)] $\nabla_{\theta_{ij}}\bfeta_k =0$, for $1 \le k \le g$.
\end{itemize}
  Now, ($i$) follows directly from Proposition \ref{lemme1}, and ($ii$) is the content of Corollary \ref{corohor}.
\end{proof}


\subsection{The case $g=1$}

 We now explicitly describe the holomorphic map $\varphi_1 : \mathbf{H} \to B_1(\CC)$. For every $k\ge 1$, let us denote by $E_{2k} : \mathbf{H} \to \CC$ the classical (level 1) Eisenstein series of weight $2k$ normalized by $E_{2k}(+i\infty) = 1$.

\begin{prop} \label{solutionexpl}
  Let $\varphi_1 : \mathbf{H} \to B_1(\CC)$ be the holomorphic map defined in Theorem \ref{theoremsolution} for $g=1$. Then
 \begin{enumerate}
    \item Under the identification $B_1 \cong \Spec \ZZ[1/2,b_2,b_4,b_6,\Delta^{-1}]$ of \cite{fonseca16} Theorem 6.2, we have
 \begin{align*}
 \varphi_1(\tau) = \left(E_2(\tau),\frac{1}{2}\theta E_2(\tau),\frac{1}{6}\theta^2 E_2(\tau) \right)\text{,}
 \end{align*}
where $\theta = \frac{1}{2\pi i}\frac{d}{d\tau}$.
    \item Under the identification of \cite{fonseca16} Remark 6.3, we have
 \begin{align*}
 \varphi_1(\tau) = (E_2(\tau),E_4(\tau),E_6(\tau))\text{.}
 \end{align*} 
 \end{enumerate}
 \end{prop}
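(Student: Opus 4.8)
The plan is to reduce everything to a single period computation and then propagate it through the differential equation. First, part (2) will follow from part (1) by the explicit polynomial change of coordinates of \cite{fonseca16} Remark 6.3, which merely repackages Ramanujan's relations $\theta E_2=(E_2^2-E_4)/12$, $\theta E_4=(E_2E_4-E_6)/3$, $\theta E_6=(E_2E_6-E_4^2)/2$; so it suffices to treat part (1). There, the only genuinely analytic point is the identity $b_2\circ\varphi_1=E_2$. Once this is granted, write $\varphi_1=(f_2,f_4,f_6)$ in the coordinates $(b_2,b_4,b_6)$ of \cite{fonseca16} Theorem 6.2: Theorem \ref{theoremsolution}(2) gives $\theta\varphi_1=v\circ\varphi_1$ with $v$ the classical Ramanujan vector field, and the explicit shape of $v$ in these coordinates (read off from \cite{fonseca16} Section 6; its $\partial_{b_2}$- and $\partial_{b_4}$-components are $2b_4$ and $3b_6$) forces $f_4=\tfrac12\theta f_2$ and $f_6=\tfrac13\theta f_4=\tfrac16\theta^2 f_2$. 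Substituting $f_2=E_2$ then yields part (1).

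It remains to prove $b_2\circ\varphi_1=E_2$. By Proposition \ref{reprsympl}, $\varphi_1$ is the map classifying the symplectic-Hodge basis $\bfb_1=(\bfomega_1,\bfeta_1)$ of $(\mathbf{X}_1,E_1)_{/\mathbf{H}}$, with $\bfomega_1=2\pi i\,dz_1$ and $\bfeta_1=\nabla_{\theta_{11}}\bfomega_1$. Fix $\tau\in\mathbf{H}$ and pass to the elliptic curve $\mathbf{X}_{1,\tau}=\CC/(\ZZ+\tau\ZZ)$. Under the Weierstrass uniformization of $\mathbf{X}_{1,\tau}$ by its canonical model $y^2=4x^3-g_2(\tau)x-g_3(\tau)$ (where $g_2(\tau),g_3(\tau)$ are the Eisenstein series of the lattice $\ZZ+\tau\ZZ$), the class $\bfomega_1$ is $2\pi i$ times the invariant differential $dx/y=dz$, and the second-kind differential $x\,dx/y=\wp_\tau(z)\,dz$ completes it to a basis of $\mathcal{H}^1_{\dR}(\mathbf{X}_{1,\tau})$. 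By Corollary \ref{caraceta}, $\bfeta_1$ is the unique de Rham class with $\int_{\gamma_1}\bfeta_1=0$ and $\int_{\delta_1}\bfeta_1=1$, where $\gamma_1,\delta_1$ are the cycles of Example \ref{intsymplbasis} (so $\int_{\gamma_1}dz_1=1$, $\int_{\delta_1}dz_1=\tau$). Writing $\bfeta_1=a\,\bfomega_1+b\,\wp_\tau(z)\,dz$ and integrating over $\gamma_1$ and $\delta_1$ gives a $2\times2$ linear system for $(a,b)$ whose off-diagonal entries are the quasi-periods of $\wp_\tau(z)\,dz$; solving it by means of the Legendre period relation (with the sign dictated by the orientation $\gamma_1\cap\delta_1=1$) and the classical identity relating the quasi-period to $E_2$ --- i.e.\ the formula $E_2(\tau)=-12\bigl(\tfrac{\omega_1}{2\pi i}\bigr)\bigl(\tfrac{\eta_1}{2\pi i}\bigr)$ recalled in the Introduction --- produces
\begin{align*}
\bfeta_1=\frac{E_2(\tau)}{12}\,\bfomega_1+\frac{1}{2\pi i}\,\wp_\tau(z)\,dz\text{.}
\end{align*}
Comparing the $\bfomega_1$-coefficient with the definition of the coordinate $b_2$ in \cite{fonseca16} Theorem 6.2 (which records exactly $12$ times this coefficient) gives $b_2\circ\varphi_1=E_2$, completing part (1), and hence part (2).

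The only step that is not purely formal is the period computation above, and its sole real difficulty is bookkeeping of normalizations against the precise conventions of \cite{fonseca16}: the powers of $2\pi i$ in the definition of $\bfomega_1$ and of the holomorphic symplectic form $\langle\ ,\ \rangle_{E_1}$, the signs in the Legendre relation and in the comparison of $c_{1,\mathrm{top}}$ with $c_{1,\mathrm{dR}}$ (cf.\ Lemma \ref{compsympl}), and the normalization $E_{2k}(+i\infty)=1$. These interact and must be handled carefully, but none is deep. (An alternative would be to note that $\varphi_1$ and $\tau\mapsto(E_2,E_4,E_6)$ both solve Ramanujan's system --- an autonomous first-order ODE on the simply connected $\mathbf{H}$ --- and to compare them at a single point, say the cusp; but establishing $\varphi_1(\tau)\to(1,1,1)$ as $\Im\tau\to\infty$ amounts to the same period computation for a degenerating elliptic curve, so this route brings no real economy.)
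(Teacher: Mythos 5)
Your proposal follows essentially the same route as the paper's proof: both hinge on the classical formula expressing $\bfeta_1$ in terms of $\wp_\tau(z)\,dz$ and $E_2(\tau)$. The reorganization is sensible but modest. The paper proves (2) directly by writing down the explicit biholomorphism $\Phi_\tau : \mathbf{X}_{1,\tau} \stackrel{\sim}{\to} X_{1,(E_2(\tau),E_4(\tau),E_6(\tau))}(\CC)$ and verifying $\Phi_\tau^*(dx/y,\,x\,dx/y)=\bfb_1$, citing \cite{katz73} A1.3.16 for the $\bfeta_1$ formula; (1) then falls out of the change of coordinates. You instead prove (1) first and you reduce the genuinely analytic content to the single identity $b_2\circ\varphi_1=E_2$, obtaining the other two coordinates from the Ramanujan ODE (whose triangular shape in the $b$-coordinates you read off from \cite{fonseca16}); you also rederive the $\bfeta_1$ formula from Corollary \ref{caraceta} rather than citing Katz. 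That is a valid and slightly more economical presentation of the same underlying period computation.

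However, you have a sign error in the one step you flagged as delicate. You assert
\begin{align*}
\bfeta_1=\frac{E_2(\tau)}{12}\,\bfomega_1+\frac{1}{2\pi i}\,\wp_\tau(z)\,dz\text{,}
\end{align*}
whereas the correct identity --- the one used in the paper's proof (cited from \cite{katz73} A1.3.16) --- is
\begin{align*}
\bfeta_1=\frac{1}{2\pi i}\,\wp_\tau(z)\,dz-\frac{E_2(\tau)}{12}\,\bfomega_1\text{.}
\end{align*}
Indeed, writing $\bfeta_1=a\,\bfomega_1+b\,[\wp_\tau(z)\,dz]$ and imposing $\int_{\gamma_1}\bfeta_1=0$, $\int_{\delta_1}\bfeta_1=1$ from Corollary \ref{caraceta}, with $\int_{\gamma_1}\bfomega_1=2\pi i$, $\int_{\delta_1}\bfomega_1=2\pi i\tau$, $\int_{\gamma_1}[\wp_\tau\,dz]=-\eta(1)=-\tfrac{\pi^2}{3}E_2(\tau)$, and the Legendre relation $\eta(1)\tau-\eta(\tau)=2\pi i$, one finds $b=\tfrac{1}{2\pi i}$ and $a=\eta(1)/(2\pi i)^2=-E_2(\tau)/12$. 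So your last display, and the subsequent comparison with the definition of $b_2$ as stated, cannot both be right with the conventions of Example \ref{intsymplbasis} and Corollary \ref{caraceta} in force. The fix is local: carry the minus sign through the computation, and recheck the matching against the precise definition of $b_2$ in \cite{fonseca16} Theorem 6.2 so that the conclusion $b_2\circ\varphi_1=E_2$ (and not $-E_2$) is actually what comes out.
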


 \begin{proof}
By the change-of-coordinates formulas in \cite{fonseca16} Remark 6.3, it is sufficient to prove (2). For every $\tau \in \mathbf{H}$, we denote by $X_{\tau}$ the complex elliptic curve defined by the equation
\begin{align*}
 y^2 = 4x^3 - \frac{E_4(\tau)}{12}x + \frac{E_6(\tau)}{216}.
\end{align*}
Recall that there is an isomorphism
\begin{align*}
F_{\tau} : \mathbf{X}_{1,\tau} &\stackrel{\sim}{\longrightarrow} X_{\tau}(\CC)\\
                z &\mapsto \begin{cases}
                            \left(\left(\frac{1}{2\pi i}\right)^2\wp_{\tau}(z) : \left(\frac{1}{2\pi i}\right)^3\wp'_{\tau}(z):1\right) &\text{ if }   z\neq 0\\
                            (0:1:0) &\text{ if } z=0
                           \end{cases}
\end{align*}
where $\wp_{\tau}$ denotes the Weierstrass $\wp$-function associated to the lattice $\ZZ + \tau \ZZ \subset \CC$. Furthermore, we have (cf. \cite{katz73} A1.3.16)
\begin{align*}
\bfeta_1 = \nabla_{\theta}\bfomega_1 = \frac{1}{2\pi i}\wp_{\tau}(z)dz - \frac{E_2(\tau)}{12}\bfomega_1\text{.}
\end{align*}
Let $\varphi : \mathbf{H} \to B_1(\CC)$ be given by $\varphi(\tau)=(E_2(\tau),E_4(\tau),E_6(\tau))$. Then, for every $\tau \in \mathbf{H}$, the isomorphism
\begin{align*}
\Phi_{\tau} : \mathbf{X}_{1,\tau} &\stackrel{\sim}{\longrightarrow} X_{1,\varphi(\tau)}(\CC)\\ 
                z &\mapsto \begin{cases}
                            \left(\left(\frac{1}{2\pi i}\right)^2\wp_{\tau}(z) - \frac{E_2(\tau)}{12} : \left(\frac{1}{2\pi i}\right)^3\wp'_{\tau}(z):1\right) &\text{ if } z\neq 0\\
                            (0:1:0) &\text{ if } z=0
                           \end{cases}
\end{align*}
satisfies
\begin{align*}
\Phi_{\tau}^*\left(\frac{dx}{y}\right) = \bfomega_1\ \ \text{ and }\ \ \Phi^*_{\tau}\left(x\frac{dx}{y}\right) =\bfeta_1\text{.}
\end{align*}
By making $\tau$ vary in $\mathbf{H}$, we obtain a morphism $\Phi_{/\varphi}: \mathbf{X}_1 \to X^{\an}_{1,\CC}$ in $\mathcal{T}_1$. Since $\Phi^*(dx/y,xdx/y) = \bfb_1$, we must have $\varphi=\varphi_1$ by definition of $\varphi_1$.
\end{proof}


\section{Values of $\varphi_g$ and transcendence degree of fields of periods of abelian varieties} \label{sectionvaluesphi}

Let $X$ be a complex abelian variety (resp. a complex torus). For any subfield $k$ of $\CC$, we say that \emph{$X$ is definable over $k$} if there exists an abelian variety $X_0$ over $k$ such that $X$ is isomorphic to $X_0\tensor_k \CC$ as a complex abelian variety (resp. isomorphic to $X_0(\CC)$ as a complex torus).

\begin{lemma}\label{fielddef}
For any complex abelian variety $X$ (resp. polarizable complex torus), there exists a smallest algebraically closed subfield $k$ of $\CC$ over which $X$ is definable.
\end{lemma}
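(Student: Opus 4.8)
The plan is to define $k$ as the intersection of all algebraically closed subfields of $\CC$ over which $X$ is definable, and then to verify that $X$ is actually definable over this intersection. The existence of at least one such field (e.g.\ $\CC$ itself) shows the family is nonempty, so the intersection $k$ makes sense; the only real content is that definability descends to $k$. I would phrase this using a spreading-out / finite-generation argument. First, recall the standard fact (Deligne, or \cite{DMOS82}) that if $X$ is definable over two algebraically closed subfields $k_1, k_2 \subset \CC$, then it is definable over $k_1 \cap k_2$: indeed, an abelian variety over $\CC$ is defined over the algebraic closure of any subfield over which its defining equations and the polarization data have coefficients, so after choosing projective models one reduces to a constructibility statement about the locus of parameters giving an abelian variety isomorphic to $X$, which is defined over $\QQ$ (or over $k_1 \cap k_2$).

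More precisely, the key step I would carry out is this. Fix a model $X_0$ over some finitely generated subfield $L \subset \CC$ (polarizable, with an ample line bundle if we are in the torus case, so that $X_0$ is actually an abelian variety by Remark \ref{algebraization}); such an $L$ exists by spreading out. Now suppose $K_1, K_2$ are algebraically closed subfields over which $X$ is definable. Choose models $X_i/K_i$. Over $\CC$ we have isomorphisms $X_1 \tensor_{K_1} \CC \cong X \cong X_2 \tensor_{K_2} \CC$, hence $X_1 \tensor_{K_1} \CC \cong X_2 \tensor_{K_2} \CC$. The isomorphism between them, together with the two models, is defined over a finitely generated extension of $K_1 K_2$ inside $\CC$; but an isomorphism of abelian varieties base-changed from $K_1$ and from $K_2$ forces, by a standard descent/specialization argument (using that $\Hom(X_1 \tensor \overline{K_1 \cap K_2}\,,\, X_2 \tensor \overline{K_1 \cap K_2})$ already contains the relevant isomorphism after base change — this uses rigidity: $\Hom$ between abelian varieties is unramified / insensitive to algebraically closed base extension), $X_1 \tensor_{K_1} \overline{K_1 \cap K_2} \cong X_2 \tensor_{K_2} \overline{K_1 \cap K_2}$, so $X$ is definable over $K_1 \cap K_2 = \overline{K_1 \cap K_2}$.

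Having established stability under pairwise (hence finite) intersection, I would conclude as follows. Let $\mathcal{K}$ be the set of algebraically closed subfields of $\CC$ over which $X$ is definable, and $k = \bigcap_{K \in \mathcal{K}} K$, an algebraically closed field. Fix one $K_0 \in \mathcal{K}$ and a model $X_0/K_0$; its defining data (equations of a projective embedding and of the group law, plus the polarization) involve only finitely many elements of $K_0$, generating a finitely generated subfield $L \subset K_0$, and $X$ is already definable over $\overline{L}$. So we may replace $K_0$ by $\overline{L}$, which has finite transcendence degree over $\QQ$. Then $\overline{L} \cap K$ for $K$ ranging over $\mathcal{K}$ is a family of algebraically closed subfields of $\overline{L}$ over which $X$ is definable, and since $\trdeg_\QQ \overline{L} < \infty$ this family has a minimal element $k'$ (any descending chain of algebraically closed subfields of $\overline{L}$ stabilizes, by finiteness of transcendence degree and the fact that each step strictly drops the transcendence degree or the field); minimality plus stability under finite intersection force $k' \subseteq K$ for all $K \in \mathcal{K}$, whence $k' = k$ and $X$ is definable over $k$.

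The main obstacle is the pairwise-intersection step: making precise why an isomorphism defined after base change to $\CC$ of two models over $K_1$ and $K_2$ can be descended to $\overline{K_1 \cap K_2}$. This is where one invokes that $\Hom$-groups of abelian varieties are unchanged under extension of algebraically closed fields (a consequence of the rigidity of abelian varieties, see \cite{mumford70}), together with a spreading-out argument to realize everything over a common finitely generated base; the torus case is reduced to the abelian variety case via the algebraization in Remark \ref{algebraization}. Everything else is bookkeeping with transcendence degrees.
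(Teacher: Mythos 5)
Your overall strategy (take the intersection of all algebraically closed subfields of definition, prove definability descends to it via stability under pairwise intersection plus a transcendence-degree chain argument) is more roundabout than the paper's, and the pairwise-descent step as written has a genuine gap. You write that rigidity of $\Hom$-groups yields an isomorphism $X_1 \otimes_{K_1} \overline{K_1 \cap K_2} \cong X_2 \otimes_{K_2} \overline{K_1 \cap K_2}$; but this does not parse: $K_1 \cap K_2$ is a subfield of $K_1$, not an extension, so there is no morphism $K_1 \to \overline{K_1 \cap K_2}$ along which to base-change $X_1$. Specialization/rigidity arguments move forward along ring maps and cannot by themselves produce a model over the smaller field $K_1 \cap K_2$ out of one over $K_1$. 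For general varieties the pairwise-intersection claim is in fact false; what rescues it for polarized abelian varieties is precisely the existence of a moduli scheme defined over $\QQ$.

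The paper uses this input directly and avoids all the bookkeeping: choose a polarization $\lambda$ of some degree $d^2$, so $(X,\lambda)$ gives a $\CC$-point $\overline x$ of the coarse moduli scheme $A_{g,d,1}$ over $\QQ$, and set $k := \overline{\QQ(\overline x)}$ (algebraic closure inside $\CC$). Then $X$ is definable over $k$, and for any algebraically closed $k'$ over which $X$ is definable, the polarization also descends (because $\Hom$ of abelian varieties is representable by an étale scheme, so it is unaffected by the algebraically closed base extension $k' \hookrightarrow \CC$), hence $\overline x$ factors through $\Spec k'$, hence $\QQ(\overline x) \subset k'$ and thus $k \subset k'$. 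Your opening allusion to a ``constructibility statement... defined over $\QQ$'' is essentially this moduli-space fact; if you make it precise you obtain the smallest field immediately, and the pairwise-intersection reduction, the chain argument, and the finite-transcendence-degree bookkeeping all become unnecessary. I would recommend replacing the rigidity sketch with a clean appeal to $A_{g,d,1}/\QQ$ and deleting the rest.
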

\begin{proof}
Let $g$ be the dimension of $X$, and let $\lambda:X \to X^t$ be any polarization on $X$ (not necessarily principal). If $\lambda$ is of degree $d^2$, then the isomorphism class of the couple $(X,\lambda)$ defines a complex point $\overline{x}\in A_{g,d,1}(\CC)$, where $A_{g,d,1}$ denotes the coarse moduli space over $\QQ$ of abelian varieties of dimension $g$ endowed with a polarization of degree $d^2$ (cf. \cite{GIT94} Theorem 7.10). Let $k$ be the algebraic closure in $\CC$ of the residue field $\mathbf{Q}(\overline{x})$ (see \ref{notationresidue}). 

It is clear that $X$ is definable over $k$. To prove that $k$ is the smallest algebraically closed subfield of $\CC$ with this property, let $k'$ be any algebraically closed subfield of $\CC$ over which there is an abelian variety $X'$ such that $X$ is isomorphic to $X'\tensor_{k'}\CC$. As $k'$ is algebraically closed, the polarization $\lambda$ on $X$ descends to a polarization $\lambda'$ on $X'$ such that $(X,\lambda)$ and $(X',\lambda')\tensor_{k'}\CC$ are isomorphic as polarized complex abelian varieties\footnote{This follows from the fact that, for any abelian varieties $X$ and $Y$ over a field $K$, the functor $\Sch_{/K}^{\opp} \longrightarrow \Sets$ given by $U \mapsto {\Hom}_{\textsf{GpSch}_{/U}}(X\times_{K}U,Y\times_{K}U)$ is representable by an étale $K$-scheme.}. Thus the morphism $\overline{x} : \Spec \CC \to A_{g,d,1}$ factors through $\Spec k'$, which implies that $\QQ(\overline{x})\subset k'$. As $k'$ is algebraically closed, we obtain $k\subset k'$.
\end{proof}

\begin{defi}
Let $X$ be a complex abelian variety, $k$ be the smallest algebraically closed subfield of $\CC$ over which $X$ is definable, and fix a $k$-model $X_0$ of $X$. The \emph{field of periods} $\mathcal{P}(X)$ of $X$ is defined as the smallest subfield of $\CC$ containing $k$ and the image of pairing
\begin{align*}
H^1_{\dR}(X_0/k)\tensor H_1(X_0(\CC),\ZZ)   &\to \CC\\
           \alpha \tensor \gamma &\mapsto \int_{\gamma}\alpha
\end{align*} 
given by ``integration of differential forms'' (cf. \ref{sectioncompisom}).
\end{defi}

Note that $\mathcal{P}(X)$ does not depend on the choice of $X_0$.

This section is devoted to the proof of the following theorem.

\begin{theorem} \label{trdeg}
Let $g\ge 1$ be an integer. With notations as in Example \ref{torus} and Theorem \ref{theoremsolution}, for every $\tau \in \mathbf{H}_g$ the field of periods $\mathcal{P}(\mathbf{X}_{g,\tau})$ of the polarizable complex torus $\mathbf{X}_{g,\tau}\defeq \CC^g/(\ZZ^g + \tau \ZZ^g)$  is an algebraic extension of $\QQ(2\pi i,\tau,\varphi_g(\tau))$. In particular,
\begin{align*}
\trdeg_{\QQ}\QQ(2\pi i,\tau,\varphi_g(\tau)) = \trdeg_{\QQ} \mathcal{P}(\mathbf{X}_{g,\tau})\text{.}
\end{align*}
\end{theorem}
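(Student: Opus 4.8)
Set $L \defeq \QQ(2\pi i, \tau, \varphi_g(\tau))$, regarded as a subfield of $\CC$ via the complex point $(2\pi i, \tau, \varphi_g(\tau))$, and let $K \defeq \QQ(\varphi_g(\tau)) \subseteq L$ be the residue field of $\varphi_g(\tau)$ in $B_{g,\QQ}$. Write $X_\tau \defeq \mathbf{X}_{g,\tau}$; being a polarizable complex torus, $X_\tau$ is an abelian variety (Remark~\ref{algebraization}). Let $k \subseteq \CC$ be the smallest algebraically closed field over which $X_\tau$ is definable (Lemma~\ref{fielddef}); since $\Hom$ of abelian varieties does not grow under extension of an algebraically closed base field, the principal polarization $E_g(\tau)$ of $X_\tau$ descends to $k$, so we may fix a $k$-model $(X_0,\lambda_0)$ of $(X_\tau, E_g(\tau))$, and then $\mathcal{P}(X_\tau) = k\bigl(\{\,\int_\gamma \alpha : \alpha \in H^1_{\dR}(X_0/k),\ \gamma \in H_1(X_0(\CC),\ZZ)\,\}\bigr)$. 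The plan is to prove the two inclusions $L \subseteq \mathcal{P}(X_\tau)$ and $\mathcal{P}(X_\tau) \subseteq \overline{L}$ (algebraic closure in $\CC$); together they say that $\mathcal{P}(X_\tau)$ is an algebraic extension of $L$, and the displayed equality of transcendence degrees is then immediate.

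The central computation is that of a period matrix of $X_\tau$ with respect to the tautological data encoded by $\varphi_g(\tau)$. By construction (Theorem~\ref{theoremsolution} and Proposition~\ref{reprsympl}), pulling back the universal family over $B_g$ along the complex point $\varphi_g(\tau)$ produces a principally polarized abelian variety over $K$ together with a $K$-rational symplectic-Hodge basis $b_g(\varphi_g(\tau)) = (\bfomega_1,\dots,\bfomega_g,\bfeta_1,\dots,\bfeta_g)$ of its first de Rham cohomology, whose base change to $\CC$ is $(X_\tau, E_g(\tau), \bfb_g(\tau))$; and by Example~\ref{torus} the lattice of $X_\tau$ carries the integral symplectic basis $\beta_g(\tau) = (\gamma_1,\dots,\gamma_g,\delta_1,\dots,\delta_g)$ with $\gamma_i = \mathbf{e}_i$, $\delta_i = \tau\mathbf{e}_i$. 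From the computations in the proof of Proposition~\ref{lemme1} and from Corollary~\ref{caraceta} one reads off $\int_{\gamma_l}\bfomega_k = 2\pi i\,\delta_{kl}$, $\int_{\delta_l}\bfomega_k = 2\pi i\,\tau_{kl}$, $\int_{\gamma_l}\bfeta_k = 0$, $\int_{\delta_l}\bfeta_k = \delta_{kl}$, so that this period matrix is $\left(\begin{smallmatrix} 2\pi i\,\mathbf{1}_g & 2\pi i\,\tau \\ 0 & \mathbf{1}_g \end{smallmatrix}\right)$, with all entries in $\QQ(2\pi i,\tau) \subseteq L$.

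For $\mathcal{P}(X_\tau) \subseteq \overline{L}$: since $X_\tau$ is definable over $K$, hence over $\overline{K} \subseteq \overline{L}$, minimality of $k$ gives $k \subseteq \overline{L}$. The $k$-model $X_0$ and the $K$-model of the previous paragraph become isomorphic over $\CC$; as their scheme of isomorphisms is of finite type over $\overline{L}$ and $\overline{L}$ is algebraically closed, there is an isomorphism $\phi$ already defined over $\overline{L}$. Writing a $k$-basis of $H^1_{\dR}(X_0/k)$ in terms of $\phi^*$ applied to $b_g(\varphi_g(\tau))$ introduces a matrix in $\GL_{2g}(\overline{L})$, and functoriality of the period pairing under $\phi$ expresses every period $\int_\gamma \alpha$ (with $\alpha$ in the chosen $k$-basis) as an $\overline{L}$-linear combination of entries of the period matrix above taken along the $\ZZ$-basis $\phi_*(\gamma_\bullet,\delta_\bullet)$ of $H_1$, which still lie in $\QQ(2\pi i,\tau)$. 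Hence all periods are in $\overline{L}$, and $\mathcal{P}(X_\tau) = k(\text{periods}) \subseteq \overline{L}$.

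For $L \subseteq \mathcal{P}(X_\tau)$ one first checks $2\pi i \in \mathcal{P}(X_\tau)$: the de Rham first Chern class $Q_{\lambda_0} = c_{1,\dR}(\mathcal{L})$ is a $k$-rational alternating form on $H^1_{\dR}(X_0/k)^{\vee}$, while by the computation in the proof of Lemma~\ref{compsympl} one has $Q_{\lambda_0}(\gamma,\delta) = 2\pi i\, E_{\lambda_0}(\gamma,\delta)$ for $\gamma,\delta \in H_1(X_0(\CC),\ZZ)$, with $E_{\lambda_0}$ a non-degenerate $\ZZ$-valued form; evaluating on $\gamma,\delta$ with $E_{\lambda_0}(\gamma,\delta) \neq 0$, the left-hand side is a $k$-linear combination of products of periods, hence in $\mathcal{P}(X_\tau)$, so $2\pi i \in \mathcal{P}(X_\tau)$. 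Next, as $\mathcal{F}^1(X_0/k)$ is $k$-rational and the matrix $(\int_{\gamma_l}\omega^0_i)$ is invertible for any $k$-basis $\omega^0_\bullet$ of $\mathcal{F}^1(X_0/k)$, the relations $\int_{\gamma_l}\bfomega_k = 2\pi i\,\delta_{kl}$ force $\bfomega_k \in \mathcal{F}^1(X_0/k)\otimes_k \mathcal{P}(X_\tau)$; likewise $\int_{\gamma_l}\bfeta_k = 0$, $\int_{\delta_l}\bfeta_k = \delta_{kl}$ give $\bfeta_k \in H^1_{\dR}(X_0/k)\otimes_k \mathcal{P}(X_\tau)$. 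Hence $2\pi i\,\tau_{kl} = \int_{\delta_l}\bfomega_k \in \mathcal{P}(X_\tau)$, so $\tau_{kl} \in \mathcal{P}(X_\tau)$; and the triple $(X_0,\lambda_0,\bfb_g(\tau))$, now defined over $\mathcal{P}(X_\tau)$, exhibits $\varphi_g(\tau)$ as a $\mathcal{P}(X_\tau)$-point of the scheme $B_g$, whence $K \subseteq \mathcal{P}(X_\tau)$. Altogether $L \subseteq \mathcal{P}(X_\tau)$, completing the argument. I expect the main obstacle to be the descent/comparison bookkeeping of the third paragraph — producing $\phi$ over $\overline{L}$ and tracking its effect on homology and on the period pairing — together with pinning down the Legendre-type identity $2\pi i \in \mathcal{P}(X_\tau)$ with the right normalization coming from Lemma~\ref{compsympl}; everything else is essentially the observation that, by design, the point $\varphi_g(\tau)$ records exactly the symplectic-Hodge basis for which the period matrix is the elementary matrix displayed above.
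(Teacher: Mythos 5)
Your argument is correct and reaches the same conclusion, but the route is genuinely different from the paper's. You prove the two inclusions $L \subseteq \mathcal{P}(\mathbf{X}_{g,\tau})$ and $\mathcal{P}(\mathbf{X}_{g,\tau}) \subseteq \overline{L}$ separately, by descent and period-matrix bookkeeping: the explicit period matrix (whose nonzero blocks are $2\pi i\,\mathbf{1}_g$, $2\pi i\,\tau$, $\mathbf{1}_g$ — you have transposed it relative to Definition~\ref{defiperiodmatrix}, but this is harmless) gives $\mathcal{P}(\mathbf{X}_{g,\tau}) \subseteq \overline{L}$ after spreading the comparison isomorphism out over $\overline{L}$, while reading the same relations ``backwards'' together with the Legendre-type identity $2\pi i \in \mathcal{P}(\mathbf{X}_{g,\tau})$ (Lemma~\ref{lemmaperiodmatrix}~(1)) and the universal property of $B_g$ gives $L \subseteq \mathcal{P}(\mathbf{X}_{g,\tau})$. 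The paper instead packages both inclusions into a single algebro-geometric statement: it constructs a quasi-finite morphism of $\overline{\QQ}$-schemes $q: \GSp_{2g,\overline{\QQ}}^* \times U \to \mathbf{G}_{m,\overline{\QQ}} \times \Sym_{g,\overline{\QQ}} \times B_{g,\overline{\QQ}}$ (with $U$ an open of $A_{g,n,\overline{\QQ}}$ over which a symplectic-Hodge basis exists), identifies the complex point $\bigl(\tfrac{1}{2\pi i}\cdot P(\cdots),\, u(\tau)\bigr)$ as a preimage of $\bigl(\tfrac{1}{2\pi i},\tau,\varphi_g(\tau)\bigr)$ under $q$, and lets quasi-finiteness do the residue-field comparison; the period-theoretic content then reduces to showing $\mathcal{P}(\mathbf{X}_{g,\tau})$ is algebraic over the residue field of the source point. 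What the paper's route buys is that the two ``directions'' of your argument become automatic from one geometric statement, and in particular the descent step in your third paragraph (producing an isomorphism $\phi$ over $\overline{L}$ and tracking its action on homology and on the period pairing, which you rightly flag as the delicate point and leave somewhat compressed) is replaced by the finiteness of the forgetful map $A_{g,n,\overline{\QQ}} \to A_{g,\overline{\QQ}}$ together with Lemma~\ref{remarkfielddef}. What your route buys is that it is more self-contained — no appeal to the level-$n$ moduli space $A_{g,n}$ or to the auxiliary product $\GSp^*_{2g} \times U$ is needed — at the cost of more explicit field-of-definition bookkeeping.
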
  

\subsection{Period matrices}

Let us consider the \emph{general symplectic group}; namely, the subgroup scheme  $\GSp_{2g}$ of $\GL_{2g}$ over $\Spec \ZZ$ such that, for every affine scheme $V=\Spec R$, we have
\begin{align*}
{\GSp}_{2g}(V) = \left.\left\{\left(\begin{array}{cc}
                           A & B \\
                           C & D
                          \end{array} \right) \in M_{2g\times 2g}(R) \ \right| \begin{aligned} &\ \ \ \ \ \ \ \ \ \ \ \   A,B,C,D \in M_{g\times g}(R) \text{ satisfy  } \\ &AB\transp = BA\transp\text{, }  CD\transp= DC\transp\text{, and } AD\transp -BC\transp \in R^{\times} \mathbf{1}_g \end{aligned}\right\}\text{.}
\end{align*}
We can define a morphism of group schemes 
$$
\nu: {\GSp}_{2g} \to \mathbf{G}_m
$$ 
as follows: if $s=( A \ B \ ; \ C \ D  ) \in \GSp_{2g}(V)$, then $\nu(s) \in R^{\times}$ satisfies $AD\transp - BC\transp = \nu(s) \textbf{1}_g$. Note that $\Sp_{2g}$ is the kernel of $\nu$. 

We denote by $\GSp_{2g}^*$ the open subscheme of $\GSp_{2g}$ defined by the condition $A \in \GL_g(R)$ in the above notations.

Let $(X,E)$ be a principally polarized complex torus of dimension $g$, and $b=(\omega_1,\ldots,\omega_g,\eta_1,\ldots,\eta_g)$ (resp. $\beta =  (\gamma_1,\ldots,\gamma_g,\delta_1,\ldots,\delta_g)$) be a symplectic-Hodge basis (resp. an integral symplectic basis) of $(X,E)$.

\begin{defi} \label{defiperiodmatrix}
The \emph{period matrix} of $(X,E)$ with respect to $b$ and $\beta$ is defined by
\begin{align*}
P(X,E,b,\beta) \defeq \left(\begin{array}{cc}
                               \Omega_1 & N_1 \\
                               \Omega_2 & N_2
                               \end{array}\right) \in M_{2g\times 2g}(\CC)\text{,}
\end{align*}
where
\begin{align*}
(\Omega_1)_{ij} \defeq \int_{\gamma_i}\omega_j \ \ & \ \ (N_1)_{ij} \defeq \int_{\gamma_i}\eta_j\\
(\Omega_2)_{ij} \defeq \int_{\delta_i}\omega_j \ \ & \ \ (N_2)_{ij} \defeq \int_{\delta_i}\eta_j\text{.}
\end{align*}
\end{defi} 

 Note that $P(X,E,b,\beta)$ is simply the matrix of the comparison isomorphism (\ref{compisom}) with respect to the bases $b$ of $\mathcal{H}^1_{\dR}(X)$ and $(E( \ \ ,\delta_1),\ldots,E(\ \ ,\delta_g),E( \gamma_1 , \ \ ),\ldots, E(\gamma_g, \ \ ))$ of $\Hom(H_1(X,\ZZ),\CC)$. 

\begin{obs}\label{remarkperiodmatrix}
In particular, let $(X,\lambda)$ be a principally polarized complex abelian variety, $k$ be the smallest algebraically closed subfield of $\CC$ over which $X$ is definable, and $(X_0,\lambda_0)$ be a $k$-model of $(X,\lambda)$. Then, if $b$ is any symplectic-Hodge basis of $(X_0,\lambda_0)$, and $\beta$ is any integral symplectic basis of $(X^{\an},E_{\lambda})$, the field of periods $\mathcal{P}(X)$ of $X$ is generated over $k$ by the coefficients of the period matrix $P(X^{\an},E_{\lambda},b,\beta)$. 
\end{obs}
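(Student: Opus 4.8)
The plan is to reduce the statement to two elementary inclusions of subfields of $\CC$, the only substantive ingredient being the compatibility between the algebraic de Rham period pairing that defines $\mathcal{P}(X)$ and the analytic pairing of \ref{sectioncompisom} entering the definition of the period matrix. First I would make the transport step explicit: by the compatibility of symplectic-Hodge bases over complex tori with their algebraic counterpart (which rests on Lemma \ref{compsympl}), together with the classical comparison isomorphism $H^1_{\dR}(X_0/k)\tensor_k\CC\cong H^1_{\dR}(X^{\an})$, the algebraic symplectic-Hodge basis $b=(\omega_1,\ldots,\omega_g,\eta_1,\ldots,\eta_g)$ of $(X_0,\lambda_0)$ induces a symplectic-Hodge basis of $(X^{\an},E_\lambda)$ --- this is the $b$ understood in $P(X^{\an},E_\lambda,b,\beta)$ --- and under this identification the coefficients $\int_{\gamma_i}\omega_j$, $\int_{\gamma_i}\eta_j$, $\int_{\delta_i}\omega_j$, $\int_{\delta_i}\eta_j$ of the period matrix agree with the values of the algebraic period pairing $H^1_{\dR}(X_0/k)\tensor H_1(X_0(\CC),\ZZ)\to\CC$, $\alpha\tensor\gamma\mapsto\int_\gamma\alpha$, on the basis vectors $\omega_j,\eta_j\in H^1_{\dR}(X_0/k)$ and $\gamma_i,\delta_i\in H_1(X_0(\CC),\ZZ)$. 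Here I also use that $\beta=(\gamma_1,\ldots,\gamma_g,\delta_1,\ldots,\delta_g)$, being a trivializing $2g$-uple of global sections of $R_1p^{\an}_*\ZZ_{X^{\an}}$, is a $\ZZ$-basis of $H_1(X^{\an},\ZZ)=H_1(X_0(\CC),\ZZ)$, and that a symplectic-Hodge basis is in particular a $k$-basis of $H^1_{\dR}(X_0/k)$.

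Granting this identification, I would prove $\mathcal{P}(X)=k\big(\text{coefficients of }P(X^{\an},E_\lambda,b,\beta)\big)$ by a two-way inclusion. For the inclusion $\supseteq$: each coefficient of the period matrix is, by the previous paragraph, a value $\int_\gamma\alpha$ of the algebraic period pairing with $\alpha\in H^1_{\dR}(X_0/k)$ and $\gamma\in H_1(X_0(\CC),\ZZ)$, hence lies in $\mathcal{P}(X)$; since $\mathcal{P}(X)$ contains $k$ as well, it contains the field generated over $k$ by these coefficients. For the inclusion $\subseteq$: an arbitrary $\alpha\in H^1_{\dR}(X_0/k)$ is a $k$-linear combination of the $\omega_j$ and the $\eta_j$, and an arbitrary $\gamma\in H_1(X_0(\CC),\ZZ)$ is a $\ZZ$-linear combination of the $\gamma_i$ and the $\delta_i$; by $\ZZ$-bilinearity of the pairing and $k$-linearity in the de Rham variable (after extension of scalars to $\CC$), $\int_\gamma\alpha$ is a $k$-linear combination of the matrix coefficients, so it lies in $k\big(\text{coefficients}\big)$. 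Since $\mathcal{P}(X)$ is by definition the subfield of $\CC$ generated by $k$ and all such values $\int_\gamma\alpha$, this yields $\mathcal{P}(X)\subseteq k\big(\text{coefficients}\big)$, and the two inclusions together prove the claim.

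The only step that is not pure linear algebra over $k$ and $\ZZ$ --- and hence the one I expect to be the main obstacle, or at least the one deserving a careful write-up --- is the identification in the first paragraph of the ``integration of differential forms'' pairing of \ref{sectioncompisom} (applied over $\Spec\CC$ to $(X^{\an},E_\lambda)$, with $b$ transported from the algebraic model $(X_0,\lambda_0)$) with the algebraic period pairing on $H^1_{\dR}(X_0/k)$ that appears in the definition of $\mathcal{P}(X)$. Concretely, one must check that analytification of the coherent $\mathcal{O}_{X_0}$-module and base change to $\CC$ produce $\mathcal{H}^1_{\dR}(X^{\an})$ compatibly with the cup-product symplectic forms (which is Lemma \ref{compsympl}), and that under the resulting identification the comparison isomorphism (\ref{compisom}) over the point restricts to the classical Betti--de Rham comparison isomorphism $H^1(X_0(\CC),\CC)\cong H^1_{\dR}(X_0/k)\tensor_k\CC$, so that $c^{-1}(\omega_j)(\gamma_i)$ is the honest period integral $\int_{\gamma_i}\omega_j$. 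These are standard facts (GAGA, respectively Grothendieck's algebraic de Rham comparison theorem), but spelling them out precisely is the real content of the Remark; once they are in place, everything else is bookkeeping with bases.
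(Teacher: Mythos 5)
Your argument is correct and is exactly the route the paper takes implicitly: the Remark is stated as immediate from the observation that $P(X^{\an},E_{\lambda},b,\beta)$ is the matrix of the comparison isomorphism (\ref{compisom}) in the bases $b$ and $\beta$, so that bilinearity of the period pairing together with the compatibility of the algebraic and analytic notions (Lemma \ref{compsympl} and the classical Betti--de Rham comparison) gives the two inclusions you spell out. Your write-up just makes explicit the transport and base-change bookkeeping that the paper leaves to the reader.
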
 

\begin{lemma}\label{lemmaperiodmatrix}
For any $(X,E,b,\beta)$ as above, we have
\begin{enumerate}
    \item $P(X,E,b,\beta) \in \GSp_{2g}(\CC)$ and $\nu(P(X,E,b,\beta)) =2\pi i$, 
    \item $\Omega^1 \in \GL_g(\CC)$ (i.e. $P(X,E,b,\beta) \in \GSp_{2g}^*(\CC)$) and $\Omega_2\Omega_1^{-1} \in \mathbf{H}_g$.
\end{enumerate}
\end{lemma}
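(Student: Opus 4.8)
The plan is to reduce both assertions, via Proposition \ref{reprsintsympl}, to an explicit period matrix computation for the standard family $(\mathbf{X}_g,E_g)_{/\mathbf{H}_g}$. First I would apply Proposition \ref{reprsintsympl} with base a point to the integral symplectic basis $\beta$ of $(X,E)$: there exist $\tau_0\in\mathbf{H}_g$ and a morphism $F_{/f}\colon(X,E)\to(\mathbf{X}_g,E_g)$ in $\mathcal{T}_g$ with $f$ the inclusion of $\tau_0$, so that $F$ identifies $(X,E)$ with the fibre $(\mathbf{X}_{g,\tau_0},E_{g,\tau_0})$ and $F^*\beta_g=\beta$. Since the comparison isomorphism (\ref{compisom}) is functorial, $F^*\colon\mathcal{H}^1_{\dR}(\mathbf{X}_{g,\tau_0})\stackrel{\sim}{\to}\mathcal{H}^1_{\dR}(X)$ satisfies $\int_{\gamma_i}F^*\alpha=\int_{\gamma_i}\alpha$ and $\int_{\delta_i}F^*\alpha=\int_{\delta_i}\alpha$ (using the same names for the classes of $\beta_g$ and $\beta$). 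Hence $b'\defeq(F^*)^{-1}b$ is a symplectic-Hodge basis of $(\mathbf{X}_{g,\tau_0},E_{g,\tau_0})$ with
\[
P(X,E,b,\beta)=P(\mathbf{X}_{g,\tau_0},E_{g,\tau_0},b',\beta_g),
\]
so it suffices to treat $(\mathbf{X}_{g,\tau_0},E_{g,\tau_0})$ together with the integral symplectic basis $\beta_g$ of Example \ref{intsymplbasis} and an arbitrary symplectic-Hodge basis.

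Next I would compute the period matrix of the canonical symplectic-Hodge basis $\bfb_g=(\bfomega_1,\ldots,\bfomega_g,\bfeta_1,\ldots,\bfeta_g)$ of Theorem \ref{theoremsolution}. Recalling that $\gamma_i(\tau_0)=\mathbf{e}_i$, $\delta_i(\tau_0)=\tau_0\mathbf{e}_i$, that $\bfomega_k=2\pi i\,dz_k$ with $\int_{\gamma_l}dz_k=\delta_{kl}$ and $\int_{\delta_l}dz_k=(\tau_0)_{kl}$ (as in the proof of Proposition \ref{lemme1}), and that $\int_{\gamma_l}\bfeta_k=0$, $\int_{\delta_l}\bfeta_k=\delta_{kl}$ (Corollary \ref{caraceta}), one obtains
\[
P_0\defeq P(\mathbf{X}_{g,\tau_0},E_{g,\tau_0},\bfb_g,\beta_g)=\left(\begin{array}{cc}2\pi i\,\mathbf{1}_g & 0\\ 2\pi i\,\tau_0 & \mathbf{1}_g\end{array}\right).
\]
From the block description of $\GSp_{2g}$ and the symmetry of $\tau_0$ one reads off directly that $P_0\in\GSp_{2g}(\CC)$ with $\nu(P_0)=2\pi i$, that its upper-left block is invertible, and that $(2\pi i\,\tau_0)(2\pi i\,\mathbf{1}_g)^{-1}=\tau_0\in\mathbf{H}_g$.

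Finally, any symplectic-Hodge basis of $(\mathbf{X}_{g,\tau_0},E_{g,\tau_0})$ is of the form $\bfb_g\cdot p$ for a unique $p=(A\ B\ ;\ 0\ (A\transp)^{-1})\in P_g(\CC)$ (\S\ref{principalbundle}). Expanding $\bfb_g\cdot p=(\bfomega A\ \ \bfomega B+\bfeta(A\transp)^{-1})$ and using bilinearity of integration, I would check that the period matrix transforms by right multiplication,
\[
P(\mathbf{X}_{g,\tau_0},E_{g,\tau_0},\bfb_g\cdot p,\beta_g)=P_0\,p.
\]
Since $P_g(\CC)\subset\Sp_{2g}(\CC)=\ker\nu$ and $\GSp_{2g}(\CC)$ is a group, $P_0p\in\GSp_{2g}(\CC)$ with $\nu(P_0p)=\nu(P_0)=2\pi i$, which proves (1). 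Its upper-left block is $2\pi i\,A\in\GL_g(\CC)$, and its lower-left block times the inverse of its upper-left block is $(2\pi i\,\tau_0A)(2\pi i\,A)^{-1}=\tau_0\in\mathbf{H}_g$, which proves (2).

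The argument is essentially bookkeeping; the point I expect to require the most care is keeping the conventions straight — in $P(X,E,b,\beta)$ the homology class indexes the rows and the de Rham class the columns, and it is this that makes the $P_g(\CC)$-action on symplectic-Hodge bases correspond to right multiplication on period matrices and makes the defining relations of $\GSp_{2g}$ apply with the transposes in the correct places. (Alternatively, part (1) alone can be obtained without the reduction, by observing that (\ref{compisom}) carries $\langle\ ,\ \rangle_E$, whose Gram matrix in the basis $b$ is $J$, to the form on $\Hom(H_1(X,\ZZ),\CC)$ whose Gram matrix in the frame $(E(\ ,\delta_1),\ldots,E(\ ,\delta_g),E(\gamma_1,\ ),\ldots,E(\gamma_g,\ ))$ is $\frac{1}{2\pi i}J$; since $P$ is the matrix of this isometry in these bases, $P\transp JP=2\pi i\,J$, which is equivalent to $P\in\GSp_{2g}(\CC)$ with $\nu(P)=2\pi i$.)
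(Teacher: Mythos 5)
Your proof is correct. It takes a more computational route than the paper's, which disposes of the lemma in one sentence: part (1) is read off from Lemma \ref{compsympl} (the comparison isomorphism intertwines $\langle\,,\,\rangle_E$ on de Rham cohomology with $\tfrac{1}{2\pi i}$ times the dual of the Riemann form, and $P$ is the base-change matrix between symplectic bases for these two forms), while part (2) is cited as an instance of the classical Riemann relations already established inside the proof of Proposition \ref{reprsintsympl}. Your parenthetical alternative for (1) — the Gram-matrix argument giving $P\transp J P = 2\pi i\,J$ — is essentially what the paper means. Your main argument instead reduces via Proposition \ref{reprsintsympl} to the standard fibre $(\mathbf{X}_{g,\tau_0},E_{g,\tau_0})$, computes $P_0 = P(\mathbf{X}_{g,\tau_0},E_{g,\tau_0},\bfb_g,\beta_g)$ explicitly using Corollary \ref{caraceta}, and then propagates to an arbitrary symplectic-Hodge basis via the transitive right $P_g(\CC)$-action, checking that the period matrix transforms as $P_0\mapsto P_0 p$ and that $p\in\Sp_{2g}(\CC)=\ker\nu$ leaves both $\nu$ and $\Omega_2\Omega_1^{-1}$ unchanged. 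What the explicit route buys you is that (1) and (2) come out of the same computation, and the invariance of $\Omega_2\Omega_1^{-1}$ under the $P_g(\CC)$-action is made visible rather than absorbed into ``classical Riemann relations.'' What the paper's abstract route buys is brevity and the fact that it does not need to invoke Theorem \ref{theoremsolution} (which logically precedes this lemma in the text but is a heavier input than strictly needed). Both are sound; just note that the same reduction-plus-$P_g(\CC)$-action pattern reappears, essentially verbatim, in the paper's proof of Theorem \ref{trdeg}, so your argument here is consistent in spirit with how the paper handles period matrices later.
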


\begin{proof}
Knowing that $P(X,E,b,\beta)$ is a base change matrix with respect to symplectic bases, (1) is simply a rephrasing of Lemma \ref{compsympl} and (2) is a particular case of the classical \emph{Riemann relations} (cf. proof of Proposition \ref{reprsintsympl}). 
\end{proof}


\subsection{Auxiliary lemmas}

We shall need the following auxiliary results. 

\begin{lemma}\label{isomorphismgsp}
The morphism of schemes
\begin{align*}
{\GSp}_{2g}^* &\to \mathbf{G}_m \times_{\ZZ} {\Sym}_g \times_{\ZZ} P_g\\
         s &\mapsto (\nu(s),\tau(s),p(s))
\end{align*}
where
\begin{align*}
\tau\left(\begin{array}{cc}
          A & B \\
          C & D \end{array}\right) \defeq CA^{-1}\ \ \ \text{ and }\ \ \  p\left(\begin{array}{cc}
          A & B \\
          C & D \end{array}\right) \defeq \left(\begin{array}{cc}
          A^{-1} & -B\transp \\
          0 & A\transp \end{array}\right)
\end{align*}
is an isomorphism.
\end{lemma}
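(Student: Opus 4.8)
The plan is to prove that the morphism of the statement is an isomorphism by writing down its inverse explicitly. Over an affine scheme $\Spec R$, an $R$-point of $\mathbf{G}_m \times_{\ZZ} \Sym_g \times_{\ZZ} P_g$ is a triple $(\lambda,\tau,q)$ with $\lambda \in R^{\times}$, $\tau \in \Sym_g(R)$, and $q = (E\ F\ ;\ 0\ (E\transp)^{-1}) \in P_g(R)$, so that $E \in \GL_g(R)$ and $EF\transp = FE\transp$. First I would define
\[
\psi : \mathbf{G}_m \times_{\ZZ} \Sym_g \times_{\ZZ} P_g \longrightarrow \GSp_{2g}^{*}, \qquad (\lambda,\tau,q) \longmapsto \left(\begin{array}{cc} E^{-1} & -F\transp \\ \tau E^{-1} & \lambda E\transp - \tau F\transp \end{array}\right).
\]
Since this is given by ring operations together with the inversion $E \mapsto E^{-1}$ (available on $P_g$, via the $A$-block) and $\lambda \mapsto \lambda^{-1}$ (available on $\mathbf{G}_m$), it is a genuine morphism of $\ZZ$-schemes; no passage to points or to a field is needed.

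The first step is then to check that $\psi$ really lands in $\GSp_{2g}^{*}$. Write $A = E^{-1}$, $B = -F\transp$, $C = \tau E^{-1}$, $D = \lambda E\transp - \tau F\transp$ for the four blocks. The block $A = E^{-1}$ is invertible, so it remains to verify the three defining relations of $\GSp_{2g}$. A short computation shows that $AB\transp = BA\transp$ is equivalent, after multiplying out, to the symmetry relation $EF\transp = FE\transp$ of $P_g$ (equivalently: $E^{-1}F$ is symmetric); that $CD\transp = DC\transp$ follows from this together with $\tau\transp = \tau$; and expanding $AD\transp - BC\transp$, again using $E^{-1}F = F\transp(E\transp)^{-1}$ and $\tau\transp = \tau$, yields exactly $\lambda\mathbf{1}_g$. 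In particular $\nu(\psi(\lambda,\tau,q)) = \lambda \in R^{\times}$, so $\psi(\lambda,\tau,q) \in \GSp_{2g}(R)$ and hence in $\GSp_{2g}^{*}(R)$.

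Next I would check that $\psi$ and the map $s \mapsto (\nu(s),\tau(s),p(s))$ of the statement are mutually inverse. The composite $\psi \circ (\nu,\tau,p)$ sends $s = (A\ B\ ;\ C\ D) \in \GSp_{2g}^{*}(R)$ to the matrix whose blocks are $A$, $-(-B\transp)\transp = B$, $(CA^{-1})A = C$, and $\nu(s)(A\transp)^{-1} + CA^{-1}B$; the first three are $s$ again, and the fourth equals $D$ because for any $s \in \GSp_{2g}$ the block $A^{-1}B$ is symmetric (this follows from $AB\transp = BA\transp$ by conjugating with $A^{-1}$), so that $\nu(s)(A\transp)^{-1} + CA^{-1}B = \nu(s)(A\transp)^{-1} + CB\transp(A\transp)^{-1}$, which is $D$ by transposing the relation $AD\transp - BC\transp = \nu(s)\mathbf{1}_g$. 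Conversely, $(\nu,\tau,p)\circ\psi$ sends $(\lambda,\tau,q)$ with $q = (E\ F\ ;\ 0\ (E\transp)^{-1})$ to $(\nu(\cdot),\tau(\cdot),p(\cdot))$ where, reading blocks off $\psi(\lambda,\tau,q)$, one gets $\nu(\cdot) = \lambda$, $\tau(\cdot) = (\tau E^{-1})(E^{-1})^{-1} = \tau$, and $p(\cdot) = ((E^{-1})^{-1}\ -(-F\transp)\transp\ ;\ 0\ (E^{-1})\transp) = (E\ F\ ;\ 0\ (E\transp)^{-1}) = q$.

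I do not expect a genuine obstacle: the whole argument is formal linear algebra over an arbitrary ring. The only two points that require a moment's care are (i) recovering the lower-right block $D$ in the composite $\psi \circ (\nu,\tau,p)$, which hinges on the symmetry of $A^{-1}B$, and (ii) bookkeeping of transposes when verifying the $\GSp_{2g}$-relations for $\psi(\lambda,\tau,q)$. As a consistency check one may note that both sides have dimension $g(2g+1)+1 = 1 + \tfrac{g(g+1)}{2} + \tfrac{g(3g+1)}{2}$, but this plays no role in the proof.
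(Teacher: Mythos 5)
Your proof is correct and takes essentially the same route as the paper: exhibit the inverse morphism explicitly. The paper writes the lower-right block of the inverse as $(\lambda\mathbf{1}_g - ZX^{-1}Y)X\transp$ rather than your $\lambda E\transp - \tau F\transp$; these agree because $E^{-1}FE\transp = F\transp$ follows from the symmetry constraint $EF\transp = FE\transp$ defining $P_g$, so the difference is purely cosmetic, and the paper leaves all the block computations you carried out to the reader.
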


\begin{proof}
We simply remark that
\begin{align*}
\left(\lambda, Z,\left(\begin{array}{cc}
          X & Y \\
          0 & (X\transp)^{-1} \end{array}\right)  \right)\mapsto \left(\begin{array}{cc}
          X^{-1} & -Y\transp \\
          ZX^{-1} & (\lambda \mathbf{1}_g  - ZX^{-1}Y)X\transp \end{array}\right) 
\end{align*}
is an inverse to the morphism defined in the statement.
\end{proof}

\begin{lemma} \label{computationallemma}
Let $F: (X,E) \to (X',E')$ be an isomorphism of principally polarized complex tori of dimension $g$, $\beta= (\gamma_1,\ldots,\gamma_g,\delta_1,\ldots,\delta_g)$  be an integral symplectic basis of $(X,E)$ and $b'$ be a symplectic-Hodge basis of $(X',E')$. We denote by $F_*\beta$ the integral symplectic basis of $(X',E')$ given by pushforward in singular homology. Then the symplectic-Hodge basis
\begin{align*}
b = (\omega_1,\ldots,\omega_g,\eta_1,\ldots,\eta_g) \defeq F^*b' \cdot p\left( \frac{1}{2\pi i}P(X',E',b',F_*\beta)\right)
\end{align*} 
of $(X,E)$ satisfy
\begin{align*}
\int_{\gamma_i}\eta_j = 0 \text{, }\int_{\delta_i}\eta_j = \delta_{ij}
\end{align*}
for every $1\le i,j\le g$.
\end{lemma}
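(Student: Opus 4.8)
The plan is to unwind the definition of $b$ completely and then read off the periods of $\eta_1,\dots,\eta_g$ from the entries of the period matrix. Write $P\defeq P(X',E',b',F_*\beta)=(\Omega_1 \ N_1 \ ; \ \Omega_2 \ N_2)$ in the block notation of \ref{notationmatrices} and Definition \ref{defiperiodmatrix}. By Lemma \ref{lemmaperiodmatrix} we have $P\in\GSp_{2g}^*(\CC)$ and $\nu(P)=2\pi i$; since multiplying a $2g\times 2g$ matrix by a scalar $\lambda$ keeps it inside $\GSp_{2g}$ (and replaces $\nu$ by $\lambda^2\nu$), also $\tfrac{1}{2\pi i}P\in\GSp_{2g}^*(\CC)$, so by Lemma \ref{isomorphismgsp}
\begin{align*}
p\!\left(\tfrac{1}{2\pi i}P\right)=\left(\begin{array}{cc} 2\pi i\,\Omega_1^{-1} & -\tfrac{1}{2\pi i}N_1\transp \\ 0 & \tfrac{1}{2\pi i}\Omega_1\transp \end{array}\right)\in P_g(\CC)\text{.}
\end{align*}
Writing $F^*b'=(F^*\omega'_1,\dots,F^*\omega'_g,F^*\eta'_1,\dots,F^*\eta'_g)$ and applying the $P_g(\CC)$-action recalled in \ref{principalbundle}, the last $g$ entries of $b=F^*b'\cdot p(\tfrac{1}{2\pi i}P)$ are
\begin{align*}
\eta_j=-\frac{1}{2\pi i}\sum_{k=1}^g (N_1)_{jk}\,F^*\omega'_k+\frac{1}{2\pi i}\sum_{k=1}^g (\Omega_1)_{jk}\,F^*\eta'_k\text{, }\qquad 1\le j\le g\text{.}
\end{align*}
(In passing, this confirms that $b$ really is a symplectic-Hodge basis of $(X,E)$: $F^*b'$ is one since $F$ is an isomorphism in $\mathcal{T}_g$, and the $P_g(\CC)$-action preserves this property.)

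Next I would compute the integrals. By functoriality of the comparison isomorphism (\ref{sectioncompisom}), for any $\alpha'\in\mathcal{H}^1_{\dR}(X')$ and any $1$-cycle $\gamma$ on $X$ one has $\int_\gamma F^*\alpha'=\int_{F_*\gamma}\alpha'$; hence, by the very definition of $P=P(X',E',b',F_*\beta)$,
\begin{align*}
\int_{\gamma_i}F^*\omega'_k=(\Omega_1)_{ik}\text{, }\quad \int_{\gamma_i}F^*\eta'_k=(N_1)_{ik}\text{, }\quad \int_{\delta_i}F^*\omega'_k=(\Omega_2)_{ik}\text{, }\quad \int_{\delta_i}F^*\eta'_k=(N_2)_{ik}\text{.}
\end{align*}
Substituting these into the formula for $\eta_j$, a short matrix manipulation yields
\begin{align*}
\int_{\gamma_i}\eta_j=\frac{1}{2\pi i}\bigl((\Omega_1 N_1\transp)\transp-\Omega_1 N_1\transp\bigr)_{ij}\text{, }\qquad \int_{\delta_i}\eta_j=\frac{1}{2\pi i}\bigl(N_2\Omega_1\transp-\Omega_2 N_1\transp\bigr)_{ij}\text{.}
\end{align*}

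Finally I would invoke the defining relations of $\GSp_{2g}$ for the block matrix $P$. The relation $\Omega_1 N_1\transp=N_1\Omega_1\transp$ says $\Omega_1 N_1\transp$ is symmetric, so the first expression vanishes and $\int_{\gamma_i}\eta_j=0$. The relation $\Omega_1 N_2\transp-N_1\Omega_2\transp=\nu(P)\,\mathbf{1}_g=2\pi i\,\mathbf{1}_g$ (Lemma \ref{lemmaperiodmatrix}(1)), transposed, reads $N_2\Omega_1\transp-\Omega_2 N_1\transp=2\pi i\,\mathbf{1}_g$, whence $\int_{\delta_i}\eta_j=\delta_{ij}$. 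The whole argument is formal once Lemmas \ref{lemmaperiodmatrix} and \ref{isomorphismgsp} are available; there is no genuine obstacle, and the only point that demands care — the likely source of any sign or transpose slip — is keeping the transpose conventions of the $P_g(\CC)$-action in \ref{principalbundle} and of the map $p$ in Lemma \ref{isomorphismgsp} consistent throughout.
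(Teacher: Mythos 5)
Your computation is correct and supplies precisely the ``straightforward computation'' the paper alludes to without writing out: unwinding the $P_g(\CC)$-action and the formula for $p$ from Lemma \ref{isomorphismgsp}, using the adjunction $\int_\gamma F^*\alpha'=\int_{F_*\gamma}\alpha'$ to read off periods from the matrix $P$, and closing with the $\GSp_{2g}$-relations $\Omega_1N_1\transp=N_1\Omega_1\transp$ and $\Omega_1N_2\transp-N_1\Omega_2\transp=2\pi i\,\mathbf{1}_g$. The transpose bookkeeping and signs all check out.
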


The proof this lemma is a straightforward computation. 

\subsection{Proof of Theorem \ref{trdeg}} \label{proofthmtrdeg}

Let $A_g$ be the coarse moduli space associated to the Deligne-Mumford stack $\mathcal{A}_g \to \Spec \ZZ$ (which exists as an algebraic space by the Keel-Mori theorem, cf. \cite{olsson16} Theorem 11.1.2). We recall that $A_g$ is a quasi-projective scheme over $\Spec \ZZ$ (cf. \cite{moret-bailly85} VII Théorème 4.2) endowed with a canonical morphism $\mathcal{A}_g \to A_g$ inducing, for every algebraically closed field $k$, a bijection of $A_g(k)$ with the set of isomorphism classes of principally polarized abelian varieties over $k$.

Since any principally polarized complex torus $(X,E)$ of dimension $g$ is algebraizable, $(X,E)$ defines an isomorphism class in the category $\mathcal{A}_g(\CC)$ that we shall denote $[(X,E)]$. Let  
\begin{align*}
j_g : \mathbf{H}_g &\to A_g(\CC)\\
               \tau &\mapsto [(\mathbf{X}_{g,\tau},E_{g,\tau})]\text{.}
\end{align*}

The next result follows immediately from our proof of the Lemma \ref{fielddef}.

\begin{lemma}\label{remarkfielddef}
For any $\tau \in \mathbf{H}_g$, the smallest algebraically closed subfield over which $\mathbf{X}_{g,\tau}$ is definable is given by the algebraic closure in $\CC$ of the residue field $\mathbf{Q}(j_g(\tau))$. 
\end{lemma}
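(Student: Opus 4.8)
The plan is to derive this directly from the proof of Lemma \ref{fielddef}, specialized to $X = \mathbf{X}_{g,\tau}$ and to the \emph{principal} polarization that this complex torus carries by construction.

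First I would recall that, by Example \ref{torus}, the polarizable complex torus $\mathbf{X}_{g,\tau} = \CC^g/(\ZZ^g + \tau\ZZ^g)$ comes equipped with the principal Riemann form $E_{g,\tau}$; algebraizing the pair $(\mathbf{X}_{g,\tau},E_{g,\tau})$ via Remark \ref{algebraization} produces a principally polarized complex abelian variety, i.e. one carrying a polarization of degree $1^2 = 1$. In the notation of the proof of Lemma \ref{fielddef}, we are thus free to take $\lambda = E_{g,\tau}$ and $d = 1$: the coarse moduli space $A_{g,d,1}$ appearing there becomes $A_{g,1,1} = A_g$ (the coarse space of $\mathcal{A}_g$ used throughout \ref{proofthmtrdeg}), and the complex point $\overline{x}\in A_{g,d,1}(\CC)$ classifying $(X,\lambda)$ becomes exactly $j_g(\tau) = [(\mathbf{X}_{g,\tau},E_{g,\tau})] \in A_g(\CC)$, by the very definition of $j_g$.

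With this identification the proof of Lemma \ref{fielddef} applies verbatim: it shows that the algebraic closure $k$ in $\CC$ of the residue field $\QQ(\overline{x}) = \QQ(j_g(\tau))$ is definable for $\mathbf{X}_{g,\tau}$, and that it is contained in every algebraically closed subfield of $\CC$ over which $\mathbf{X}_{g,\tau}$ is definable (the key point reused there — that over an algebraically closed field a polarization descends along with the abelian variety, so any field of definition of $\mathbf{X}_{g,\tau}$ is a field of definition of the polarized pair, hence factors the moduli point — requires no modification). Therefore $k = \overline{\QQ(j_g(\tau))}$ is the smallest algebraically closed field of definition, as claimed.

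There is no real obstacle here; the only point deserving a line of care is that the minimal field $k$ produced by Lemma \ref{fielddef} does not depend on the auxiliary polarization chosen in its proof — it is characterized intrinsically — so we are entitled to compute it using the principal polarization $E_{g,\tau}$, which is what yields the clean description in terms of $j_g(\tau)$ rather than a point on some $A_{g,d,1}$ with $d>1$.
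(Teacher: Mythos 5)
Your proof is correct and matches the paper's intent exactly: the paper's own argument is just the one-line observation that the claim ``follows immediately from our proof of Lemma \ref{fielddef}'', and what you have written out — taking $\lambda = E_{g,\tau}$, $d=1$, so that the moduli point $\overline{x}$ is $j_g(\tau) \in A_g(\CC)$, and noting that the minimal field $k$ is intrinsic and hence independent of the auxiliary polarization — is precisely the specialization the author has in mind.
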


\begin{proof}[Proof of Theorem \ref{trdeg}]
Let $\tau \in \mathbf{H}_g$ and fix any integer $n\ge 3$. The principally polarized complex torus $(\mathbf{X}_{g,\tau},E_{g,\tau})$ endowed with the integral symplectic basis modulo $n$ induced by $\beta_{g,\tau}$ (cf. Remark \ref{localift}) defines a complex point $u(\tau) \in A_{g,n}(\CC)$ in the fine moduli space $A_{g,n}$ over $\ZZ[1/n,\zeta_n]$ of principally polarized abelian varieties of dimension $g$ endowed with a symplectic basis of its $n$-torsion subscheme (cf. \ref{level}).

 Let $(X_{g,n},\lambda_{g,n})$ denote the universal principally polarized abelian scheme over $A_{g,n}$. Then, by the remark following \cite{fonseca16} Definition 2.4, there exists a Zariski open neighborhood $U\subset A_{g,n,\overline{\QQ}}$ of $u(\tau)$ over which $(X_{g,n,\overline{\QQ}},\lambda_{g,n})$ admits a symplectic-Hodge basis $b$. Let us denote by $(X,\lambda)=(X_{g,n,\overline{\QQ}},\lambda_{g,n})\times_{A_{g,n,\overline{\mathbf{Q}}}} U$ the restriction of $(X_{g,n,\overline{\QQ}},\lambda_{g,n})$ to $U$.

In the following, fiber products will be taken with respect to $\overline{\QQ}$. The symplectic-Hodge basis $b$ of $(X,\lambda)_{/U}$ induces an isomorphism of principal $P_g$-bundles over $U$
\begin{align*}
P_{g,\overline{\QQ}}\times U &\stackrel{\sim}{\to} B(X,\lambda) \\
        (p,u) &\mapsto b_u\cdot p\text{,} 
\end{align*} 
where $B(X,\lambda)$ is the $U$-scheme defined in \cite{fonseca16} Corollary 3.4. By composing this isomorphism with the isomorphism in Lemma \ref{isomorphismgsp}, we obtain the isomorphism of $\overline{\QQ}$-schemes
\begin{align*}
f: {\GSp}_{2g,\overline{\QQ}}^*\times U &\to \mathbf{G}_{m,\overline{\QQ}} \times  {\Sym}_{g,\overline{\QQ}} \times B(X,\lambda)\\
               (s,u) &\mapsto (\nu(s), \tau(s), b_u\cdot p(s))\text{.}
\end{align*}

Note that the canonical morphism $h: B(X,\lambda) \to \mathcal{B}_{g,\overline{\QQ}}\cong B_{g,\overline{\QQ}}$ is quasi-finite, since it fits into the cartesian diagram of Deligne-Mumford stacks
$$
  \raisebox{-0.5\height}{\includegraphics{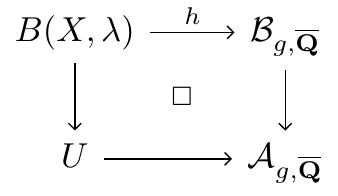}}
$$
where the bottom arrow $U \to \mathcal{A}_{g,\overline{\QQ}}$ is given by the composition of the open immersion $U\subset A_{g,n,\overline{\QQ}}$ with the (canonical) finite étale morphism $A_{g,n,\overline{\QQ}} \to \mathcal{A}_{g,\overline{\QQ}}$.

In particular, by composing $f$ with $h$, we obtain a quasi-finite morphism of $\overline{\QQ}$-schemes
\begin{align*}
q: {\GSp}_{2g,\overline{\QQ}}^*\times U \to \mathbf{G}_{m,\overline{\QQ}} \times {\Sym}_{g,\overline{\QQ}} \times B_{g,\overline{\QQ}}
\end{align*}
given on geometric points by
\begin{align*}
q(s,u) = (\nu(s),\tau(s),[(X_u,\lambda_u,b_u\cdot p(s))])
\end{align*}
where $[(X_u,\lambda_u,b_u\cdot p(s))]$ denotes the isomorphism class in $\mathcal{B}_g(k(u))$ of $(X_u,\lambda_u,b_u\cdot p(s))$, and $k(u)$ denotes the residue field of $u\in U$.

Let $F: (\mathbf{X}_{g,\tau},E_{g,\tau}) \stackrel{\sim}{\to} (X^{\an}_{u(\tau)},E_{\lambda_{u(\tau)}})$ be the isomorphism of principally polarized complex tori corresponding, by the universal property of $A_{g,n}$, to the reduction of the integral symplectic basis $\beta_{g,\tau}$ modulo $n$, and put
\begin{align*}
s(\tau) \defeq \frac{1}{2\pi i}P(X_{u(\tau)}^{\an},E_{\lambda_{u(\tau)}}, b_{u(\tau)},F_*\beta_{g,\tau})  \in {\GSp}_{2g}^*(\CC)\text{.}
\end{align*}
It follows from Corollary \ref{caraceta} and Lemma \ref{computationallemma} that $F^*b_{u(\tau)}\cdot p(s(\tau)) = \bfb_{g,\tau}$, so that
\begin{align*}
[(X_{u(\tau)}, \lambda_{u(\tau)},b_{u(\tau)}\cdot p(s(\tau)))]= [(\mathbf{X}_{g,\tau},E_{g,\tau}, \bfb_{g,\tau})] = \varphi_g(\tau)\text{.}
\end{align*}
Thus, by Lemma \ref{lemmaperiodmatrix}, we obtain
\begin{align*}
q(s(\tau),u(\tau)) = \left(\frac{1}{2\pi i}, \tau, \varphi_g(\tau)\right) \in \mathbf{G}_m(\CC)\times {\Sym}_g(\CC) \times B_g(\CC)\text{.}
\end{align*}


To finish the proof, it is sufficient to show that $\mathcal{P}(\mathbf{X}_{g,\tau})$ is an algebraic extension of $\overline{\QQ}(s(\tau),u(\tau))$. For this, let
\begin{align*}
h: A_{g,n,\overline{\QQ}} \to A_{g,\overline{\QQ}}
\end{align*}
be the canonical map; then $h(u(\tau))=j_g(\tau)$. As $h$ is finite (in fact, it identifies $A_{g,\overline{\QQ}}$ with the quotient $\Sp_{2g}(\ZZ/n\ZZ)\setminus A_{g,n,\overline{\QQ}}$), the residue field $\overline{\QQ}(u(\tau))$ is a finite extension of $\overline{\QQ}(j_g(\tau))$. Then, it follows from Corollary \ref{remarkfielddef} that $\overline{\QQ}(u(\tau))$ is contained in the smallest algebraically closed field of $\CC$ over which $\mathbf{X}_{g,\tau}$ is definable (namely, the algebraic closure in $\CC$ of $\overline{\QQ}(j_g(\tau))$). Finally, since $2\pi i \in \mathcal{P}(\mathbf{X}_{g,\tau})$ by Lemma \ref{lemmaperiodmatrix} (1), the assertion follows from Remark \ref{remarkperiodmatrix}.
\end{proof}

\section{Group-theoretic interpretation of $B_g(\CC)$ and of the higher Ramanujan vector fields}\label{gpinterpret}

In this section we shall explain how to realize the complex manifold $B_g(\CC)$ as a domain (in the analytic topology) of the quotient manifold $\Sp_{2g}(\ZZ)\backslash\Sp_{2g}(\CC)$ (Corollary \ref{realization}).

We shall also give an explicit expression for the higher Ramanujan vector fields, and for the holomorphic map $\varphi_g: \mathbf{H}_g \to B_g(\CC)$, under this group-theoretic interpretation. For this, recall that the Lie algebra of $\Sp_{2g}(\CC)$ is given by
\begin{align*}
\Lie {\Sp}_{2g} (\CC) = \left\{\left.\left(\begin{array}{cc}A & B \\ C & D \end{array}\right) \in M_{2g\times 2g}(\CC)\right|B\transp=B\text{, }C\transp = C\text{, }D = -A\transp\right\}\text{.}
\end{align*}
For $1\le k \le l \le g$, let us consider the left invariant holomorphic vector field $\tilde{V}_{kl}$ on $\Sp_{2g}(\CC)$ corresponding to
\begin{align*}
\frac{1}{2\pi i}\left(\begin{array}{cc}0 & \mathbf{E}^{kl} \\ 0 & 0 \end{array}\right) \in \Lie {\Sp}_{2g}(\CC)\text{;}
\end{align*}
it descends to a holomorphic vector field $V_{kl}$ on the quotient $\Sp_{2g}(\ZZ)\backslash \Sp_{2g}(\CC)$.

\begin{theorem}\label{unifhrvf}
  Let $(v_{kl})_{1\le k \le l \le g}$ be the higher Ramanujan vector fields on $B_g(\CC)$. Under the identification of $B_g(\CC)$ with an open submanifold of $\Sp_{2g}(\ZZ)\backslash \Sp_{2g}(\CC)$ of Corollary \ref{realization}, we have:
  \begin{enumerate}
     \item For every $1\le k \le l \le g$,
\begin{align*}
v_{kl} = V_{kl}|_{B_g(\CC)}\text{.}
\end{align*}
\item The solution of the higher Ramanujan equations $\varphi_g: \mathbf{H}_g \to B_g(\CC)$ is given by
  \begin{align*}
    \varphi_g(\tau) = {\Sp}_{2g}(\ZZ)\left(\begin{array}{cc}\mathbf{1}_g & \tau \\ 0 &\mathbf{1}_g\end{array} \right) \in {\Sp}_{2g}(\ZZ)\backslash {\Sp}_{2g}(\CC)\text{.}
    \end{align*}
   \end{enumerate}
 \end{theorem}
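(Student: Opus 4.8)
The plan is to read off both statements from the group-theoretic realization of $B_g(\CC)$ in Corollary \ref{realization}, using the explicit symplectic-Hodge basis $\bfb_g$ together with Theorem \ref{theoremsolution}(2). Recall that $\bfb_g=(\bfomega_1,\dots,\bfomega_g,\bfeta_1,\dots,\bfeta_g)$ is a \emph{global} symplectic-Hodge basis of $(\mathbf X_g,E_g)_{/\mathbf H_g}$, so $\varphi_g$ lifts to a holomorphic section $\tilde\varphi_g\colon\mathbf H_g\to\mathbf B_g$ of the principal $P_g(\CC)$-bundle $\mathbf B_g=B(\mathbf X_g,E_g)\to\mathbf H_g$, with $\varphi_g$ the composite of $\tilde\varphi_g$ with $\mathbf B_g\to\Sp_{2g}(\ZZ)\backslash\mathbf B_g=B_g(\CC)$. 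Under the identification of $\mathbf B_g$ with an open subset of $\Sp_{2g}(\CC)$ underlying Corollary \ref{realization}, a point of $\mathbf B_g$ over $\tau\in\mathbf H_g$, i.e. a symplectic-Hodge basis $b$ of $(\mathbf X_{g,\tau},E_{g,\tau})$, corresponds to the (normalized) period matrix of $(\mathbf X_{g,\tau},E_{g,\tau})$ relative to $b$ and the canonical integral symplectic basis $\beta_{g,\tau}$ (Definition \ref{defiperiodmatrix}). I would first compute this matrix for $b=\bfb_{g,\tau}$, proving (2), and then deduce (1).

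For (2): the four blocks of the period matrix of $\bfb_{g,\tau}$ are computed directly. Since $\bfomega_k=2\pi i\,dz_k$ with $(dz_k)$ the dual of the canonical frame, and $\gamma_l(\tau)=\mathbf e_l$, $\delta_l(\tau)=\tau\mathbf e_l$ (Examples \ref{torus}, \ref{intsymplbasis}), one has $\int_{\gamma_l}\bfomega_k=2\pi i\,\delta_{kl}$ and $\int_{\delta_l}\bfomega_k=2\pi i\,\tau_{kl}$ (these integrals of $dz_k$ already appear in the proof of Proposition \ref{lemme1}); and Corollary \ref{caraceta} gives $\int_{\gamma_l}\bfeta_k=0$, $\int_{\delta_l}\bfeta_k=\delta_{kl}$. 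Transporting the resulting matrix through the isomorphism of Corollary \ref{realization} — whose $\tfrac{1}{2\pi i}$-normalization on the holomorphic part absorbs the factor in $\bfomega_k=2\pi i\,dz_k$ — yields $\left(\begin{smallmatrix}\mathbf 1_g&\tau\\ 0&\mathbf 1_g\end{smallmatrix}\right)$, hence $\varphi_g(\tau)=\Sp_{2g}(\ZZ)\left(\begin{smallmatrix}\mathbf 1_g&\tau\\ 0&\mathbf 1_g\end{smallmatrix}\right)$.

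For (1): write $n(\tau)=\left(\begin{smallmatrix}\mathbf 1_g&\tau\\0&\mathbf 1_g\end{smallmatrix}\right)$, so $\varphi_g(\tau)=\Sp_{2g}(\ZZ)\,n(\tau)$ by (2). As $\partial\tau/\partial\tau_{kl}=\mathbf E^{kl}$, we get $\partial n(\tau)/\partial\tau_{kl}=\left(\begin{smallmatrix}0&\mathbf E^{kl}\\0&0\end{smallmatrix}\right)=n(\tau)\left(\begin{smallmatrix}0&\mathbf E^{kl}\\0&0\end{smallmatrix}\right)$; hence $\theta_{kl}\varphi_g(\tau)$ equals the value at $\Sp_{2g}(\ZZ)\,n(\tau)$ of the left invariant vector field attached to $\tfrac{1}{2\pi i}\left(\begin{smallmatrix}0&\mathbf E^{kl}\\0&0\end{smallmatrix}\right)$, namely $V_{kl}$. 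By Theorem \ref{theoremsolution}(2), $v_{kl}\circ\varphi_g=\theta_{kl}\varphi_g$, so $v_{kl}$ and $V_{kl}|_{B_g(\CC)}$ coincide at every point of $\varphi_g(\mathbf H_g)$. To promote this to an equality on all of $B_g(\CC)$ I would use that, $\tilde\varphi_g$ being a \emph{global} section, $\mathbf B_g$ is the $P_g(\CC)$-orbit of $\tilde\varphi_g(\mathbf H_g)$, whence $B_g(\CC)=\varphi_g(\mathbf H_g)\cdot P_g(\CC)$ for the right $P_g(\CC)$-action; it then suffices to show that $(v_{kl})$ and $(V_{kl})$ transform by the \emph{same} cocycle under this action. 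For the left invariant fields this is a direct $\mathrm{Ad}$-computation: $\{\left(\begin{smallmatrix}0&S\\0&0\end{smallmatrix}\right):S\transp=S\}$ is $\mathrm{Ad}(P_g(\CC))$-stable, with $\mathrm{Ad}\left(\begin{smallmatrix}A&B\\0&(A\transp)^{-1}\end{smallmatrix}\right)$ acting on it by $S\mapsto ASA\transp$ on $\Sym_g(\CC)$, and this governs the transformation of the $V_{kl}$. For the higher Ramanujan vector fields the same rule holds because the defining Kodaira--Spencer/Gauss--Manin construction (\cite{fonseca16} 5.3) is $P_g$-equivariant --- this is precisely what makes the $v_{kl}$ descend from $\mathbf B_g$ to $B_g$. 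Since the two families agree on the section $\varphi_g(\mathbf H_g)$ and obey the same $P_g(\CC)$-transformation rule, they agree on $\varphi_g(\mathbf H_g)\cdot P_g(\CC)=B_g(\CC)$.

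The routine points are the period computation in (2) and the chain-rule identity in (1). The main obstacle is bookkeeping: getting the $2\pi i$-normalization right so that the period matrix of $\bfb_{g,\tau}$ lands exactly on $\left(\begin{smallmatrix}\mathbf 1_g&\tau\\0&\mathbf 1_g\end{smallmatrix}\right)$ and not a rescaled variant, and --- more substantively --- checking that the $P_g(\CC)$-equivariance of the higher Ramanujan vector fields inherited from \cite{fonseca16} matches, under the identification of $B_g(\CC)$ with an open subset of $\Sp_{2g}(\ZZ)\backslash\Sp_{2g}(\CC)$, the $\mathrm{Ad}$-equivariance of the left invariant fields $V_{kl}$ (both being controlled by $S\mapsto ASA\transp$ on $\Sym_g$, up to replacing $p$ by $p^{-1}$ according to the side conventions).
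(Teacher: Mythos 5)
Your treatment of part (2) matches the paper's: Lemma \ref{psi} computes the (normalized) period matrix $\Pi(\mathbf{X}_{g,\tau},E_{g,\tau},\bfb_{g,\tau},\beta_{g,\tau})$ exactly by the four integrals you list, and the conclusion follows from Theorem \ref{theoremsolution} and Corollary \ref{realization}. That part is correct and essentially identical to the paper's argument.

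Your proof of part (1), however, has a gap in the extension step. Under the identification $\Pi$ of Proposition \ref{prop1}, the right $P_g(\CC)$-action on $\mathbf{B}_g$ corresponds to right multiplication by the \emph{lower} parabolic $P'_g(\CC)$ (Lemma \ref{equivariancepi}), not by $P_g(\CC)$. The push-forward of a left invariant field $V_\xi$ under right translation by $p'\in P'_g(\CC)$ is $V_{\mathrm{Ad}(p'^{-1})\xi}$, and for $\xi=\left(\begin{smallmatrix}0&S\\0&0\end{smallmatrix}\right)$ and $p'$ lower triangular, $\mathrm{Ad}(p'^{-1})\xi$ leaves the space $\bigl\{\left(\begin{smallmatrix}0&S\\0&0\end{smallmatrix}\right):S\transp=S\bigr\}$; your $\mathrm{Ad}$-computation, done with the \emph{upper} parabolic $P_g(\CC)$, gives the wrong stability statement. (This is consistent with the fact that the higher Ramanujan foliation is not invariant under the right $P'_g(\CC)$-action: in the description of Proposition \ref{charactleaves} the leaves are cosets of $U_g(\CC)$, which is not normalized by $P'_g(\CC)$.) In addition, the $P_g(\CC)$-equivariance you invoke for the $v_{kl}$ is not ``what makes them descend from $\mathbf{B}_g$ to $B_g$'': the descent $\mathbf{B}_g\to B_g(\CC)$ is the quotient by the left $\Sp_{2g}(\ZZ)$-action (Proposition \ref{reprsympl}), not by $P_g(\CC)$. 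So the claimed equal transformation rule for $(v_{kl})$ and $(V_{kl})$ under the $P_g(\CC)$-action is both unjustified and, as formulated, inconsistent with the group-theoretic picture.

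The paper circumvents the extension problem entirely by an ODE argument: for an \emph{arbitrary} local solution $u$ of $\theta_{kl}u=v_{kl}\circ u$, one lifts to $\tilde u$ and sets $h=\Pi\circ\tilde u$; writing $h$ in block form, Proposition \ref{equivalences}(3) translates the higher Ramanujan equations into $\theta_{ij}N_1=\theta_{ij}N_2=0$ and $\theta_{ij}\Omega_a=N_a\mathbf{E}^{ij}$, which integrates to $h(\tau)=s\left(\begin{smallmatrix}\mathbf{1}_g&\tau\\0&\mathbf{1}_g\end{smallmatrix}\right)$ for a constant $s\in\Sp_{2g}(\CC)$; left invariance of $\tilde V_{kl}$ then shows $h$ solves $\theta_{kl}h=\tilde V_{kl}\circ h$. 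Since through every point of $B_g(\CC)$ there passes such a solution, $v_{kl}=V_{kl}$ everywhere. If you want to keep a ``propagate from $\varphi_g(\mathbf{H}_g)$'' strategy, you would have to supply the correct propagation argument --- and the natural one (moving along the $\varphi_\delta$, $\delta\in\Sp_{2g}(\CC)$, that cover $B_g(\CC)$) is circular, since it rests on Proposition \ref{charactleaves}, which itself uses Theorem \ref{unifhrvf}(1). I would replace your part (1) by the paper's ODE computation.
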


 As an example of application, we shall prove the following easy consequence of the above theorem.
 
 \begin{coro}\label{ramleafclosed}
The image of $\varphi_g: \mathbf{H}_g \to B_g(\CC)$ is closed for the analytic topology.
\end{coro}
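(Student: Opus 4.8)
The plan is to use the explicit formula for $\varphi_g$ given by Theorem \ref{unifhrvf} (2), namely
$$\varphi_g(\tau) = {\Sp}_{2g}(\ZZ)\left(\begin{array}{cc}\mathbf{1}_g & \tau \\ 0 & \mathbf{1}_g\end{array}\right),$$
and reduce the closedness of the image to a statement about the orbit of the subgroup
$$N \defeq \left\{\left.\left(\begin{array}{cc}\mathbf{1}_g & \tau \\ 0 & \mathbf{1}_g\end{array}\right)\ \right|\ \tau \in {\Sym}_g(\CC)\right\} \subset {\Sp}_{2g}(\CC)$$
in the quotient ${\Sp}_{2g}(\ZZ)\backslash {\Sp}_{2g}(\CC)$. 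More precisely, writing $q: {\Sp}_{2g}(\CC) \to {\Sp}_{2g}(\ZZ)\backslash {\Sp}_{2g}(\CC)$ for the (open, continuous) projection, the image of $\varphi_g$ is $q(N')$, where $N' = \{(\mathbf{1}_g\ \tau\ ;\ 0\ \mathbf{1}_g) \mid \tau \in \mathbf{H}_g\}$ is an open subset of the closed subgroup $N$. Since $\mathbf{H}_g$ is only a domain in ${\Sym}_g(\CC)$, I first observe that $\varphi_g(\mathbf{H}_g)$ nonetheless equals the full orbit $q(N)$: indeed ${\Sp}_{2g}(\ZZ)$ contains $-\mathbf{1}_{2g}$ and also the element $(0\ -\mathbf{1}_g\ ;\ \mathbf{1}_g\ 0)$, and more generally for any $\tau \in {\Sym}_g(\CC)$ with $\det \tau \neq 0$ one has $(\mathbf{1}_g\ \tau\ ;\ 0\ \mathbf{1}_g) = \gamma\cdot(\mathbf{1}_g\ \tau'\ ;\ 0\ \mathbf{1}_g)\cdot(\text{upper triangular})$ for a suitable $\gamma \in {\Sp}_{2g}(\ZZ)$ and $\tau'$ with $\Im\tau' > 0$ — this is the classical fact that every point of ${\Sym}_g(\CC)$ is ${\Sp}_{2g}(\ZZ)$-equivalent (up to the $P_g$-ambiguity, which is invisible in $N\backslash$-cosets once we remember $N$ is normalized appropriately) to a point of $\mathbf{H}_g$. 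Hence it suffices to prove $q(N)$ is closed.

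Next I would reduce to a purely discrete-group statement. Let $\Gamma = {\Sp}_{2g}(\ZZ)$. A coset $\Gamma g$ lies in the closure of $q(N)$ iff there are sequences $\gamma_n \in \Gamma$ and $\nu_n \in N$ with $\gamma_n \nu_n \to g'$ for some $g'$ with $\Gamma g' = \Gamma g$; equivalently, $q(N)$ is closed iff for every convergent sequence $\gamma_n \nu_n \to h$ in ${\Sp}_{2g}(\CC)$ with $\gamma_n \in \Gamma$, $\nu_n \in N$, the limit $h$ lies in $\Gamma N$. Write $\gamma_n = (A_n\ B_n\ ;\ C_n\ D_n)$; then $\gamma_n \nu_n = (A_n\ A_n\tau_n + B_n\ ;\ C_n\ C_n\tau_n + D_n)$. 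The lower-left block of $\gamma_n\nu_n$ is exactly $C_n$, which is therefore a convergent — hence eventually constant, being integral — sequence: $C_n = C$ for $n \gg 0$. Similarly the block $A_n$ is constant, $A_n = A$, for $n\gg 0$. This pins down the ``bottom-half'' $(C_n\ D_n)$ up to the integral ambiguity $D_n \mapsto D_n$: from $C_n\tau_n + D_n$ convergent and $C_n = C$ fixed and $D_n \in M_g(\ZZ)$, we get control on $C\tau_n \bmod M_g(\ZZ)$.

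The crucial case to handle is $C = 0$: then $\gamma_n \in P_g(\ZZ)$ (the integral Siegel parabolic), so $\gamma_n = (A\ B_n\ ;\ 0\ (A\transp)^{-1})$ with $A \in {\GL}_g(\ZZ)$ and $B_n (A\transp) \in {\Sym}_g(\ZZ)$ eventually constant — indeed $D_n = (A\transp)^{-1}$ is already fixed, forcing $B_n$ fixed too since $A B_n\transp = B_n A\transp$ and the whole bottom is fixed. Thus $\gamma_n = \gamma$ is eventually constant and $\gamma_n \nu_n = \gamma \nu_n \to h$ forces $\nu_n \to \gamma^{-1}h$, which lies in the closed subgroup $N$ (as $N$ is closed and $\nu_n \in N$), whence $h \in \gamma N \subset \Gamma N$. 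When $C \neq 0$, I would argue that no convergent sequence can arise unless it reduces to the previous case: from $C\tau_n + D_n$ bounded with $D_n$ integral, the entries of $C\tau_n$ must stay within bounded distance of $M_g(\ZZ)$; combined with the fact that a general upper-triangular $\nu_n$ ranges over all of ${\Sym}_g(\CC)$, one uses the structure of $\gamma_n\in\Gamma$ (the symplectic relations force $C\transp A = A\transp C$, etc.) to see that after passing to a subsequence one may absorb $C$ into a left multiplication by an element of $\Gamma$, returning to $C = 0$.

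The main obstacle is this last point: ruling out ``accidental'' limits coming from cosets $\Gamma\gamma_n$ with $\gamma_n$ having nonzero, unbounded lower-left block. The clean way to do it is to invoke that $N$ is the unipotent radical of the (real or complex) Siegel parabolic $P_g$ and that $\Gamma \cap P_g(\CC) = P_g(\ZZ)$ is a lattice in $P_g(\CC)$ acting cocompactly on $N \cong {\Sym}_g(\CC) \cong \CC^{g(g+1)/2}$ modulo the Levi — so the orbit $q(N) = q(P_g(\CC))$ after folding in the Levi directions, and $q(P_g(\CC))$ is closed because $\Gamma P_g(\CC)$ is closed in ${\Sp}_{2g}(\CC)$, which in turn follows from $P_g$ being a parabolic $\QQ$-subgroup: $\Gamma \backslash \Gamma P_g(\CC)$ is the (closed) image of the proper map from $\Gamma\cap P_g(\CC)\backslash P_g(\CC)$, using that parabolic subgroups are their own normalizers and the Borel--Harish-Chandra reduction theory. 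I would phrase this as: $\Gamma P_g(\CC)$ is a union of finitely many closed $P_g(\CC)$-orbits on the flag-type variety ${\Sp}_{2g}(\ZZ)\backslash {\Sp}_{2g}(\CC)$, each closed by the valuative/properness criterion, hence $q(N)$ — which is one such orbit after the Levi-identification above — is closed.
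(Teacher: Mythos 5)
Your key observation — that the blocks $A_n$ and $C_n$ of $\gamma_n \nu_n = (A_n\ A_n\tau_n + B_n\ ;\ C_n\ C_n\tau_n + D_n)$ are integral and convergent, hence eventually constant — is exactly the right starting point and is the same idea that drives the paper's proof. But both the reduction to $q(N)$ and the case analysis have genuine errors.

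First, the claim that $\varphi_g(\mathbf{H}_g) = q(N)$ is false. The correct statement is $\varphi_g(\mathbf{H}_g) = q(N') = q(N) \cap B_g(\CC)$: right-multiplying $(\mathbf{1}_g\ \tau\ ;\ 0\ \mathbf{1}_g)$ by an element of $P_g(\CC)$ genuinely changes the class in $\Sp_{2g}(\ZZ)\backslash\Sp_{2g}(\CC)$, so the $P_g$-ambiguity is not ``invisible.'' Indeed $q(\mathbf{1}_{2g}) \in q(N)$ is \emph{not} in the image: $q((\mathbf{1}_g\ Z\ ;\ 0\ \mathbf{1}_g))$ lies in $B_g(\CC)$ if and only if $\Im Z > 0$, because $\Sp_{2g}(\RR)$ preserves $\iota(\mathbf{H}_g) \subset L_g(\CC)$. (Your ultimate goal — prove $q(N)$ closed in the full quotient — would still suffice, since $q(N') = q(N)\cap B_g(\CC)$; but you should state and use that relation, not the false equality.)

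Second, in the $C=0$ subcase, the claim that the fixed bottom forces $B_n$ to be eventually constant is wrong. The constraint $AB_n^{\mathsf{T}} = B_n A^{\mathsf{T}}$ says only that $B_n A^{\mathsf{T}}$ is symmetric, which leaves a lattice's worth of freedom; and since the top-right block is $A\tau_n + B_n$ with $\tau_n$ ranging over $\mathrm{Sym}_g(\CC)$, a varying $B_n$ can be compensated by $\tau_n$. So $\gamma_n$ need \emph{not} stabilize. This gap is harmless in the end, but your argument as written does not bridge it, and the $C\neq 0$ case is then handled only by an appeal to Borel--Harish-Chandra reduction theory that is never carried out.

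The clean way to finish from your key observation — and it avoids any case split on $C$ — is to note that once $A_n = A$, $C_n = C$ for $n\ge n_0$, the symplectic relations force $\gamma_{n_0}^{-1}\gamma_n \in U_g(\ZZ)$ (an integral upper unipotent), so $\gamma_n\nu_n = \gamma_{n_0}(\gamma_{n_0}^{-1}\gamma_n\,\nu_n)$ with $\gamma_{n_0}^{-1}\gamma_n\,\nu_n \in N$; since $N$ is closed, the limit lies in $\gamma_{n_0} N \subset \Gamma N$. The paper packages this even more tightly: with $f(A\ B\ ;\ C\ D) := (A,C)$, one has $\Gamma U_g(\CC) = f^{-1}(f(\Gamma))$, and $f(\Gamma) \subset M_g(\ZZ)\times M_g(\ZZ)$ is closed and discrete, so $\Gamma U_g(\CC)$ is closed; no sequences, no cases, no reduction theory.
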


\subsection{Realization of $B_g(\CC)$ as an open submanifold of $\Sp_{2g}(\ZZ)\backslash\Sp_{2g}(\CC)$} \label{parareal}

Let $\mathbf{B}_g = B(\mathbf{X}_g,E_g)$ be the principal $P_g(\CC)$-bundle over $\mathbf{H}_g$ associated to the principally polarized complex torus $(\mathbf{X}_g,E_g)_{/\mathbf{H}_g}$ as defined in Lemma \ref{relrepr}, so that the fiber of $\mathbf{B}_g \to \mathbf{H}_g$ over $\tau \in \mathbf{H}_g$ is given by the set of symplectic-Hodge bases of $(\mathbf{X}_{g,\tau},E_{g,\tau})$. 

We shall first realize $\mathbf{B}_g$ as a ``period domain'' in $\Sp_{2g}(\CC)$. For this, let us introduce the following convenient modification of period matrices (Definition \ref{defiperiodmatrix}).

\begin{defi}
  Let $(X,E)$ be a principally polarized complex torus of dimension $g$, and $b$ (resp. $\beta$) be a symplectic-Hodge basis (resp. an integral symplectic basis) of $(X,E)$. Let
  \begin{align*}
P(X,E,b,\beta) = \left(\begin{array}{cc}
                               \Omega_1 & N_1 \\
                               \Omega_2 & N_2
                               \end{array}\right) \in {\GSp}_{2g}(\CC)
\end{align*}
be the period matrix of $(X,E)$ with respect to $b$ and $\beta$. We define
\begin{align*}
\Pi(X,E,b,\beta) \defeq \left(\begin{array}{cc}
                               N_2 & \frac{1}{2\pi i}\Omega_2 \\[0.5em]
                               N_1 & \frac{1}{2\pi i}\Omega_1
                               \end{array}\right) \in {\Sp}_{2g}(\CC)
\end{align*}
Observe that this matrix is indeed symplectic by Lemma \ref{lemmaperiodmatrix}.
\end{defi}

We define a holomorphic map $\Pi : \mathbf{B}_g \to {\Sp}_{2g}(\CC)$ as follows. Let $q$ be a point in $\mathbf{B}_g$ lying above $\tau \in \mathbf{H}_g$, and corresponding to the symplectic-Hodge basis $b$ of $(\mathbf{X}_{g,\tau},E_{g,\tau})$, then
\begin{align*}
\Pi(q) \defeq \Pi (\mathbf{X}_{g,\tau},E_{g,\tau},b,\beta_{g,\tau})
\end{align*}
where $\beta_g$ is the integral symplectic basis of $(\mathbf{X}_g,E_{g})_{/\mathbf{H}_g}$ defined in Example \ref{intsymplbasis}. 

\begin{obs} \label{moduliinterpret}
Alternatively, recall that $\mathbf{H}_g$ may be regarded as the moduli space for principally polarized complex tori of dimension $g$ endowed with an integral symplectic basis (Proposition \ref{reprsintsympl}). In particular, as already remarked in the proof of Proposition \ref{reprsympl}, points in $\mathbf{B}_g$ correspond to isomorphism classes $[(X,E,b,\beta)]$ of quadruples $(X,E,b,\beta)$, where $(X,E)$ is a principally polarized complex torus of dimension $g$, and $b$ (resp. $\beta$) is a symplectic-Hodge basis (resp. integral symplectic basis) of $(X,E)$. Under this identification, the map $\Pi : \mathbf{B}_g \to \Sp_{2g}(\CC)$ is given by $[(X,E,b,\beta)] \mapsto \Pi (X,E,b,\beta)$.
\end{obs}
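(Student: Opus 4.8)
The plan is to unwind Proposition~\ref{reprsintsympl} and Lemma~\ref{relrepr} fibrewise and then to check that the resulting identification is compatible with the definition of $\Pi$. By Lemma~\ref{relrepr} the underlying set of $\mathbf{B}_g=B(\mathbf{X}_g,E_g)$ is
\[
\mathbf{B}_g=\{(\tau,b')\mid \tau\in\mathbf{H}_g,\ b'\text{ a symplectic-Hodge basis of }(\mathbf{X}_{g,\tau},E_{g,\tau})\},
\]
with projection $(\tau,b')\mapsto\tau$. By Proposition~\ref{reprsintsympl}, for any principally polarized complex torus $(X,E)$ of dimension $g$ over a point and any integral symplectic basis $\beta$ of $(X,E)$, there is a unique $\tau=\tau(X,E,\beta)\in\mathbf{H}_g$ and a unique isomorphism $F=F_{(X,E,\beta)}\colon(X,E)\stackrel{\sim}{\to}(\mathbf{X}_{g,\tau},E_{g,\tau})$ in $\mathcal{T}_g$ with $F^*\beta_{g,\tau}=\beta$. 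Since $F$ induces a symplectic isomorphism $F^*\colon\mathcal{H}^1_{\dR}(\mathbf{X}_{g,\tau})\to\mathcal{H}^1_{\dR}(X)$ preserving the Hodge subbundle, one may define
\[
\Theta\bigl([(X,E,b,\beta)]\bigr)\defeq\bigl(\tau(X,E,\beta),\ (F_{(X,E,\beta)}^*)^{-1}b\bigr)\in\mathbf{B}_g,
\]
together with a candidate inverse $\Psi(\tau,b')\defeq[(\mathbf{X}_{g,\tau},E_{g,\tau},b',\beta_{g,\tau})]$.

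The main step is to check that $\Theta$ is well defined and that $\Theta$ and $\Psi$ are mutually inverse; both assertions reduce to the uniqueness clause of Proposition~\ref{reprsintsympl}. If $G\colon(X,E)\stackrel{\sim}{\to}(X_1,E_1)$ is an isomorphism of quadruples — so $G^*\beta_1=\beta$ and $G^*b_1=b$ — then $F_{(X_1,E_1,\beta_1)}\circ G$ pulls $\beta_{g,\tau_1}$ back to $\beta$, hence equals $F_{(X,E,\beta)}$ (and $\tau_1=\tau$) by uniqueness; transporting the symplectic-Hodge bases through this identity gives $(F_{(X,E,\beta)}^*)^{-1}b=(F_{(X_1,E_1,\beta_1)}^*)^{-1}b_1$, so $\Theta$ is well defined, and reading the same computation in reverse turns an equality $\Theta([(X,E,b,\beta)])=\Theta([(X_1,E_1,b_1,\beta_1)])$ into the isomorphism $G=F_{(X_1,E_1,\beta_1)}^{-1}\circ F_{(X,E,\beta)}$ of quadruples, proving injectivity. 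Finally $\Theta\circ\Psi=\id$ is immediate (take $F=\id$), so $\Theta$ is a bijection with inverse $\Psi$.

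It remains to identify $\Pi$ under this bijection. By construction $\Pi(\tau,b')=\Pi(\mathbf{X}_{g,\tau},E_{g,\tau},b',\beta_{g,\tau})$, which is $\Pi$ applied to the quadruple $\Psi(\tau,b')$; so the desired formula $[(X,E,b,\beta)]\mapsto\Pi(X,E,b,\beta)$ will follow once we know that $\Pi(X,E,b,\beta)$ — equivalently the period matrix $P(X,E,b,\beta)$ of Definition~\ref{defiperiodmatrix} — depends only on the isomorphism class of $(X,E,b,\beta)$. This is the naturality of the comparison isomorphism~(\ref{compisom}): an isomorphism $G$ of quadruples identifies each entry $\int_{\gamma_i}\omega_j$ (and likewise $\int_{\gamma_i}\eta_j$, $\int_{\delta_i}\omega_j$, $\int_{\delta_i}\eta_j$) of $P(X,E,b,\beta)$ with the corresponding entry of $P(X_1,E_1,b_1,\beta_1)$. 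The only thing requiring care throughout is the bookkeeping of the pullback conventions on homology (the notation $F^*\gamma$ versus pushforward) and the direction of the induced maps on de Rham cohomology; once these are pinned down every step is formal, and I do not expect a genuine obstacle — the remark is essentially a repackaging of Proposition~\ref{reprsintsympl}, Lemma~\ref{relrepr}, and the definition of $\Pi$.
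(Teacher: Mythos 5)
Your argument is correct and follows essentially the same route as the paper: the paper treats this remark as an immediate unwinding of Lemma~\ref{relrepr}, Proposition~\ref{reprsintsympl}, and the representability of the product functor noted in the proof of Proposition~\ref{reprsympl}, which is exactly what your fibrewise construction of $\Theta$ and $\Psi$ spells out. Your closing check that $P(X,E,b,\beta)$ depends only on the isomorphism class of the quadruple (naturality of the comparison isomorphism~(\ref{compisom})) is the one detail the paper leaves implicit, and you handle it correctly.
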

 
Let us consider the moduli-theoretic interpretation of $\mathbf{B}_g$ of the above remark, and recall that $\mathbf{B}_g$ is endowed with a natural left action of the discrete group $\Sp_{2g}(\ZZ)$ given by
\begin{align*}
\left(\begin{array}{cc}A & B \\ C & D \end{array}\right) \cdot [(X,E,b,\beta)] = \left[\left(X,E,b, \beta\cdot \left(\begin{array}{cc}
D\transp & B\transp \\
C\transp & A\transp
\end{array}\right)
\right)\right]
\end{align*} (cf. Remark \ref{actionsp}), and a right action of the Siegel parabolic subgroup $P_g(\CC)\le \Sp_{2g}(\CC)$ given by
\begin{align*}
[(X,E,b,\beta)]\cdot p = [(X,E,b\cdot p,\beta)]\text{,}
\end{align*}
where both $\beta$ and $b$ are regarded as row vectors of order $2g$.

 Let us denote by $P'_g$ the subgroup scheme of $\Sp_{2g}$ consisting of matrices $(A \ B \ ; C \ D )$ such that $B=0$. A simple computation proves the following equivariance properties of $\Pi : \mathbf{B}_g \to \Sp_{2g}(\CC)$.

\begin{lemma}\label{equivariancepi}
Consider the isomorphism of groups
\begin{align*}
P_g(\CC) &\stackrel{\sim}{\to} P'_g(\CC) \\
 p=\left( \begin{array}{cc}
    A & B \\
    0 & (A\transp)^{-1}
    \end{array}\right) &\mapsto p' \defeq \left( \begin{array}{cc}
    (A\transp)^{-1} & 0 \\
    2\pi i B & A
    \end{array}\right)\text{.}
\end{align*} 
Then, for any $q \in \mathbf{B}_g$, $\gamma \in \Sp_{2g}(\ZZ)$, and $p \in P_g(\CC)$, we have
\begin{align*}
\Pi(\gamma\cdot q) = \gamma\Pi(q)\ \ \ \text{ and }\ \ \ \Pi(q\cdot p) = \Pi(q)p'
\end{align*}
in $\Sp_{2g}(\CC)$.
\end{lemma}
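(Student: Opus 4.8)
The plan is to reduce the statement to a short block-matrix manipulation relating the modified period matrix $\Pi$ to the ordinary period matrix $P$ of Definition \ref{defiperiodmatrix}. Throughout, set
\begin{align*}
w\defeq\left(\begin{array}{cc}0 & \mathbf{1}_g \\ \mathbf{1}_g & 0\end{array}\right)\in\Sp_{2g}(\CC)\text{,}\qquad d\defeq\left(\begin{array}{cc}\mathbf{1}_g & 0 \\ 0 & \frac{1}{2\pi i}\mathbf{1}_g\end{array}\right)\text{.}
\end{align*}
First I would record how $P(X,E,b,\beta)=(\Omega_1 \ N_1 \ ; \ \Omega_2 \ N_2)$ transforms under the two actions on $\mathbf{B}_g$. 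Its $(k,l)$ entry is the period of the $l$-th vector of $b$ along the $k$-th cycle of $\beta$, so replacing $b$ by $b\cdot p$ multiplies $P$ on the right by $p$ (using the $P_g(\CC)$-action formula of \ref{principalbundle}), whereas replacing $\beta$ by $\gamma\cdot\beta=\beta\cdot(D\transp \ B\transp \ ; \ C\transp \ A\transp)$ for $\gamma=(A \ B \ ; \ C \ D)\in\Sp_{2g}(\ZZ)$ (Remark \ref{actionsp}) multiplies $P$ on the left by the transpose of that matrix:
\begin{align*}
P(X,E,b\cdot p,\beta)=P(X,E,b,\beta)\,p\text{,}
\end{align*}
and
\begin{align*}
P(X,E,b,\gamma\cdot\beta)=\left(\begin{array}{cc}D & C \\ B & A\end{array}\right)P(X,E,b,\beta)=w\,\gamma\,w\,P(X,E,b,\beta)\text{.}
\end{align*}

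The second step is to observe that $\Pi$ is obtained from $P$ by one fixed shuffle of blocks; directly from the two definitions one checks
\begin{align*}
\Pi(X,E,b,\beta)=w\,P(X,E,b,\beta)\,w\,d\text{.}
\end{align*}
Granting this, the equivariance properties are purely formal. Using $w^2=\mathbf{1}_{2g}$,
\begin{align*}
\Pi(\gamma\cdot q)=w\,\big(w\,\gamma\,w\,P(X,E,b,\beta)\big)\,w\,d=\gamma\,w\,P(X,E,b,\beta)\,w\,d=\gamma\,\Pi(q)\text{;}
\end{align*}
and, after checking the single identity $d^{-1}\,w\,p\,w\,d=p'$ (equivalently $p\,w\,d=w\,d\,p'$),
\begin{align*}
\Pi(q\cdot p)=w\,P(X,E,b,\beta)\,p\,w\,d=w\,P(X,E,b,\beta)\,w\,d\,p'=\Pi(q)\,p'\text{.}
\end{align*}
The identity $d^{-1}\,w\,p\,w\,d=p'$ is a one-line computation: conjugating $p=(A \ B \ ; \ 0 \ (A\transp)^{-1})$ by $w$ swaps the two diagonal blocks and moves $B$ to the lower-left corner, and conjugating by $d$ then rescales that corner by $2\pi i$, producing exactly $((A\transp)^{-1} \ 0 \ ; \ 2\pi i\,B \ A)=p'$.

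I do not expect any genuine obstacle; the whole content is the bookkeeping of transposes and of block positions. The points deserving care are the verification that the reordering built into the $\Sp_{2g}(\ZZ)$-action of Remark \ref{actionsp} is precisely what turns left multiplication on $P$ into conjugation $\gamma\mapsto w\gamma w$, and that the group isomorphism $p\mapsto p'$ in the statement coincides with conjugation by $w\,d$ inside $\Sp_{2g}(\CC)$. If one prefers to avoid the auxiliary matrices $w$ and $d$, an equivalent route is to compute the four blocks $N_2'$, $\Omega_2'$, $N_1'$, $\Omega_1'$ of $\Pi(\gamma\cdot q)$ and of $\Pi(q\cdot p)$ directly from the period integrals and match them against the corresponding blocks of $\gamma\,\Pi(q)$ and $\Pi(q)\,p'$; this is the elementary verification alluded to before the statement.
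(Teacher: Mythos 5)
Your proof is correct and is a well-organized version of the direct verification that the paper leaves to the reader (it merely states that ``a simple computation'' yields the lemma). Packaging the block shuffle as $\Pi = w\,P\,w\,d$ with $w^2=\mathbf{1}_{2g}$, and checking the two transformation laws $P\mapsto Pp$ and $P\mapsto w\gamma w\,P$ plus the single conjugation identity $d^{-1}wpwd=p'$, is exactly the bookkeeping the paper has in mind; the only tiny imprecision is that conjugation by $d$ rescales both off-diagonal blocks, not just the lower-left one, but the upper-right block of $wpw$ is already zero so this does not affect anything.
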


Let us now consider the \emph{Lagrangian Grassmannian}, namely the smooth and quasi-projective $\CC$-scheme of dimension $g(g+1)/2$ obtained as the quotient of complex affine algebraic groups
\begin{align*}
L_g \defeq {\Sp}_{2g,\CC}/P'_{g,\CC}\text{.}
\end{align*}
The complex manifold $L_g(\CC)=\Sp_{2g}(\CC)/P'_g(\CC)$ may be naturally identified with the quotient of
\begin{align*}
  M\defeq \{(Z_1,Z_2) \in M_{g\times g}(\CC)\times M_{g\times g}(\CC) \mid Z_1\transp Z_2 = Z_2\transp Z_1\text{, } \text{rank} (Z_1 \ Z_2) = g\}
\end{align*}
by the right action of $\GL_g(\CC)$ defined by matrix multiplication:
\begin{align*}
(Z_1,Z_2) \cdot S \defeq (Z_1S,Z_2S)\text{.}
\end{align*}
We denote the class in $L_g(\CC)$ of a point $(Z_1,Z_2)\in M$ by $(Z_1:Z_2)$. The canonical map
\begin{align*}
\pi: {\Sp}_{2g,\CC} \to L_g
\end{align*}
is then given on complex points by
\begin{align*}
\pi \left(\begin{array}{cc}A & B \\ C & D \end{array} \right) = (B:D)\text{.}
\end{align*}

\begin{prop}\label{prop1}
  Let $\iota: \mathbf{H}_g \to L_g(\CC)$ be the open embedding given by $\iota(\tau)=(\tau: \mathbf{1}_g)$. Then the diagram of complex manifolds
$$
  \raisebox{-0.5\height}{\includegraphics{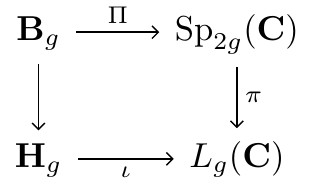}}
$$ 
is cartesian. That is, $\Pi: \mathbf{B}_g \to \Sp_{2g}(\CC)$ induces a biholomorphism of $\mathbf{B}_g$ onto the open submanifold
\begin{align*}
\pi^{-1}(\iota(\mathbf{H}_g))=\left\{\left.\left(\begin{array}{cc}A & B \\ C & D \end{array} \right) \in {\Sp}_{2g}(\CC) \right| D \in {\GL}_{g}(\CC)\text{, }BD^{-1} \in \mathbf{H}_g \right\}
\end{align*}
of $\Sp_{2g}(\CC)$, and makes the above diagram commute.
\end{prop}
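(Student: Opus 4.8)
The plan is to verify the commutativity of the square on complex points and then upgrade it to the cartesian statement using that all maps in sight are holomorphic maps between complex manifolds. First I would make the moduli-theoretic identification of $\mathbf{B}_g$ explicit, as in Remark \ref{moduliinterpret}: a point $q \in \mathbf{B}_g$ over $\tau \in \mathbf{H}_g$ corresponds to a symplectic-Hodge basis $b$ of $(\mathbf{X}_{g,\tau},E_{g,\tau})$, and by construction $\Pi(q) = \Pi(\mathbf{X}_{g,\tau},E_{g,\tau},b,\beta_{g,\tau})$, whose lower two block-columns $(N_1\ ;\ N_2)$ and $(\frac{1}{2\pi i}\Omega_1\ ;\ \frac{1}{2\pi i}\Omega_2)$ record the $\bfeta$- and $\bfomega$-periods against $\beta_{g,\tau}$. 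The key computational input is Corollary \ref{caraceta} applied to the \emph{universal} symplectic-Hodge basis $\bfb_g$: for that basis one has $\int_{\gamma_l}\bfeta_k = 0$ and $\int_{\delta_l}\bfeta_k = \delta_{kl}$, i.e. $N_1 = \mathbf{1}_g$ and $N_2 = 0$; simultaneously $\int_{\gamma_l}\bfomega_k = 2\pi i\,\delta_{kl}$ and $\int_{\delta_l}\bfomega_k = 2\pi i\,\tau_{kl}$, so $\frac{1}{2\pi i}\Omega_1 = \mathbf{1}_g$ and $\frac{1}{2\pi i}\Omega_2 = \tau$. Hence for the point $q_0 = \varphi_g(\tau)$-lift given by $\bfb_{g,\tau}$ we get
\begin{align*}
\Pi(q_0) = \left(\begin{array}{cc} 0 & \tau \\ \mathbf{1}_g & \mathbf{1}_g \end{array}\right),
\end{align*}
and its image under $\pi$ is $(\tau:\mathbf{1}_g) = \iota(\tau)$. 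For a general $q$ over $\tau$, write $q = q_0\cdot p$ for a unique $p \in P_g(\CC)$ (the action is free and transitive on fibers by \ref{principalbundle}); by Lemma \ref{equivariancepi}, $\Pi(q) = \Pi(q_0)p'$ with $p' \in P'_g(\CC)$, so $\pi(\Pi(q)) = \pi(\Pi(q_0)) = \iota(\tau)$, and the square commutes.

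Next I would establish that the square is cartesian, i.e. that $\Pi$ maps $\mathbf{B}_g$ bijectively onto $\pi^{-1}(\iota(\mathbf{H}_g))$. For surjectivity, let $M = (A\ B\ ;\ C\ D) \in \Sp_{2g}(\CC)$ with $D \in \GL_g(\CC)$ and $\tau \defeq BD^{-1} \in \mathbf{H}_g$; I claim $M$ lies in the image of $\Pi$. Using the right $P'_g(\CC)$-action and Lemma \ref{equivariancepi}, it suffices to find $p' \in P'_g(\CC)$ with $Mp' = \Pi(q_0)$ for the canonical point $q_0$ over $\tau$ from the previous paragraph; this amounts to solving $M = \Pi(q_0)(p')^{-1}$, a matrix equation whose solvability follows by comparing the last two block-columns and using that the symplectic relations (Remark \ref{eqsympl}) force $C = D \cdot(\text{symmetric})$ and $A D^\transp = \mathbf{1}_g + \tau C^\transp$ — a direct computation shows $(p')^{-1} = \Pi(q_0)^{-1}M$ indeed lies in $P'_g(\CC)$ once $BD^{-1}$ is symmetric. (Alternatively, and more cleanly: $\pi(M) = (B:D) = (BD^{-1}:\mathbf{1}_g) = \iota(\tau)$, so $M \in \pi^{-1}(\iota(\mathbf{H}_g))$ is equivalent to $BD^{-1} \in \mathbf{H}_g$, and since $\pi$ is the quotient map by the free right $P'_g(\CC)$-action while $\Pi$ is $P'_g(\CC)$-equivariant and hits $\Pi(q_0)$ in the fiber over $\iota(\tau)$, $P'_g(\CC)$-equivariance transports this to all of $\pi^{-1}(\iota(\tau))$.) For injectivity: if $\Pi(q) = \Pi(q')$, then comparing with $\pi$ gives that $q,q'$ lie over the same $\tau$; writing $q' = q\cdot p$ with $p \in P_g(\CC)$, equivariance gives $\Pi(q) = \Pi(q)p'$, so $p' = \mathbf{1}_{2g}$ in $\Sp_{2g}(\CC)$ (as $\Pi(q) \in \GL_{2g}(\CC)$), whence $p = \mathbf{1}_{2g}$ by the isomorphism $P_g(\CC) \xrightarrow{\sim} P'_g(\CC)$ of Lemma \ref{equivariancepi}, so $q = q'$.

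Finally, $\Pi$ is a holomorphic bijection between complex manifolds of the same dimension $g(3g+1)/2$ — note $\dim \Sp_{2g}(\CC) = g(2g+1)$ and $\dim P'_g(\CC) = g(g+1)/2 + g^2$, so $\dim \pi^{-1}(\iota(\mathbf{H}_g)) = \dim\Sp_{2g}(\CC) = g(2g+1)$, which matches $\dim \mathbf{B}_g = \dim \mathbf{H}_g + \dim P_g(\CC) = g(g+1)/2 + g(3g+1)/2 = g(2g+1)$ — so by the holomorphic inverse function theorem (\cite{GH78} p. 19) $\Pi$ is a biholomorphism onto its image $\pi^{-1}(\iota(\mathbf{H}_g))$, and the explicit description of that image in the statement is just the computation $\pi^{-1}(\iota(\mathbf{H}_g)) = \{(A\ B\ ;\ C\ D) : \pi(A\ B\ ;\ C\ D) = (B:D) \in \iota(\mathbf{H}_g)\} = \{D \in \GL_g(\CC),\ BD^{-1}\in\mathbf{H}_g\}$. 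The only genuinely delicate point is bookkeeping the precise block structure of $\Pi(q_0)$ from Corollary \ref{caraceta} and checking that the equivariance of Lemma \ref{equivariancepi} really does identify the $P'_g(\CC)$-orbit of $\Pi(q_0)$ with the full fiber $\pi^{-1}(\iota(\tau))$; everything else is linear algebra with symplectic matrices. I expect this orbit-vs-fiber matching — i.e. confirming that $\Pi$ is injective \emph{and} that its image is $P'_g(\CC)$-saturated over $\mathbf{H}_g$ — to be the main obstacle, though a dimension count plus the inverse function theorem sidesteps having to argue surjectivity of $\Pi$ onto the fiber by hand.
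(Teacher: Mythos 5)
Your proof follows essentially the same route as the paper's: compute the $\Pi$-image of the canonical point $q_0$ over $\tau\in\mathbf{H}_g$ to verify commutativity, use the free $P_g(\CC)$-action for injectivity, use $P'_g(\CC)$-equivariance (Lemma \ref{equivariancepi}) to see that the image hits every fiber over $\iota(\mathbf{H}_g)$, and conclude via the dimension count and \cite{GH78} p.~19. The one structural difference is cosmetic — for injectivity the paper argues that a period matrix determines the symplectic-Hodge basis once the integral basis is fixed, whereas you cancel $p'$ in $\GL_{2g}(\CC)$; both are fine.

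There is, however, a bookkeeping error in your computation of $\Pi(q_0)$. From Corollary \ref{caraceta} you correctly quote $\int_{\gamma_l}\bfeta_k = 0$ and $\int_{\delta_l}\bfeta_k = \delta_{kl}$, but by Definition \ref{defiperiodmatrix} this means $N_1 = 0$ and $N_2 = \mathbf{1}_g$, not the reverse as you wrote; the correct block form is $\Pi(q_0) = (\,\mathbf{1}_g\ \ \tau\ ;\ 0\ \ \mathbf{1}_g\,)$, exactly as recorded by the paper's Lemma \ref{psi}. Your stated matrix $(\,0\ \ \tau\ ;\ \mathbf{1}_g\ \ \mathbf{1}_g\,)$ is not even symplectic, since with $A=0$, $B=\tau$, $C=D=\mathbf{1}_g$ one has $AD\transp - BC\transp = -\tau \neq \mathbf{1}_g$. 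The error happens to be harmless for the rest of your argument, because $\pi$ depends only on the right block column $(B;D)=(\tau;\mathbf{1}_g)$, which your matrix gets right by accident — but a careful reader would stumble on the non-symplectic matrix, so it should be corrected. A second, smaller slip: the common dimension you first assert, $g(3g+1)/2$, is $\dim P_g(\CC)$; the correct common value is $g(2g+1)$, which you then do compute correctly in the following sentence. With these two points fixed, the argument is sound.
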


\begin{proof}
The commutativity of the diagram in the statement is easy (cf. proof of Proposition \ref{reprsintsympl}). In particular, if $q,q'\in \mathbf{B}_g$ satisfy $\Pi(q)=\Pi(q')$, then they lie above the same point $\tau \in \mathbf{H}_g$. Let $b$ (resp. $b'$) be the symplectic-Hodge basis of $(\mathbf{X}_{g,\tau},E_{g,\tau})$ corresponding to $q$ (resp. $q'$). Since period matrices are base change matrices for the comparison isomorphism,  and
\begin{align*}
\Pi(\mathbf{X}_{g,\tau}, E_{g,\tau},b, \beta_{g,\tau})=\Pi(\mathbf{X}_{g,\tau}, E_{g,\tau},b', \beta_{g,\tau})\text{,}
\end{align*}
it is clear that $b=b'$. This proves that $\Pi$ is injective.

Observe that $\mathbf{B}_g$ and $\Sp_{2g}(\CC)$ are complex manifolds of same dimension. Thus, to finish our proof, it suffices to check that  $\Pi(\mathbf{B}_g)=\pi^{-1}(\iota(\mathbf{H}_g))$ (\cite{GH78} p. 19). Let $s \in\pi^{-1}(\iota(\mathbf{H}_g))$, and let $\tau \in \mathbf{H}_g$ be such that $\iota(\tau)=\pi(s)$. Fix any $q \in \mathbf{B}_g$ lying above $\tau \in \mathbf{H}_g$. Then, there exists a unique $p' \in P_g'(\CC)$ such that $s=\Pi(q)p'$. Hence, by Lemma \ref{equivariancepi}, $s= \Pi(q\cdot p) \in \Pi(\mathbf{B}_g)$.
\end{proof}

Recall from Proposition \ref{reprsympl} that the canonical map 
\begin{align}\label{canmap}
\begin{split}
\mathbf{B}_g &\to B_g(\CC)\\
       [(X,E,b,\beta)] &\mapsto [(X,E,b)]
\end{split}
\end{align}
induces a biholomorphism
\begin{align*}
{\Sp}_{2g}(\ZZ) \backslash \mathbf{B}_g \stackrel{\sim}{\to} B_g(\CC)\text{.}
\end{align*}
Furthermore, note that Lemma \ref{equivariancepi} implies that the action of $\Sp_{2g}(\ZZ)$ on $\Sp_{2g}(\CC)$ by left multiplication preserves the open subset $\Pi(\mathbf{B}_g)$.

\begin{coro} \label{realization}
The map $\Pi : \mathbf{B}_g \to \Sp_{2g}(\CC)$ induces a biholomorphism of $B_g(\CC)$ onto the open submanifold of $\Sp_{2g}(\ZZ) \backslash \Sp_{2g}(\CC)$
\begin{align*}
{\Sp}_{2g}(\ZZ)\setminus \Pi(\mathbf{B}_g) = \{{\Sp}_{2g}(\ZZ) s  \in {\Sp}_{2g}(\ZZ) \backslash {\Sp}_{2g}(\CC) \mid \pi(s) \in \iota(\mathbf{H}_g) \}\text{.}
\end{align*}
\end{coro}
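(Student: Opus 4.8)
The plan is to obtain the corollary purely formally from Proposition~\ref{prop1}, Proposition~\ref{reprsympl}, and the equivariance recorded in Lemma~\ref{equivariancepi}, using the general fact that a $\Gamma$-equivariant biholomorphism between complex manifolds on which a discrete group $\Gamma$ acts freely and properly descends to a biholomorphism of the quotient manifolds.

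First I would unwind the ingredients. By Proposition~\ref{prop1}, $\Pi$ is a biholomorphism of $\mathbf{B}_g$ onto the open submanifold $\Pi(\mathbf{B}_g) = \pi^{-1}(\iota(\mathbf{H}_g))$ of $\Sp_{2g}(\CC)$. By the first identity of Lemma~\ref{equivariancepi}, $\Pi(\gamma\cdot q) = \gamma\,\Pi(q)$ for all $\gamma\in\Sp_{2g}(\ZZ)$ and $q\in\mathbf{B}_g$, i.e.\ $\Pi$ intertwines the action of $\Sp_{2g}(\ZZ)$ on $\mathbf{B}_g$ from Remark~\ref{actionsp} with left translation on $\Sp_{2g}(\CC)$. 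In particular $\Pi(\mathbf{B}_g)$ is stable under left translation by $\Sp_{2g}(\ZZ)$; this is also visible directly, since $\iota:\mathbf{H}_g\to L_g(\CC)$ is $\Sp_{2g}(\RR)$-equivariant for the action on $L_g$ induced by left multiplication and $\Sp_{2g}(\ZZ)$ preserves $\mathbf{H}_g$, so $\iota(\mathbf{H}_g)$ and hence its preimage $\pi^{-1}(\iota(\mathbf{H}_g))$ are $\Sp_{2g}(\ZZ)$-stable.

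Next, the left translation action of the discrete subgroup $\Sp_{2g}(\ZZ)$ on $\Sp_{2g}(\CC)$ is free and proper, so $\Sp_{2g}(\ZZ)\backslash\Sp_{2g}(\CC)$ is a complex manifold, and restricting to the open stable subset $\Pi(\mathbf{B}_g)$ produces an open submanifold $\Sp_{2g}(\ZZ)\backslash\Pi(\mathbf{B}_g)$ of it. On the other side, by the proof of Proposition~\ref{reprsympl} the action of $\Sp_{2g}(\ZZ)$ on $\mathbf{B}_g$ is free and proper as well. Since $\Pi:\mathbf{B}_g\to\Pi(\mathbf{B}_g)$ is an $\Sp_{2g}(\ZZ)$-equivariant biholomorphism, it descends to a biholomorphism
\[
\Sp_{2g}(\ZZ)\backslash\mathbf{B}_g \stackrel{\sim}{\to} \Sp_{2g}(\ZZ)\backslash\Pi(\mathbf{B}_g)\text{.}
\]
Composing with the biholomorphism $B_g(\CC)\stackrel{\sim}{\to}\Sp_{2g}(\ZZ)\backslash\mathbf{B}_g$ induced by the canonical map~(\ref{canmap}) (Proposition~\ref{reprsympl}) gives the asserted biholomorphism $B_g(\CC)\stackrel{\sim}{\to}\Sp_{2g}(\ZZ)\backslash\Pi(\mathbf{B}_g)$, and the displayed set-theoretic description of the target follows at once from $\Pi(\mathbf{B}_g)=\pi^{-1}(\iota(\mathbf{H}_g))$ together with the $\Sp_{2g}(\ZZ)$-stability of $\iota(\mathbf{H}_g)$.

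There is no real obstacle here: the statement is a formal consequence of the preceding results. The only point deserving attention is the bookkeeping that the two quotient constructions --- the one producing $B_g(\CC)$ from $\mathbf{B}_g$ via~(\ref{canmap}), and the one producing $\Sp_{2g}(\ZZ)\backslash\Pi(\mathbf{B}_g)$ from the open submanifold $\Pi(\mathbf{B}_g)$ of $\Sp_{2g}(\CC)$ --- are compatible through $\Pi$, and this compatibility is precisely the content of the equivariance relation in Lemma~\ref{equivariancepi}; once invoked, the descent of biholomorphisms along free proper quotients finishes the argument with no computation.
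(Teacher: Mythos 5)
Your proposal is correct and follows exactly the route the paper takes: the corollary is presented as an immediate consequence of Proposition~\ref{prop1}, the descent $\Sp_{2g}(\ZZ)\backslash\mathbf{B}_g\stackrel{\sim}{\to}B_g(\CC)$ from Proposition~\ref{reprsympl}, and the $\Sp_{2g}(\ZZ)$-equivariance of $\Pi$ from Lemma~\ref{equivariancepi}, with the paper itself explicitly noting just before the corollary that the equivariance implies $\Pi(\mathbf{B}_g)$ is left-$\Sp_{2g}(\ZZ)$-stable. The only tiny wrinkle is that (\ref{canmap}) induces a biholomorphism $\Sp_{2g}(\ZZ)\backslash\mathbf{B}_g\stackrel{\sim}{\to}B_g(\CC)$ rather than the other direction, so you should compose $\Pi$'s descent with its inverse, but this is merely a matter of orientation.
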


\subsection{Proof of Theorem \ref{unifhrvf} and of Corollary \ref{ramleafclosed}}





We prove parts (1) and (2) of Theorem \ref{unifhrvf} separately.

\begin{proof}[Proof of Theorem \ref{unifhrvf} (1)]
  It is sufficient to prove that the solutions of the differential equations defined by $v_{kl}$ and by $V_{kl}$ coincide. More precisely, let $U$ be a simply connected open subset of $\mathbf{H}_g$, and $u:U \to B_g(\CC)$ be a solution of the higher Ramanujan equations (Definition \ref{defihreq}); we shall prove that, for any lifting
 $$
  \raisebox{-0.5\height}{\includegraphics{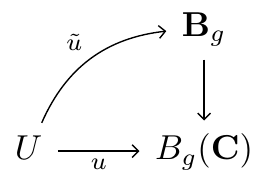}}
$$
 of $u$, the holomorphic map $h \defeq \Pi \circ \tilde{u} : U \to \Sp_{2g}(\CC)$ is a solution of the differential equations
\begin{align} \label{rameqdisg}
\theta_{kl}h = \tilde{V}_{kl}\circ h  \text{, }\ \ \ 1\le k \le l \le g \text{.}
\end{align}
where $\theta_{kl} = \frac{1}{2\pi i}\frac{\partial}{\partial \tau_{kl}}$.

By the universal property of $\mathbf{B}_g$, the holomorphic map $\tilde{u}$ corresponds to a principally polarized complex torus $(X,E)$ over $U$, of relative dimension $g$, endowed with a symplectic-Hodge basis $b = (\omega_1,\ldots,\omega_g,\eta_1,\ldots,\eta_g)$ and an integral symplectic basis $\beta = (\gamma_1,\ldots,\gamma_g,\delta_1,\ldots,\delta_g)$. For $\tau\in U$, let us write
\begin{align*}
h(\tau) = \left(\begin{array}{cc} N_2(\tau) & \frac{1}{2\pi i}\Omega_2(\tau)\\[0.5em]
                               N_1(\tau) & \frac{1}{2\pi i}\Omega_1(\tau) \end{array} \right) \in {\Sp}_{2g}(\CC)
\end{align*}
where $\Omega_1,\Omega_2,N_1,N_2: U \to M_{g\times g}(\CC)$ are holomorphic.

Now, since $u$ is a solution of the higher Ramanujan equations, it follows from Proposition \ref{equivalences} (3) that, for every $1\le i \le j \le g$,
\begin{enumerate}
   \item[(i)] $\theta_{ij}\Omega_1 = N_1\mathbf{E}^{ij}$, $\theta_{ij}\Omega_2 = N_2\mathbf{E}^{ij}$
   \item[(ii)] $\theta_{ij}N_1 = 0$, $\theta_{ij}N_2=0$.
\end{enumerate}
As $U$ is connected, (ii) implies that $N_1$ and $N_2$ are constant. Thus, (i) implies that $\frac{1}{2\pi i}\Omega_1 -  N_1\tau$ and $\frac{1}{2\pi i}\Omega_2 -  N_2\tau$ are also constant. In other words, there exists a unique element $s \in \Sp_{2g}(\CC)$ such that
\begin{align*}
h(\tau) = s\left(\begin{array}{cc}\mathbf{1}_g & \tau \\ 0 &\mathbf{1}_g\end{array} \right)
\end{align*}
for every $\tau \in U$. Finally, since each $\tilde{V}_{kl}$ is left invariant, it is easy to see that $h$ is a solution of the differential equations (\ref{rameqdisg}).
\end{proof}



\begin{lemma} \label{psi}
For any $\tau \in \mathbf{H}_g$, we have
  \begin{align*}
    \Pi (\mathbf{X}_{g,\tau},E_{g,\tau}, \bfb_{g,\tau},\beta_{g,\tau}) = \left(\begin{array}{cc} \mathbf{1}_g & \tau \\ 0 & \mathbf{1}_g\end{array} \right)\text{.}
  \end{align*}
\end{lemma}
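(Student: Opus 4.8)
The plan is to compute the four $g\times g$ blocks of the period matrix $P(\mathbf{X}_{g,\tau}, E_{g,\tau}, \bfb_{g,\tau},\beta_{g,\tau})$ directly, and then read off $\Pi$ from its definition. Write $\bfb_{g,\tau} = (\bfomega_1,\dots,\bfomega_g,\bfeta_1,\dots,\bfeta_g)$ and $\beta_{g,\tau} = (\gamma_1,\dots,\gamma_g,\delta_1,\dots,\delta_g)$, with $\gamma_i(\tau)=\mathbf{e}_i$ and $\delta_i(\tau)=\tau\mathbf{e}_i$ as in Example~\ref{intsymplbasis}. By Definition~\ref{defiperiodmatrix} the blocks are
$$(\Omega_1)_{ij} = \int_{\gamma_i}\bfomega_j,\quad (N_1)_{ij} = \int_{\gamma_i}\bfeta_j,\quad (\Omega_2)_{ij} = \int_{\delta_i}\bfomega_j,\quad (N_2)_{ij} = \int_{\delta_i}\bfeta_j,$$
and $\Pi$ stacks $\bigl(N_2\ \tfrac{1}{2\pi i}\Omega_2\bigr)$ over $\bigl(N_1\ \tfrac{1}{2\pi i}\Omega_1\bigr)$.

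First I would treat the $\bfomega$-columns. Since $\bfomega_j = 2\pi i\,dz_j$ and, as already recorded inside the proof of Proposition~\ref{lemme1}, $\int_{\gamma_l}dz_k = \delta_{kl}$ and $\int_{\delta_l}dz_k = \tau_{kl}$, one gets $(\Omega_1)_{ij} = 2\pi i\,\delta_{ij}$ and $(\Omega_2)_{ij} = 2\pi i\,\tau_{ji} = 2\pi i\,\tau_{ij}$, the last equality by the symmetry $\tau = \tau\transp$. Hence $\tfrac{1}{2\pi i}\Omega_1 = \mathbf{1}_g$ and $\tfrac{1}{2\pi i}\Omega_2 = \tau$.

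Next I would treat the $\bfeta$-columns, which is where Corollary~\ref{caraceta} does all the work: it asserts precisely that $\int_{\gamma_l}\bfeta_k = 0$ and $\int_{\delta_l}\bfeta_k = \delta_{kl}$ for every $l$, so $N_1 = 0$ and $N_2 = \mathbf{1}_g$. Substituting the four blocks into the definition of $\Pi$ yields
$$\Pi(\mathbf{X}_{g,\tau}, E_{g,\tau}, \bfb_{g,\tau},\beta_{g,\tau}) = \left(\begin{array}{cc} \mathbf{1}_g & \tau \\ 0 & \mathbf{1}_g \end{array}\right),$$
which is the assertion (and this matrix is visibly in $P_g(\CC)\subset\Sp_{2g}(\CC)$, in accordance with Lemma~\ref{lemmaperiodmatrix}).

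There is no real obstacle here: the lemma is pure bookkeeping on top of Corollary~\ref{caraceta} and the elementary period integrals of the $dz_k$. The only points requiring minor care are the index/transpose conventions in Definition~\ref{defiperiodmatrix} (keeping straight whether an entry is $\int_{\gamma_i}\omega_j$ or $\int_{\gamma_j}\omega_i$) and the use of $\tau = \tau\transp$ to replace $\tau_{ji}$ by $\tau_{ij}$ in the off-diagonal block.
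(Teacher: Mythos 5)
Your proof is correct and follows essentially the same route as the paper's: compute $\Omega_1, \Omega_2$ directly from $\bfomega_j = 2\pi i\,dz_j$ and the elementary period integrals of $dz_k$, and read $N_1, N_2$ off from Corollary~\ref{caraceta}. The paper merely compresses the $\Omega$-block computation into ``by definition of $\beta_g$ and $\bfb_g$, it is clear,'' whereas you spell it out; your remark that the answer lies in $U_g(\CC)\subset P_g(\CC)$ is a harmless extra observation.
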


\begin{proof}
 Let us write
  $$
  \Pi (\mathbf{X}_{g,\tau},E_{g,\tau}, \bfb_{g,\tau},\beta_{g,\tau}) = \left(\begin{array}{cc}
                               N_2(\tau) & \frac{1}{2\pi i}\Omega_2(\tau)  \\[0.5em]
                               N_1(\tau) & \frac{1}{2\pi i}\Omega_1(\tau)
                               \end{array}\right)\text{.}
                             $$
By definition of $\beta_g$ and of $\bfb_g$, it is clear that $\Omega_1(\tau) = 2\pi i\mathbf{1}_g$ and that $\Omega_{2}(\tau) = 2\pi i \tau$. That $N_1(\tau) = 0$ and $N_2(\tau)=\mathbf{1}_g$ is a reformulation of Corollary \ref{caraceta}.
\end{proof}

\begin{proof}[Proof of Theorem \ref{unifhrvf} (2)]
By definition, $\varphi_g$ is given by the composition of
\begin{align*}
\mathbf{H}_g &\to \mathbf{B}_g\\
         \tau &\mapsto [(\mathbf{X}_{g,\tau},E_{g,\tau}, \bfb_{g,\tau},\beta_{g,\tau})]
\end{align*}
with the canonical map $\mathbf{B}_g \to B_g(\CC)$. The result now follows from Lemma \ref{psi}.
\end{proof}

\begin{proof}[Proof of Corollary \ref{ramleafclosed}]
  Consider the subgroup
  $$
  U_g(\CC) \defeq \left.\left\{\left(\begin{array}{cc} \mathbf{1}_g & Z \\ 0 & \mathbf{1}_g\end{array}\right) \in M_{2g\times 2g}(\CC) \right| Z\transp =Z\right\} \le {\Sp}_{2g}(\CC)\text{.}
  $$
  The statement is equivalent to asserting that the image of $U_g(\CC)\subset \Sp_{2g}(\CC)$ in the quotient $\Sp_{2g}(\ZZ)\backslash \Sp_{2g}(\CC)$ is closed, or, equivalently, that $\Sp_{2g}(\ZZ)\cdot U_g(\CC) \subset \Sp_{2g}(\CC)$ is closed. Let us consider the (holomorphic) map
  \begin{align*}
    f: {\Sp}_{2g}(\CC) &\to M_g(\CC)\times M_g(\CC)\\
                \left(\begin{array}{cc}A & B\\ C & D \end{array} \right)&\mapsto (A,C)\text{.}
  \end{align*}
  Now, one simply remarks that
\begin{align*}
{\Sp}_{2g}(\ZZ)\cdot U_g(\CC) = f^{-1}(f({\Sp}_{2g}(\ZZ)))\text{.}
\end{align*}
Since $f(\Sp_{2g}(\ZZ))\subset M_g(\ZZ) \times M_g(\ZZ)$, and $M_g(\ZZ)\times M_g(\ZZ)$ is a closed discrete subset of $M_g(\CC)\times M_g(\CC)$ for the analytic topology, we conclude that  ${\Sp}_{2g}(\ZZ)\cdot U_g(\CC)$ is closed in $\Sp_{2g}(\CC)$.
\end{proof}

\section{Zariski-density of leaves of the higher Ramanujan foliation}


 Let us denote by $\mathcal{R}_g$ the subbundle of the holomorphic tangent bundle $T_{B_g(\CC)}$ generated by the higher Ramanujan vector fields $v_{ij}$, $1\le i \le j \le g$. Since the vector fields $v_{ij}$ commute (\cite{fonseca16} Corollary 5.10), $\mathcal{R}_g$ is an integrable subbundle of $T_{B_g(\CC)}$. Hence, by holomorphic Frobenius Theorem, $\mathcal{R}_g$ induces a holomorphic foliation on $B_g(\CC)$; we call it the \emph{higher Ramanujan foliation}.


 Using the group-theoretic interpretation of $B_g(\CC)$ of Section \ref{gpinterpret}, we shall also provide an explicit parametrization of every leaf $L\subset B_g(\CC)$\footnote{By definition, a \emph{leaf} of the higher Ramanujan foliation on $B_g(\CC)$ is a maximal connected immersed complex submanifold of $B_g(\CC)$ that is everywhere tangent to $\mathcal{R}_g$.} of the higher Ramanujan foliation (see Proposition \ref{charactleaves} for a precise statement).

Our main results in this section are the following Zariski-density statements.

\begin{theorem} \label{densite}
Every leaf $L\subset B_g(\CC)$ of the higher Ramanujan foliation is Zariski-dense in $B_{g,\CC}$, that is, for every closed subscheme $Y$ of $B_{g,\CC}$, if $Y(\CC)$ contains $L$, then $Y(\CC)=B_g(\CC)$. 
\end{theorem}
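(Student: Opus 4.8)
The plan is to exploit the group-theoretic picture of Section~\ref{gpinterpret}: under Corollary~\ref{realization}, $B_g(\CC)$ is an open submanifold of $\Sp_{2g}(\ZZ)\backslash\Sp_{2g}(\CC)$, and by Theorem~\ref{unifhrvf}~(1) the higher Ramanujan vector fields $v_{kl}$ are the images of the \emph{left}-invariant vector fields $\tilde V_{kl}$ attached to $\frac{1}{2\pi i}(0\ \mathbf{E}^{kl};0\ 0)\in\Lie\Sp_{2g}(\CC)$. These matrices span the Lie algebra $\Lie U_g(\CC)$ of the abelian unipotent subgroup $U_g(\CC)=\{(\mathbf1_g\ Z;0\ \mathbf1_g)\mid Z\transp=Z\}$. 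Since $\tilde V_{kl}$ are left-invariant and $U_g(\CC)$ is connected, the leaf of the foliation on $\Sp_{2g}(\CC)$ through a point $s$ is exactly the right coset $s\,U_g(\CC)$; passing to the quotient by $\Sp_{2g}(\ZZ)$ and intersecting with $B_g(\CC)$, every leaf $L\subset B_g(\CC)$ is the image of $\{s\,u\mid u\in U_g(\CC)\}$ for a suitable $s\in\Sp_{2g}(\CC)$ with $\pi(s)\in\iota(\mathbf H_g)$ (this is presumably the content of the promised Proposition~\ref{charactleaves}). In particular, the leaf through $\varphi_g(\tau_0)$ is the image of $\tau\mapsto\Sp_{2g}(\ZZ)\,(\mathbf1_g\ \tau;0\ \mathbf1_g)$, i.e.\ the image of $\varphi_g$ itself, by Theorem~\ref{unifhrvf}~(2).

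The next step is to reduce Zariski-density of an arbitrary leaf to a statement about a single algebraic subgroup orbit. Fix a leaf $L$ and lift it to $s\,U_g(\CC)\subset\Sp_{2g}(\CC)$. Let $Y\subseteq B_{g,\CC}$ be a closed subscheme with $L\subseteq Y(\CC)$; I want to show $Y=B_{g,\CC}$. Pull $Y$ back along the quasi-finite (indeed étale, up to the $P_g$-bundle structure) maps relating $\mathbf B_g$, $B_g(\CC)$, and $\Sp_{2g}(\ZZ)\backslash\Sp_{2g}(\CC)$: concretely, the preimage $Z$ of $Y(\CC)$ in $\Pi(\mathbf B_g)\subset\Sp_{2g}(\CC)$ is a locally closed analytic subset which is $\Sp_{2g}(\ZZ)$-invariant on the left, contains $s\,U_g(\CC)$, and is "algebraic" in the sense that it is the analytic trace on the Zariski-open $\Pi(\mathbf B_g)$ of a Zariski-closed subset of $\Sp_{2g,\CC}$ (because $Y$ is algebraic and the map $\mathbf B_g\to B_g(\CC)\cong\Sp_{2g}(\ZZ)\backslash\Sp_{2g}(\CC)$, through the $P_g$-bundle $\mathbf B_g\to\mathbf H_g$ and Proposition~\ref{prop1}, is algebraic on charts). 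So it suffices to prove: if $W\subseteq\Sp_{2g,\CC}$ is a Zariski-closed subset, invariant under left translation by $\Sp_{2g}(\ZZ)$, whose complex points contain a right coset $s\,U_g(\CC)$ with $\pi(s)\in\iota(\mathbf H_g)$, then $W=\Sp_{2g,\CC}$.

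For this I would argue as follows. Let $W'$ be the Zariski closure in $\Sp_{2g,\CC}$ of $s\,U_g(\CC)$; since $U_g(\CC)$ is Zariski-dense in the algebraic unipotent group $U_{g,\CC}$, we get $s\,U_{g,\CC}\subseteq W'\subseteq W$, and more generally $\gamma\,s\,U_{g,\CC}\subseteq W$ for all $\gamma\in\Sp_{2g}(\ZZ)$ by left $\Sp_{2g}(\ZZ)$-invariance. Hence the Zariski closure $\overline{\Sp_{2g}(\ZZ)\cdot s}$ satisfies $\overline{\Sp_{2g}(\ZZ)\cdot s}\cdot U_{g,\CC}\subseteq W$. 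Now $\Sp_{2g}(\ZZ)$ is Zariski-dense in $\Sp_{2g,\CC}$ (Borel density, or directly: $\Sp_{2g}(\ZZ)$ is Zariski-dense in $\Sp_{2g,\QQ}$ since $\Sp_{2g}$ is a connected semisimple, hence "perfect" and unirational, split group, and $\ZZ$-points are Zariski-dense by strong approximation / the fact that $\Sp_{2g}(\ZZ)$ surjects onto $\Sp_{2g}(\ZZ/N)$ for all $N$), so $\overline{\Sp_{2g}(\ZZ)\cdot s}=\Sp_{2g,\CC}\cdot s=\Sp_{2g,\CC}$ (as translation is an automorphism). Therefore $W\supseteq\Sp_{2g,\CC}\cdot U_{g,\CC}=\Sp_{2g,\CC}$, and we are done. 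Unwinding, $Y(\CC)\supseteq B_g(\CC)$, hence $Y=B_{g,\CC}$.

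The main obstacle, and where care is needed, is the passage between the algebraic object $Y\subseteq B_{g,\CC}$ and a genuinely Zariski-closed $\Sp_{2g}(\ZZ)$-invariant subset $W$ of the algebraic group $\Sp_{2g,\CC}$: the quotient $\Sp_{2g}(\ZZ)\backslash\Sp_{2g}(\CC)$ is not an algebraic variety, only $B_g(\CC)$ sits inside it as an \emph{analytic} open set. One has to check that the $P_g$-bundle $\mathbf B_g\to\mathbf H_g$, composed with $\Pi$ and with the biholomorphism of Proposition~\ref{prop1} onto the Zariski-open $\pi^{-1}(\iota(\mathbf H_g))\subset\Sp_{2g}(\CC)$, is compatible with the algebraic structures (using Remark~\ref{relreprcomp} and the explicit formulas for $\Pi$), so that the preimage of an algebraic $Y$ is the analytic trace of an algebraic subset; and that taking Zariski closures inside $\Sp_{2g,\CC}$ of sets of the form $s\,U_g(\CC)$ commutes appropriately with the $\Sp_{2g}(\ZZ)$-action. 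Once this bookkeeping is done, the density core is exactly the classical Zariski-density of $\Sp_{2g}(\ZZ)$ in $\Sp_{2g,\CC}$ combined with the density of $U_g(\CC)$ in $U_{g,\CC}$ and the homogeneity of $\Sp_{2g}$ under translations. The one-dimensional/graph strengthening mentioned in the introduction then follows formally from the density of the image of $\varphi_g$.
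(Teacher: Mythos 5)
Your group-theoretic setup is the right starting point, and the ultimate source of density --- Zariski-density of $\Sp_{2g}(\ZZ)$ in $\Sp_{2g,\CC}$ together with the $U_g(\CC)$-coset description of the leaves --- is the same engine the paper uses. But the reduction step has a genuine gap, and it is not merely a matter of bookkeeping.

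The claim that the preimage $Z$ of $Y(\CC)$ is ``the analytic trace on the Zariski-open $\Pi(\mathbf{B}_g)$ of a Zariski-closed subset of $\Sp_{2g,\CC}$'' fails. First, $\Pi(\mathbf{B}_g)=\pi^{-1}(\iota(\mathbf{H}_g))$ is only \emph{analytically} open in $\Sp_{2g}(\CC)$, not Zariski-open: $\mathbf{H}_g\subset\Sym_g(\CC)$ is cut out by the real-analytic inequality $\Im\tau>0$, and this passes through $\iota$ and $\pi$ to a non-algebraic open condition. Second, and more seriously, the quotient map $\Pi(\mathbf{B}_g)\to B_g(\CC)$ is a holomorphic covering whose source is not an algebraic variety, so the preimage of the algebraic set $Y(\CC)$ has no reason to be the trace of anything algebraic in $\Sp_{2g,\CC}$; indeed the preimage of even a \emph{single point} of $B_g(\CC)$ is an infinite discrete $\Sp_{2g}(\ZZ)$-orbit, which is never the trace of a Zariski-closed subset.

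If instead one takes $W$ to be the Zariski closure of $Z$ in $\Sp_{2g,\CC}$ --- the only reading under which the rest of your argument parses --- then your computation does give $W=\Sp_{2g,\CC}$, but the conclusion is vacuous: \emph{any} nonempty left-$\Sp_{2g}(\ZZ)$-invariant subset of $\Sp_{2g}(\CC)$ has full Zariski closure, since already a single orbit $\Sp_{2g}(\ZZ)s_0$ has closure $\overline{\Sp_{2g}(\ZZ)}\,s_0=\Sp_{2g,\CC}$ by Lemma~\ref{spzdense}. Thus $W=\Sp_{2g,\CC}$ holds whether or not $Y$ is proper, and the final ``unwinding'' to $Y(\CC)\supseteq B_g(\CC)$ has no justification. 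In one sentence: Zariski-density in the auxiliary group $\Sp_{2g,\CC}$ cannot be transported back to $B_{g,\CC}$ across a non-algebraic covering.

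The paper avoids this by never leaving the algebraic world. It fibers $B_{g,\CC}$ over the coarse moduli scheme $A_{g,\CC}$ via the forgetful morphism $\varpi_g$ and applies the fibration method (Lemma~\ref{metfib}). Each fiber $\varpi_g^{-1}(x)$ is a homogeneous space under the algebraic group $P_{g,\CC}$; by Lemma~\ref{lemmeutile} and formula~(\ref{phimoduli}), the points of the leaf $\varphi_\delta(U_\delta)$ lying in $\varpi_g^{-1}(x)$ are precisely the images under the surjective algebraic morphism $f_\tau:P_{g,\CC}\to\varpi_g^{-1}(x)$ of the set $S_{\delta,\tau}=\{p_{\delta\gamma,\tau}\mid\gamma\in\Sp_{2g}(\ZZ),\ j(\delta\gamma,\tau)\in\GL_g(\CC)\}$, and Lemma~\ref{principallemma} shows $S_{\delta,\tau}$ is Zariski-dense in $P_{g,\CC}$ by exhibiting a dominant algebraic morphism from an open subscheme of $\Sp_{2g,\CC}$ onto $P_{g,\CC}$ and pushing forward the density of $\Sp_{2g}(\ZZ)$. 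The density of $\Sp_{2g}(\ZZ)$ is still the key input, but it is applied through an algebraic morphism \emph{into} a fiber of $\varpi_g$, not across an analytic covering \emph{onto} $B_g(\CC)$.
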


In particular, we obtain that the image of the solution of the higher Ramanujan equations $\varphi_g: \mathbf{H}_g \to B_g(\CC)$ is Zariski-dense in $B_{g}$.

Concerning the image of $\varphi_g$, we can actually derive the following \emph{a priori} stronger result. 

\begin{coro} \label{graphe}
The set $ \{(\tau,\varphi_g(\tau)) \in {\Sym}_g(\CC)\times B_g(\CC) \mid \tau \in \mathbf{H}_g\}$ is  Zariski-dense in $\Sym_{g,\CC}\times_{\CC} B_{g,\CC}$.
\end{coro}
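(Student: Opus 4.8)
The plan is to deduce this from Theorem \ref{densite} together with Theorem \ref{unifhrvf}, by exhibiting the graph of $\varphi_g$ as (the closure of) a \emph{single orbit} of a connected algebraic group acting on $\Sym_{g,\CC}\times_\CC B_{g,\CC}$, and then showing that any proper closed subvariety containing this orbit would force a proper closed subvariety of $B_{g,\CC}$ containing a leaf of the higher Ramanujan foliation — contradicting Theorem \ref{densite}.

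First I would set up the group action. By Theorem \ref{unifhrvf}(2), under the realization of Corollary \ref{realization} the map $\varphi_g$ sends $\tau \mapsto \Sp_{2g}(\ZZ)\left(\begin{smallmatrix}\mathbf 1_g & \tau \\ 0 & \mathbf 1_g\end{smallmatrix}\right)$; equivalently, the image of $\varphi_g$ is the image of the unipotent subgroup $U_g(\CC)$ in $\Sp_{2g}(\ZZ)\backslash\Sp_{2g}(\CC)$. Now consider the map
\begin{align*}
\Psi : U_g(\CC) &\to \Sym_g(\CC)\times B_g(\CC)\\
\left(\begin{array}{cc}\mathbf 1_g & \tau \\ 0 & \mathbf 1_g\end{array}\right) &\mapsto \bigl(\tau,\varphi_g(\tau)\bigr)\text{,}
\end{align*}
whose image is precisely the graph in the statement. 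The key observation is that the higher Ramanujan vector fields $v_{kl}$ are, by Theorem \ref{unifhrvf}(1), the restrictions of the left-invariant vector fields $V_{kl}$ attached to $\frac{1}{2\pi i}\left(\begin{smallmatrix}0 & \mathbf E^{kl}\\0&0\end{smallmatrix}\right)$, and these span $\Lie U_g(\CC)$. Hence the leaves of the Ramanujan foliation through a point of $\varphi_g(\mathbf H_g)$ are exactly the local images of $U_g(\CC)$-translates; in particular the image of $\varphi_g$ is an open subset of a single leaf $L_0$ of the foliation.

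Next I would argue by contradiction. Suppose $Z\subsetneq \Sym_{g,\CC}\times_\CC B_{g,\CC}$ is a closed subscheme with $Z(\CC)$ containing the graph. Consider the projection $\mathrm{pr}_2 : \Sym_{g,\CC}\times_\CC B_{g,\CC}\to B_{g,\CC}$. For each fixed $\tau_0\in\mathbf H_g$ the slice $Z\cap(\{\tau_0\}\times B_{g,\CC})$ is a closed subscheme of $B_{g,\CC}$ whose $\CC$-points contain $\varphi_g(\tau_0)$; more usefully, I would instead use the $U_g$-equivariance. The translation action $\tau\mapsto\tau+\tau'$ on the $\Sym_g$-factor combined with the right $U_g(\CC)$-action on $B_g(\CC)$ (which makes sense since $U_g(\CC)\subset P_g'(\CC)$ acts on $\Sp_{2g}(\ZZ)\backslash\Sp_{2g}(\CC)$) preserves the graph: indeed $\varphi_g(\tau+\tau') = \varphi_g(\tau)\cdot\left(\begin{smallmatrix}\mathbf 1_g&\tau'\\0&\mathbf 1_g\end{smallmatrix}\right)$. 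This is an \emph{algebraic} action of the connected group $U_g$ on $\Sym_{g,\CC}\times_\CC B_{g,\CC}$, and the graph is a single $U_g$-orbit; therefore its Zariski closure $\overline{Z_0}$ is irreducible and $U_g$-stable, so we may take $Z=\overline{Z_0}$ irreducible and $U_g$-invariant. Then $W\defeq\overline{\mathrm{pr}_2(Z)}$ is an irreducible closed subvariety of $B_{g,\CC}$ containing $\varphi_g(\mathbf H_g)$, hence containing an open subset of the leaf $L_0$; since $W$ is closed and $L_0$ is an irreducible immersed submanifold, $W(\CC)\supseteq L_0$. By Theorem \ref{densite}, $W=B_{g,\CC}$, so $\mathrm{pr}_2|_Z : Z\to B_{g,\CC}$ is dominant. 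A dominant morphism from an irreducible variety has fibres of dimension $\dim Z-\dim B_{g,\CC}$ over a dense open; comparing with the generic fibre of $\mathrm{pr}_2$ itself, which is $\Sym_{g,\CC}$ of dimension $g(g+1)/2$, I would conclude that either $Z=\Sym_{g,\CC}\times_\CC B_{g,\CC}$, or the generic fibre $Z_b\subsetneq\{b\}\times\Sym_{g,\CC}$ is a proper closed subvariety of $\Sym_{g,\CC}$ of dimension $<g(g+1)/2$.

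The main obstacle is ruling out this last possibility, i.e. showing the fibre over a generic $b\in B_g(\CC)$ cannot be a \emph{proper} subvariety of $\Sym_{g,\CC}$. Here I would use the $U_g$-equivariance in the other direction: for $b$ in the image of $\varphi_g$, say $b=\varphi_g(\tau_0)$, the fibre $Z_b$ contains $\tau_0$; and for any $\tau'$ with $\varphi_g(\tau_0+\tau')=b$ — but more to the point, the $U_g$-invariance of $Z$ means that if $(\tau_0,b)\in Z$ then $(\tau_0+\tau',\,b\cdot u_{\tau'})\in Z$, which does \emph{not} keep $b$ fixed. Instead, fix $b$ and vary within the fibre of $\mathrm{pr}_2$: the stabilizer considerations show that the set of $\tau\in\Sym_g(\CC)$ with $(\tau,b)$ in the graph, for $b=\varphi_g(\tau_0)$, is the coset $\{\tau : \varphi_g(\tau)=\varphi_g(\tau_0)\}$, which by Corollary \ref{realization} and the injectivity on $\Pi(\mathbf B_g)$ is the $\Sp_{2g}(\ZZ)$-orbit data, a countable set — so this naive fibre is \emph{small}, not all of $\Sym_g(\CC)$, and the argument above must instead extract density of the graph differently. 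The correct route, which I would pursue, is: the closed set $Z$ contains the graph, which is Zariski-dense in its closure; the leaf $L_0$ being Zariski-dense in $B_{g,\CC}$ forces $\mathrm{pr}_2(Z)$ dense, and then one invokes that the graph is the image of $U_g(\CC)$ under a map whose differential is everywhere injective with image transverse to the foliation, so that the graph is Zariski-dense in $\Sym_{g,\CC}\times_\CC B_{g,\CC}$ precisely because $U_g$ acts on the base $B_{g,\CC}$ with Zariski-dense orbit (the leaf) \emph{and} the $\Sym_g$-coordinate is a faithful parameter for this action — i.e. the orbit map $U_g\to B_{g,\CC}$ has Zariski-dense image of dimension $g(g+1)/2=\dim U_g$, so the graph, being the graph of this orbit map, cannot lie in any $Z$ with $\dim Z<\dim\Sym_{g,\CC}+\dim B_{g,\CC}$. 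I would make this precise by the standard fact that if $H$ is a connected algebraic group acting on a variety $V$ with a Zariski-dense orbit $O=H\cdot v_0$, and $H\to O$, $h\mapsto h v_0$ is generically finite (which holds here since $\varphi_g$ is an immersion), then the graph $\{(h, hv_0):h\in H\}\subset H\times V$ is Zariski-dense in $\overline{H}\times \overline{O}$ — applied with $\overline{H}=\Sym_{g,\CC}$ (the affine space compactifying $U_g$ as a variety) and $\overline{O}=B_{g,\CC}$ by Theorem \ref{densite}.
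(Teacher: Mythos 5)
Your proposal has a genuine gap, and it is instructive to compare it with the paper's short proof, which uses precisely the observation you dismiss midway as ``not all of $\Sym_g(\CC)$.''

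The core error is in the group-action setup. You claim there is an algebraic right $U_g$-action on $B_{g,\CC}$ for which $\varphi_g(\tau+\tau')=\varphi_g(\tau)\cdot\psi(\tau')$, and also parenthetically assert $U_g(\CC)\subset P_g'(\CC)$. Both are false: $P_g'$ consists of matrices with vanishing upper-right block, while $U_g$ consists of matrices with vanishing lower-left block, so $U_g\not\subset P_g'$; consequently right multiplication by $\psi(\tau')$ on $\Sp_{2g}(\ZZ)\backslash\Sp_{2g}(\CC)$ does not preserve the open subset $B_g(\CC)$ and does not correspond to the algebraic $P_g$-action $[(X,E,b)]\mapsto[(X,E,b\cdot p)]$. (In the moduli interpretation this is clear: $X_{\tau+\tau'}$ and $X_\tau$ are not isomorphic in general, whereas the $P_g$-action fixes the underlying abelian variety.) The image of $\varphi_g$ is a $U_g(\CC)$-orbit only for the \emph{transcendental} right-multiplication action on the quotient manifold, not for any algebraic group action; if it were an algebraic orbit it would be constructible of dimension $g(g+1)/2\ll\dim B_g$, directly contradicting Theorem~\ref{densite}. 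The ``standard fact'' you invoke at the very end is also false as stated: for an algebraic action the graph $\{(h,h v_0)\}$ is a closed subvariety of $H\times V$ of dimension $\dim H$, so it is never dense in $H\times V$ once $\dim V>0$ (e.g. $\GG_a$ acting on $\AA^1$ by translation, whose graph is the diagonal in $\AA^2$).

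The paper's actual proof is the fibration argument you brushed past. Apply Lemma~\ref{metfib}(ii) to the projection $\mathrm{pr}_2:\Sym_{g,\CC}\times_\CC B_{g,\CC}\to B_{g,\CC}$ (which is flat, hence Zariski-open) and $E=\{(\tau,\varphi_g(\tau))\}$. Then $\mathrm{pr}_2(E)=\varphi_g(\mathbf H_g)$ is Zariski-dense in $B_{g,\CC}$ by Theorem~\ref{densite}. For $b=\varphi_g(\tau_0)$ in the image, Remark~\ref{remarkphi} gives $\varphi_g(\tau)=\varphi_g(\tau_0)\iff\tau\in\tau_0+\Sym_g(\ZZ)$, so the fibre $E\cap\mathrm{pr}_2^{-1}(b)$ is $(\tau_0+\Sym_g(\ZZ))\times\{b\}$ — exactly the ``countable set'' you noticed. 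But $\Sym_g(\ZZ)\cong\ZZ^{g(g+1)/2}$ is Zariski-dense in $\Sym_{g,\CC}\cong\AA^{g(g+1)/2}$, and a translate of a Zariski-dense set is Zariski-dense; that is all the hypothesis of Lemma~\ref{metfib} requires. The lesson is that ``small'' in the analytic or measure-theoretic sense (countable) is perfectly compatible with Zariski-density, and the fibration lemma turns a Zariski-dense image (Theorem~\ref{densite}) plus Zariski-dense fibres into Zariski-density of the whole set.
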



The proof of both Zariski-density results will rely on the following elementary lemma.

\begin{lemma}[Fibration method] \label{metfib}
Let $p:X \to S$ be a morphism of separated $\CC$-schemes of finite type and let $E\subset X(\CC)$ be a subset. If, for every $s \in p(E)$, the set $E\cap X_s$ is Zariski-dense in $X_s\defeq p^{-1}(s)$, and one of the following conditions is satisfied, 
\begin{enumerate}[(i)]
    \item $p(E) = S(\CC)$,
    \item $p$ is open (in the Zariski topology) and $p(E)$ is Zariski-dense in $S$,
\end{enumerate}
then $E$ is Zariski-dense in $X$.
\end{lemma}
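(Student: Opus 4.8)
\textbf{Plan for the proof of Lemma \ref{metfib} (Fibration method).}
The plan is to argue by contradiction. Suppose $E$ is not Zariski-dense in $X$, so there is a proper closed subscheme $Z \subsetneq X$ with $E \subset Z(\CC)$; replacing $Z$ by the Zariski closure of $E$ we may assume $Z = \overline{E}$. The strategy is to show that each fiber $Z_s = Z \times_X X_s$ must in fact be all of $X_s$ for $s$ ranging over a Zariski-dense subset of $S$, and then to deduce that $Z = X$, contradicting properness. The key input is the hypothesis that $E \cap X_s$ is Zariski-dense in $X_s$ for every $s \in p(E)$.

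First I would observe that for $s \in p(E)$ we have $E \cap X_s \subset Z(\CC) \cap X_s(\CC) = Z_s(\CC)$, and since $E \cap X_s$ is Zariski-dense in $X_s$ and $Z_s$ is closed in $X_s$, this forces $Z_s = X_s$ as closed subschemes (equivalently, $X_s(\CC) \subset Z(\CC)$, hence $X_s \subset Z$ because $X_s$ is reduced of finite type over $\CC$ and $\CC$ is algebraically closed, so $\CC$-points are dense). Thus $Z$ contains the entire fiber $X_s$ for every $s \in p(E)$. Now I split into the two cases. In case (i), $p(E) = S(\CC)$, so $Z$ contains $X_s$ for every closed point $s$ of $S$; since the closed points of $X$ lying over closed points of $S$ are Zariski-dense in $X$ (as $S$ is of finite type over $\CC$, hence Jacobson, and $p$ is of finite type), we get $Z = X$, a contradiction. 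In case (ii), let $S_0 \subset S$ be the reduced closed subscheme which is the Zariski closure of $p(E)$; by hypothesis $S_0 = S$. Consider the open set $U = X \setminus Z$; since $p$ is open, $p(U)$ is open in $S$. For any $s \in p(E)$ we showed $X_s \subset Z$, i.e.\ $X_s \cap U = \emptyset$, so $s \notin p(U)$; hence $p(U) \cap p(E) = \emptyset$. But $p(U)$ is open and $p(E)$ is Zariski-dense in $S$, so an open set disjoint from a dense set must be empty; therefore $U = \emptyset$ (as $p$ has no reason to be empty-fibered — more carefully, $U = \emptyset$ because if $U \neq \emptyset$ then, $X$ being of finite type over $\CC$, $U$ has a closed point $x$ and $p(x) \in p(U)$, contradicting $p(U) = \emptyset$). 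Thus $Z = X$, again a contradiction.

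The main obstacle, such as it is, is bookkeeping about reducedness and $\CC$-points: one must be careful that ``$E \cap X_s$ Zariski-dense in $X_s$'' together with $Z$ closed really gives the \emph{scheme-theoretic} inclusion $X_s \subset Z$, which uses that $X_s$ is reduced (it is a $\CC$-scheme of finite type, but not a priori reduced — however for the density statement only the underlying topological space matters, so one may freely pass to $X_{\mathrm{red}}$ and $Z_{\mathrm{red}}$ at the outset). Likewise, in passing from ``$Z$ contains all fibers over $p(E)$'' to ``$Z = X$'' one uses that $X$ is Jacobson and the relevant closed points are dense; this is where conditions (i) and (ii) do their work, (i) trivially and (ii) via the openness of $p$. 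Everything else is formal. I would also remark at the end that the lemma is exactly what is needed to run the fibration argument for Theorem \ref{densite} and Corollary \ref{graphe}: one takes $X = \Sym_{g,\CC} \times_\CC B_{g,\CC}$ (or $B_{g,\CC}$ itself), $p$ a suitable projection, and $E$ the graph of $\varphi_g$ (or the leaf $L$), checking the fiberwise density and one of (i), (ii) separately.
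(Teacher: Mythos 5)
Your proof is correct and is essentially the contrapositive of the paper's argument: the paper fixes an arbitrary nonempty Zariski-open $U\subset X$, produces (via (i) or (ii)) a closed point $s\in p(E)\cap p(U)$, and then uses fiberwise density to find a point of $E\cap U\cap X_s$, whereas you pass to the Zariski closure $Z=\overline{E}$ and show its complement is empty. The two routes are equivalent; the paper's direct version is a few lines shorter and sidesteps the reducedness/Jacobson bookkeeping you flagged, since it only ever intersects a nonempty open with a dense set inside one fiber.
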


\begin{proof}
Let $U$ be a non-empty Zariski open subset of $X$; we must show that $E\cap U$ is non-empty. In both cases (i) and (ii) above, there exists a closed point $s \in p(E)\cap p(U)$. Since $E\cap X_s$ is Zariski-dense in $ X_s$ and $U\cap X_s$ is a non-empty open subset of $X_s$, there exists a closed point $x\in E\cap U\cap X_s\subset E\cap U$. 
\end{proof}

\subsection{Characterization of the leaves of the higher Ramanujan foliation}

\subsubsection{}\label{unipotentfoliation} Let $U_g$ be the unipotent subgroup scheme of $\Sp_{2g}$ defined by
\begin{align*}
U_g(R) = \left\{\left. \left(\begin{array}{cc} \mathbf{1}_g & Z \\ 0 & \mathbf{1}_g \end{array}\right) \in M_{2g\times 2g}(R)\right| Z\transp = Z \right\}
\end{align*}
for any ring $R$.

The Lie algebra of $U_g(\CC)$ is given by
\begin{align*}
\Lie U_g(\CC) = \left\{\left. \left(\begin{array}{cc} 0 & Z \\ 0 & 0 \end{array}\right) \in M_{2g\times 2g}(\CC)\right| Z\transp = Z \right\}\text{,}
\end{align*}
and admit as a basis the vectors
\begin{align*}
\frac{1}{2\pi i}\left(\begin{array}{cc}0 & \mathbf{E}^{kl} \\ 0 & 0 \end{array}\right) \in \Lie {U}_{g}(\CC)\text{, } \ \ \ 1\le k \le l \le g\text{,}
\end{align*}
inducing the higher Ramanujan vector fields on the quotient $\Sp_{2g}(\ZZ)\backslash \Sp_{2g}(\CC)$ (Section \ref{gpinterpret}). In particular, under the realization of $B_g(\CC)$ as an open submanifold of $\Sp_{2g}(\ZZ)\backslash \Sp_{2g}(\CC)$ of Corollary \ref{realization}, the higher Ramanujan foliation on $B_g(\CC)$ is induced by the foliation on $\Sp_{2g}(\CC)$ defined by $U_g(\CC)$, i.e. the foliation whose leaves are left cosets of $U_g(\CC)$ in $\Sp_{2g}(\CC)$.

It follows from the above discussion that, under the identification of $\mathbf{B}_g$ (resp. $B_g(\CC)$) with an open submanifold of $\Sp_{2g}(\CC)$ (resp. $\Sp_{2g}(\ZZ)\backslash \Sp_{2g}(\CC)$) via $\Pi$ (cf. Proposition \ref{prop1} and Corollary \ref{realization}), for any leaf $L$ of the higher Ramanujan foliation on $B_g(\CC)$, there exists $\delta \in \Sp_{2g}(\CC)$ such that $L$ is a connected component of the image of $\delta U_g(\CC)\cap \mathbf{B}_g$ in $B_g(\CC)$ under the quotient map $\Sp_{2g}(\CC)\to \Sp_{2g}(\ZZ)\backslash \Sp_{2g}(\CC)$. We shall provide a more precise result in Proposition \ref{charactleaves}.

\subsubsection{} We may also obtain an explicit \emph{parametrization} of every leaf. For this, let us consider $\Sym_g(\CC) = \{Z \in M_{g\times g}(\CC) \mid Z\transp = Z\}$ as an open subset of the Lagrangian Grassmannian $L_g(\CC)$ (cf. discussion preceding Proposition \ref{prop1}) via
\begin{align*}
  {\Sym}_g(\CC) &\to L_g(\CC)\\
          Z &\mapsto (Z:\mathbf{1}_g)\text{,}
\end{align*}
so that the embedding $\iota : \mathbf{H}_g \to L_g(\CC)$ defined in Proposition \ref{prop1} is given by the restriction of $\Sym_g(\CC) \to L_g(\CC)$ to $\mathbf{H}_g$. Furthermore, let
\begin{align*}
  \psi : {\Sym}_g(\CC) &\to {\Sp}_{2g}(\CC)\\
                 Z &\mapsto \left(\begin{array}{cc}\mathbf{1}_g & Z \\ 0 & \mathbf{1}_g \end{array}\right)\text{.}
 \end{align*}

\begin{obs}
Under the obvious identification of $\Sym_g(\CC)$ with $\Lie U_g(\CC)$, the map $\psi$ is simply the exponential $\exp : \Lie U_g(\CC) \to U_g(\CC)\subset \Sp_{2g}(\CC)$.
\end{obs}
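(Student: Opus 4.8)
The plan is to settle this by a one-step block-matrix computation. By \ref{unipotentfoliation}, the Lie algebra $\Lie U_g(\CC)$ consists exactly of the matrices
\[
N_Z \defeq \begin{pmatrix} 0 & Z \\ 0 & 0 \end{pmatrix}, \qquad Z \in \Sym_g(\CC),
\]
so the ``obvious identification'' referred to in the statement is the $\CC$-linear isomorphism $\Sym_g(\CC) \stackrel{\sim}{\to} \Lie U_g(\CC)$, $Z \mapsto N_Z$, and the whole content of the Remark is the equality $\exp(N_Z) = \psi(Z)$ in $\Sp_{2g}(\CC)$, for every symmetric $Z$.

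First I would observe that $N_Z$ is square-zero, $N_Z^{2} = 0$, as one sees at once from block multiplication. Hence the exponential series $\exp(N_Z) = \sum_{n \ge 0} N_Z^{n}/n!$ truncates after its linear term, and
\[
\exp(N_Z) = \mathbf{1}_{2g} + N_Z = \begin{pmatrix} \mathbf{1}_g & Z \\ 0 & \mathbf{1}_g \end{pmatrix} = \psi(Z),
\]
which is exactly the asserted identity. (For the record, $\psi(Z)$ does lie in $U_g(\CC) \le \Sp_{2g}(\CC)$: applying condition (1) of Remark~\ref{eqsympl} with $A = D = \mathbf{1}_g$, $B = Z$, $C = 0$, the symplectic relations reduce to $Z\transp = Z$ together with the trivial identity $\mathbf{1}_g\mathbf{1}_g\transp - Z\cdot 0 = \mathbf{1}_g$ --- which is also precisely why $\Lie U_g(\CC)$ carries the symmetry constraint $Z\transp = Z$.)

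There is no genuine obstacle here: the sole input is the nilpotency $N_Z^{2} = 0$, and the Remark is a bookkeeping observation, recording that the parametrizing map $\psi$ introduced in \ref{unipotentfoliation} is the Lie-theoretic exponential of the abelian unipotent group $U_g(\CC)$. This is the form in which $\psi$ will be used when describing each leaf $L$ of the higher Ramanujan foliation as a connected component of the image in $B_g(\CC)$ of a coset $\delta\cdot\psi(\Sym_g(\CC)) = \delta\, U_g(\CC)$ (cf.\ \ref{unipotentfoliation}), and hence in the Zariski-density arguments run through the fibration method of Lemma~\ref{metfib}.
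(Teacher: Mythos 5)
Your computation is correct, and it is exactly the evidently intended justification that the paper leaves implicit in this Remark: $N_Z$ is square-zero, so the exponential series truncates to $\mathbf{1}_{2g}+N_Z = \psi(Z)$. Nothing more is needed.
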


Now, the action of $\Sp_{2g}(\CC)$ on itself by left multiplication descends to a left action of $\Sp_{2g}(\CC)$ on $L_g(\CC)$ given explicitly by
\begin{align*}
  \left( \begin{array}{cc}
           A & B \\
           C & D
 \end{array}\right)\cdot (Z_1:Z_2) = (AZ_1+BZ_2:CZ_1+DZ_2)\text{.}
\end{align*}
For any $\delta \in \Sp_{2g}(\CC)$, let us define
\begin{align*}
  \psi_{\delta} : \delta^{-1}\cdot {\Sym}_g(\CC)\subset L_g(\CC) &\to {\Sp}_{2g}(\CC)\\
                          p &\mapsto \delta^{-1}\psi(\delta\cdot p)\text{.}
\end{align*}
Then $\psi_{\delta}$ induces a biholomorphism of $\delta^{-1}\cdot {\Sym}_g(\CC)$ onto the closed submanifold $\delta^{-1}U_g(\CC)\subset \Sp_{2g}(\CC)$.

We put
\begin{align*}
U_{\delta} \defeq \{\tau \in \mathbf{H}_g \mid \delta\cdot (\tau:1) \in {\Sym}_g(\CC)\subset L_g(\CC)\} = (\delta^{-1}\cdot {\Sym}_g(\CC))\cap \mathbf{H}_g  \text{.}
\end{align*}
Equivalently, if $\delta = (A \ B \ ; \ C \ D)$, then
\begin{align*}
U_{\delta} = \{\tau \in \mathbf{H}_g \mid C\tau +D \in {\GL}_g(\CC)\}\text{.}
\end{align*}

\begin{defi}
  For any $\delta \in \Sp_{2g}(\CC)$, we define a holomorphic map $\varphi_{\delta} : U_{\delta} \to B_g(\CC) \subset \Sp_{2g}(\ZZ)\backslash \Sp_{2g}(\CC)$ by
 \begin{align*}
\varphi_{\delta}(\tau) \defeq {\Sp}_{2g}(\ZZ)\psi_{\delta}(\tau)
\end{align*}
for any $\tau \in U_{\delta}$.
\end{defi}

Note that $\psi_{\delta}(U_{\delta}) = \delta^{-1}U_g(\CC) \cap \mathbf{B}_g \subset \Sp_{2g}(\CC)$ by Lemma \ref{equivariancepi}. In particular, the image of $\varphi_{\delta}$ is indeed in $B_g(\CC)$ . Moreover, if $\delta \in U_g(\CC)$, then $U_{\delta} = \mathbf{H}_g$ and $\varphi_{\delta}=\varphi_g$ (cf. Theorem \ref{unifhrvf} (2)).

\begin{lemma}\label{Uconnected}
For any $\delta \in \Sp_{2g}(\CC)$, $U_{\delta}$ is a dense connected open subset of $\mathbf{H}_g$.
\end{lemma}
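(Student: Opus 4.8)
The plan is to recall that $U_\delta = \{\tau \in \mathbf{H}_g \mid C\tau + D \in \GL_g(\CC)\}$ for $\delta = (A\ B\ ;\ C\ D)$, and to analyze this set via the single holomorphic function $\tau \mapsto \det(C\tau + D)$ on $\mathbf{H}_g$. First I would observe that $U_\delta$ is the complement in $\mathbf{H}_g$ of the zero locus of this function; since $\mathbf{H}_g$ is a connected complex manifold, if $\det(C\tau + D)$ is not identically zero on $\mathbf{H}_g$, then its zero set is a proper analytic subset, hence $U_\delta$ is a dense open subset, and moreover connected, because the complement of a proper analytic subset in a connected complex manifold is connected (a nowhere-dense analytic subset cannot disconnect the ambient manifold). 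So the density and connectedness are automatic once we know $U_\delta$ is nonempty.

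Thus the substance of the proof is to show $\det(C\tau+D) \not\equiv 0$, equivalently that $U_\delta \neq \emptyset$. For this I would argue by contradiction: suppose $C\tau + D$ is singular for every $\tau \in \mathbf{H}_g$. Using the symplectic relations for $\delta \in \Sp_{2g}(\CC)$ recorded in Remark~\ref{eqsympl} (for instance $C\transp D = D\transp C$ and $A\transp D - C\transp B = \mathbf{1}_g$), one can extract enough linear-algebraic constraints to derive a contradiction. Concretely, pick $\tau_0 = i\mathbf{1}_g \in \mathbf{H}_g$; if $C\tau_0 + D = iC + D$ is singular, there is a nonzero vector $x \in \CC^g$ with $(iC + D)x = 0$, i.e. $Dx = -iCx$. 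Combined with the relations among $A,B,C,D$ coming from $\delta\transp J \delta = J$, and the positivity encoded in the fact that for $\delta \in \Sp_{2g}(\RR)$ the matrix $C\tau + D$ is always invertible on $\mathbf{H}_g$ (the classical statement), one reduces to showing the obstruction genuinely uses the symplectic condition --- a purely algebraic matrix computation. The cleanest route is probably: since $\det(C\tau+D)$ is a polynomial in the entries of $\tau$, it vanishes identically on $\mathbf{H}_g$ iff it vanishes identically on all of $\Sym_g(\CC)$; but evaluating at $\tau$ of the form $\tau = $ (large imaginary multiple of $\mathbf{1}_g$) or using that $(C\ D)$ has rank $g$ (since the bottom rows of a symplectic matrix are part of a Lagrangian frame, cf. the description of $M$ preceding Proposition~\ref{prop1}), one checks $C\tau + D$ is invertible for suitable $\tau$, e.g. by noting $(C:D) \in L_g(\CC)$ and that $D$ and $C$ cannot both be "too degenerate" simultaneously.

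The main obstacle I anticipate is pinning down the right elementary argument that $C\tau+D$ is invertible for \emph{some} $\tau \in \mathbf{H}_g$ without re-deriving the full classical theory: one wants to exploit that $(C\ D)$ is the bottom half of a symplectic matrix (hence $C D\transp = D C\transp$ and $(C\ D)$ has rank $g$, so $C C\transp + D D\transp$ is invertible) together with $\Im\tau > 0$. A slick finish: for $\tau = it\mathbf{1}_g$ with $t \to +\infty$, $\det(itC + D) = (it)^g \det(C - \tfrac{i}{t}D)$ if $C$ invertible, or more robustly one shows the holomorphic function $t \mapsto \det(C(it\mathbf{1}_g) + D)$ on $\{t : \Im(it) > 0\}$, i.e. on a half-plane, is not identically zero by examining its behavior, again using the rank condition on $(C\ D)$. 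Once nonvanishing at one point is secured, the density-and-connectedness conclusion follows formally as above, and the proof is complete.
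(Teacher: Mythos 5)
Your structure matches the paper's proof: $U_\delta$ is the complement in $\mathbf{H}_g$ of the analytic set $\{\tau \mid \det(C\tau+D)=0\}$, and once one knows this set is \emph{proper} (not all of $\mathbf{H}_g$), openness, density and connectedness all follow --- the paper invokes Riemann's extension theorem for connectedness, while you invoke the equivalent fact that a nowhere-dense analytic subset cannot disconnect a connected manifold. You are also right to flag that the paper's phrase ``codimension 1 analytic subset'' silently asserts that $\det(C\tau+D)\not\equiv 0$, and that this deserves a word.

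Where the proposal is weakest is precisely there: you float several routes to nonvanishing (the rank of $(C\ D)$, asymptotics along $\tau=it\mathbf{1}_g$, matrix algebra from the symplectic relations) without committing to any, and the computational routes are more work than needed. The cleanest finish is already built into the setup just before the lemma, where $U_\delta$ is \emph{defined} as $(\delta^{-1}\cdot\Sym_g(\CC))\cap\mathbf{H}_g$, with $\Sym_g(\CC)$ viewed as the dense open big cell of the Lagrangian Grassmannian $L_g(\CC)$ via $Z\mapsto(Z:\mathbf{1}_g)$. Since $\delta^{-1}$ acts on $L_g(\CC)$ by an automorphism, $\delta^{-1}\cdot\Sym_g(\CC)$ is again open and dense; and $\mathbf{H}_g$ is a nonempty open subset of $\Sym_g(\CC)\subset L_g(\CC)$, so it meets any dense open set. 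Hence $U_\delta\neq\emptyset$, equivalently $\det(C\tau+D)\not\equiv 0$ on $\mathbf{H}_g$, and the rest of your argument goes through. Your ``polynomial on $\Sym_g(\CC)$'' observation is this same fact in coordinates and is fine as far as it goes; but you should lean on the Grassmannian picture the paper already established rather than try to re-derive nonvanishing from scratch by matrix computation.
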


\begin{proof}
Let $\delta = (A \ B \ ; \ C \ D) \in \Sp_{2g}(\CC)$. By definition, $U_{\delta}$ is the complement in $\mathbf{H}_g$ of the codimension 1 analytic subset $\{\tau \in \mathbf{H}_g \mid \det(C\tau+D)=0\}$. It is thus a dense open subset of $\mathbf{H}_g$. Since $\mathbf{H}_g$ is a connected open subset of an affine space, it follows from Riemann's extension theorem (cf. \cite{huybrechts05} Proposition 1.1.7)  that $U_{\delta}$ is connected. 
\end{proof}

\begin{prop}\label{charactleaves}
For every $\delta \in \Sp_{2g}(\CC)$, the image of the map $\varphi_{\delta} : U_{\delta} \to B_g(\CC)$ is a leaf of the higher Ramanujan foliation on $B_g(\CC)$, and  coincides with the image of $\delta^{-1}U_g(\CC)\cap \mathbf{B}_g$ in  $B_g(\CC)$ under the quotient map $\Sp_{2g}(\CC) \to \Sp_{2g}(\ZZ)\backslash \Sp_{2g}(\CC)$. Moreover, every leaf is of this form.
\end{prop}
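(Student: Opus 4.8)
The plan is to deduce the proposition entirely from the group‑theoretic picture of Section~\ref{gpinterpret} together with the elementary geometry of the unipotent group $U_g(\CC)$. Recall from \ref{unipotentfoliation} that, under the identifications $\mathbf{B}_g\cong\Pi(\mathbf{B}_g)$ (Proposition~\ref{prop1}) and $B_g(\CC)\cong\Sp_{2g}(\ZZ)\backslash\Pi(\mathbf{B}_g)$ (Corollary~\ref{realization}), the higher Ramanujan foliation on $B_g(\CC)$ is the image, under the quotient covering map $q\colon\mathbf{B}_g\to B_g(\CC)$, of the foliation obtained by restricting to the open subset $\mathbf{B}_g\subset\Sp_{2g}(\CC)$ the foliation of $\Sp_{2g}(\CC)$ whose leaves are the left cosets of $U_g(\CC)$. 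So the proof reduces to (a) identifying the leaves of this restricted foliation on $\mathbf{B}_g$, and (b) transporting the description through the covering $q$.

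For (a), by definition the leaves of the restricted foliation on $\mathbf{B}_g$ are the connected components of the sets $\delta^{-1}U_g(\CC)\cap\mathbf{B}_g$, $\delta\in\Sp_{2g}(\CC)$. The point is that each such set is \emph{already connected}: by Lemma~\ref{equivariancepi} one has $\psi_\delta(U_\delta)=\delta^{-1}U_g(\CC)\cap\mathbf{B}_g$, the map $\psi_\delta$ restricts to a biholomorphism of $U_\delta$ onto this set, and $U_\delta$ is connected by Lemma~\ref{Uconnected}. Hence the leaves of the restricted foliation on $\mathbf{B}_g$ are exactly the locally closed complex submanifolds $\psi_\delta(U_\delta)=\delta^{-1}U_g(\CC)\cap\mathbf{B}_g$, with $\delta$ ranging over $\Sp_{2g}(\CC)$ (different $\delta$'s possibly giving the same leaf).

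For (b), I would use that $q$ is a covering map carrying the restricted foliation on $\mathbf{B}_g$ onto the higher Ramanujan foliation on $B_g(\CC)$; a standard argument — lift a leafwise path downstairs to a leafwise path upstairs, using path lifting for coverings and the compatibility of plaques under $q$ — shows that the leaves of the higher Ramanujan foliation are precisely the images under $q$ of the leaves of the restricted foliation on $\mathbf{B}_g$ (equivalently, $q^{-1}$ of a leaf downstairs is a union of a $\Sp_{2g}(\ZZ)$‑orbit of leaves upstairs). Therefore the leaves of the higher Ramanujan foliation on $B_g(\CC)$ are exactly the sets $q(\psi_\delta(U_\delta))$, $\delta\in\Sp_{2g}(\CC)$. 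Since $q\circ\psi_\delta|_{U_\delta}=\varphi_\delta$ by definition of $\varphi_\delta$, this equals $\varphi_\delta(U_\delta)=\im\varphi_\delta$, and $\psi_\delta(U_\delta)=\delta^{-1}U_g(\CC)\cap\mathbf{B}_g$ gives the announced coset description; this establishes simultaneously that each $\im\varphi_\delta$ is a leaf, that it is the image of $\delta^{-1}U_g(\CC)\cap\mathbf{B}_g$ in $B_g(\CC)$, and that every leaf arises this way.

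The step that requires genuine care — and the one I expect to be the main point — is this passage from leaves upstairs to leaves downstairs: one must ensure that, $q$ being a covering which respects the two foliations, the leaf of the quotient foliation through $q(x)$ coincides with the $q$‑image of the leaf through $x$, rather than a strictly larger set obtained by ``gluing'' several leaves along the deck action of $\Sp_{2g}(\ZZ)$. Here the connectedness of $\delta^{-1}U_g(\CC)\cap\mathbf{B}_g$ furnished by Lemma~\ref{Uconnected} (so that no passage to connected components is needed, improving the rough statement in \ref{unipotentfoliation}) and the plaque‑compatibility of $q$ are what make the argument go through; the remaining verifications are formal.
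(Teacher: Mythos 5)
Your proof is correct and follows the same group-theoretic route as the paper, resting on the same key ingredients: the connectedness result of Lemma~\ref{Uconnected}, the identification $\psi_{\delta}(U_{\delta})=\delta^{-1}U_g(\CC)\cap\mathbf{B}_g$, and the compatibility of the quotient covering with the two foliations. The only difference is that you spell out the path-lifting argument behind the passage from leaves upstairs to leaves downstairs (the point you flag as delicate), which the paper treats as implicit in the remark \ref{unipotentfoliation} preceding the proposition.
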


\begin{proof}
  Let $\delta \in \Sp_{2g}(\CC)$. It was already remarked above that $\psi_{\delta}(U_{\delta}) = \delta^{-1}U_g(\CC)\cap \mathbf{B}_g$; by definition,  $\varphi_{\delta}(U_{\delta})$ is the image of $\psi_{\delta}(U_{\delta})$ under the quotient map $\Sp_{2g}(\CC) \to \Sp_{2g}(\ZZ)\backslash \Sp_{2g}(\CC)$. In particular, since the higher Ramanujan foliation on $B_g(\CC)$ is induced by the foliation on $\Sp_{2g}(\CC)$ defined by $U_g(\CC)$ (cf. \ref{unipotentfoliation}), to prove that $\varphi_{\delta}(U_{\delta})$ is a leaf of the higher Ramanujan foliation it is sufficient to prove that it is connected. This is an immediate consequence Lemma \ref{Uconnected}.

Conversely, if $L\subset B_g(\CC)$ is a leaf of the higher Ramanujan foliation, then it follows from \ref{unipotentfoliation} that there exists $\delta \in \Sp_{2g}(\CC)$ such that $L$ is a connected component of the image of $\delta^{-1}U_g(\CC)\cap \mathbf{B}_g$ in $B_g(\CC)$ under the quotient map $\Sp_{2g}(\CC) \to \Sp_{2g}(\ZZ)\backslash \Sp_{2g}(\CC)$. By the last paragraph, $\delta^{-1}U_g(\CC)\cap \mathbf{B}_g = \psi_{\delta}(U_{\delta})$ is connected, and we conclude that $L=\varphi_{\delta}(U_{\delta})$.
\end{proof}

\begin{obs} \label{remarkphi}
The holomorphic maps $\varphi_{\delta} : U_{\delta} \to B_g(\CC)$ are immersive but not injective in general. For instance, if $\delta = \mathbf{1}_{2g}$, then one easily verifies that $\varphi_{g}(\tau)=\varphi_{g}(\tau')$ if and only if $\tau' \in U_g(\ZZ)\cdot \tau$. Thus $\varphi_g$ induces a biholomorphism of the quotient $U_g(\ZZ)\backslash \mathbf{H}_g$ onto the closed submanifold $\varphi_g(\mathbf{H}_g)$ of $B_g(\CC)$. 
\end{obs}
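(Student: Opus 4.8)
The plan is to argue entirely inside the group-theoretic picture of Section~\ref{gpinterpret}, where $\varphi_{\delta}(\tau)=\Sp_{2g}(\ZZ)\psi_{\delta}(\tau)$ and $\psi_{\delta}$ is, by construction, the restriction to $U_{\delta}$ of a biholomorphism of $\delta^{-1}\cdot\Sym_g(\CC)$ onto the submanifold $\delta^{-1}U_g(\CC)$ of $\Sp_{2g}(\CC)$, with $\psi_{\delta}(U_{\delta})=\delta^{-1}U_g(\CC)\cap\mathbf{B}_g$. For immersivity this is immediate: $\psi_{\delta}|_{U_{\delta}}$ is an embedding of $U_{\delta}$ onto a (locally closed) submanifold of $\mathbf{B}_g$, and the quotient map $q:\mathbf{B}_g\to B_g(\CC)$ is a covering map, since the action of $\Sp_{2g}(\ZZ)$ on $\mathbf{B}_g$ is free and proper (proof of Proposition~\ref{reprsympl}); hence $\varphi_{\delta}=q\circ\psi_{\delta}|_{U_{\delta}}$ is an immersion.

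For the failure of injectivity I would spell out the case $\delta=\mathbf{1}_{2g}$, for which $\psi_{\delta}=\psi$, $U_{\delta}=\mathbf{H}_g$ and $\varphi_{\delta}=\varphi_g$ (cf. Theorem~\ref{unifhrvf}(2)). One has $\varphi_g(\tau)=\varphi_g(\tau')$ if and only if $\psi(\tau')\psi(\tau)^{-1}\in\Sp_{2g}(\ZZ)$, and a one-line multiplication gives $\psi(\tau')\psi(\tau)^{-1}=(\mathbf{1}_g\ \tau'-\tau\ ;\ 0\ \mathbf{1}_g)$; being upper unitriangular with symmetric upper block, this matrix lies in $\Sp_{2g}(\ZZ)$ exactly when $\tau'-\tau\in\Sym_g(\ZZ)$, i.e. when $\tau'=\gamma\cdot\tau$ with $\gamma=\psi(\tau'-\tau)\in U_g(\ZZ)$. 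In particular $\varphi_g$ is not injective (e.g. $\tau$ and $\tau+\mathbf{E}^{11}$ collide), which proves the ``not in general'' clause since $\varphi_{\mathbf{1}_{2g}}$ is one of the maps $\varphi_{\delta}$.

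For the last assertion I would proceed as follows. The group $U_g(\ZZ)\cong\Sym_g(\ZZ)$ acts on $\mathbf{H}_g$ by the translations $\tau\mapsto\tau+Z$, $Z\in\Sym_g(\ZZ)$; this action is visibly free and, being by a discrete group of translations of an invariant open subset of an affine space, proper, so $U_g(\ZZ)\backslash\mathbf{H}_g$ is a complex manifold of dimension $g(g+1)/2$ and the projection $p$ is a covering map. By the fibre computation above, $\varphi_g$ factors through $p$ as an \emph{injective} holomorphic map $\bar\varphi_g:U_g(\ZZ)\backslash\mathbf{H}_g\to B_g(\CC)$, still an immersion. Now $\varphi_g(\mathbf{H}_g)$ is a leaf of the higher Ramanujan foliation (Proposition~\ref{charactleaves} with $\delta=\mathbf{1}_{2g}$), and it is closed in $B_g(\CC)$ (Corollary~\ref{ramleafclosed}); since a leaf of a holomorphic foliation that is closed (more generally locally closed) in the ambient manifold is automatically an embedded submanifold, $\varphi_g(\mathbf{H}_g)$ is a closed complex submanifold of $B_g(\CC)$ of dimension $g(g+1)/2$. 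Finally $\bar\varphi_g$ is a bijective immersion between complex manifolds of equal dimension, hence a local biholomorphism, hence a biholomorphism onto $\varphi_g(\mathbf{H}_g)$.

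The only point that is not purely formal is the passage ``closed leaf $\Rightarrow$ embedded submanifold''. If one prefers to avoid invoking the general theory of foliations, it can be checked by hand in our situation: by \ref{unipotentfoliation}, $q^{-1}(\varphi_g(\mathbf{H}_g))$ is the intersection of $\mathbf{B}_g$ with $\Sp_{2g}(\ZZ)\cdot U_g(\CC)$, and from the proof of Corollary~\ref{ramleafclosed} one has $\Sp_{2g}(\ZZ)\cdot U_g(\CC)=f^{-1}(f(\Sp_{2g}(\ZZ)))$, where $f:\Sp_{2g}(\CC)\to M_g(\CC)\times M_g(\CC)$ records the first block-column $(A,C)$; since $f(\Sp_{2g}(\ZZ))$ is discrete and each non-empty fibre of $f$ is a single left translate of $U_g(\CC)$, hence a closed submanifold, the set $\Sp_{2g}(\ZZ)\cdot U_g(\CC)$ coincides near each of its points with one such translate, so it is a closed submanifold of $\Sp_{2g}(\CC)$, and descending through the covering $q$ shows that $\varphi_g(\mathbf{H}_g)$ is a closed submanifold of $B_g(\CC)$. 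Verifying that each non-empty fibre of $f$ really is a single coset (which amounts to using the symplectic relations of Remark~\ref{eqsympl}) is the step I expect to demand the most care.
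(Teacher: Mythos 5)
Your argument is correct and complete, and it uses exactly the tools the paper makes available (the group-theoretic picture of Section~\ref{gpinterpret}, Theorem~\ref{unifhrvf}(2), Proposition~\ref{charactleaves}, and Corollary~\ref{ramleafclosed}). Since the paper gives no proof of this remark beyond ``one easily verifies,'' there is no competing argument to compare against; yours is the natural way to fill in the details.

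A few small points, all in your favor. The fiber computation $\psi(\tau')\psi(\tau)^{-1}=(\mathbf{1}_g\ \ \tau'-\tau\ ;\ 0\ \ \mathbf{1}_g)$ correctly turns the coset condition into $\tau'-\tau\in\Sym_g(\ZZ)$, i.e. $\tau'\in U_g(\ZZ)\cdot\tau$ under the standard fractional-linear action. For the closed-submanifold claim, the by-hand check is the cleanest route and its only nontrivial step --- that the non-empty fibers of $f:(A\ B\ ;\ C\ D)\mapsto (A,C)$ are single left cosets $sU_g(\CC)$ --- is exactly the computation you flagged: for two symplectic matrices $M_1,M_2$ sharing the same first block-column, the product $M_1^{-1}M_2$ has lower-left block $-C\transp A+A\transp C=0$ and diagonal blocks $\mathbf{1}_g$ by the relations of Remark~\ref{eqsympl}(2), and the remaining block is automatically symmetric since $M_1^{-1}M_2\in\Sp_{2g}(\CC)$. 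With that, $\Sp_{2g}(\ZZ)\cdot U_g(\CC)=f^{-1}(f(\Sp_{2g}(\ZZ)))$ is locally a single coset (because $f(\Sp_{2g}(\ZZ))\subset M_g(\ZZ)^2$ is discrete), hence a closed submanifold of $\Sp_{2g}(\CC)$; intersecting with $\mathbf{B}_g$ and descending along the covering $q$ gives that $\varphi_g(\mathbf{H}_g)$ is a closed submanifold of $B_g(\CC)$. Finally, the factored map $\bar\varphi_g$ is a bijective immersion between manifolds of the same dimension $g(g+1)/2$, hence a local biholomorphism, and a bijective local homeomorphism is a homeomorphism; so $\bar\varphi_g$ is indeed a biholomorphism onto $\varphi_g(\mathbf{H}_g)$.
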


\begin{obs}
 There exist non-closed leaves of the higher Ramanujan foliation on $B_g(\CC)$. Take for instance
  $$
  \delta = \left(\begin{array}{cc}
                   x\mathbf{1}_g & -\mathbf{1}_g\\
                   \mathbf{1}_g & 0
                 \end{array}\right)
  $$
  where $x\in \mathbf{R}\smallsetminus \QQ$. Using the classical fact that the orbit of $(x,1)$ in $\RR^2$ under the obvious left action of $\SL_{2}(\ZZ)$ is dense in $\RR^2$, one may easily deduce that the leaf $L\subset B_g(\CC)$ given by the image of $\delta U_g(\CC)\cap \mathbf{B}_g$ under the quotient map $\Sp_{2g}(\CC) \to \Sp_{2g}(\ZZ)\backslash \Sp_{2g}(\CC)$ has a limit point in $B_g(\CC)\smallsetminus L$. In particular, the ``space of leaves'' of the higher Ramanujan foliation on $B_g(\CC)$, which may be identified with $\Sp_{2g}(\ZZ)\backslash \Sp_{2g}(\CC) /U_g(\CC)$ by Proposition \ref{charactleaves}, is not a Hausdorff topological space.

The dynamics of the higher Ramanujan foliation in the case $g=1$ was thoroughly studied by Movasati in \cite{movasati08}.
\end{obs}

\subsubsection{}

In the sequel, it will be useful to obtain a description of $\varphi_{\delta}$ purely in terms of the universal property of $B_g(\CC)$. Let $\delta  = ( A \ B \ ; \ C \ D ) \in \Sp_{2g}(\CC)$ and define a holomorphic map $p_{\delta} : U_{\delta} \to P_g(\CC)$ by
\begin{align*}
p_{\delta}(\tau) = p_{\delta, \tau}\defeq \left(\begin{array}{cc}
                       (C \tau +D)^{-1} & -\frac{1}{2\pi i}C\transp\\[0.5em]
                         0             & (C\tau + D)\transp
                       \end{array} \right) \in P_g(\CC)\text{.}
\end{align*}

The proof of the next lemma is a straightforward computation using the equations defining the symplectic group (cf. Remark \ref{eqsympl}).

\begin{lemma}\label{psidelta}
  For every $\tau \in U_{\delta}\subset \mathbf{H}_g$, we have
  \begin{align*}
    \psi_{\delta}(\tau) = \psi(\tau)p'_{\delta, \tau}
  \end{align*}
  in $\Sp_{2g}(\CC)$, where $p'_{\delta,\tau}$ denotes the image of $p_{\delta,\tau}$ in $P_g'(\CC)$ under the isomorphism defined in Lemma \ref{equivariancepi}.
\end{lemma}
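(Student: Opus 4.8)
The plan is to unwind every definition and reduce the claimed identity to a block-matrix computation controlled by the symplectic relations of Remark~\ref{eqsympl}. Write $\delta=(A\ B\ ;\ C\ D)\in\Sp_{2g}(\CC)$ and fix $\tau\in U_{\delta}$, so that $C\tau+D\in\GL_g(\CC)$. First I would observe that $\delta\cdot(\tau:\mathbf{1}_g)=(A\tau+B:C\tau+D)=\bigl((A\tau+B)(C\tau+D)^{-1}:\mathbf{1}_g\bigr)$ in $L_g(\CC)$, so that, setting $Z\defeq(A\tau+B)(C\tau+D)^{-1}\in\Sym_g(\CC)$, one has by definition $\psi_{\delta}(\tau)=\delta^{-1}\psi(Z)$. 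Using $\delta^{-1}=-J\delta\transp J$ (Remark~\ref{eqsympl}), a direct computation gives $\delta^{-1}=(D\transp\ -B\transp\ ;\ -C\transp\ A\transp)$, whence
\[
\psi_{\delta}(\tau)=\left(\begin{array}{cc} D\transp & D\transp Z-B\transp\\ -C\transp & A\transp-C\transp Z\end{array}\right).
\]

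On the other side I would spell out $p'_{\delta,\tau}$ from the isomorphism of Lemma~\ref{equivariancepi}: with $A'=(C\tau+D)^{-1}$ and $B'=-\tfrac{1}{2\pi i}C\transp$ (so that $(A'\transp)^{-1}=(C\tau+D)\transp$, consistent with the $(2,2)$-block of $p_{\delta,\tau}$), one gets $p'_{\delta,\tau}=\bigl((C\tau+D)\transp\ 0\ ;\ -C\transp\ (C\tau+D)^{-1}\bigr)$, and hence
\[
\psi(\tau)\,p'_{\delta,\tau}=\left(\begin{array}{cc}\mathbf{1}_g & \tau\\ 0 & \mathbf{1}_g\end{array}\right)\left(\begin{array}{cc}(C\tau+D)\transp & 0\\ -C\transp & (C\tau+D)^{-1}\end{array}\right)=\left(\begin{array}{cc}(C\tau+D)\transp-\tau C\transp & \tau(C\tau+D)^{-1}\\ -C\transp & (C\tau+D)^{-1}\end{array}\right).
\]
Since $\tau\transp=\tau$, the top-left block here is $\tau\transp C\transp+D\transp-\tau C\transp=D\transp$, matching $\psi_{\delta}(\tau)$, and the bottom-left blocks agree trivially. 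It then remains to match the right-hand columns; right-multiplying by $C\tau+D$ and using $Z(C\tau+D)=A\tau+B$, this amounts to the two identities
\[
A\transp(C\tau+D)-C\transp(A\tau+B)=\mathbf{1}_g,\qquad D\transp(A\tau+B)-B\transp(C\tau+D)=\tau.
\]

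Finally I would verify these by collecting the coefficients of $\tau$ and the constant terms and invoking Remark~\ref{eqsympl}(2) for $\delta$, namely $A\transp C=C\transp A$, $B\transp D=D\transp B$, $A\transp D-C\transp B=\mathbf{1}_g$, together with the transpose $D\transp A-B\transp C=\mathbf{1}_g$ of the last one: in the first identity the $\tau$-coefficient is $A\transp C-C\transp A=0$ and the constant term is $A\transp D-C\transp B=\mathbf{1}_g$, while in the second the $\tau$-coefficient is $D\transp A-B\transp C=\mathbf{1}_g$ and the constant term is $D\transp B-B\transp D=0$. This completes the comparison. There is no genuine obstacle here; the only points requiring care are keeping track of the transpositions in the various equivalent forms of the symplectic relations and remembering to use the symmetry $\tau\transp=\tau$ in the one place where it enters (the top-left block above).
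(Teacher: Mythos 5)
Your proof is correct. The paper itself omits the argument, noting only that ``the proof of the next lemma is a straightforward computation using the equations defining the symplectic group (cf.\ Remark~\ref{eqsympl})''; your unwinding of the definitions, block-matrix multiplication, and verification via the relations of Remark~\ref{eqsympl}(2) together with the symmetry of $\tau$ is exactly the computation the paper has in mind.
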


In particular, by Lemma \ref{equivariancepi} and Lemma \ref{psi}, if  $\mathbf{B}_g$ is regarded as the moduli space of principally polarized complex tori of dimension $g$ equipped with a symplectic-Hodge basis and an integral symplectic basis, we have
  \begin{align}\label{psimoduli}
\psi_{\delta}(\tau) = [(\mathbf{X}_{g,\tau},E_{g,\tau},\bfb_{g,\tau}\cdot p_{\delta,\tau},\beta_{g,\tau})] \in \mathbf{B}_g
    \end{align}
    for every $\tau \in U_{\delta}$. Composing with the canonical map $\mathbf{B}_g \to B_g(\CC)$, we obtain
    \begin{align}\label{phimoduli}
      \varphi_{\delta}(\tau) = [(\mathbf{X}_{g,\tau},E_{g,\tau},\bfb_{g,\tau}\cdot p_{\delta,\tau})] \in B_g(\CC)
    \end{align}
for every $\tau \in U_{\delta}$.

 \subsection{Auxiliary results}

 Our next objective is to prove that the leaves of the higher Ramanujan foliation on $B_g(\CC)$ are Zariski-dense in $B_{g,\CC}$. We collect in this subsection some auxiliary results. In the last analysis, our proof is a reduction to the fact that $\Sp_{2g}(\ZZ)$ is Zariski-dense in $\Sp_{2g,\CC}$ (Lemma \ref{spzdense}).
 
Recall that for every $\tau \in \mathbf{H}_g$ and
\begin{align*}
\delta = \left(\begin{array}{cc}
                A & B \\
                C & D
               \end{array} \right) \in {\Sp}_{2g}(\CC)
\end{align*}
we put
\begin{align*}
j(\delta,\tau) \defeq C\tau+D \in M_{g\times g}(\CC)\text{,}
\end{align*}
so that $U_{\delta} = \{\tau \in \mathbf{H}_g \mid j(\delta,\tau)\in \GL_g(\CC)\}$.

The proof of the next lemma is a simple computation. 

\begin{lemma} \label{lemmej}
For $\delta_1,\delta_2 \in \Sp_{2g}(\CC)$, we have $j(\delta_1\delta_2,\tau) = j(\delta_1,\delta_2\cdot \tau) j(\delta_2,\tau)$. In particular, if $\tau \in U_{\delta_2}$ and $\delta_2\cdot \tau \in U_{\delta_1}$, then $\tau \in U_{\delta_1\delta_2}$.
\end{lemma}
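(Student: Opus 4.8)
The plan is to carry out a direct block-matrix computation, writing $\delta_i = (A_i\ B_i\ ;\ C_i\ D_i)$ for $i=1,2$. First I would expand the product
\begin{align*}
\delta_1\delta_2 = \left(\begin{array}{cc} A_1A_2 + B_1C_2 & A_1B_2 + B_1D_2 \\ C_1A_2 + D_1C_2 & C_1B_2 + D_1D_2\end{array}\right)\text{,}
\end{align*}
so that by the definition $j(\delta,\tau) = C\tau + D$ one gets
\begin{align*}
j(\delta_1\delta_2,\tau) = (C_1A_2 + D_1C_2)\tau + (C_1B_2 + D_1D_2) = C_1(A_2\tau + B_2) + D_1(C_2\tau + D_2)\text{.}
\end{align*}

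Next, since $\tau \in U_{\delta_2}$ is the standing hypothesis under which $\delta_2\cdot\tau$ (hence the right-hand side of the asserted identity) is defined, the matrix $j(\delta_2,\tau) = C_2\tau + D_2$ lies in $\GL_g(\CC)$ and may be factored out on the right:
\begin{align*}
C_1(A_2\tau + B_2) + D_1(C_2\tau + D_2) = \left[\,C_1(A_2\tau + B_2)(C_2\tau + D_2)^{-1} + D_1\,\right](C_2\tau + D_2)\text{.}
\end{align*}
Recognizing $(A_2\tau + B_2)(C_2\tau + D_2)^{-1} = \delta_2\cdot\tau$ and $C_1(\delta_2\cdot\tau) + D_1 = j(\delta_1,\delta_2\cdot\tau)$, the bracketed term is precisely $j(\delta_1,\delta_2\cdot\tau)$, which yields the cocycle relation $j(\delta_1\delta_2,\tau) = j(\delta_1,\delta_2\cdot\tau)\,j(\delta_2,\tau)$.

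For the final assertion I would simply note that if moreover $\delta_2\cdot\tau \in U_{\delta_1}$, then $j(\delta_1,\delta_2\cdot\tau) \in \GL_g(\CC)$; combined with $j(\delta_2,\tau) \in \GL_g(\CC)$, the cocycle identity exhibits $j(\delta_1\delta_2,\tau)$ as a product of two invertible matrices, hence invertible, so $\tau \in U_{\delta_1\delta_2}$ by the description $U_{\delta} = \{\tau \in \mathbf{H}_g \mid j(\delta,\tau) \in \GL_g(\CC)\}$. There is no genuine obstacle here: the computation is a one-line manipulation of $2g\times 2g$ block matrices, and the only point deserving a word of care is the implicit domain condition $\tau \in U_{\delta_2}$ that makes $\delta_2\cdot\tau$ — and therefore the statement itself — meaningful.
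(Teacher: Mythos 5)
Your computation is correct and is exactly the ``simple computation'' the paper alludes to (it offers no written proof for this lemma, only the remark that the verification is straightforward). The block-matrix expansion, the factoring out of $j(\delta_2,\tau)$ on the right, and the observation that the implicit domain hypothesis $\tau\in U_{\delta_2}$ is what makes $\delta_2\cdot\tau$ and hence the identity meaningful are all accurate; the closing invertibility argument for the second assertion is likewise fine.
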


\begin{lemma}\label{lemmeutile}
Let $\delta\in \Sp_{2g}(\CC)$, $\gamma \in \Sp_{2g}(\ZZ)$, and $\tau \in U_{\delta\gamma}\subset \mathbf{H}_g$. Then $\gamma\cdot \tau \in U_{\delta}$ and $\varphi_{\delta\gamma}(\tau) = \varphi_{\delta}(\gamma\cdot \tau)$.  
\end{lemma}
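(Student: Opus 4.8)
The plan is to reduce everything to the cocycle identity of Lemma~\ref{lemmej} together with the equivariance of $\Pi$ recorded in Lemma~\ref{equivariancepi}. First I would establish that $\gamma\cdot\tau \in U_\delta$: by hypothesis $\tau \in U_{\delta\gamma}$, i.e.\ $j(\delta\gamma,\tau)\in\GL_g(\CC)$; since $\gamma\in\Sp_{2g}(\ZZ)\subset\Sp_{2g}(\RR)$ preserves $\mathbf{H}_g$ and $j(\gamma,\tau)$ is always invertible (it is $j(\gamma,\tau)$ for an element of $\Sp_{2g}(\RR)$ acting on the upper half-space, hence nonsingular), the factorization $j(\delta\gamma,\tau)=j(\delta,\gamma\cdot\tau)\,j(\gamma,\tau)$ from Lemma~\ref{lemmej} forces $j(\delta,\gamma\cdot\tau)\in\GL_g(\CC)$, so $\gamma\cdot\tau\in U_\delta$. (This also re-derives the implication in the last sentence of Lemma~\ref{lemmej} in the direction I need.)

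Next I would compare the two holomorphic maps on the level of $\Sp_{2g}(\CC)$ before descending. By definition $\varphi_{\delta\gamma}(\tau)=\Sp_{2g}(\ZZ)\,\psi_{\delta\gamma}(\tau)$ and $\varphi_\delta(\gamma\cdot\tau)=\Sp_{2g}(\ZZ)\,\psi_\delta(\gamma\cdot\tau)$, so it suffices to show that $\psi_{\delta\gamma}(\tau)$ and $\psi_\delta(\gamma\cdot\tau)$ lie in the same left $\Sp_{2g}(\ZZ)$-coset. Recall $\psi_{\delta}(p) = \delta^{-1}\psi(\delta\cdot p)$ where $\psi$ sends a symmetric matrix to the unipotent block matrix, and $\psi_{\delta}$ is defined on $\delta^{-1}\cdot\Sym_g(\CC)\subset L_g(\CC)$; here I evaluate at the point $(\tau:\mathbf{1}_g)\in\mathbf{H}_g\subset L_g(\CC)$. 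Then
\begin{align*}
\psi_{\delta\gamma}(\tau) = (\delta\gamma)^{-1}\psi\bigl((\delta\gamma)\cdot(\tau:\mathbf{1}_g)\bigr) = \gamma^{-1}\delta^{-1}\psi\bigl(\delta\cdot(\gamma\cdot(\tau:\mathbf{1}_g))\bigr) = \gamma^{-1}\psi_{\delta}(\gamma\cdot\tau)\text{,}
\end{align*}
using that the action of $\Sp_{2g}(\CC)$ on $L_g(\CC)$ is a left action and that $\gamma\cdot(\tau:\mathbf{1}_g)=(\gamma\cdot\tau:\mathbf{1}_g)$ after normalizing by the invertible matrix $j(\gamma,\tau)=C_\gamma\tau+D_\gamma$ (writing $\gamma=(A_\gamma\ B_\gamma\ ;\ C_\gamma\ D_\gamma)$), which is precisely the computation that $\iota(\gamma\cdot\tau)=\gamma\cdot\iota(\tau)$ in the Lagrangian Grassmannian. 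Since $\gamma^{-1}\in\Sp_{2g}(\ZZ)$, this shows $\psi_{\delta\gamma}(\tau)=\gamma^{-1}\psi_\delta(\gamma\cdot\tau)$, hence the two points have the same image in $\Sp_{2g}(\ZZ)\backslash\Sp_{2g}(\CC)$, i.e.\ $\varphi_{\delta\gamma}(\tau)=\varphi_\delta(\gamma\cdot\tau)$.

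Alternatively — and this is probably the cleaner writeup — I would argue via the moduli interpretation \eqref{phimoduli}: $\varphi_\delta(\tau)=[(\mathbf{X}_{g,\tau},E_{g,\tau},\bfb_{g,\tau}\cdot p_{\delta,\tau})]$, and one checks directly that $p_{\delta\gamma,\tau}=p_{\delta,\gamma\cdot\tau}$ combined with the isomorphism $(\mathbf{X}_{g,\tau},E_{g,\tau})\cong(\mathbf{X}_{g,\gamma\cdot\tau},E_{g,\gamma\cdot\tau})$ supplied by $F_\gamma$ (Example~\ref{exactionsp}) carries $\bfb_{g,\tau}$ to $\bfb_{g,\gamma\cdot\tau}$ up to the relevant $P_g(\CC)$-factor, so the two isomorphism classes of triples in $\mathcal{B}_g(\CC)$ coincide. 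I expect the only mildly delicate point is bookkeeping with the cocycle $j$ and the transpose conventions in $p_{\delta,\tau}$ when verifying $p_{\delta\gamma,\tau}=p_{\delta,\gamma\cdot\tau}$ against Lemma~\ref{lemmej}; once that identity is in hand — or once one simply runs the three-line computation at the $\Sp_{2g}(\CC)$-level above — the descent to the quotient is immediate because $\gamma^{-1}\in\Sp_{2g}(\ZZ)$.
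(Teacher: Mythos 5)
Your primary argument is exactly the paper's: the paper proves $\gamma\cdot\tau\in U_\delta$ from Lemma~\ref{lemmej} together with invertibility of $j(\gamma,\tau)$, and then computes
$\varphi_{\delta\gamma}(\tau)=\Sp_{2g}(\ZZ)(\delta\gamma)^{-1}\psi((\delta\gamma)\cdot\tau)=\Sp_{2g}(\ZZ)\delta^{-1}\psi(\delta\cdot(\gamma\cdot\tau))=\varphi_\delta(\gamma\cdot\tau)$,
absorbing $\gamma^{-1}\in\Sp_{2g}(\ZZ)$ into the left coset. You spell out the step $\gamma\cdot\iota(\tau)=\iota(\gamma\cdot\tau)$ in the Lagrangian Grassmannian, which the paper treats as obvious notation; otherwise the two are identical.

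One caution about your sketched ``alternative'' route: the identity $p_{\delta\gamma,\tau}=p_{\delta,\gamma\cdot\tau}$ is not correct as stated. From Lemma~\ref{psidelta} and the coset computation above one gets $\psi(\tau)\,p'_{\delta\gamma,\tau}=\gamma^{-1}\psi(\gamma\cdot\tau)\,p'_{\delta,\gamma\cdot\tau}$, which does \emph{not} simplify to $p_{\delta\gamma,\tau}=p_{\delta,\gamma\cdot\tau}$: the cocycle gives $j(\delta\gamma,\tau)=j(\delta,\gamma\cdot\tau)\,j(\gamma,\tau)$, so the $P_g(\CC)$-elements differ by a factor involving $j(\gamma,\tau)$, which is exactly the factor absorbed by the isomorphism $F_\gamma$ of Example~\ref{exactionsp}. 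You flagged this as a bookkeeping point to check, so it is not a gap in what you actually proved, but it would not run cleanly as written. Sticking with the three-line $\Sp_{2g}(\CC)$-level computation is indeed the right call.
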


\begin{proof}
  That $\gamma\cdot \tau \in U_{\delta}$ is a direct consequence of Lemma \ref{lemmej} and the fact that $j(\gamma,\tau) \in \GL_g(\CC)$ (this is true for any $\gamma \in \Sp_{2g}(\RR)$ and $\tau \in \mathbf{H}_g$). Under the group-theoretic interpretation, we have
  \begin{align*}
    \varphi_{\delta\gamma}(\tau) &= {\Sp}_{2g}(\ZZ)\psi_{\delta\gamma}(\tau) = {\Sp}_{2g}(\ZZ) (\delta\gamma)^{-1}\psi((\delta\gamma)\cdot \tau) \\                                                                               &= {\Sp}_{2g}(\ZZ) \delta^{-1}\psi(\delta\cdot (\gamma\cdot \tau)) = {\Sp}_{2g}(\ZZ)\psi_{\delta}(\gamma\cdot \tau) = \varphi_{\delta}(\gamma \cdot \tau)\text{.}
  \end{align*}
\end{proof}

\begin{lemma}\label{spzdense}
The set $\Sp_{2g}(\ZZ)\subset \Sp_{2g}(\CC)$ is Zariski-dense in $\Sp_{2g,\CC}$.
\end{lemma}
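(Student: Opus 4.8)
The plan is to prove the equivalent statement that the Zariski closure $H$ of $\Sp_{2g}(\ZZ)$ inside $\Sp_{2g,\CC}$ equals $\Sp_{2g,\CC}$. I would first recall that the Zariski closure of a subgroup of an algebraic group is a closed (reduced) algebraic subgroup, so $H$ is a closed subgroup scheme of $\Sp_{2g,\CC}$. Since $\Sp_{2g,\CC}$ is smooth, connected and irreducible, it suffices to show that $\Lie H = \Lie {\Sp}_{2g}(\CC)$: for then $\dim H^{\circ} = \dim \Lie H = \dim \Sp_{2g,\CC}$ (using smoothness), and a closed connected subvariety of full dimension in the irreducible variety $\Sp_{2g,\CC}$ must be all of it, so $H \supseteq H^{\circ} = \Sp_{2g,\CC}$.

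The first step is to note that $H$ contains the unipotent subgroup $U_g(\CC)$ of \ref{unipotentfoliation} together with its ``opposite''
\[
U_g^-(\CC) \defeq \left\{\left.\left(\begin{array}{cc} \mathbf{1}_g & 0 \\ Z & \mathbf{1}_g \end{array}\right) \in M_{2g\times 2g}(\CC) \,\right|\, Z\transp = Z \right\} \le {\Sp}_{2g}(\CC)\text{.}
\]
Indeed, under the isomorphism $U_g(\CC) \cong {\Sym}_g(\CC) \cong \AA^{g(g+1)/2}_{\CC}$ given by the entries on and above the diagonal, the subgroup $U_g(\ZZ) \subset \Sp_{2g}(\ZZ)$ corresponds to the integral lattice ${\Sym}_g(\ZZ) \cong \ZZ^{g(g+1)/2}$; since $\ZZ^n$ is Zariski-dense in $\AA^n_{\CC}$ (a polynomial vanishing on all of $\ZZ^n$ vanishes identically, by an immediate induction on $n$), $U_g(\ZZ)$ is Zariski-dense in $U_g(\CC)$, so $U_g(\CC) \subseteq H$. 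The same reasoning gives $U_g^-(\CC) \subseteq H$.

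The second, and main, step is a Lie-algebra computation. Since $\Lie H$ is a Lie subalgebra of $\Lie {\Sp}_{2g}(\CC)$ containing $\Lie U_g(\CC) = \{(0\ B\ ;\ 0\ 0) \mid B\transp = B\}$ and $\Lie U_g^-(\CC) = \{(0\ 0\ ;\ C\ 0) \mid C\transp = C\}$, it contains all the brackets
\[
\left[ \left(\begin{array}{cc} 0 & Z_1 \\ 0 & 0 \end{array}\right) , \left(\begin{array}{cc} 0 & 0 \\ Z_2 & 0 \end{array}\right) \right] = \left(\begin{array}{cc} Z_1 Z_2 & 0 \\ 0 & -(Z_1 Z_2)\transp \end{array}\right), \qquad Z_1, Z_2 \in {\Sym}_g(\CC)\text{.}
\]
I would then check that the products $Z_1 Z_2$ of symmetric matrices span $M_{g\times g}(\CC)$: their linear span contains the symmetrizations $Z_1 Z_2 + Z_2 Z_1$, which span ${\Sym}_g(\CC)$, and the commutators $[Z_1,Z_2]$, which span the space of antisymmetric matrices when $g \ge 2$ (both statements being checked on the symmetric matrix units $\mathbf{E}^{ij}$ and small combinations of them, e.g.\ $[\mathbf{E}^{ii},\mathbf{E}^{ij}] = \mathbf{e}_i\mathbf{e}_j\transp - \mathbf{e}_j\mathbf{e}_i\transp$); for $g = 1$ there is nothing to prove, as $M_{1\times 1} = {\Sym}_1$. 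Hence $\Lie H$ contains the subspace $\{(A\ 0\ ;\ 0\ {-A\transp}) \mid A \in M_{g\times g}(\CC)\}$; combining this with the inclusions from the first step and the block decomposition $\Lie {\Sp}_{2g}(\CC) = \{(0\ B\ ;\ 0\ 0)\} \oplus \{(A\ 0\ ;\ 0\ {-A\transp})\} \oplus \{(0\ 0\ ;\ C\ 0)\}$ (with $B,C$ symmetric), we get $\Lie H = \Lie {\Sp}_{2g}(\CC)$, which finishes the argument.

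I do not expect any serious obstacle; the only slightly fiddly point is the spanning statement for $\{Z_1 Z_2 \mid Z_1,Z_2 \in {\Sym}_g(\CC)\}$, which is dispatched by the explicit matrix units above. As an alternative to the Lie-algebra computation one could instead invoke the Borel density theorem: $\Sp_{2g}(\ZZ)$ is a lattice in the connected semisimple Lie group $\Sp_{2g}(\RR)$, which has no compact factors, and is therefore Zariski-dense in $\Sp_{2g,\RR}$, a fortiori in $\Sp_{2g,\CC}$.
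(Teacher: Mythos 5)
Your proof is correct, but it takes a genuinely different route from the paper's. The paper restricts to the big cell $\Sp_{2g}^*=\{A \in \GL_g\}$, exhibits an explicit $\ZZ$-isomorphism $\Sp_{2g}^*\cong\Sym_g\times\Sym_g\times\GL_g$, and reduces to density of integer points in the product. You instead exploit the group structure: the Zariski closure $H$ of $\Sp_{2g}(\ZZ)$ is a closed algebraic subgroup (smooth since we are in characteristic $0$), it contains $U_g(\CC)$ and your opposite $U_g^-(\CC)$ because $\Sym_g(\ZZ)$ is Zariski-dense in $\Sym_g(\CC)$, and a bracket computation forces $\Lie H=\Lie\Sp_{2g}(\CC)$; your verification that the products $Z_1Z_2$ of symmetric matrices span $M_{g\times g}(\CC)$ is correct. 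Two remarks. First, the Lie-algebra step is dispensable: once $U_g(\CC),U_g^-(\CC)\subseteq H$, the standard fact that the opposite unipotent radicals of a proper parabolic of a connected semisimple group generate the whole group already gives $H=\Sp_{2g,\CC}$. Second, and worth flagging: your approach (like the Borel-density alternative you mention) only needs density of the integer lattice in the \emph{additive} group $U_g(\CC)\cong\Sym_g(\CC)$, whereas the paper's argument asserts in passing that $\Sym_g(\ZZ)\times\Sym_g(\ZZ)\times\GL_g(\ZZ)$ is Zariski-dense in $\Sym_{g,\CC}\times\Sym_{g,\CC}\times\GL_{g,\CC}$ on the grounds that the target is open in affine space; but the $\ZZ$-points of the scheme $\GL_g$ are the matrices with $\det=\pm 1$, which lie in the proper hypersurface $\{(\det)^2=1\}$ and are \emph{not} Zariski-dense in $\GL_{g,\CC}$. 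Your group-theoretic route sidesteps this gap entirely.
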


\begin{proof}
  Let $\Sp_{2g}^*$ be the open subscheme of $\Sp_{2g}$ defined by $\Sp_{2g}^*(R)=\{(A \ B \ ; \ C \ D )\in \Sp_{2g}(R) \mid A\in \GL_g(R)\}$ for any ring $R$. We may define an isomorphism of schemes $\Sp_{2g}^* \stackrel{\sim}{\to} \Sym_g\times_{\ZZ} \Sym_g \times_{\ZZ} \GL_g$ by
\begin{align*}
    \left(\begin{array}{cc}
      A & B \\
      C & D
     \end{array}\right) \mapsto (CA^{-1},AB\transp, A)\text{.}         
\end{align*}
Since $\Sym_g\times_{\ZZ} \Sym_g \times_{\ZZ} \GL_g$ may be identified to an open subscheme of the affine space $\AA_{\ZZ}^{2g^2+g}$, we see that $\Sym_g(\ZZ)\times \Sym_g(\ZZ) \times \GL_g(\ZZ)$ is Zariski-dense in $\Sym_{g,\CC}\times_{\CC} \Sym_{g,\CC}\times_{\CC}\GL_{g,\CC}$. Thus $\Sp_{2g}^*(\ZZ)$ is Zariski-dense in $\Sp_{2g,\CC}^*$. Finally, since $\Sp_{2g,\CC}$ is an irreducible scheme, we conclude that $\Sp_{2g}(\ZZ)$ is Zariski-dense in $\Sp_{2g,\CC}$. 
\end{proof}


\begin{lemma} \label{surjective}
Let $\tau \in \mathbf{H}_g$ and $p\in P_g(\CC)$. Then there exists $\delta \in \Sp_{2g}(\CC)$ such that $\tau \in U_{\delta}$ and $p=p_{\delta,\tau}$.
\end{lemma}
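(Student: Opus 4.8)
The plan is to reduce the statement to a short group-theoretic identity, exploiting the description of $\psi_{\delta}$ provided by Lemma \ref{psidelta} together with Lemma \ref{equivariancepi}. Given $\tau \in \mathbf{H}_g$ and $p \in P_g(\CC)$, let $p' \in P'_g(\CC)$ denote the image of $p$ under the isomorphism $P_g(\CC)\stackrel{\sim}{\to}P'_g(\CC)$ of Lemma \ref{equivariancepi}, and I would take
\begin{align*}
\delta \defeq (p')^{-1}\psi(-\tau) \in {\Sp}_{2g}(\CC)\text{,}
\end{align*}
noting that $\psi(-\tau) = \psi(\tau)^{-1}$, so that $\delta^{-1} = \psi(\tau)p'$. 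The guiding heuristic is that $\delta$ should be chosen so as to move $\iota(\tau)$ to the base point of the Lagrangian Grassmannian $L_g(\CC)$, i.e. so that $\delta\cdot\tau = 0$; then $\psi_{\delta}(\tau) = \delta^{-1}\psi(0) = \delta^{-1}$ will be forced to equal $\psi(\tau)p'$, and comparison with Lemma \ref{psidelta} will pin down $p_{\delta,\tau}$.

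First I would verify that $\tau \in U_{\delta}$ and that $\delta\cdot\tau = 0$ in $\Sym_g(\CC)$. For this, recall that $\psi(\tau)\cdot(0:\mathbf{1}_g) = (\tau:\mathbf{1}_g) = \iota(\tau)$, and that any element of $P'_g(\CC)$ has the block form $(A'\ 0\ ;\ C'\ D')$ with $A'^{\transp}D' = \mathbf{1}_g$ (Remark \ref{eqsympl}), hence $D' \in \GL_g(\CC)$ and such an element fixes the base point $(0:\mathbf{1}_g)$ of $L_g(\CC)$; in particular $(p')^{-1}$ fixes $(0:\mathbf{1}_g)$. Combining these,
\begin{align*}
\delta\cdot\iota(\tau) = (p')^{-1}\psi(-\tau)\psi(\tau)\cdot(0:\mathbf{1}_g) = (p')^{-1}\cdot(0:\mathbf{1}_g) = (0:\mathbf{1}_g)\text{,}
\end{align*}
which lies in $\Sym_g(\CC) \subset L_g(\CC)$. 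By definition of $U_{\delta}$ this shows $\tau \in U_{\delta}$, and it also exhibits $\delta\cdot\tau$ as the point $0 \in \Sym_g(\CC)$.

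Next I would combine the two available expressions for $\psi_{\delta}(\tau)$. On one hand, by definition $\psi_{\delta}(\tau) = \delta^{-1}\psi(\delta\cdot\tau) = \delta^{-1}\psi(0) = \delta^{-1} = \psi(\tau)p'$; on the other hand, since $\tau \in U_{\delta}$, Lemma \ref{psidelta} gives $\psi_{\delta}(\tau) = \psi(\tau)\,p'_{\delta,\tau}$, where $p'_{\delta,\tau}$ is the image of $p_{\delta,\tau}$ in $P'_g(\CC)$. Cancelling the invertible factor $\psi(\tau)$ yields $p'_{\delta,\tau} = p'$, and the injectivity of $p\mapsto p'$ then gives $p_{\delta,\tau} = p$, as desired.

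I do not expect a genuine obstacle here: the argument is essentially bookkeeping with the $\Sp_{2g}(\CC)$-action on $L_g(\CC)$ and the relevant identifications ($\Sym_g(\CC) \subset L_g(\CC)$, the base point $(0:\mathbf{1}_g)$, and $P_g(\CC)\cong P'_g(\CC)$), so the only care needed is to keep left and right actions and the matrix conventions straight. As a self-contained alternative I could instead write $\delta = (A\ B\ ;\ C\ D)$ explicitly -- for $p = (X\ Y\ ;\ 0\ (X^{\transp})^{-1})$ one takes $C = -2\pi i\,Y^{\transp}$ and $D = X^{-1} + 2\pi i\,Y^{\transp}\tau$ -- and check directly that $C\tau+D = X^{-1} \in \GL_g(\CC)$, that $CD^{\transp}$ is symmetric (using that $XY^{\transp} = YX^{\transp}$ forces $X^{-1}Y$ to be symmetric) so that $(C\ D)$ extends to a symplectic matrix, and that the resulting $p_{\delta,\tau}$ equals $p$; but the group-theoretic route above is cleaner and avoids invoking the completion-of-a-symplectic-row fact.
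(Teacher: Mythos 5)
Your argument is correct, and in fact your $\delta = (p')^{-1}\psi(-\tau)$, once expanded in block form using $AB^{\transp}=BA^{\transp}$ (so that $A^{-1}BA^{\transp}=B^{\transp}$), is \emph{exactly} the matrix the paper writes down. The difference is in how it is obtained. The paper simply exhibits the explicit block formula for $\delta$ in terms of $p=(A\ B\ ;\ 0\ (A^{\transp})^{-1})$ and leaves the verification (that it is symplectic, that $\tau\in U_{\delta}$, and that $p_{\delta,\tau}=p$) to the reader; it invokes no machinery beyond the definitions. You instead derive $\delta$ from the normalization $\delta\cdot\iota(\tau)=(0:\mathbf{1}_g)$, so that $\psi_{\delta}(\tau)=\delta^{-1}=\psi(\tau)p'$, and then read off $p_{\delta,\tau}=p$ by comparing with the factorization $\psi_{\delta}(\tau)=\psi(\tau)p'_{\delta,\tau}$ of Lemma \ref{psidelta} and the injectivity of $p\mapsto p'$ from Lemma \ref{equivariancepi}. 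Your route is more conceptual — it explains where the formula comes from and sidesteps the check that the constructed matrix is symplectic, since it is manufactured as a product of symplectic matrices — at the cost of depending on Lemmas \ref{equivariancepi} and \ref{psidelta}, while the paper's is fully self-contained but gives no hint of where $\delta$ comes from. Both are correct; yours is arguably the cleaner way to present it in the paper's framework, and your closing remark identifying the explicit alternative shows you see the equivalence.
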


\begin{proof}
Let $A \in \GL_g(\CC)$ and $B\in M_{g\times g}(\CC)$ such that
\begin{align*}
p = \left(\begin{array}{cc}
          A & B\\
          0 & (A\transp)^{-1}
\end{array} \right)\text{.}
\end{align*}
One easily verifies, using the equation $A B\transp = BA\transp$, that
\begin{align*}
\delta \defeq \left(\begin{array}{cc}
          A\transp & -A\transp\tau\\
          -2\pi i\, B\transp & A^{-1} + 2\pi i\, B\transp \tau
\end{array} \right) \in M_{2g\times 2g}(\CC)
\end{align*}
is in $\Sp_{2g}(\CC)$ and satisfies the required conditions in the statement.
\end{proof}

\begin{lemma}\label{principallemma}
For every $\delta \in \Sp_{2g}(\CC)$ and $\tau \in \mathbf{H}_g$, the subset
\begin{align*}
S_{\delta,\tau} \defeq \{p_{\delta\gamma,\tau} \in P_g(\CC) \mid \gamma \in {\Sp}_{2g}(\ZZ)\text{ such that }j(\delta\gamma,\tau)\in {\GL}_g(\CC)\}
\end{align*}
of $P_{g}(\CC)$ is Zariski-dense in $P_{g,\CC}$.
\end{lemma}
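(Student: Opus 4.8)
The plan is to reduce the assertion to the two preceding lemmas — the Zariski-density of $\Sp_{2g}(\ZZ)$ in $\Sp_{2g,\CC}$ (Lemma \ref{spzdense}) and the surjectivity statement of Lemma \ref{surjective} — by exhibiting $S_{\delta,\tau}$ as the image of a Zariski-dense subset under a dominant morphism of $\CC$-schemes.

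Fix $\tau \in \mathbf{H}_g$ once and for all, and consider the Zariski-open subscheme $V_\tau \subset \Sp_{2g,\CC}$ defined by the non-vanishing of the regular function $\delta' = (A'\ B'\ ;\ C'\ D') \mapsto \det(C'\tau + D')$; it is nonempty (it contains $\mathbf{1}_{2g}$) and, since $\Sp_{2g,\CC}$ is irreducible, irreducible as well, with $\CC$-points $V_\tau(\CC) = \{\delta' \in \Sp_{2g}(\CC) \mid \tau \in U_{\delta'}\}$. The explicit formula for $p_{\delta',\tau}$ exhibits its entries as regular functions on $V_\tau$ (polynomials in the entries of $\delta'$ divided by a power of $\det(C'\tau+D')$), and, since each $p_{\delta',\tau}$ lies in $P_g(\CC)$ and $V_\tau$ is reduced, this defines a morphism of $\CC$-schemes $\Phi_\tau \colon V_\tau \to P_{g,\CC}$. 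The point of this reformulation is the tautological identity
\[
S_{\delta,\tau} \;=\; \Phi_\tau\bigl( (\delta \cdot \Sp_{2g}(\ZZ)) \cap V_\tau(\CC)\bigr),
\]
with $\delta \cdot \Sp_{2g}(\ZZ) \defeq \{\delta\gamma \mid \gamma \in \Sp_{2g}(\ZZ)\}$: indeed $\delta\gamma \in V_\tau(\CC)$ means exactly $j(\delta\gamma,\tau) \in \GL_g(\CC)$, and $\Phi_\tau(\delta\gamma) = p_{\delta\gamma,\tau}$ by definition.

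Next I would collect the two inputs. First, $\Phi_\tau$ is dominant: by Lemma \ref{surjective}, every $\CC$-point of $P_{g,\CC}$ is of the form $p_{\delta',\tau}$ for some $\delta' \in V_\tau(\CC)$, so the image of $\Phi_\tau$ contains the dense set of $\CC$-points of $P_{g,\CC}$. Second, $\delta \cdot \Sp_{2g}(\ZZ)$ is Zariski-dense in $\Sp_{2g,\CC}$: left translation by the $\CC$-point $\delta$ is an automorphism of the $\CC$-scheme $\Sp_{2g,\CC}$, hence maps the Zariski-dense subset $\Sp_{2g}(\ZZ)$ (Lemma \ref{spzdense}) onto $\delta \cdot \Sp_{2g}(\ZZ)$.

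Finally I would combine these with two elementary facts about irreducible schemes of finite type over $\CC$: a Zariski-dense subset of such a scheme meets every nonempty open subscheme in a set that is Zariski-dense in that open subscheme; and the image of a Zariski-dense subset under a dominant morphism is Zariski-dense in the target. Applying the first to $\delta \cdot \Sp_{2g}(\ZZ)$ and the open $V_\tau$ gives that $(\delta\cdot\Sp_{2g}(\ZZ)) \cap V_\tau(\CC)$ is Zariski-dense in $V_\tau$; applying the second to $\Phi_\tau$ then gives that $S_{\delta,\tau}$ is Zariski-dense in $P_{g,\CC}$, as required. I do not expect a genuine obstacle here: the only points needing a little care are that $\Phi_\tau$ is an honest \emph{algebraic} morphism despite being built from the transcendental constant $\tau$ — which is exactly why $\tau$ is fixed and everything is done over $\CC$ from the outset — and the bookkeeping identity displayed above; the substance of the argument lies entirely in the already established Lemmas \ref{spzdense} and \ref{surjective}.
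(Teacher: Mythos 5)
Your proof is correct and follows essentially the same route as the paper's: realize $S_{\delta,\tau}$ as the image of a Zariski-dense subset under a dominant morphism of $\CC$-schemes, with dominance supplied by Lemma \ref{surjective} and Zariski-density of the domain subset supplied by Lemma \ref{spzdense}. The only cosmetic difference is that you parameterize the domain by $\delta' = \delta\gamma$ (making the morphism $\Phi_\tau$ independent of $\delta$, at the cost of a one-line remark that left translation by $\delta$ preserves Zariski-density), whereas the paper parameterizes directly by $\gamma$ and writes $S_{\delta,\tau} = h(\Sp_{2g}(\ZZ)\cap V)$ with $h(\gamma) = p_{\delta\gamma,\tau}$.
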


\begin{proof}
Let $V$ be the unique open subscheme of $\Sp_{2g,\CC}$ such that 
\begin{align*}
V(\CC)= \{\gamma \in {\Sp}_{2g}(\CC) \mid j(\delta \gamma,\tau)\in {\GL}_g(\CC)\}
\end{align*}
and let $h : V \to P_{g,\CC}$ be the morphism of $\CC$-schemes given on complex points by $h(\gamma)= p_{\delta\gamma,\tau}$ (note that $V$ and $P_{g,\CC}$ are reduced separated $\CC$-schemes of finite type). It follows from Lemma \ref{surjective} that $h$ is surjective on complex points, thus a dominant morphism of schemes.

Now, we remark that $S_{\delta,\tau} = h({\Sp}_{2g}(\ZZ)\cap V)$. Since $\Sp_{2g,\CC}$ is irreducible and $\Sp_{2g}(\ZZ)$ is Zariski-dense in $\Sp_{2g,\CC}$ by Lemma \ref{spzdense}, $\Sp_{2g}(\ZZ)\cap V$ is also Zariski-dense in $\Sp_{2g,\CC}$. Hence, as $h$ is dominant and continuous for the Zariski topology, $S_{\delta,\tau}$ is Zariski-dense in $P_{g,\CC}$.
\end{proof}

\subsection{Proof of Theorem \ref{densite} and Corollary \ref{graphe}} \label{proofdensite}

Recall from \ref{proofthmtrdeg} that we denote the coarse moduli scheme of $\mathcal{A}_g$ by $A_g$, and that we have a canonical map $j_g: \mathbf{H}_g \to A_g(\CC)$ associating to each $\tau \in \mathbf{H}_g$ the isomorphism class of the principally polarized complex torus $(\mathbf{X}_{g,\tau},E_{g,\tau})$. 

\begin{proof}[Proof of Theorem \ref{densite}]
By Proposition \ref{charactleaves}, we must prove that, for every $\delta \in \Sp_{2g}(\CC)$, the image of $\varphi_{\delta}: U_{\delta} \to B_g(\CC)$ is Zariski-dense in $B_{g,\CC}$.

Let 
\begin{align*}
\varpi_g: B_{g,\CC} \to A_{g,\CC}
\end{align*}
be the composition of the forgetful functor $\pi_g:B_{g,\CC} \cong \mathcal{B}_{g,\CC} \to \mathcal{A}_{g,\CC}$ with the canonical morphism $\mathcal{A}_{g,\CC}\to A_{g,\CC}$. Note that $\varpi_g$ acts on complex points by sending an isomorphism class in $\mathcal{B}_{g}(\CC)$ of a principally polarized complex abelian variety endowed with a symplectic-Hodge basis to the isomorphism class in $\mathcal{A}_{g}(\CC)$ of the same principally polarized complex abelian variety.

 By Lemma \ref{metfib}, we are reduced to proving that, for every $x \in A_g(\CC)$, the set
\begin{align*}
\varphi_{\delta}(U_{\delta})\cap \varpi_g^{-1}(x)
\end{align*}
is Zariski-dense in $\varpi_g^{-1}(x) \subset B_{g,\CC}$. Indeed, by surjectivity of $\varpi_g$ on the level of complex points, this proves in particular that $\varpi_g(\varphi_{\delta}(U_{\delta}))=A_g(\CC)$ (cf. condition (i) in Lemma \ref{metfib}).

Let $(X,\lambda)$ be a representative of the isomorphism class $x$. The set of complex points of the $\CC$-scheme $\varpi_g^{-1}(x)$ can be identified with the set of isomorphism classes of objects of the category $\mathcal{B}_{g}(\CC)$ lying over $(X,\lambda)$; we denote these isomorphism classes by $[(X,\lambda,b)]$. Then, we recall that $\CC$-group scheme $P_{g,\CC}$ acts transitively on $\varpi_g^{-1}(x)$ by 
\begin{align*}
[(X,\lambda,b)]\cdot p \defeq [(X,\lambda,b\cdot p)]\text{.}
\end{align*}
 Thus, if $\tau \in \mathbf{H}_g$ satisfies $j_g(\tau)=x$, we can define a surjective morphism of $\CC$-schemes\footnote{Actually, as the automorphism group of a complex principally polarized abelian variety is finite (\cite{mumford70} IV.21 Theorem 5), the stabilizer of $\varphi_g(\tau)$ is a finite subgroup scheme of $P_{g,\CC}$. Therefore, $f_{\tau}$ is a finite surjective morphism. We shall not use this fact in our proof.}
\begin{align*}
f_{\tau} : P_{g,\CC} &\longrightarrow \varpi_g^{-1}(x)\\
              p &\mapsto \varphi_g(\tau)\cdot p\text{.}
\end{align*}

Now, let $\gamma \in \Sp_{2g}(\ZZ)$ be such that $j(\delta\gamma,\tau) \in \GL_g(\CC)$. By Lemma \ref{lemmeutile}, we have $\gamma\cdot \tau \in U_{\delta}$ and $\varphi_{\delta\gamma}(\tau) = \varphi_{\delta}(\gamma\cdot \tau)$. Thus, by formula (\ref{phimoduli}), we obtain
\begin{align*}
f_{\tau}(p_{\delta\gamma,\tau}) = \varphi_{g}(\tau) \cdot p_{\delta\gamma,\tau} = \varphi_{\delta\gamma}(\tau) = \varphi_{\delta}(\gamma\cdot \tau)\text{.}
\end{align*}
This proves that
\begin{align*}
  S_{\delta,\tau} = \{p_{\delta\gamma,\tau} \in P_g(\CC) \mid \gamma \in {\Sp}_{2g}(\ZZ)\text{ such that }j(\delta\gamma,\tau)\in {\GL}_g(\CC)\} \subset f_{\tau}^{-1}(\varphi_{\delta}(U_{\delta})\cap \varpi_g^{-1}(x))\text{.}
\end{align*}
By Lemma \ref{principallemma}, $S_{\delta,\tau}$ is Zariski-dense in $P_{g,\CC}$. Hence, as $f_{\tau}$ is surjective and continuous for the Zariski topology, we conclude that $\varphi_{\delta}(U_{\delta})\cap \varpi_g^{-1}(x)$ is Zariski-dense in $\varpi_g^{-1}(x)$.
\end{proof}

\begin{proof}[Proof of Corollary \ref{graphe}]
It is clear that $\Sym_g(\ZZ)$ is Zariski-dense in $\Sym_{g,\CC}$. Thus, by Theorem \ref{densite} and Lemma \ref{metfib} (ii) applied to the projection on the second factor
\begin{align*}
{\Sym}_{g,\CC}\times_{\CC}B_{g,\CC} \to B_{g,\CC}\text{,}
\end{align*} 
it suffices to prove that for every $N \in \Sym_g(\ZZ)$ and $\tau \in \mathbf{H}_g$ we have $\varphi_g(\tau+N)=\varphi_g(\tau)$. This was already observed in Remark \ref{remarkphi}.
\end{proof}

\end{document}